\documentclass[a4paper,10pt]{article}
\usepackage{a4wide}
\usepackage{amsmath,amssymb,amsfonts,amsthm}
\usepackage[all,cmtip]{xy}
\usepackage{fouridx}
\usepackage[usenames,dvipsnames]{color}
\usepackage[colorlinks=true,linktocpage=true,linkcolor=blue,citecolor=blue]{hyperref}

\theoremstyle{definition}
\newtheorem{definition}{Definition}[section]
\newtheorem{assumption}[definition]{Assumption}
\theoremstyle{plain}
\newtheorem{theorem}[definition]{Theorem}
\newtheorem{proposition}[definition]{Proposition}
\newtheorem{lemma}[definition]{Lemma}
\newtheorem{corollary}[definition]{Corollary}
\newtheorem{example}[definition]{Example}
\theoremstyle{remark}
\newtheorem{remark}[definition]{Remark}

\numberwithin{equation}{section}

\newcommand{\tildeHD}[2]{\langle #1,#2\rangle_{\tilde{\mathsf{d}}}^{-}}

\allowdisplaybreaks[4]

\begin{document}
\title{Central extensions of generalized orthosymplectic Lie superalgebras}
\author{Zhihua Chang\footnote{Zhihua Chang is supported by the National Natural Science Foundation of China (no. 11501213), the China Postdoctoral Science Foundation (no. 2015M570705) and the Fundamental Research Funds for the Central Universities (no. 2015ZM085).} ${}^1$ and Yongjie Wang\footnote{Yongjie Wang is supported by the China Postdoctoral Science Foundation (no. 2015M571928) and the Fundamental Research Funds for the Central Universities.} ${}^2$}
\maketitle

\begin{center}
\footnotesize
\begin{itemize}
\item[1] School of Mathematics, South China University of Technology, Guangzhou, Guangdong, 510640, China.
\item[2] School of Mathematical Sciences, University of Science and Technology of China, Hefei, Anhui, 230026, China.
\end{itemize}
\end{center}

\begin{abstract}
The key ingredient of this paper is the universal central extension of the generalized orthosymplectic Lie superalgebra $\mathfrak{osp}_{m|2n}(R,{}^-)$ coordinatized by a unital associative superalgebra $(R,{}^-)$ with superinvolution. Such a universal central extension will be constructed via a Steinberg orthosymplectic Lie superalgebra coordinated by $(R,{}^-)$. The research on the universal central extension of $\mathfrak{osp}_{m|2n}(R,{}^-)$ will yield an identification between the second homology group of the generalized orthosymplectic Lie superalgebra $\mathfrak{osp}_{m|2n}(R,{}^-)$ and the first  $\mathbb{Z}/2\mathbb{Z}$-graded skew-dihedral homology group of $(R,{}^-)$ for $(m,n)\neq(2,1),(1,1)$. The universal central extensions of $\mathfrak{osp}_{2|2}(R,{}^-)$ and $\mathfrak{osp}_{1|2}(R,{}^-)$ will also be treated separately.
\medskip

\noindent\textit{MSC(2010):} 17B05, 19D55.
\medskip

\noindent\textit{Key words:} Orthosymplectic Lie superalgebra; Universal central extension; $\mathbb{Z}/2\mathbb{Z}$-graded skew-dihedral homology; Associative superalgebra with superinvolution.
\end{abstract}

\section{Introduction}
\label{sec:intr}

Central extension theory is of vital importance in the study of Lie algebras and Lie superalgebras, which has attracted the attentions from mathematicians and physicist in recent decades (see \cite{Neher2003} and \cite{ScheunertZhang1998} for a survey). In recent paper \cite{ChangWang2015}, the authors determined the universal central extensions of generalized periplectic Lie superalgebras and established a connection between the second homology group of a generalized periplectic Lie superalgebra and the dihedral group of its coordinate algebra with certain superinvolution.  The current paper is devoted to study the universal central extensions of the generalized orthosymplectic Lie superalgebras coordinatized by unital associative superalgebras with superinvolution, which form another family of Lie superalgebras determined by superinvolutions.

Let $\Bbbk$ be a unital commutative ring with $2$ invertible and $R$ an associative $\Bbbk$-superalgebra. We consider the associative superalgebra $\mathrm{M}_{m|2n}(R)$ of $(m+2n)\times(m+2n)$-matrices, in which the degree of the matrix unit $e_{ij}(a)$ is
$$|e_{ij}(a)|:=|i|+|j|+|a|$$
for a homogeneous element $a\in R$ of degree $|a|$ and $1\leqslant i,j\leqslant m+2n$ with the parity given by
\begin{equation}
|i|:=\begin{cases}0,&\text{if }i\leqslant m,\\1,&\text{if }i>m.\end{cases}\label{intro:eq:deg_i}
\end{equation}
Analogous to the generalized periplectic Lie superalgebra discussed in \cite{ChangWang2015}, we further assume that the associative superalgebra $R$ is equipped with a superinvolution, which is a $\Bbbk$-linear map ${}^-:R\rightarrow R$ preserving the $\mathbb{Z}/2\mathbb{Z}$-gradings and satisfying
\begin{equation}
\overline{ab}=(-1)^{|a||b|}\bar{b}\bar{a},\text{ and }\bar{\bar{a}}=a,\label{intro:eq:supinv}
\end{equation}
for homogeneous $a,b\in R$. In this situation, the associative superalgebra $\mathrm{M}_{m|2n}(R)$ possesses \textit{the generalized orthosymplectic superinvolution} given by
\begin{equation}
\begin{pmatrix}A&B_1&C_2\\ C_1&D_{11}&D_{12}\\ B_2&D_{21}&D_{22}\end{pmatrix}^{\mathrm{osp}}
:=\begin{pmatrix}\overline{A}^t&-\overline{\rho(B_2)}^t&\overline{\rho(C_1)}^t\\
\overline{\rho(C_2)}^t&\overline{D}_{22}^t&-\overline{D}_{12}^t\\
-\overline{\rho(B_1)}^t&-\overline{D}_{21}^t&\overline{D}_{11}^t\end{pmatrix},
\label{intro:eq:inv_ospR}
\end{equation}
where $A$ is an $m\times m$-matrix, $B_1, B_2^t, C_1^t, C_2$ are $m\times n$-matrices, $D_{ij},i,j=1,2$ are $n\times n$-matrices and $\rho:R\rightarrow R$ is a $\Bbbk$-linear map such that
\begin{equation}
\rho(a)=(-1)^{|a|}a,\label{intro:eq:rho}
\end{equation}
for homogeneous $a\in R$, $\rho(A)$ denotes the matrix $(\rho(a_{ij}))$ and $\overline{A}=(\overline{a_{ij}})$ for $A=(a_{ij})$.  The generalized orthosymplectic superinvolution determines a Lie superalgebra over $\Bbbk$:
\begin{equation}
\widetilde{\mathfrak{osp}}_{m|2n}(R,{}^-):=\{X\in\mathrm{M}_{m|2n}(R)|X^{\mathrm{osp}}=-X\},\label{intro:eq:osptilde}
\end{equation}
on which the Lie superbracket is given by the standard super-commutator of matrices. Its derived Lie sub-superalgebra
\begin{equation}
\mathfrak{osp}_{m|2n}(R,{}^-):=[\widetilde{\mathfrak{osp}}_{m|2n}(R,{}^-),\widetilde{\mathfrak{osp}}_{m|2n}(R,{}^-)],
\label{intro:eq:osp}
\end{equation}
is called \textit{a generalized orthosymplectic Lie superalgebra coordinatized by $(R,{}^-)$}.
\bigskip

Generalized orthosymplectic Lie superalgebras realize many important finite-dimensional or infinite-dimensional Lie superalgebras when $(R,{}^-)$ varies. Their universal central extensions have been studied separately by many mathematicians. In the especial case where the base ring $\Bbbk$ is the field $\mathbb{C}$ of complex numbers, we view $\mathbb{C}$ as an associative superalgebra with zero odd part and identity map is a superinvolution on $\mathbb{C}$. Then the above realization of a generalized orthosymplectic Lie superalgebra exactly gives the ordinary orthosymplectic Lie superalgebra $\mathfrak{osp}_{m|2n}(\mathbb{C})$ as decribed in \cite{Kac1977}, which provides us with the finite-dimensional simple complex Lie superalgebras of type $B(m,n)$, $C(n)$ and $D(m,n)$. The universal central extensions of these simple Lie superalgebras have been given in \cite{FuchLeites1984}.

If $R$ is a unital super-commutative associative algebra over an arbitrary base ring $\Bbbk$ with $2$ invertible, we will see in Section~\ref{sec:osp} that identity map is a superinvolution on $R$ and $\mathfrak{osp}_{m|2n}(R,\mathrm{id})$ is isomorphic to the Lie superalgebra $\mathfrak{osp}_{m|2n}(\Bbbk)\otimes_{\Bbbk}R$. These Lie superalgebras have been demonstrated to be crucial in root graded Lie superalgebras of type $C(n)$ and $D(m,n)$ (c.f. \cite{BenkartElduque2002, BenkartElduque2003}). Under the assumption that $R$ is super-commutative, the Lie superalgebra $\mathfrak{osp}_{m|2n}(\Bbbk)\otimes_{\Bbbk}R$ has also been interpreted using the language of bilinear forms in \cite{Duff2002}. The universal central extensions and second homology groups of these Lie superalgebras have been completely determined in \cite{IoharaKoga2001} and \cite{IoharaKoga2005}.

For a unital associative superalgebra $R$ that is not necessarily super-commutative, the generalized orthosymplectic Lie superalgebras $\mathfrak{osp}_{m|2n}(R,{}^-)$ also realize a series of important objects in Lie theory. For instance, an arbitrary unital associative superalgebra $S$ and its opposite superalgebra $S^{\mathrm{op}}$ give rise to a new associative superalgebra $S\oplus S^{\mathrm{op}}$, on which the $\Bbbk$-linear map $\mathrm{ex}:S\oplus S^{\mathrm{op}}\rightarrow S\oplus S^{\mathrm{op}}$ that exchanges the two direct summands is a superinvolution. We will see in Section~\ref{sec:osp} that the generalized orthosymplectic Lie superalgebra $\mathfrak{osp}_{m|2n}(S\oplus S^{\mathrm{op}},\mathrm{ex})$ is isomorphic to the special linear Lie superalgebra $\mathfrak{sl}_{m|2n}(S)$, that is the derived sub-superalgebra of the Lie superalgebra $\mathfrak{gl}_{m|2n}(S)$ of all $(m+2n)\times(m+2n)$-matrices with entries in $S$. Recent research \cite{ChenSun2015} on the universal central extension of the special linear Lie superalgebra $\mathfrak{sl}_{m|n}(S)$\footnote{The universal central extension of $\mathfrak{sl}_{m|n}(S)$ under different assumptions has also been studied in \cite{MikhalevPinchuk2002} and \cite{ChenGuay2013}} yielded the notion of Steinberg Lie superalgebras coordinatized by $S$ and established
the connection between the second homology group of $\mathfrak{sl}_{m|n}(S)$ and the first $\mathbb{Z}/2\mathbb{Z}$-graded cyclic homology group of $S$.  This indeed a super-version generalization of the remarkable results about the universal central extension of the Lie algebra $\mathfrak{sl}_n(S)$ given in \cite{KasselLoday1982}.

A generalized orthosymplectic Lie superalgebra coordinatized by a unital associative superalgebra with superinvolution can also be regarded as a generalization of an elementary unitary Lie algebra coordinatized by a unital associative algebra with anti-involution. In the case of $n=0$, $\mathfrak{osp}_{m|0}(R,{}^-)$ is indeed an elementary unitary Lie algebra in the sense of \cite{Gao1996II} provided that $R$ is a unital associative algebra (that is viewed as an associative superalgebra with zero odd part). For $m\geqslant5$, the universal central extensions of these elementary unitary Lie algebras have been demonstrated to be the Steinberg unitary Lie algebras, which leads to the identification between the second homology group of an elementary unitary Lie algebra and the first skew-dihedral homology group of its coordinates algebras for $m\geqslant5$ \cite{Gao1996II}. The Steinberg unitary Lie algebras have also been successfully used in the study of the compact forms of certain intersection matrix algebras of Slodowy \cite{Gao1996I}.
\bigskip

Our primary aim in this paper is to explicitly characterize the universal central extensions of the generalized orthosymplectic Lie superalgebras $\mathfrak{osp}_{m|2n}(R,{}^-)$ for positive integers $m$ and $n$. We will work under a quite general setting that $\Bbbk$ is a unital commutative associative ring with $2$ invertible and $(R,{}^-)$ is an arbitrary unital associative $\Bbbk$-superalgebra with superinvolution. Our aim will be achieved via introducing the notion of Steinberg orthosymplectic Lie superalgebra $\mathfrak{sto}_{m|2n}(R,{}^-)$ coordinatized by $(R,{}^-)$ (See Section~\ref{sec:sto}). Then, we will prove in Section~\ref{sec:cext} that the canonical homomorphism $\psi:\mathfrak{sto}_{m|2n}(R,{}^-)\rightarrow\mathfrak{osp}_{m|2n}(R,{}^-)$ is a central extension for $(m,n)\neq(1,1)$, whose kernel is identified with the first $\mathbb{Z}/2\mathbb{Z}$-graded skew-dihedral homology group $\fourIdx{}{-}{}{1}{\mathrm{HD}}(R,{}^-)$ as described in \cite{ChangWang2015}. As will be stated in Section~\ref{sec:uce}, the central extension $\psi:\mathfrak{sto}_{m|2n}(R,{}^-)\rightarrow\mathfrak{osp}_{m|2n}(R,{}^-)$ turns out to be universal for $(m,n)\neq(2,1),(1,1)$. The cases of $\mathfrak{osp}_{2|2}(R,{}^-)$ and $\mathfrak{osp}_{1|2}(R,{}^-)$ are exceptional. We will provide a concrete construction for the universal central extension of $\mathfrak{osp}_{2|2}(R,{}^-)$ under certain assumption in Section~\ref{sec:osp22}. Finally, the universal central extension of $\mathfrak{osp}_{1|2}(R,{}^-)$ will be obtained in Section~\ref{sec:osp12}. It yields an interesting result that the second homology group of the Lie superalgebra $\mathfrak{osp}_{1|2}(R,{}^-)$ for an arbitrary $(R,{}^-)$ can be explicitly characterized using a modified version of the first $\mathbb{Z}/2\mathbb{Z}$-graded skew-dihedral homology group.

In summary, our results encompass explicit characterizations of the universal central extensions of $\mathfrak{osp}_{m|2n}(R,{}^-)$ for $(m,n)\neq(2,1)$ and $(R,{}^-)$ an arbitrary associative superalgebra with superinvolution (see Theorem~\ref{thm:sto_uce} and Theorem~\ref{thm:osp12_uce}), as well as the universal central extension of $\mathfrak{osp}_{2|2}(R,{}^-)$ for $(R,{}^-)$ satisfying certain assumption (see Theorem~\ref{thm:osp22_uce}). Consequently, we reveal the second homology groups of all these Lie superalgebras (see Corollaries~\ref{cor:hml},~\ref{cor:osp22_hml} and~\ref{cor:osp12_hml}). Our results recover the  consequences about the universal central extensions of the Lie superalgebras $\mathfrak{osp}_{m|2n}(\mathbb{C})$, $\mathfrak{osp}_{m|2n}(\Bbbk)\otimes_{\Bbbk}R$ for a unital supercommutative associative superalgerba $R$ and $\mathfrak{sl}_{m|2n}(S)$ for a unital associative superalgebra $S$, which have been separately obtained in \cite{FuchLeites1984}, \cite{IoharaKoga2001} and \cite{ChenSun2015}.
\bigskip

\noindent{\bf Notations and terminologies:}

Throughout this paper, $\mathbb{Z}$, $\mathbb{Z}_+$ and $\mathbb{N}$ will denote the sets of integers, non-negative integers and positive integers, respectively. $\Bbbk$ always denotes a unital commutative associative base ring with $2$ invertible. All modules, associative (super)algebras and Lie (super)algebras are assumed to be over $\Bbbk$.

We always write $\mathbb{Z}/2\mathbb{Z}=\{0,1\}$. For an element $x$ of an associative or Lie superalgebra, we use $|x|$ to denote the degree of $|x|$ with respect to the $\mathbb{Z}/2\mathbb{Z}$-grading.

Let $R$ be an associative superalgebra. Then $\mathrm{M}_m(R)$, $\mathrm{M}_{m|n}(R)$ and $\mathrm{M}_{m\times n}(R)$ will denote the associative algebra of all $m\times m$-matrices with entries in $R$, the associative superalgebra of all $(m+n)\times(m+n)$-matrices with entries in $R$ and the set of all $m\times n$-matrices with entries in $R$, respectively. The notation $e_{ij}(a)$ means the matrix with $a$ at the $(i,j)$-position and $0$ elsewhere.

Given an associative superalgebra $(R,{}^-)$ with superinvolution, we set
\begin{equation}
R_{\pm}:=\{a\in R|\bar{a}=\pm a\}.
\end{equation}
Note that $2$ is invertible in $\Bbbk$, we know that $R=R_+\oplus R_-$ as $\Bbbk$-modules. For an element $a\in R$, we denote
$a_{\pm}:=a\pm\bar{a}\in R_{\pm}$.

\section{Basics on generalized orthosymplectic Lie superalgebras}
\label{sec:osp}

In this section, we will discuss the perfectness and generators of the generalized orthosymplectic Lie superalgebra $\mathfrak{osp}_{m|2n}(R,{}^-)$ coordinatized by an associative superalgebra $(R,{}^-)$ with superinvolution. The consequences will be used in our discussion on Steinberg orthosymplectic Lie superalgebras and central extensions of $\mathfrak{osp}_{m|2n}(R,{}^-)$ later.

It is easy to observe from the definition (\ref{intro:eq:osptilde}) that every element of $\widetilde{\mathfrak{osp}}_{m|2n}(R,{}^-)$ is of the form
$$\begin{pmatrix}A&B&-\overline{\rho(C)}^t\\ C&D_{11}&D_{12}\\ \overline{\rho(B)}^t&D_{21}&-\overline{D}_{11}^t\end{pmatrix}$$
where $A\in\mathrm{M}_m(R)$, $B,C^t\in\mathrm{M}_{m\times n}(R)$ and $D_{11}, D_{12}, D_{21}\in\mathrm{M}_n(R)$ satisfy $\overline{A}^t=-A$, $\overline{D}_{12}^t=D_{12}$  and $\overline{D}_{21}^t=D_{21}$. The generalized orthosymplectic Lie superalgebra $\mathfrak{osp}_{m|2n}(R,{}^-)$ is the derived Lie sub-superalgebra of the Lie superalgebra $\widetilde{\mathfrak{osp}}_{m|2n}(R,{}^-)$,
i.e.,
$$\mathfrak{osp}_{m|2n}(R,{}^-):=[\widetilde{\mathfrak{osp}}_{m|2n}(R,{}^-),\widetilde{\mathfrak{osp}}_{m|2n}(R,{}^-)].$$
Before proving the properties of $\mathfrak{osp}_{m|2n}(R,{}^-)$, we first give a few examples:

\begin{example}
\label{eg:osp_com}
Let $R$ be a unital super-commutative associative superalgebra on which the identity map is a superinvolution. Then
$$\mathfrak{osp}_{m|2n}(R,\mathrm{id})\cong\mathfrak{osp}_{m|2n}(\Bbbk)\otimes_{\Bbbk}R,$$
for $m,n\in\mathbb{Z}_+$ with $m+n\geqslant1$.
\end{example}
\begin{proof}
Under the assumption that $R$ is super-commutative, an isomorphism from $\mathfrak{osp}_{m|2n}(\Bbbk)\otimes_{\Bbbk}R$ onto $\mathfrak{osp}_{m|2n}(R,\mathrm{id})$ can be given as follows:
\begin{align*}
\mathfrak{osp}_{m|2n}(\Bbbk)\otimes_{\Bbbk}R&\rightarrow\mathfrak{osp}_{m|2n}(R,\mathrm{id}),\\
(e_{ij}-e_{ji})\otimes a&\mapsto e_{ij}(a)-e_{ji}(a),\\
(e_{m+k,m+l}-e_{m+n+l,m+n+k})\otimes a&\mapsto e_{m+k,m+l}(\rho(a))-e_{m+n+l,m+n+k}(\rho(a)),\\
(e_{m+k,m+n+l}+e_{m+l,m+n+k})\otimes a&\mapsto e_{m+k,m+n+l}(\rho(a))+e_{m+l,m+n+k}(\rho(a)),\\
(e_{m+n+k,m+l}+e_{m+n+l,m+k})\otimes a&\mapsto e_{m+n+k,m+l}(\rho(a))+e_{m+n+l,m+k}(\rho(a)),\\
(e_{i,m+k}+e_{m+n+k,i})\otimes a&\mapsto e_{i,m+k}(\rho(a))+e_{m+n+k,i}(a),\\
(e_{m+k,i}-e_{i,m+n+k})\otimes a&\mapsto e_{m+k,i}(a)-e_{i,m+n+k}(\rho(a)),
\end{align*}
where $\rho$ is the $\Bbbk$-linear map given by (\ref{intro:eq:rho}).
\end{proof}

\begin{example}
\label{eg:osp_SS}
Let $S$ be an arbitrary unital associative superalgebra and $S^{\mathrm{op}}$ be \textit{the opposite superalgebra of $S$} with the multilication
\begin{equation}
a\overset{\mathrm{op}}{\cdot}b=(-1)^{|a||b|}b\cdot a,\label{hml:eq:opalg}
\end{equation}
for homogeneous $a,b\in S$. Then the associative superalgebra $S\oplus S^{\mathrm{op}}$ is naturally equipped with the superinvolution
\begin{equation}
\mathrm{ex}:S\oplus S^{\mathrm{op}}\rightarrow S\oplus S^{\mathrm{op}}, \quad a\oplus b\mapsto b\oplus a.
\label{hml:eq:inv_ex}
\end{equation}
In this situation, the generalized orthosymplectic Lie superalgebra $\mathfrak{osp}_{m|2n}(S\oplus S^{\mathrm{op}},\mathrm{ex})$ is isomorphic to the Lie superalgebra $\mathfrak{sl}_{m|2n}(S)$ for $m,n\in\mathbb{Z}_+$ with $m+n\geqslant1$.
\end{example}
\begin{proof}
Indeed, there is an isomorphism from the Lie superalgebra $\mathfrak{gl}_{m|2n}(S)$ to the Lie superalgebra $\widetilde{\mathfrak{osp}}_{m|2n}(S\oplus S^{\mathrm{op}},\mathrm{ex})$ given as follows:
\begin{align*}
\mathfrak{gl}_{m|2n}(S)&\rightarrow \widetilde{\mathfrak{osp}}_{m|2n}(S\oplus S^{\mathrm{op}},\mathrm{ex})\\
e_{ij}(a)&\mapsto e_{ij}(a\oplus0)-e_{ji}(0\oplus a),\\
e_{i,m+k}(a)&\mapsto e_{i,m+k}(a\oplus0)+e_{m+n+k,i}(0\oplus \rho(a)),\\
e_{i,m+n+k}(a)&\mapsto e_{i,m+n+k}(a\oplus0)-e_{m+k,i}(0\oplus \rho(a)),\\
e_{m+k,i}(a)&\mapsto-e_{i,m+n+k}(0\oplus \rho(a))+e_{m+k,i}(a\oplus 0),\\
e_{m+k,m+l}(a)&\mapsto e_{m+k,m+l}(a\oplus 0)-e_{m+n+l,m+n+k}(0\oplus a),\\
e_{m+k,m+n+l}(a)&\mapsto e_{m+k,m+n+l}(a\oplus 0)+e_{m+l,m+n+k}(0\oplus a), \\
e_{m+n+k,i}(a)&\mapsto e_{i,m+k}(0\oplus \rho(a))+e_{m+n+k,i}(a\oplus 0),\\
e_{m+n+k,m+l}(a)&\mapsto e_{m+n+k,m+l}(a\oplus0)-e_{m+n+l,m+k}(0\oplus a),\\
e_{m+n+k,m+n+l}(a)&\mapsto -e_{m+l,m+k}(0\oplus a)+e_{m+n+k,m+n+l}(a\oplus0),
\end{align*}
for $a\in S$ and $1\leqslant i,j\leqslant m$, $1\leqslant k,l\leqslant n$. It further yields that $\mathfrak{osp}_{m|2n}(S\oplus S^{\mathrm{op}},\mathrm{ex})$ is isomorphic to $\mathfrak{sl}_{m|2n}(S)$.
\end{proof}

\begin{example}
\label{eg:osp_prp}
Suppose that the base ring $\Bbbk$ is an algebraically closed field of characteristic zero. Then the associative superalgebra $\mathrm{M}_{l|l}(\Bbbk)$ for $l\in\mathbb{N}$ has the periplectic superinvolution given by
$$\begin{pmatrix}A&B\\C&D\end{pmatrix}^{\mathrm{prp}}:=\begin{pmatrix}D^t&-B^t\\C^t&A^t\end{pmatrix}.$$
The generalized orthosymplectic Lie superalgebra $\mathfrak{osp}_{m|2n}(\mathrm{M}_{l|l}(\Bbbk),\mathrm{prp})$ is isomorphic to the ordinary periplectic Lie superlagebra $\mathfrak{p}_{(m+2n)l}(\Bbbk)$ that is the derived Lie sub-superalgebra of $$\widetilde{\mathfrak{p}}_{(m+2n)l}(\Bbbk)=\{X\in\mathrm{M}_{(m+2n)l|(m+2n)l}(\Bbbk)|X^{\mathrm{prp}}=-X\}.$$
\end{example}
\begin{proof}
We first deduce from \cite{ChangWang2015} that the associative superalgebra $\mathrm{M}_{m|2n}(\mathrm{M}_{l|l}(\Bbbk))$ is isomorphic to the associative superalgebra $\mathrm{M}_{(m+2n)l|(m+2n)l}(\Bbbk)$. Hence, the orthosymplectic superinvolution $\sigma$ on $\mathrm{M}_{m|2n}(\mathrm{M}_{l|l}(\Bbbk))$ induces an superinvolution $\tilde{\sigma}$ on $\mathrm{M}_{(m+n)l|(m+n)l}(\Bbbk)$.  We first claim that $\tilde{\sigma}$ is equivalent\footnote{Two superinvolution $\sigma_1$ and $\sigma_2$ on a given associative superalgebra $R$ are said to be equivalent if there is an automorphism of superalgebras $\varphi:R\rightarrow R$ such that $\sigma_1\circ \varphi=\varphi\circ\sigma_2$.} to the periplectic superinvolution on $\mathrm{M}_{(m+2n)l|(m+2n)l}(\Bbbk)$. By \cite[Propositions 13 and 14]{Racine1998}, it suffices to show that the even part $\mathrm{M}_{(m+2n)l|(m+2n)l}(\Bbbk)_0$ has no nonzero proper two-sided ideal that is invariant under $\tilde{\sigma}$, or equivalently, the even part $\mathrm{M}_{m|2n}(\mathrm{M}_{l|l}(\Bbbk))_0$ has no nonzero proper two-sided ideal that is invariant under $\sigma$.

The $\Bbbk$-module $\mathrm{M}_{m|2n}(\mathrm{M}_{l|l}(\Bbbk))_0$ is spanned by:
\begin{align*}
&e_{i,i'}\begin{pmatrix}A&0\\0&B\end{pmatrix},\,
e_{m+j,m+j'}\begin{pmatrix}B&0\\0&A\end{pmatrix},\,
e_{m+j,m+n+j'}\begin{pmatrix}B&0\\0&A\end{pmatrix},\\
&e_{m+n+j,m+j'}\begin{pmatrix}B&0\\0&A\end{pmatrix},\,
e_{m+n+j,m+n+j'}\begin{pmatrix}B&0\\0&A\end{pmatrix},\,
e_{i,m+j}\begin{pmatrix}0&A\\B&0\end{pmatrix},\\
&e_{i,m+n+j}\begin{pmatrix}0&A\\B&0\end{pmatrix},\,
e_{m+j,i}\begin{pmatrix}0&B\\A&0\end{pmatrix},\,
e_{m+n+j,i}\begin{pmatrix}0&B\\A&0\end{pmatrix},
\end{align*}
where $A,B\in\mathrm{M}_l(\Bbbk)$, $1\leqslant i,i'\leqslant m$ and $1\leqslant j,j'\leqslant n$.

Let $\mathcal{A}$ be the $\Bbbk$-submodule of $\mathrm{M}_{m|2n}(\mathrm{M}_{l|l}(\Bbbk))_0$ spanned by
\begin{align*}
&e_{i,i'}\begin{pmatrix}A&0\\0&0\end{pmatrix},\,
e_{m+j,m+j'}\begin{pmatrix}0&0\\0&A\end{pmatrix},\,
e_{m+j,m+n+j'}\begin{pmatrix}0&0\\0&A\end{pmatrix},\\
&e_{m+n+j,m+j'}\begin{pmatrix}0&0\\0&A\end{pmatrix},\,
e_{m+n+j,m+n+j'}\begin{pmatrix}0&0\\0&A\end{pmatrix},
e_{i,m+j}\begin{pmatrix}0&A\\0&0\end{pmatrix},\\
&e_{i,m+n+j}\begin{pmatrix}0&A\\0&0\end{pmatrix},\,
e_{m+j,i}\begin{pmatrix}0&0\\A&0\end{pmatrix},\,
e_{m+n+j,i}\begin{pmatrix}0&0\\A&0\end{pmatrix},
\end{align*}
for $A\in\mathrm{M}_l(\Bbbk)$, $1\leqslant i,i'\leqslant m$ and $1\leqslant j,j'\leqslant n$. Let $\mathcal{B}$ be the $\Bbbk$-submodule of $\mathrm{M}_{m|2n}(\mathrm{M}_{l|l}(\Bbbk))_0$ spanned by
\begin{align*}
&e_{i,i'}\begin{pmatrix}0&0\\0&B\end{pmatrix},\,
e_{m+j,m+j'}\begin{pmatrix}B&0\\0&0\end{pmatrix},\,
e_{m+j,m+n+j'}\begin{pmatrix}B&0\\0&0\end{pmatrix},\\
&e_{m+n+j,m+j'}\begin{pmatrix}B&0\\0&0\end{pmatrix},\,
e_{m+n+j,m+n+j'}\begin{pmatrix}B&0\\0&0\end{pmatrix},
e_{i,m+j}\begin{pmatrix}0&0\\B&0\end{pmatrix},\\
&e_{i,m+n+j}\begin{pmatrix}0&0\\B&0\end{pmatrix},\,
e_{m+j,i}\begin{pmatrix}0&B\\0&0\end{pmatrix},\,
e_{m+n+j,i}\begin{pmatrix}0&B\\0&0\end{pmatrix},
\end{align*}
where $B\in\mathrm{M}_l(\Bbbk)$, $1\leqslant i,i'\leqslant m$ and $1\leqslant j,j'\leqslant n$.

Then both $\mathcal{A}$ and $\mathcal{B}$ are two-sided ideals of $\mathrm{M}_{m|2n}(\mathrm{M}_{l|l}(\Bbbk))_0$ and
$$\mathrm{M}_{m|2n}(\mathrm{M}_{l|l}(\Bbbk))_0=\mathcal{A}\oplus\mathcal{B}.$$
Moreover, both $\mathcal{A}$ and $\mathcal{B}$ are isomorphic to $\mathrm{M}_{(m+2n)l}(\Bbbk)$ as associative algebras and $\sigma(\mathcal{A})=\mathcal{B}$. It follows that
$$(\mathrm{M}_{m|2n}(\mathrm{M}_{l|l}(\Bbbk))_0,\sigma|)\cong(\mathrm{M}_{(m+2n)l}(\Bbbk)\oplus\mathrm{M}_{(m+2n)l}(\Bbbk)^{\mathrm{op}},\mathrm{ex})$$
as associative superalgebras with superinvolution. Hence, the superinvolution $\tilde{\sigma}$ on the associative superalgebra $\mathrm{M}_{(m+2n)l|(m+2n)l}(\Bbbk)$ is equivalent to the periplectic superinvolution, which proves the claim.

Now, we conclude from the claim that
$$\widetilde{\mathfrak{osp}}_{m|2n}(\mathrm{M}_{l|l}(\Bbbk),\mathrm{prp})\cong\widetilde{\mathfrak{p}}_{(m+2n)l}(\Bbbk),$$
which yields the desired isomorphism.
\end{proof}

\begin{example}
\label{eg:osp_osp}
Suppose that the base ring $\Bbbk$ is an algebraically closed field of characteristic zero. Then the associative superalgebra $\mathrm{M}_{k|2l}(\Bbbk)$ for $k,l\in\mathbb{Z}_+$ with $k+l\geqslant1$ possesses the orthosymplectic superinvolution. In this situation, the generalized orthosymplectic Lie superalgebra $\mathfrak{osp}_{m|2n}(\mathrm{M}_{k|2l}(\Bbbk),\mathrm{osp})$ is isomorphic to $\mathfrak{osp}_{(mk+4nl)|2(nk+ml)}(\Bbbk)$ for $m,n\in\mathbb{Z}_+$ with $m+n\geqslant1$.
\end{example}
\begin{proof}
The proof is similar to Example~\ref{eg:osp_prp}. Let $\sigma$ be the orthosymplectic superinvolution on $\mathrm{M}_{m|2n}(\mathrm{M}_{k|2l}(\Bbbk))$ defined by (\ref{intro:eq:inv_ospR}). It induces a superinvolution $\widetilde{\sigma}$ on $\mathrm{M}_{(mk+4nl)|2(ml+nk)}(\Bbbk)$. We will show that  $\widetilde{\sigma}$ is equivalent to the orthosymplectic superinvolution on $\mathrm{M}_{(mk+4nl)|2(ml+nk)}(\Bbbk)$. It suffices to show that the even part $\mathrm{M}_{m|2n}(\mathrm{M}_{k|2l}(\Bbbk))_0$ has a nonzero proper two-sided ideal that is invariant under $\sigma$  \cite[Propositions~13 and 14]{Racine1998}.

Let $\mathcal{A}$ be the $\Bbbk$-submodule of $\mathrm{M}_{m|2n}(\mathrm{M}_{k|2l}(\Bbbk))_0$ spanned by
\begin{align*}
&e_{i,i'}\begin{pmatrix}A&0\\0&0\end{pmatrix},
e_{m+j,m+j'}\begin{pmatrix}0&0\\0&D\end{pmatrix},
e_{m+j,m+n+j'}\begin{pmatrix}0&0\\0&D\end{pmatrix},\\
&e_{m+n+j,m+j'}\begin{pmatrix}0&0\\0&D\end{pmatrix},
e_{m+n+j,m+n+j'}\begin{pmatrix}0&0\\0&D\end{pmatrix},
e_{i,m+j}\begin{pmatrix}0&B\\0&0\end{pmatrix},\\
&e_{i,m+n+j}\begin{pmatrix}0&B\\0&0\end{pmatrix},
e_{m+j,i}\begin{pmatrix}0&0\\B^t&0\end{pmatrix},
e_{m+n+j,i}\begin{pmatrix}0&0\\B^t&0\end{pmatrix},
\end{align*}
for $A\in\mathrm{M}_k(\Bbbk)$, $B\in\mathrm{M}_{k\times 2l}(\Bbbk)$, $D\in\mathrm{M}_{2l}(\Bbbk)$, $1\leqslant i,i'\leqslant m, 1\leqslant j,j'\leqslant n$. Let $\mathcal{B}$ be the $\Bbbk$-submodule of $\mathrm{M}_{m|2n}(\mathrm{M}_{k|2l}(\Bbbk))_0$ spanned by
\begin{align*}
&e_{i,i'}\begin{pmatrix}0&0\\0&D\end{pmatrix},
e_{m+j,m+j'}\begin{pmatrix}A&0\\0&0\end{pmatrix},
e_{m+j,m+n+j'}\begin{pmatrix}A&0\\0&0\end{pmatrix},\\
&e_{m+n+j,m+j'}\begin{pmatrix}A&0\\0&0\end{pmatrix},
e_{m+n+j,m+n+j'}\begin{pmatrix}A&0\\0&0\end{pmatrix},
e_{i,m+j}\begin{pmatrix}0&0\\B^t&0\end{pmatrix},\\
&e_{i,m+n+j}\begin{pmatrix}0&0\\B^t&0\end{pmatrix},
e_{m+j,i}\begin{pmatrix}0&B\\0&0\end{pmatrix},
e_{m+n+j,i}\begin{pmatrix}0&B\\0&0\end{pmatrix},
\end{align*}
for $A\in\mathrm{M}_k(\Bbbk)$, $B\in\mathrm{M}_{k\times 2l}(\Bbbk)$, $D\in\mathrm{M}_{2l}(\Bbbk)$, $1\leqslant i,i'\leqslant m, 1\leqslant j,j'\leqslant n$.

Then both $\mathcal{A}$ and $\mathcal{B}$ are two-sided ideals of $\mathrm{M}_{m|2n}(\mathrm{M}_{k|2l}(\Bbbk))_0$ that are invariant under $\sigma$. We also observe that $\mathrm{M}_{m|2n}(\mathrm{M}_{k|2l}(\Bbbk))_0=\mathcal{A}\oplus\mathcal{B}$. Furthermore, under the isomorphism between the associative superalgebras $\mathrm{M}_{m|2n}(\mathrm{M}_{k|2l}(\Bbbk))$ and $\mathrm{M}_{(mk+4nl)|2(ml+nk)}(\Bbbk)$, we have $\mathcal{A}\cong\mathrm{M}_{mk+4nl}(\Bbbk)$ on which the restriction of $\sigma$ is of the orthogonal type, and $\mathcal{B}\cong\mathrm{M}_{2(ml+nk)}(\Bbbk)$ on which the restriction of $\sigma$ is of the symplectic type. Hence, $\widetilde{\sigma}$ is equivalent to the orthosymplectic superinvolution on $\mathrm{M}_{(mk+4nl)|2(nk+ml)}(\Bbbk)$, which implies that
$$\widetilde{\mathfrak{osp}}_{m|2n}(\mathrm{M}_{k|2l}(\Bbbk),\mathrm{osp})\cong\widetilde{\mathfrak{osp}}_{(mk+4nl)|2(ml+nk)}(\Bbbk)$$
and their derived Lie sub-superalgebras are also isomorphic.
\end{proof}
\bigskip

The rest of this section will be devoted to prove the perfectness of $\mathfrak{osp}_{m|2n}(R,{}^-)$ for an arbitrary unital associative superalgebra $(R,{}^-)$ with superinvolution and to calculate the generators of $\mathfrak{osp}_{m|2n}(R,{}^-)$. We introduce the following notation:
\begin{align*}
t_{ij}(a):&=e_{ij}(a)-e_{ji}(\bar{a}),\\
u_{kl}(a):&=e_{m+k,m+l}(a)-e_{m+n+l,m+n+k}(\bar{a}),\\
v_{kl}(a):&=e_{m+k,m+n+l}(a)+e_{m+l,m+n+k}(\bar{a}),\\
w_{kl}(a):&=e_{m+n+k,m+l}(a)+e_{m+n+l,m+k}(\bar{a}),\\
f_{ik}(a):&=e_{i,m+k}(a)+e_{m+n+k,i}(\rho(\bar{a})),\\
g_{ki}(a):&=e_{m+k,i}(a)-e_{i,m+n+k}(\rho(\bar{a})),
\end{align*}
where $a\in R$, $1\leqslant i,j\leqslant m$ and $1\leqslant k,l\leqslant n$. These elements will serve as generators of $\mathfrak{osp}_{m|2n}(R,{}^-)$. We first provide an explicit description for elements of $\mathfrak{osp}_{m|2n}(R,{}^-)$.

\begin{lemma}
\label{lem:osp_ele}
Let $m,n\in\mathbb{N}$ and $(R,{}^-)$ be a unital associative superalgebra with superinvolution. Then every element $x\in\widetilde{\mathfrak{osp}}_{m|2n}(R,{}^-)$ is written as
\begin{equation}
\begin{aligned}
x=&e_{11}(a)+\sum\limits_{i=2}^m(e_{ii}(a_i)-e_{11}(a_i))
+\sum\limits_{1\leqslant i< j\leqslant m}{t}_{ij}(a_{ij})\\
&+\sum\limits_{k=1}^n({u}_{kk}(d_k)+e_{11}(\rho(d_k)-\rho(\overline{d_k})))
+\sum\limits_{1\leqslant k\neq l\leqslant n}{u}_{kl}(d_{kl})\\
&+\sum\limits_{k=1}^n(e_{m+k,m+n+k}(d'_k)+e_{m+n+k,m+k}(d''_k))
+\sum\limits_{1\leqslant k< l\leqslant n}({v}_{kl}(d'_{kl})+{w}_{kl}(d''_{kl}))\\
&+\sum\limits_{\substack{1\leqslant i\leqslant m\\ 1\leqslant k\leqslant n}}({f}_{ik}(b_{ik})+{g}_{ki}(c_{ki})),
\end{aligned}
\label{eq:osp_ele}
\end{equation}
where $a,a_i\in R_-, a_{ij},b_{ik},c_{ki},d_{kl},d'_{kl},d''_{kl},d_k\in R$, and $d'_k, d''_k\in R_+$ are uniquely determined by $x$. Moreover, $x\in\mathfrak{osp}_{m|2n}(R,{}^-)$ if and only if $a\in [R,R]\cap R_-$.
\end{lemma}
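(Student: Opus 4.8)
The plan is to establish the two assertions in turn: the expansion (\ref{eq:osp_ele}) with its uniqueness is bookkeeping against the block description of $\widetilde{\mathfrak{osp}}_{m|2n}(R,{}^-)$ recalled above, whereas the identification of the derived sub-superalgebra carries the real content. For the expansion I would begin with the general element displayed before the lemma, with blocks $A,B,C,D_{11},D_{12},D_{21}$ constrained by $\overline{A}^t=-A$, $\overline{D}_{12}^t=D_{12}$ and $\overline{D}_{21}^t=D_{21}$, and read the coefficients off as entries (with $2\le i\le m$, $i<j$, $k\ne l$, $k<l$ as appropriate): $a_i=(x)_{ii}$, $a_{ij}=(x)_{ij}$, $d_k=(x)_{m+k,m+k}$, $d_{kl}=(x)_{m+k,m+l}$, $d'_k=(x)_{m+k,m+n+k}$, $d''_k=(x)_{m+n+k,m+k}$, $d'_{kl}=(x)_{m+k,m+n+l}$, $d''_{kl}=(x)_{m+n+k,m+l}$, $b_{ik}=(x)_{i,m+k}$, $c_{ki}=(x)_{m+k,i}$, together with
\[
a=\sum_{i=1}^{m}(x)_{ii}-\sum_{j=m+1}^{m+2n}\rho\big((x)_{jj}\big).
\]
The three block relations force $a_i,a\in R_-$ and $d'_k,d''_k\in R_+$, while the remaining coefficients are unconstrained; the only nonformal point is to check that the $(1,1)$-entry produced by $e_{11}(a)$ together with the corrections $-e_{11}(a_i)$ and $e_{11}(\rho(d_k)-\rho(\overline{d_k}))$ reassembles $(x)_{11}$, which is precisely the displayed formula for $a$. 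Uniqueness is then automatic, each coefficient having been recovered as an explicit entry of $x$.

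For the forward inclusion I would read the displayed formula as a twisted supertrace: define $\mathrm{T}\colon\mathrm{M}_{m|2n}(R)\to R$ by $\mathrm{T}(x)=\sum_{i\le m}(x)_{ii}-\sum_{j>m}\rho\big((x)_{jj}\big)$, so that $\mathrm{T}$ restricts to $a$ on $\widetilde{\mathfrak{osp}}_{m|2n}(R,{}^-)$. The key property is $\mathrm{T}([X,Y])\in[R,R]$ for all $X,Y$: on matrix units $\mathrm{T}([e_{pq}(\alpha),e_{rs}(\beta)])$ vanishes unless $p=s$ and $q=r$, in which case it equals, up to an overall sign and an application of the algebra automorphism $\rho$, the super-commutator $\alpha\beta-(-1)^{|\alpha||\beta|}\beta\alpha$, and one separates the three parity patterns $p,q\le m$, $p,q>m$ and $p\le m<q$. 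Since $\mathfrak{osp}_{m|2n}(R,{}^-)=[\widetilde{\mathfrak{osp}}_{m|2n}(R,{}^-),\widetilde{\mathfrak{osp}}_{m|2n}(R,{}^-)]$, every $x\in\mathfrak{osp}_{m|2n}(R,{}^-)$ then has $a=\mathrm{T}(x)\in[R,R]$, that is $a\in[R,R]\cap R_-$.

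For the reverse inclusion I would prove that every summand of (\ref{eq:osp_ele}) except $e_{11}(a)$ lies in $\mathfrak{osp}_{m|2n}(R,{}^-)$, and that $e_{11}(c)\in\mathfrak{osp}_{m|2n}(R,{}^-)$ for $c\in[R,R]\cap R_-$; granting this, any $x$ with $a\in[R,R]\cap R_-$ is a combination of such elements. The off-diagonal generators are root vectors and so are visibly brackets, e.g. $f_{ik}(a)=[f_{ik}(a),u_{kk}(1)]$ and, for $i\ne j$, $t_{ij}(a)=[f_{ik}(a),g_{kj}(1)]$, the other families being similar via $1\in R$ and $n\ge1$. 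The diagonal corrections are produced by $e_{ii}(s)-e_{11}(s)=-\tfrac12[t_{1i}(1),t_{i1}(s)]$ for $s\in R_-$, $2\le i\le m$, and by the identity $u_{kk}(d)+e_{11}(\rho(d)-\rho(\overline{d}))=[f_{1k}(1),g_{k1}(\rho(d))]$. Finally, for $c\in[R,R]\cap R_-$ one writes $c$ as a sum of elements $c_0-\overline{c_0}$ with $c_0$ a super-commutator (using $2\in\Bbbk^\times$), notes that $[f_{1k}(\alpha),g_{k1}(\beta)]$ is diagonal with $\mathrm{T}$-value $c_0-\overline{c_0}$ for $c_0=\alpha\beta-(-1)^{|\alpha||\beta|}\beta\alpha$, and subtracts the corresponding correction $u_{kk}(\cdot)+e_{11}(\cdots)$ already known to lie in $\mathfrak{osp}_{m|2n}(R,{}^-)$ to isolate $e_{11}(c_0-\overline{c_0})$.

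The step I expect to be the main obstacle is exactly this diagonal analysis in the reverse inclusion. The bracket $[f_{1k}(\alpha),g_{k1}(\beta)]$ entangles an $e_{11}$-term with a $u_{kk}$-correction, so extracting $e_{11}\big([R,R]\cap R_-\big)$ directly is circular; the resolution is to first specialize $\alpha=1$, which annihilates the super-commutator and yields the pure correction $[f_{1k}(1),g_{k1}(\beta)]=u_{kk}(\rho(\beta))+e_{11}(\beta-\overline{\beta})$, and only afterwards to run general $\alpha,\beta$ to peel off $e_{11}$ of a super-commutator. This device is what keeps the argument uniform down to the smallest cases $m=1$ or $n=1$, where the symplectic generators $f,g,u$ must do the work that the $t_{1i}$-brackets cannot. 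On the forward side the one genuine idea is to recognize $a$ as the $\rho$-twisted supertrace $\mathrm{T}$, after which its vanishing on super-commutators is the short matrix-unit check above.
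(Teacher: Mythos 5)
Your proposal is correct and follows essentially the same route as the paper: the coefficients are read off from the block form, the forward implication uses the $\rho$-twisted supertrace (which is exactly the paper's map $\varepsilon$ in (\ref{eq:osp_tr}), verified to land in $[R,R]\cap R_-$ on brackets), and the reverse implication uses explicit bracket identities matching (\ref{eq:osp_gen01})--(\ref{eq:osp_gen10}), with $e_{11}$ of a super-commutator isolated by subtracting the $u_{kk}$-correction, just as in the paper's computation $\sum_i([f_{11}(a_i'),g_{11}(a_i'')]-(-1)^{|a_i'||a_i''|}[f_{11}(1),g_{11}(a_i''a_i')])=2e_{11}(a)$. The only differences are cosmetic choices of which generators to bracket.
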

\begin{proof}
The first statement is obvious. We now show that $x\in \mathfrak{osp}_{m|2n}(R,{}^-)$ if and only if $a\in [R,R]\cap R_-$.

Since $\frac{1}{2}\in\Bbbk$, we deduce that
\begin{align}
e_{ii}(a_i)-e_{11}(a_i)&=[{t}_{i1}(a_i),{t}_{1i}(1)],&&\text{for }i=2,\ldots,m,\label{eq:osp_gen01}\\
e_{m+k,m+n+k}(d'_k)&=-\frac{1}{2}[{g}_{k1}(d'_k),{g}_{k1}(1)],&&\text{for }k=1,\ldots,n,\label{eq:osp_gen02}\\
e_{m+n+k,m+k}(d''_k)&=\frac{1}{2}[{f}_{1k}(d''_k),{f}_{1k}(1)],&&\text{for }k=1,\ldots,n,\label{eq:osp_gen03}\\
{u}_{kk}(d_k)+e_{11}(\rho(d_k)-\rho(\overline{d_k}))&=[{g}_{k1}(1),{f}_{1k}(d_k)],&&\text{for }k=1,\ldots,n,\label{eq:osp_gen04}\\
{t}_{ij}(a_{ij})&=[{f}_{i1}(1),{g}_{1j}(a_{ij})],&&\text{for }1\leqslant i\neq j\leqslant m,\label{eq:osp_gen05}\\
{u}_{kl}(d_{kl})&=[{g}_{k1}(d_{kl}),{f}_{1l}(1)],&&\text{for }1\leqslant k\neq l\leqslant n,\label{eq:osp_gen06}\\
{v}_{kl}(d'_{kl})&=-[{g}_{k1}(d'_{kl}),{g}_{l1}(1)],&&\text{for }1\leqslant k\neq l\leqslant n,\label{eq:osp_gen07}\\
{w}_{kl}(d''_{kl})&=[{f}_{1k}(1),{f}_{1l}(d''_{kl})],&&\text{for }1\leqslant k \neq l\leqslant n,\label{eq:osp_gen08}\\
{f}_{ik}(b_{ik})&=[{f}_{ik}(b_{ik}),{u}_{kk}(1)],&&\text{for }1\leqslant i\leqslant  m,1\leqslant k\leqslant n,\label{eq:osp_gen09}\\
{g}_{ki}(c_{ki})&=[{u}_{kk}(1),{g}_{ki}(c_{ki})],&&\text{for }1\leqslant i\leqslant  m,1\leqslant k\leqslant n,\label{eq:osp_gen10}
\end{align}
where $a_i\in R_-$, $d'_k,d''_k\in R_+$ and $a_{ij}, b_{ik}, c_{ki}, d_{kl}, d'_{kl}, d''_{kl}\in R$.

Hence, all items except $e_{11}(a)$ appeared in the decomposition (\ref{eq:osp_ele}) of $x$ are contained in the derived Lie superalgebra $\mathfrak{osp}_{m|2n}(R,{}^-)$. The problem is reduced to proving that $e_{11}(a)\in\mathfrak{osp}_{m|2n}(R,{}^-)$ if and only if $a\in[R,R]\cap R_-$.

If $a\in [R,R]\cap R_-$, we write $a=\sum [a_i',a_i'']$ with $a_i', a_i''\in R$ and compute that
\begin{align*}
\sum_i([{f}_{11}(a_i'),{g}_{11}(a_i'')]-(-1)^{|a_i'||a_i''|}[{f}_{11}(1),{g}_{11}(a_i''a_i')])
=e_{11}\left(\sum_i([a_i',a_i'']-\overline{[a_i',a_i'']})\right)
=2e_{11}(a),
\end{align*}
which yields that $e_{11}(a)\in\mathfrak{osp}_{m|2n}(R,{}^-)$ since $2$ is invertible in $R$.

Conversely, we define a $\Bbbk$--linear map
\begin{equation}
\varepsilon:\widetilde{\mathfrak{osp}}_{m|2n}(R,{}^-)\rightarrow R_-,
\quad
\begin{pmatrix}
A&B&-\overline{\rho(C)}^t\\
C&D_{11}&D_{12}\\
\overline{\rho(B)}^t&D_{21}&-\overline{D}_{11}^t
\end{pmatrix}
\mapsto\mathrm{Tr}(A)-\mathrm{Tr}(\rho(D_{11})-\overline{\rho(D_{11})}).
\label{eq:osp_tr}
\end{equation}
Then it is directly verified that $\varepsilon(x)\in[R,R]\cap R_-$ for every element $x\in\mathfrak{osp}_{m|2n}(R,{}^-)$. Hence, $e_{11}(a)\in\mathfrak{osp}_{m|2n}(R,{}^-)$ implies that $\varepsilon(e_{11}(a))=a\in[R,R]\cap R_-$.  This completes the proof.
\end{proof}

Now, we may proceed to state and prove the main proposition in this section.

\begin{proposition}
\label{prop:ospalg}
Let $m,n\in\mathbb{N}$ and $(R,{}^-)$ a unital associative superalgebra with superinvolution.
\begin{enumerate}
\item There is an exact sequence of Lie superalgebras
$$0\rightarrow\mathfrak{osp}_{m|2n}(R,{}^-)\rightarrow\widetilde{\mathfrak{osp}}_{m|2n}(R,{}^-)\rightarrow \frac{R_-}{[R,R]\cap R_-}\rightarrow0.$$
\item The Lie superalgebra $\mathfrak{osp}_{m|2n}(R,{}^-)$ is the Lie sub-superalgebra of $\mathfrak{gl}_{m|2n}(R)$ generated by ${t}_{ij}(a)$, ${u}_{kl}(a)$, ${v}_{kl}(a)$, ${w}_{kl}(a)$, ${f}_{i'k'}(a)$ and ${g}_{k'i'}(a)$ for $a\in R$ , $1\leqslant i',i,j\leqslant m$ with $i\neq j$ and $1\leqslant k',k,l\leqslant n$ with $k\neq l$.
\item The Lie superalgebra $\mathfrak{osp}_{m|2n}(R,{}^-)$ is perfect, i.e.,
$$\mathfrak{osp}_{m|2n}(R,{}^-)=[\mathfrak{osp}_{m|2n}(R,{}^-),\mathfrak{osp}_{m|2n}(R,{}^-)].$$
\end{enumerate}
\end{proposition}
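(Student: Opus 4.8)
The plan is to extract all three statements from Lemma~\ref{lem:osp_ele} together with the explicit bracket identities (\ref{eq:osp_gen01})--(\ref{eq:osp_gen10}), since the substantive computation is already carried out there and what remains is essentially packaging. For (i) I would use the $\Bbbk$-linear map $\varepsilon$ of (\ref{eq:osp_tr}) composed with the canonical projection to obtain $\bar\varepsilon\colon\widetilde{\mathfrak{osp}}_{m|2n}(R,{}^-)\to R_-/([R,R]\cap R_-)$, the target being viewed as an abelian Lie superalgebra. First, $\bar\varepsilon$ is a homomorphism: as the target is abelian it suffices that $\varepsilon([x,y])\in[R,R]\cap R_-$ for all $x,y\in\widetilde{\mathfrak{osp}}_{m|2n}(R,{}^-)$, and this is precisely the inclusion $\varepsilon(\mathfrak{osp}_{m|2n}(R,{}^-))\subseteq[R,R]\cap R_-$ proved in Lemma~\ref{lem:osp_ele}, since $[x,y]\in\mathfrak{osp}_{m|2n}(R,{}^-)$. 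Next, Lemma~\ref{lem:osp_ele} identifies $\mathfrak{osp}_{m|2n}(R,{}^-)$ with the set of $x$ whose $e_{11}$-coefficient lies in $[R,R]\cap R_-$, i.e. with $\ker\bar\varepsilon$. Finally $\bar\varepsilon$ is surjective because $e_{11}(a)\in\widetilde{\mathfrak{osp}}_{m|2n}(R,{}^-)$ and $\varepsilon(e_{11}(a))=a$ for every $a\in R_-$. These three facts give the asserted exact sequence.

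For (ii) let $\mathfrak{h}$ be the Lie sub-superalgebra of $\mathfrak{gl}_{m|2n}(R)$ generated by the listed elements. The inclusion $\mathfrak{h}\subseteq\mathfrak{osp}_{m|2n}(R,{}^-)$ is immediate from (\ref{eq:osp_gen05})--(\ref{eq:osp_gen10}), where each listed generator appears as a supercommutator of two elements of $\widetilde{\mathfrak{osp}}_{m|2n}(R,{}^-)$ and hence lies in $[\widetilde{\mathfrak{osp}}_{m|2n}(R,{}^-),\widetilde{\mathfrak{osp}}_{m|2n}(R,{}^-)]=\mathfrak{osp}_{m|2n}(R,{}^-)$. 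For the reverse inclusion I would take $x\in\mathfrak{osp}_{m|2n}(R,{}^-)$, expand it according to (\ref{eq:osp_ele}) with $a\in[R,R]\cap R_-$, and check that each summand lies in $\mathfrak{h}$: the identities (\ref{eq:osp_gen01})--(\ref{eq:osp_gen10}) rewrite every diagonal and off-diagonal summand as a bracket of listed generators, and the remaining term $e_{11}(a)$ is dispatched by writing $a=\sum_i[a_i',a_i'']$ and using $2e_{11}(a)=\sum_i([f_{11}(a_i'),g_{11}(a_i'')]-(-1)^{|a_i'||a_i''|}[f_{11}(1),g_{11}(a_i''a_i')])$ together with the invertibility of $2$. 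Hence $\mathfrak{osp}_{m|2n}(R,{}^-)=\mathfrak{h}$.

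For (iii) I would show that every generator from (ii) already lies in $[\mathfrak{osp}_{m|2n}(R,{}^-),\mathfrak{osp}_{m|2n}(R,{}^-)]$. The relations (\ref{eq:osp_gen05})--(\ref{eq:osp_gen10}) present each generator as a single bracket, and the key observation is that in every case both bracket factors are themselves among the generators of (ii), hence belong to $\mathfrak{osp}_{m|2n}(R,{}^-)$; the only factor needing comment is $u_{kk}(1)$ occurring in (\ref{eq:osp_gen09})--(\ref{eq:osp_gen10}), which lies in $\mathfrak{osp}_{m|2n}(R,{}^-)$ because $\rho(1)-\rho(\bar{1})=0$ collapses (\ref{eq:osp_gen04}) to $u_{kk}(1)=[g_{k1}(1),f_{1k}(1)]$. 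Since $[\mathfrak{osp}_{m|2n}(R,{}^-),\mathfrak{osp}_{m|2n}(R,{}^-)]$ is a sub-superalgebra containing every generator, part (ii) forces $\mathfrak{osp}_{m|2n}(R,{}^-)\subseteq[\mathfrak{osp}_{m|2n}(R,{}^-),\mathfrak{osp}_{m|2n}(R,{}^-)]$, and equality follows. Granting Lemma~\ref{lem:osp_ele}, the only real subtlety in the whole argument is this last point, that the bracket factors lie in the derived algebra $\mathfrak{osp}_{m|2n}(R,{}^-)$ rather than merely in $\widetilde{\mathfrak{osp}}_{m|2n}(R,{}^-)$; the genuinely hard computation, namely that $\varepsilon$ sends supercommutators into $[R,R]$, is the content of Lemma~\ref{lem:osp_ele} itself.
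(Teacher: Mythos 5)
Your proposal is correct and follows essentially the same route as the paper: part (i) via the map $\varepsilon$ of (\ref{eq:osp_tr}) and the kernel identification from Lemma~\ref{lem:osp_ele}, and parts (ii)--(iii) by reading off the identities (\ref{eq:osp_gen01})--(\ref{eq:osp_gen10}), including the observation that $u_{kk}(1)=[g_{k1}(1),f_{1k}(1)]$ lies in $\mathfrak{osp}_{m|2n}(R,{}^-)$ so that (\ref{eq:osp_gen09})--(\ref{eq:osp_gen10}) land in the derived algebra. The only differences are presentational (an explicit surjectivity witness in (i) and a two-inclusion phrasing in (ii)).
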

\begin{proof}
(i) We consider the surjective $\Bbbk$--linear map:
$$\tilde{\varepsilon}:\widetilde{\mathfrak{osp}}_{m|2n}(R,{}^-)\xrightarrow{\varepsilon} R_-\rightarrow R_-/([R,R]\cap R_-),$$
where $\varepsilon$ is the $\Bbbk$-linear map defined in (\ref{eq:osp_tr}). It follows from Lemma~\ref{lem:osp_ele} that $\mathfrak{osp}_{m|2n}(R,{}^-)=\ker(\tilde{\varepsilon})$. Hence, we obtain an exact sequence of $\Bbbk$--modules:
$$0\rightarrow \mathfrak{osp}_{m|2n}(R,{}^-)\rightarrow\widetilde{\mathfrak{osp}}_{m|2n}(R,{}^-)\rightarrow\frac{R_-}{[R,R]\cap R_-}\rightarrow0,$$
in which all $\Bbbk$--linear maps are homomorphisms of Lie superalgebras since $R_-/([R,R]\cap R_-)$ is a super-commutative Lie superalgebra. Hence, it is an exact sequence of Lie superalgebras.
\medskip

(ii) Let $x\in\mathfrak{osp}_{m|2n}(R,{}^-)$. It follows from Lemma~\ref{lem:osp_ele} that $x$ is written as
\begin{align*}
x=&e_{11}(a)+\sum\limits_{i=2}^m(e_{ii}(a_i)-e_{11}(a_i))
+\sum\limits_{1\leqslant i< j\leqslant m}{t}_{ij}(a_{ij})\nonumber\\
&+\sum\limits_{k=1}^n({u}_{kk}(d_k)+e_{11}(\rho(d_k)-\rho(\overline{d_k})))
+\sum\limits_{1\leqslant k\neq l\leqslant n}{u}_{kl}(d_{kl})\nonumber\\
&+\sum\limits_{k=1}^n(e_{m+k,m+n+k}(d'_k)+e_{m+n+k,m+k}(d''_k))
+\sum\limits_{1\leqslant k< l\leqslant n}({v}_{kl}(d'_{kl})+{w}_{kl}(d''_{kl}))\nonumber\\
&+\sum\limits_{\substack{1\leqslant i\leqslant m\\ 1\leqslant k\leqslant n}}({f}_{ik}(b_{ik})+{g}_{ki}(c_{ki})),
\end{align*}
where $a\in[R,R]\cap R_-,a_i\in R_-, a_{ij},b_{ik},c_{ki},d_{kl},d'_{kl},d''_{kl},d_k\in R$, and $d'_k, d''_k\in R_+$ are uniquely determined by $x$.

Thus, we deduce from (\ref{eq:osp_gen01})-(\ref{eq:osp_gen04}) that $x-e_{11}(a)$ is generated by
${t}_{ij}(a)$, ${u}_{kl}(a)$, ${v}_{kl}(a)$, ${w}_{kl}(a)$, ${f}_{ik}(a)$, ${g}_{ki}(a)$ with $a\in R$, $1\leqslant i\neq j\leqslant m$ and $1\leqslant k\neq l\leqslant n$. Moreover, for $a\in [R,R]\cap R_-$, we write $a=\sum [a_i',a_i'']$ and deduce that
$$e_{11}(a)=\frac{1}{2}\sum_i([{f}_{11}(a_i'),{g}_{11}(a_i'')]-(-1)^{|a_i'||a_i''|}[{f}_{11}(1),{g}_{11}(a_i''a_i')])$$
This proves (ii).
\medskip

(iii) By (ii), the Lie superalgebra $\mathfrak{osp}_{m|2n}(R,{}^-)$ is the Lie sub-superalgebra of $\mathfrak{gl}_{m|2n}(R)$ generated by ${t}_{ij}(a)$, ${u}_{kl}(a)$, ${v}_{kl}(a)$, ${w}_{kl}(a)$, ${f}_{i'k'}(a)$, ${g}_{k'i'}(a)$ for $a\in R$, $1\leqslant i',i, j\leqslant m$ with $i\neq j$ and $1\leqslant k',k,l\leqslant n$ with $k\neq l$. Hence, it suffices to show that these generators are contained in the derived superalgebra $[\mathfrak{osp}_{m|2n}(R,{}^-),\mathfrak{osp}_{m|2n}(R,{}^-)]$.

We have shown in (\ref{eq:osp_gen05})-(\ref{eq:osp_gen08}) that ${t}_{ij}(a)$, ${u}_{kl}(a)$, ${v}_{kl}(a)$, and ${w}_{kl}(a)$ are contained in the derived Lie superalgebra $[\mathfrak{osp}_{m|2n}(R,{}^-),\mathfrak{osp}_{m|2n}(R,{}^-)]$ for $a\in R$, $1\leqslant i\neq j\leqslant m$ and $1\leqslant k\neq l\leqslant n$. We also observe that ${u}_{kk}(1)=[{g}_{k1}(1),{f}_{1k}(1)]\in\mathfrak{osp}_{m|2n}(R,{}^-)$ for $k=1,\ldots,n$, and hence
\begin{align*}
{f}_{ik}(a)=[{f}_{ik}(a),{u}_{kk}(1)]\in[\mathfrak{osp}_{m|2n}(R,{}^-),\mathfrak{osp}_{m|2n}(R,{}^-)],\\
{g}_{ki}(a)=[{u}_{kk}(1),{g}_{ki}(a)]\in[\mathfrak{osp}_{m|2n}(R,{}^-),\mathfrak{osp}_{m|2n}(R,{}^-)],
\end{align*}
for $a\in R$, $1\leqslant i\leqslant m$ and $1\leqslant k\leqslant n$. This proves (iii).
\end{proof}

\section{Steinberg orthosymplectic Lie superalgebras}
\label{sec:sto}

We will introduce Steinberg orthosymplectic Lie superalgebras coordinatized by a unital associative superalgebra $(R,{}^-)$ with superinvolution in this section. It will play crucial role in the study of central extensions of the Lie superalgebra $\mathfrak{osp}_{m|2n}(R,{}^-)$.
\begin{definition}
\label{def:sto}
Let $m,n\in\mathbb{N}$ and $(R,{}^-)$ be a unital associative superalgebra over $\Bbbk$ with superinvolution.
{\it The Steinberg orthosymplectic Lie superalgebra coordinatized by $(R,{}^-)$}, denoted by $\mathfrak{sto}_{m|2n}(R,{}^-)$, is the abstract Lie superalgebra generated by homogenous elements $\mathbf{t}_{ij}(a)$, $\mathbf{u}_{kl}(a)$, $\mathbf{v}_{kl}(a)$, $\mathbf{w}_{kl}(a)$ of degree $|a|$ and homogeneous elements $\mathbf{f}_{i'k'}(a)$, $\mathbf{g}_{k'i'}(a)$ of degree $1+|a|$ for homogeneous $a\in R$, $1\leqslant i', i, j\leqslant m$ with $i\neq j$ and $1\leqslant k', k, l\leqslant n$ with $k\neq l$. The defining relations for $\mathfrak{sto}_{m|2n}(R,{}^-)$ are given as follows:
\begin{align}
&\mathbf{t}_{ij},\mathbf{u}_{kl},\mathbf{v}_{kl},\mathbf{w}_{kl},\mathbf{f}_{i'k'},\mathbf{g}_{k'i'}\text{ are all }\Bbbk\text{-linear},
&&\tag{STO00}\label{STO00}\\
&\mathbf{t}_{ij}(a)=-\mathbf{t}_{ji}(\bar{a}),
&&\text{for }i\neq j,\tag{STO01}\label{STO01}\\
&\mathbf{v}_{kl}(a)=\mathbf{v}_{lk}(\bar{a}),\text{ and }
\mathbf{w}_{kl}(a)=\mathbf{w}_{lk}(\bar{a}),
&&\text{for }k\neq l,\tag{STO02}\label{STO02}\\
&[\mathbf{t}_{ii'}(a),\mathbf{t}_{i'j}(b)]=\mathbf{t}_{ij}(ab),
&&\text{for distinct }i,i',j,\tag{STO03}\label{STO03}\\
&[\mathbf{t}_{ij}(a),\mathbf{t}_{i'j'}(b)]=0,
&&\text{for distinct }i,j,i',j',\tag{STO04}\label{STO04}\\
&[\mathbf{t}_{ij}(a),\mathbf{u}_{kl}(b)]
=[\mathbf{t}_{ij}(a),\mathbf{v}_{kl}(b)]
=[\mathbf{t}_{ij}(a),\mathbf{w}_{kl}(b)]=0,
&&\text{for }i\neq j\text{ and }k\neq l,\tag{STO05}\label{STO05}\\
&[\mathbf{u}_{kk'}(a),\mathbf{u}_{k'l}(b)]=\mathbf{u}_{kl}(ab),
&&\text{for distinct }k, k', l,\tag{STO06}\label{STO06}\\
&[\mathbf{u}_{kl}(a),\mathbf{u}_{k'l'}(b)]=0,
&&\text{for }k\ne l\ne k'\ne l'\ne k,\tag{STO07}\label{STO07}\\
&[\mathbf{u}_{kk'}(a),\mathbf{v}_{k'l}(b)]=\mathbf{v}_{kl}(ab),
&&\text{for distinct }k,k',l,\tag{STO08}\label{STO08}\\
&[\mathbf{u}_{kl}(a),\mathbf{v}_{k'l'}(b)]=0,
&&\text{for }k\ne l\ne k'\ne l'\ne l,\tag{STO09}\label{STO09}\\
&[\mathbf{w}_{lk'}(a),\mathbf{u}_{k'k}(b)]=\mathbf{w}_{lk}(ab),
&&\text{for distinct }k, k', l,\tag{STO10}\label{STO10}\\
&[\mathbf{w}_{l'k'}(a),\mathbf{u}_{lk}(b)]=0,
&&\text{for }k\neq l\ne k'\ne l'\ne l,\tag{STO11}\label{STO11}\\
&[\mathbf{v}_{kl}(a),\mathbf{v}_{k'l'}(b)]=
[\mathbf{w}_{kl}(a),\mathbf{w}_{k'l'}(b)]=0,
&&\text{for }k\ne l\text{ and } k'\ne l',\tag{STO12}\label{STO12}\\
&[\mathbf{v}_{kk'}(a),\mathbf{w}_{k'l}(b)]=\mathbf{u}_{kl}(ab),
&&\text{for distinct }k, k', l,\tag{STO13}\label{STO13}\\
&[\mathbf{v}_{kl}(a),\mathbf{w}_{k'l'}(b)]=0,
&&\text{for distinct }k, l, k', l',\tag{STO14}\label{STO14}\\
&[\mathbf{t}_{ij}(a),\mathbf{f}_{i'k}(b)]=\delta_{i'j}\mathbf{f}_{ik}(ab)-\delta_{i'i}\mathbf{f}_{jk}(\bar{a}b),
&&\text{for }i\neq j,\tag{STO15}\label{STO15}\\
&[\mathbf{g}_{ki'}(a),\mathbf{t}_{ij}(b)]=\delta_{i'i}\mathbf{g}_{kj}(ab)-\delta_{i'j}\mathbf{g}_{ki}(a\bar{b}),
&&\text{for } i\ne j,\tag{STO16}\label{STO16}\\
&[\mathbf{f}_{ik'}(a),\mathbf{u}_{kl}(b)]=\delta_{kk'}\mathbf{f}_{il}(ab),
&&\text{for }k\ne l,\tag{STO17}\label{STO17}\\
&[\mathbf{u}_{lk}(a),\mathbf{g}_{k'i}(b)]=\delta_{k'k}\mathbf{g}_{li}(ab),
&&\text{for }k\ne l,\tag{STO18}\label{STO18}\\
&[\mathbf{v}_{kl}(a),\mathbf{f}_{il'}(b)]
=(-1)^{|b|}(\delta_{l'l}\mathbf{g}_{ki}(a\bar{b})+\delta_{l'k}\mathbf{g}_{li}(\bar{a}\bar{b})),
&&\text{for }k\ne l,\tag{STO19}\label{STO19}\\
&[\mathbf{g}_{k'i}(a),\mathbf{v}_{kl}(b)]=0,
&&\text{for }k\ne l,\tag{STO20}\label{STO20}\\
&[\mathbf{w}_{kl}(a),\mathbf{f}_{ik'}(b)]=0,
&&\text{for } k\ne l,\tag{STO21}\label{STO21}\\
&[\mathbf{g}_{k'i}(a),\mathbf{w}_{kl}(b)]
=-(-1)^{|a|}(\delta_{k'k}\mathbf{f}_{il}(\bar{a}b)+\delta_{k'l}\mathbf{f}_{ik}(\bar{a}\bar{b})),
&&\text{for }k\ne l,\tag{STO22}\label{STO22}\\
&[\mathbf{f}_{ik}(a),\mathbf{f}_{il}(b)]=(-1)^{|a|}\mathbf{w}_{kl}(\bar{a}b),
&&\text{for }k\ne l,\tag{STO23}\label{STO23}\\
&[\mathbf{f}_{ik}(a),\mathbf{f}_{jl}(b)]=0,
&&\text{for }i\ne j,\tag{STO24}\label{STO24}\\
&[\mathbf{g}_{ki}(a),\mathbf{g}_{li}(b)]=-(-1)^{|b|}\mathbf{v}_{kl}(a\bar{b}),
&&\text{for }k\ne l,\tag{STO25}\label{STO25}\\
&[\mathbf{g}_{ki}(a),\mathbf{g}_{lj}(b)]=0,
&&\text{for }i\ne j,\tag{STO26}\label{STO26}\\
&[\mathbf{f}_{ik}(a),\mathbf{g}_{lj}(b)]=\delta_{kl}\mathbf{t}_{ij}(ab),
&&\text{for }i\ne j,\tag{STO27}\label{STO27}\\
&[\mathbf{g}_{ki}(a),\mathbf{f}_{il}(b)]=\mathbf{u}_{kl}(ab),
&&\text{for }k\ne l,\tag{STO28}\label{STO28}
\end{align}
where $a,b\in R$ are homogeneous, $1\leqslant i,j,i',j'\leqslant m$ and $1\leqslant k,l,k',l'\leqslant n$.
\end{definition}

Recall from Proposition~\ref{prop:ospalg} that the Lie superalgebra $\mathfrak{osp}_{m|2n}(R,{}^-)$ has a family of generators consisting of $t_{ij}(a)$, $u_{kl}(a)$, $v_{kl}(a)$, $w_{kl}(a)$, $f_{i'k'}(a)$ and $g_{k'i'}(a)$, where $a\in R$ is homogeneous, $1\leqslant i',i,j\leqslant m$ with $i\neq j$ and $1\leqslant k',k,l\leqslant n$ with $k\neq l$. These generators essentially satisfy all the relations (\ref{STO00})-(\ref{STO28}). Hence, there is a canonical epimorphism of Lie superalgebras
\begin{equation}
\psi:\mathfrak{sto}_{m|2n}(R,{}^-)\rightarrow\mathfrak{osp}_{m|2n}(R,{}^-)\label{eq:ce}
\end{equation}
such that
\begin{align*}
&\psi(\mathbf{t}_{ij}(a))=t_{ij}(a),
&&\psi(\mathbf{u}_{kl}(a))=u_{kl}(a),
&&\psi(\mathbf{v}_{kl}(a))=v_{kl}(a),\\
&\psi(\mathbf{w}_{kl}(a))=w_{kl}(a),
&&\psi(\mathbf{f}_{i'k'}(a))=f_{i'k'}(a),
&&\psi(\mathbf{g}_{k'i'}(a))=g_{k'i'}(a),
\end{align*}
for $a\in R$, $1\leqslant i',i,j\leqslant m$ with $i\neq j$ and $1\leqslant k', k, l\leqslant n$ with $k\neq l$.

Our primary aim is to show that (\ref{eq:ce}) is a central extension if $(m,n)\neq(1,1)$ and to further describe the universal central extension of $\mathfrak{osp}_{m|2n}(R,{}^-)$ for $m,n\in\mathbb{N}$. In this section, we first prove the perfectness of $\mathfrak{sto}_{m|2n}(R,{}^-)$ and give a decomposition of $\mathfrak{sto}_{m|2n}(R,{}^-)$.

\begin{proposition}
\label{prop:sto_pft}
The Lie superalgebra $\mathfrak{sto}_{m|2n}(R,{}^-)$ is perfect for $(m,n)\neq(1,1)$.
\end{proposition}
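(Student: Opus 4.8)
The plan is to exploit the standard perfectness criterion: since $\mathfrak{sto}_{m|2n}(R,{}^-)$ is by definition generated by the elements $\mathbf{t}_{ij}(a)$, $\mathbf{u}_{kl}(a)$, $\mathbf{v}_{kl}(a)$, $\mathbf{w}_{kl}(a)$, $\mathbf{f}_{i'k'}(a)$, $\mathbf{g}_{k'i'}(a)$, it is perfect precisely when each of these generators already lies in the derived subalgebra $\mathcal{D}:=[\mathfrak{sto}_{m|2n}(R,{}^-),\mathfrak{sto}_{m|2n}(R,{}^-)]$. Throughout I shall use that $m,n\geqslant1$ and that $R$ is unital with $\bar 1=1$ and $|1|=0$, so that inserting the unit into a defining relation trivializes all the sign factors. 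Note also that the index constraints $i\neq j$ (resp. $k\neq l$) mean the $\mathbf{t}$-generators are present only when $m\geqslant2$ and the $\mathbf{u},\mathbf{v},\mathbf{w}$-generators only when $n\geqslant2$, whereas $\mathbf{f}_{ik}(a),\mathbf{g}_{ki}(a)$ are always present; only the generators actually present for the given $(m,n)$ need be treated.

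The crux is to place the mixed generators $\mathbf{f}_{ik}(a)$ and $\mathbf{g}_{ki}(a)$ into $\mathcal{D}$, and this is exactly where the hypothesis $(m,n)\neq(1,1)$ enters. Since $m,n\geqslant1$, the condition $(m,n)\neq(1,1)$ forces $m\geqslant2$ or $n\geqslant2$, and each case is handled by a single bracket. If $m\geqslant2$, I pick $j\neq i$ and use \eqref{STO15} and \eqref{STO16} with a unit entry to write $\mathbf{f}_{ik}(a)=[\mathbf{t}_{ij}(a),\mathbf{f}_{jk}(1)]$ and $\mathbf{g}_{ki}(a)=[\mathbf{g}_{kj}(a),\mathbf{t}_{ji}(1)]$, where the Kronecker deltas isolate a single surviving term because $i\neq j$. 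If instead $n\geqslant2$, I pick $k_0\neq k$ and use \eqref{STO17} and \eqref{STO18} to write $\mathbf{f}_{ik}(a)=[\mathbf{f}_{ik_0}(a),\mathbf{u}_{k_0k}(1)]$ and $\mathbf{g}_{ki}(a)=[\mathbf{u}_{kk_0}(1),\mathbf{g}_{k_0i}(a)]$. In either case every $\mathbf{f}_{ik}(a)$ and $\mathbf{g}_{ki}(a)$ lies in $\mathcal{D}$.

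With the mixed generators inside $\mathcal{D}$, the remaining generators follow at once from the relations expressing them as brackets of $\mathbf{f}$'s and $\mathbf{g}$'s, again inserting the unit: \eqref{STO27} gives $\mathbf{t}_{ij}(a)=[\mathbf{f}_{ik}(a),\mathbf{g}_{kj}(1)]$ for $i\neq j$, \eqref{STO28} gives $\mathbf{u}_{kl}(a)=[\mathbf{g}_{ki}(a),\mathbf{f}_{il}(1)]$ for $k\neq l$, \eqref{STO25} gives $\mathbf{v}_{kl}(a)=-[\mathbf{g}_{ki}(a),\mathbf{g}_{li}(1)]$ for $k\neq l$, and \eqref{STO23} gives $\mathbf{w}_{kl}(a)=[\mathbf{f}_{ik}(1),\mathbf{f}_{il}(a)]$ for $k\neq l$. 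Since $m,n\geqslant1$, the auxiliary index required in each of these brackets is available, so every generator present for the given $(m,n)$ lies in $\mathcal{D}$, which proves the perfectness.

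I expect no genuine difficulty beyond this bookkeeping; the one point that must be handled with care is the case split for $\mathbf{f}$ and $\mathbf{g}$, as these are the only generators whose realization as a bracket requires an auxiliary index of the same type (a third orthogonal index when $m\geqslant2$, or a second symplectic index when $n\geqslant2$). It is precisely the unavailability of such an index for $(m,n)=(1,1)$ — where the only generators are $\mathbf{f}_{11},\mathbf{g}_{11}$ and none of \eqref{STO15}--\eqref{STO18} applies — that makes that case genuinely non-perfect and hence excluded from the statement.
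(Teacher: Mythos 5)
Your proposal is correct and follows essentially the same route as the paper: it places $\mathbf{f}_{ik}(a)$ and $\mathbf{g}_{ki}(a)$ in the derived subalgebra via (\ref{STO15})--(\ref{STO16}) when $m\geqslant2$ or (\ref{STO17})--(\ref{STO18}) when $n\geqslant2$, and the remaining generators via (\ref{STO23}), (\ref{STO25}), (\ref{STO27}), (\ref{STO28}). The explicit unit-substituted brackets you give all check against the defining relations, so the argument is sound.
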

\begin{proof}
The Lie superalgebra $\mathfrak{sto}_{m|2n}(R,{}^-)$ is generated by $\mathbf{t}_{ij}(a)$, $\mathbf{u}_{kl}(a)$, $\mathbf{v}_{kl}(a)$, $\mathbf{w}_{kl}(a)$, $\mathbf{f}_{i'k'}(a)$ and $\mathbf{g}_{k'i'}(a)$, where $a\in R$, $1\leqslant i',i,j\leqslant m$ with $i\neq j$ and $1\leqslant k',k,l\leqslant n$ with $k\neq l$. By (\ref{STO27}), (\ref{STO28}), (\ref{STO25}), (\ref{STO23}), the generators $\mathbf{t}_{ij}(a)$, $\mathbf{u}_{kl}(a)$, $\mathbf{v}_{kl}(a)$ and $\mathbf{w}_{kl}(a)$ are contained in the derived Lie superalgebra $[\mathfrak{sto}_{m|2n}(R,{}^-),\mathfrak{sto}_{m|2n}(R,{}^-)]$.

Since $(m,n)\neq(1,1)$, we have $m\geqslant2$ or $n\geqslant2$. Then (\ref{STO15}) and (\ref{STO16}) imply that $\mathbf{f}_{i'k'}(a)$ and $\mathbf{g}_{k'i'}(a)$ are contained in the derived Lie superalgebra $[\mathfrak{sto}_{m|2n}(R,{}^-),\mathfrak{sto}_{m|2n}(R,{}^-)]$ if $m\geqslant2$, while (\ref{STO17}) and (\ref{STO18}) yield that $\mathbf{f}_{i'k'}(a)$ and $\mathbf{g}_{k'i'}(a)$ are contained in the derived Lie superalgebra $[\mathfrak{sto}_{m|2n}(R,{}^-),\mathfrak{sto}_{m|2n}(R,{}^-)]$ if $n\geqslant2$. Hence, the Lie superalgebra $\mathfrak{sto}_{m|2n}(R,{}^-)$ is perfect.
\end{proof}
\medskip

The generalized orthosymplectic Lie superalgebra $\mathfrak{osp}_{m|2n}(R,{}^-)$ is a Lie sub-superalgebra of $\mathfrak{gl}_{m|2n}(R)$. Hence, it is decomposed as a direct sum of $\Bbbk$-modules:
$$\mathfrak{osp}_{m|2n}(R,{}^-)=\mathfrak{osp}_{m|2n}^0(R,{}^-)\oplus\mathfrak{osp}_{m|2n}^1(R,{}^-),$$
where $\mathfrak{osp}_{m|2n}^0(R,{}^-)$ consists of all diagonal matrices in $\mathfrak{osp}_{m|2n}(R,{}^-)$ and $\mathfrak{osp}_{m|2n}^1(R,{}^-)$ consists of all matrices in $\mathfrak{osp}_{m|2n}(R,{}^-)$ with $0$ diagonal. Then $\mathfrak{osp}_{m|2n}^0(R,{}^-)$ is a Lie sub-supealgebra of $\mathfrak{osp}_{m|2n}(R,{}^-)$. Although $\mathfrak{osp}_{m|2n}^1(R,{}^-)$ is not a Lie sub-superalgebra of $\mathfrak{osp}_{m|2n}(R,{}^-)$, we have
$$[\mathfrak{osp}_{m|2n}^0(R,{}^-),\mathfrak{osp}_{m|2n}^1(R,{}^-)]\subseteq\mathfrak{osp}_{m|2n}^1(R,{}^-).$$
In order to figure out a similar decomposition for $\mathfrak{sto}_{m|2n}(R,{}^-)$, we denote the following elements of $\mathfrak{sto}_{m|2n}(R,{}^-)$ for $m,n\in\mathbb{N}$:
\begin{align*}
\mathbf{h}_{ik}(a,b):&=[\mathbf{f}_{ik}(a),\mathbf{g}_{ki}(b)],\\
\mathbf{v}_{k}(a):&=-[\mathbf{g}_{k1}(a),\mathbf{g}_{k1}(1)],\\
\mathbf{w}_{k}(a):&=[\mathbf{f}_{1k}(1),\mathbf{f}_{1k}(a)],
\end{align*}
where $a,b\in R$, $1\leqslant i\leqslant m$ and $1\leqslant k\leqslant n$. Using the relations (\ref{STO01})-(\ref{STO28}), we may deduce the following Lemmas~\ref{lem:sto_kp},~\ref{lem:sto_lmd34} and~\ref{lem:sto_lmd_kp} through direct computation.

\begin{lemma}
\label{lem:sto_kp}
The following equalities hold in the Lie superalgebra $\mathfrak{sto}_{m|2n}(R,{}^-)$ for $m,n\in\mathbb{N}$:
\begin{align*}
[\mathbf{h}_{ik}(a,b),\mathbf{t}_{ij}(c)]
&=\mathbf{t}_{ij}(abc-\overline{ab}c),
&&\text{for }i\neq j,\\
[\mathbf{h}_{ik}(a,b),\mathbf{t}_{jj'}(c)]&=0,
&&\text{for distinct }i,j,j',\\
[\mathbf{h}_{ik}(a,b),\mathbf{u}_{kl}(c)]
&=-(-1)^{(|a|+1)(|b|+1)}\mathbf{u}_{kl}(bac),
&&\text{for }k\neq l,\\
[\mathbf{h}_{ik}(a,b),\mathbf{u}_{lk}(c)]
&=-(-1)^{(|a|+1)(|b|+1)+|b||c|+|c||a|}\mathbf{u}_{lk}(cba),
&&\text{for }k\neq l,\\
[\mathbf{h}_{ik}(a,b),\mathbf{u}_{ll'}(c)]&=0,
&&\text{for distinct }k,l,l',\\
[\mathbf{h}_{ik}(a,b),\mathbf{v}_{kl}(c)]&=-(-1)^{(|a|+1)(|b|+1)}\mathbf{v}_{ll'}(bac),
&&\text{for }k\neq l,\\
[\mathbf{h}_{ik}(a,b),\mathbf{v}_{ll'}(c)]&=0,
&&\text{for distinct }k,l,l',\\
[\mathbf{h}_{ik}(a,b),\mathbf{w}_{kl}(c)]
&=-(-1)^{(|a|+1)(|b|+1)}\mathbf{w}_{kl}(\overline{ba}c),
&&\text{for }k\neq l,\\
[\mathbf{h}_{ik}(a,b),\mathbf{w}_{ll'}(c)]&=0,
&&\text{for distinct }k,l,l',\\
[\mathbf{h}_{ik}(a,b),\mathbf{f}_{il}(c)]
&=\mathbf{f}_{il}(abc-\overline{ab}c),
&&\text{for }k\neq l,\\
[\mathbf{h}_{ik}(a,b),\mathbf{f}_{jk}(c)]
&=-(-1)^{|a||b|+|b||c|+|c||a|}\mathbf{f}_{jk}(cba),
&&\text{for }i\neq j,\\
[\mathbf{h}_{ik}(a,b),\mathbf{f}_{jl}(c)]&=0,
&&\text{for }i\neq j\text{ and }k\neq l,\\
[\mathbf{g}_{li}(a),\mathbf{h}_{ik}(b,c)]
&=\mathbf{g}_{li}(abc-a\overline{bc}),
&&\text{for }k\neq l,\\
[\mathbf{g}_{kj}(a),\mathbf{h}_{ik}(b,c)]
&=-(-1)^{|a||b|+|b||c|+|c||a|}\mathbf{g}_{kj}(cba),
&&\text{for }i\neq j,\\
[\mathbf{g}_{lj}(a),\mathbf{h}_{ik}(b,c)]&=0,
&&\text{for }i\neq j\text{ and }k\neq l,
\end{align*}
where $a,b,c\in R$ are homogeneous, $1\leqslant i,j,j'\leqslant m$ and $1\leqslant k,l,l'\leqslant n$.

If in addition $(m,n)\neq(1,1)$, we also have
\begin{align*}
[\mathbf{h}_{ik}(a,b),\mathbf{f}_{ik}(c)]&=\mathbf{f}_{ik}(abc-\overline{ab}c-(-1)^{|a||b|+|b||c|+|c||a|}cba),\\
[\mathbf{g}_{ki}(c),\mathbf{h}_{ik}(a,b)]&=\mathbf{g}_{ki}(cab-c\overline{ab}-(-1)^{|a||b|+|b||c|+|c||a|}bac),
\end{align*}
for homogeneous $a,b,c\in R$, $1\leqslant i\leqslant m$ and $1\leqslant k\leqslant n$.\qed
\end{lemma}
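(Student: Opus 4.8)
The plan is to verify each identity by expanding the Cartan-type element $\mathbf{h}_{ik}(a,b)=[\mathbf{f}_{ik}(a),\mathbf{g}_{ki}(b)]$ through the super Jacobi identity and then evaluating the resulting inner and outer brackets by the defining relations (\ref{STO01})--(\ref{STO28}). Concretely, for any generator $X$ I would write
\[
[\mathbf{h}_{ik}(a,b),X]=[\mathbf{f}_{ik}(a),[\mathbf{g}_{ki}(b),X]]-(-1)^{(1+|a|)(1+|b|)}[\mathbf{g}_{ki}(b),[\mathbf{f}_{ik}(a),X]],
\]
the sign being the Koszul sign for interchanging $\mathbf{f}_{ik}(a)$ (of degree $1+|a|$) and $\mathbf{g}_{ki}(b)$ (of degree $1+|b|$). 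Because of the index constraints $i\neq j$, $k\neq l$ built into (\ref{STO15})--(\ref{STO28}), each inner bracket $[\mathbf{g}_{ki}(b),X]$ and $[\mathbf{f}_{ik}(a),X]$ collapses to a \emph{single} generator (or to zero), and the remaining outer bracket is evaluated the same way; adding the two contributions and simplifying the products in $R$ produces the stated right-hand side.

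For the fifteen identities valid for all $m,n\in\mathbb{N}$, the target $X$ always carries at least one index distinct from $i$ or $k$, so both inner brackets land squarely in the regime where (\ref{STO15})--(\ref{STO28}) apply without index clashes. I would organize the verification by the type of $X$: the $\mathbf{t}$-cases invoke (\ref{STO15})--(\ref{STO16}) together with (\ref{STO27}); the $\mathbf{u},\mathbf{v},\mathbf{w}$-cases invoke (\ref{STO17})--(\ref{STO22}) together with (\ref{STO28}),(\ref{STO25}),(\ref{STO23}); and the $\mathbf{f},\mathbf{g}$-cases invoke (\ref{STO23})--(\ref{STO28}). The vanishing identities (for instance $[\mathbf{h}_{ik}(a,b),\mathbf{t}_{jj'}(c)]=0$ for distinct $i,j,j'$) fall out because both inner brackets are killed by the Kronecker deltas. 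The only error-prone feature here is the sign bookkeeping, since each reassociation drags a Koszul factor; I would control this by recording $|\mathbf{f}_{ik}(a)|=1+|a|$ and $|\mathbf{g}_{ki}(b)|=1+|b|$ at every step.

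The two remaining identities, valid only for $(m,n)\neq(1,1)$, are the diagonal cases $[\mathbf{h}_{ik}(a,b),\mathbf{f}_{ik}(c)]$ and $[\mathbf{g}_{ki}(c),\mathbf{h}_{ik}(a,b)]$, where both indices of the target coincide with those of $\mathbf{h}_{ik}$. Here the naive expansion stalls, since $[\mathbf{g}_{ki}(b),\mathbf{f}_{ik}(c)]$ is \emph{not} covered by (\ref{STO28}), which requires $k\neq l$. To circumvent this I would use the hypothesis $(m,n)\neq(1,1)$ to manufacture an auxiliary index: if $n\geqslant2$, choose $l\neq k$ and write $\mathbf{f}_{ik}(c)=[\mathbf{f}_{il}(c),\mathbf{u}_{lk}(1)]$ via (\ref{STO17}); if $m\geqslant2$, choose $j\neq i$ and resolve $\mathbf{f}_{ik}(c)$ through (\ref{STO15}) instead. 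Substituting such a resolution and applying the super Jacobi identity once more reduces the diagonal bracket to the already-established off-diagonal identities $[\mathbf{h}_{ik}(a,b),\mathbf{f}_{il}(c)]$ and $[\mathbf{h}_{ik}(a,b),\mathbf{u}_{lk}(c)]$; collecting terms (and carefully gathering the accumulated Koszul sign into the single factor $(-1)^{|a||b|+|b||c|+|c||a|}$) yields the asymmetric expression $abc-\overline{ab}c-(-1)^{|a||b|+|b||c|+|c||a|}cba$. The $\mathbf{g}$-identity is handled by the transposed manipulation through (\ref{STO16}) or (\ref{STO18}).

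I expect the main obstacle to be precisely these diagonal cases, which force the hypothesis $(m,n)\neq(1,1)$ so that an auxiliary index exists at all. When both $m\geqslant2$ and $n\geqslant2$ the element $\mathbf{f}_{ik}(c)$ admits two inequivalent resolutions, and the two derivations must return the same element; since both compute the one fixed bracket, their agreement is automatic, but checking it is a valuable sanity test on the sign arithmetic. Beyond this, the recurring difficulty throughout is the disciplined tracking of Koszul signs, and I would guard against slips by first confirming that both sides of each identity carry the same $\mathbb{Z}/2\mathbb{Z}$-degree.
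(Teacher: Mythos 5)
Your plan is exactly the paper's (unwritten) argument: the paper simply asserts that Lemma~\ref{lem:sto_kp} follows from (\ref{STO01})--(\ref{STO28}) ``through direct computation,'' and your expansion of $\mathbf{h}_{ik}(a,b)=[\mathbf{f}_{ik}(a),\mathbf{g}_{ki}(b)]$ via the super Jacobi identity, plus the auxiliary-index resolution $\mathbf{f}_{ik}(c)=[\mathbf{f}_{il}(c),\mathbf{u}_{lk}(1)]$ (or its $m\geqslant 2$ analogue) for the diagonal cases, is precisely that computation and correctly isolates the only step where $(m,n)\neq(1,1)$ is needed. The approach is sound and essentially identical to the paper's.
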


\begin{lemma}
\label{lem:sto_lmd34}
In the Lie superalgebra $\mathfrak{sto}_{m|2n}(R,{}^-)$ with $(m,n)\neq(1,1)$,  we have
\begin{equation*}
\mathbf{v}_k(a)=\mathbf{v}_k(\bar{a}),\text{ and }
\mathbf{w}_k(a)=\mathbf{w}_k(\bar{a}),\\
\end{equation*}
for $a\in R$ and $1\leqslant k\leqslant n$. Moreover, the following equalities hold:
\begin{align*}
[\mathbf{g}_{ki}(a),\mathbf{g}_{ki}(b)]&=-(-1)^{|b|}\mathbf{v}_{k}(a\bar{b}),
&[\mathbf{f}_{ik}(a),\mathbf{f}_{ik}(b)]&=(-1)^{|a|}\mathbf{w}_{k}(\bar{a}b),\\
[\mathbf{v}_{k}(a),\mathbf{t}_{ij}(b)]&=0,
&[\mathbf{w}_{k}(a),\mathbf{t}_{ij}(b)]&=0,\\
[\mathbf{u}_{kl}(a),\mathbf{v}_{lk}(b)]&=\mathbf{v}_{k}(ab),
&[\mathbf{w}_{kl}(a),\mathbf{u}_{lk}(b)]&=\mathbf{w}_{k}(ab),\\
[\mathbf{v}_{k}(a),\mathbf{w}_{ll'}(b)]&=\delta_{kl'}\mathbf{u}_{kl}(a_{+}\bar{b}),
&[\mathbf{v}_{kl}(a),\mathbf{w}_{l'}(b)]&=\delta_{kl'}\mathbf{u}_{kl}(\bar{a}b_+),\\
[\mathbf{v}_{k}(a),\mathbf{f}_{il}(b)]&=(-1)^{|b|}\delta_{kl}\mathbf{g}_{ki}(a_{+}\bar{b}),
&[\mathbf{v}_{k}(a),\mathbf{g}_{li}(b)]&=0,\\
[\mathbf{w}_{k}(a),\mathbf{f}_{il}(b)]&=0,
&[\mathbf{g}_{li}(a),\mathbf{w}_{k}(b)]&=-(-1)^{|a|}\delta_{kl}\mathbf{f}_{ik}(\bar{a}b_+),
\end{align*}
where $a,b\in R$ are homogeneous, $1\leqslant i\leqslant m$ and $1\leqslant k,l,l'\leqslant n$.\qed
\end{lemma}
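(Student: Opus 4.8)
The plan is to derive every identity by direct computation from the defining relations (\ref{STO01})--(\ref{STO28}), the super-Jacobi identity, and the bracket formulas already recorded in Lemma~\ref{lem:sto_kp}, arranging them so that each identity is reduced to its predecessors. I would prove them in three stages: (1) the two quadratic identities $[\mathbf{g}_{ki}(a),\mathbf{g}_{ki}(b)]=-(-1)^{|b|}\mathbf{v}_k(a\bar{b})$ and $[\mathbf{f}_{ik}(a),\mathbf{f}_{ik}(b)]=(-1)^{|a|}\mathbf{w}_k(\bar{a}b)$, which generalize the definitions $\mathbf{v}_k(a)=-[\mathbf{g}_{k1}(a),\mathbf{g}_{k1}(1)]$ and $\mathbf{w}_k(a)=[\mathbf{f}_{1k}(1),\mathbf{f}_{1k}(a)]$ from the index pair $(k,1)$ to an arbitrary orthogonal index $i$ and arbitrary entries; (2) the symmetry $\mathbf{v}_k(a)=\mathbf{v}_k(\bar{a})$, $\mathbf{w}_k(a)=\mathbf{w}_k(\bar{a})$; and (3) the remaining commutators, each obtained by one super-Jacobi expansion after substituting these definitions. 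The hypothesis $(m,n)\neq(1,1)$ enters exactly because it guarantees $m\geqslant 2$ or $n\geqslant 2$, and stage (1) is handled by two parallel arguments covering these two cases.

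For stage (1) I would first record index-independence: using $\mathbf{g}_{ki}(b)=[\mathbf{g}_{kj}(b),\mathbf{t}_{ji}(1)]$ (a special case of (\ref{STO16})) for $j\neq i$, a super-Jacobi expansion together with (\ref{STO26}) shows $[\mathbf{g}_{ki}(a),\mathbf{g}_{ki}(b)]=[\mathbf{g}_{kj}(a),\mathbf{g}_{kj}(b)]$, so the bracket is independent of the orthogonal index. If $m\geqslant 2$, I then write $\mathbf{g}_{k1}(b)=[\mathbf{g}_{kj}(1),\mathbf{t}_{j1}(b)]$ with $j\neq 1$; a single super-Jacobi expansion, in which $[\mathbf{g}_{k1}(a),\mathbf{g}_{kj}(1)]=0$ by (\ref{STO26}) and $[\mathbf{g}_{k1}(a),\mathbf{t}_{j1}(b)]=-\mathbf{g}_{kj}(a\bar{b})$ by (\ref{STO16}), reduces $[\mathbf{g}_{k1}(a),\mathbf{g}_{k1}(b)]$ to $\pm[\mathbf{g}_{kj}(1),\mathbf{g}_{kj}(a\bar{b})]$, and index-independence together with super-antisymmetry and the definition of $\mathbf{v}_k$ finishes it. If instead $n\geqslant 2$, the same conclusion follows from the (simultaneously proved) relation $[\mathbf{u}_{kl}(a),\mathbf{v}_{lk}(b)]=\mathbf{v}_k(ab)$: expanding $\mathbf{v}_{lk}(b)=-[\mathbf{g}_{l1}(b),\mathbf{g}_{k1}(1)]$ (read off from (\ref{STO25})) and using (\ref{STO18}) to evaluate $[\mathbf{u}_{kl}(a),\mathbf{g}_{l1}(b)]=\mathbf{g}_{k1}(ab)$ and $[\mathbf{u}_{kl}(a),\mathbf{g}_{k1}(1)]=0$ collapses this to the definition of $\mathbf{v}_k$, after which $\mathbf{g}_{ki}(a)=[\mathbf{u}_{kl}(1),\mathbf{g}_{li}(a)]$ and (\ref{STO25}) yield the quadratic formula. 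The $\mathbf{w}$-statements, together with $[\mathbf{w}_{kl}(a),\mathbf{u}_{lk}(b)]=\mathbf{w}_k(ab)$, are the mirror images under interchanging $\mathbf{f}\leftrightarrow\mathbf{g}$ and (\ref{STO23})$\leftrightarrow$(\ref{STO25}).

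Stage (2) is then immediate and uniform: applying super-antisymmetry to $[\mathbf{g}_{k1}(a),\mathbf{g}_{k1}(b)]$ and comparing both sides with the quadratic formula gives $\mathbf{v}_k(a\bar{b})=(-1)^{|a||b|}\mathbf{v}_k(b\bar{a})=\mathbf{v}_k(\overline{a\bar{b}})$, where the last equality uses $\overline{a\bar{b}}=(-1)^{|a||b|}b\bar{a}$; specializing $b=1$ gives $\mathbf{v}_k(a)=\mathbf{v}_k(\bar{a})$, and symmetrically for $\mathbf{w}_k$. Among the stage (3) relations, the ``commuting'' ones are the cheapest: for $[\mathbf{v}_k(a),\mathbf{t}_{ij}(b)]=0$ and $[\mathbf{v}_k(a),\mathbf{g}_{li}(b)]=0$ I would expand $\mathbf{v}_k(a)=-[\mathbf{g}_{k1}(a),\mathbf{g}_{k1}(1)]$ and apply super-Jacobi, the inner brackets being handled by (\ref{STO16}) and (\ref{STO26}) in the first case and by (\ref{STO20}), (\ref{STO25}), (\ref{STO26}) in the second, so that all surviving terms cancel.

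The main obstacle is the block of mixed relations in which the symmetrized entries $a_{+}$ and $b_{+}$ appear, namely $[\mathbf{v}_k(a),\mathbf{w}_{ll'}(b)]=\delta_{kl'}\mathbf{u}_{kl}(a_{+}\bar{b})$, $[\mathbf{v}_k(a),\mathbf{f}_{il}(b)]=(-1)^{|b|}\delta_{kl}\mathbf{g}_{ki}(a_{+}\bar{b})$ and their $\mathbf{w}$-counterparts. Here expanding $\mathbf{v}_k(a)=-[\mathbf{g}_{k1}(a),\mathbf{g}_{k1}(1)]$ and applying super-Jacobi produces two terms---one carrying the entry $a$ and one carrying $\bar{a}$---whose sum must be reorganized into the single factor $a_{+}=a+\bar{a}$; controlling this requires both the cubic action formulas for $\mathbf{h}_{ik}$ recorded in Lemma~\ref{lem:sto_kp} (to evaluate the brackets of the $\mathbf{g}_{k1}$- and $\mathbf{f}_{1k}$-factors against the $\mathbf{h}_{ik}$-expressions that arise) and the symmetry established in stage (2). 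The difficulty is not conceptual but bookkeeping: the three-fold sign factors $(-1)^{|a||b|+|b||c|+|c||a|}$ occurring in Lemma~\ref{lem:sto_kp} and the superinvolution identity $\overline{ab}=(-1)^{|a||b|}\bar{b}\bar{a}$ must be tracked simultaneously so that the two contributions add rather than cancel and emerge with the correct Kronecker deltas $\delta_{kl}$, $\delta_{kl'}$.
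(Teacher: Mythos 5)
Your proposal is correct and follows the same route as the paper, which offers no written proof for this lemma beyond the remark that Lemmas~\ref{lem:sto_kp}--\ref{lem:sto_lmd_kp} follow from the relations (\ref{STO01})--(\ref{STO28}) ``through direct computation''; your staged organization (quadratic formulas and their index-independence, then the symmetry under ${}^-$, then the remaining brackets via one super-Jacobi expansion each, splitting into the cases $m\geqslant2$ and $n\geqslant2$) is a sound way to carry that computation out, and spot-checking the delicate identities such as $[\mathbf{v}_k(a),\mathbf{f}_{1k}(b)]=(-1)^{|b|}\mathbf{g}_{k1}(a_+\bar{b})$ confirms the two Jacobi terms do combine into $a_+$ exactly as you describe. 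The only point to watch in execution is the sub-case $l=k$, $i=1$ of $[\mathbf{v}_k(a),\mathbf{g}_{li}(b)]=0$, where expanding $\mathbf{v}_k$ at the orthogonal index $1$ is self-referential; this is resolved by the tools you already set up (expand at a second orthogonal index when $m\geqslant2$, or write $\mathbf{v}_k(a)=[\mathbf{u}_{kl'}(a),\mathbf{v}_{l'k}(1)]$ and use (\ref{STO20}) when $n\geqslant2$).
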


\begin{lemma}
\label{lem:sto_lmd_kp}
The following equalities hold in $\mathfrak{sto}_{m|2n}(R,{}^-)$ with $(m,n)\neq(1,1)$:
\begin{align*}
[\mathbf{t}_{ij}(a),\mathbf{t}_{ji}(b)]
&=\mathbf{h}_{ik}(a,b)-(-1)^{|r||s|}\mathbf{h}_{jk}(1,ba),
&&\text{for }1\leqslant i\ne j\leqslant m, 1\leqslant k\leqslant n,\\
[\mathbf{u}_{kl}(a),\mathbf{u}_{lk}(b)]
&=-(-1)^{|a|+|b|}(\mathbf{h}_{il}(a,b)-\mathbf{h}_{ik}(1,ab)),
&&\text{for }1\leqslant i\leqslant m, 1\leqslant k\ne l\leqslant n,\\
[\mathbf{v}_{kl}(a),\mathbf{w}_{lk}(b)]
&=(-1)^{(1+|a|)(1+|b|)}(\mathbf{h}_{ik}(b,a)+\mathbf{h}_{il}(1,\overline{ab})),
&&\text{for }1\leqslant i\leqslant m, 1\leqslant k\ne l\leqslant n,\\
[\mathbf{v}_{k}(a),\mathbf{w}_{l}(b)]
&=-(-1)^{(|a|+1)(|b|+1)}\delta_{kl}(\mathbf{h}_{ik}(b,\bar{a})+\mathbf{h}_{ik}(1,\overline{ba})),
&&\text{for }1\leqslant i\leqslant m, 1\leqslant k, l\leqslant n,
\end{align*}
where $a,b\in R$ are homogeneous.\qed
\end{lemma}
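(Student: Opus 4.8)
All four identities share the same shape: the bracket of two ``opposite'' non-Cartan generators lands in the span of the $\mathbf{h}$-elements. I plan to prove each by the uniform device of rewriting one of the two factors as a bracket of the atomic generators $\mathbf{f}$ and $\mathbf{g}$ (or of two $\mathbf{g}$'s, two $\mathbf{f}$'s), keeping the other factor intact, expanding by the graded Jacobi identity $[[X,Y],Z]=[X,[Y,Z]]-(-1)^{|X||Y|}[Y,[X,Z]]$, and then transporting the intact factor through the atomic pieces by the relations (\ref{STO15})--(\ref{STO22}); every surviving inner bracket is then of the form $[\mathbf{f}_{\bullet\bullet}(\cdot),\mathbf{g}_{\bullet\bullet}(\cdot)]$, i.e.\ an $\mathbf{h}$-element by definition. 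For the first identity I would keep $\mathbf{t}_{ij}(a)$ and split $\mathbf{t}_{ji}(b)=[\mathbf{f}_{jk}(1),\mathbf{g}_{ki}(b)]$ via (\ref{STO27}); the relation (\ref{STO15}) turns the first Jacobi term into $[\mathbf{f}_{ik}(a),\mathbf{g}_{ki}(b)]=\mathbf{h}_{ik}(a,b)$, while (\ref{STO16}) turns the second into $\mathbf{h}_{jk}(1,ba)$ carrying the sign $-(-1)^{|a||b|}$ (so that the exponent $|r||s|$ printed in the statement is to be read as $|a||b|$). The second identity is strictly parallel: keep $\mathbf{u}_{kl}(a)$, split $\mathbf{u}_{lk}(b)=[\mathbf{g}_{li}(b),\mathbf{f}_{ik}(1)]$ via (\ref{STO28}), and transport through (\ref{STO17})--(\ref{STO18}) to reach $\mathbf{h}_{ik}(1,ab)$ and $\mathbf{h}_{il}(a,b)$.

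The third and fourth identities follow the same template, now splitting via (\ref{STO23}), (\ref{STO25}) and the definitions of $\mathbf{v}_k,\mathbf{w}_k$, and transporting through (\ref{STO19})--(\ref{STO22}) and the bracket rules collected in Lemma~\ref{lem:sto_lmd34}. The crucial point is the placement of the unit $1$ when splitting a factor: a careless choice deposits the ring elements as $\mathbf{h}_{\bullet}(ab,1)$ rather than the required $\mathbf{h}_{\bullet}(1,ab)$, so for each identity one must split on the side that matches the stated arguments. The auxiliary leg index introduced by the split is exactly the index $k$ (first identity) or $i$ (the others) appearing on the right, so the identities come out for precisely the indices in their statements; the standing hypothesis $(m,n)\neq(1,1)$ is what makes such an index available and, decisively, what puts Lemma~\ref{lem:sto_lmd34} at our disposal. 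For the fourth identity the factor $\delta_{kl}$ separates two regimes: when $k\neq l$ the transport rules of Lemma~\ref{lem:sto_lmd34} show that $\mathbf{v}_k(a)$ and $\mathbf{w}_l(b)$ supercommute, since transporting $\mathbf{v}_k$ past the two $\mathbf{f}_{il}$'s of $\mathbf{w}_l(b)$ produces only $\delta_{kl}=0$ terms; whereas for $k=l$ one uses $\mathbf{w}_k(b)=[\mathbf{f}_{ik}(1),\mathbf{f}_{ik}(b)]$ together with $[\mathbf{v}_k(a),\mathbf{f}_{ik}(c)]=(-1)^{|c|}\mathbf{g}_{ki}(a_+\bar{c})$, and the symmetry $\mathbf{v}_k(a)=\mathbf{v}_k(\bar{a})$ is what allows one to recover the stated arguments.

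The genuine difficulty is arithmetic rather than conceptual. Each identity demands several nested applications of the graded Jacobi identity and of graded skew-symmetry $[X,Y]=-(-1)^{|X||Y|}[Y,X]$, after which the accumulated exponents must be reduced modulo $2$ to collapse onto the printed signs $(-1)^{|a||b|}$ and $(-1)^{(1+|a|)(1+|b|)}$; a single mis-parity derails the match. Compounding this, the transport rules for $\mathbf{v},\mathbf{w}$ introduce bars and reorder factors, so that turning an argument such as $\bar{a}\bar{b}$ into $\overline{ab}$ (or $\overline{ba}$) forces repeated use of the superinvolution law $\overline{ab}=(-1)^{|a||b|}\bar{b}\bar{a}$, and it is precisely this step that simultaneously fixes the position of the bar and the attendant sign. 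I expect the $\mathbf{v}_k$--$\mathbf{w}_l$ identity to be the hardest, since there one must also absorb the extra $\mathbf{h}$-terms produced by $a_+=a+\bar{a}$, using the $\Bbbk$-linearity of $\mathbf{h}_{ik}$ in each slot together with the $\mathbf{h}$-relations implicit in the already-proved first identity. A prudent safeguard is to verify every case first at $a=1$ or $b=1$ before committing to general homogeneous $a,b$.
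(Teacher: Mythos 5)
Your proposal matches the paper exactly: the paper gives no written proof of this lemma, asserting only that it follows from the relations (\ref{STO01})--(\ref{STO28}) ``through direct computation'', and your splitting-plus-Jacobi strategy (writing one factor as a bracket via (\ref{STO27}), (\ref{STO28}), (\ref{STO23}), (\ref{STO25}) and transporting the other through (\ref{STO15})--(\ref{STO22})) is precisely that computation --- I checked that the first two identities come out with the stated signs, including your correct reading of the typo $|r||s|$ as $|a||b|$. The only caution is that the third identity as printed seems to carry its own sign/order typos (compare its use in the proof of Lemma~\ref{lem:sto_h_ele}, where $\overline{ab}$ versus $\overline{ba}$ is handled differently), so a literal match may require emending the statement rather than your method.
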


Based on the lemmas above, we obtain the following decomposition of $\mathfrak{sto}_{m|2n}(R,{}^-)$:
\begin{proposition}
\label{prop:sto_tridec}
Suppose $m,n\in\mathbb{N}$ satisfy $(m,n)\neq(1,1)$. The Lie superalgebra $\mathfrak{sto}_{m|2n}(R,{}^-)$ is decomposed as the following direct sum of $\Bbbk$-modules:
\begin{equation}
\mathfrak{sto}_{m|2n}(R,{}^-)=\mathfrak{sto}_{m|2n}^0(R,{}^-)\oplus\mathfrak{sto}_{m|2n}^1(R,{}^-),
\label{eq:sto_tridec}
\end{equation}
where $\mathfrak{sto}_{m|2n}^0(R,{}^-)=
\mathrm{span}_{\Bbbk}\left\{\mathbf{h}_{ik}(a,b)
\middle|a,b\in R, 1\leqslant i\leqslant m, 1\leqslant k\leqslant n\right\}$ and
$\mathfrak{sto}_{m|2n}^1(R,{}^-)$ is the $\Bbbk$-module spanned by $\mathbf{t}_{ij}(a)$, $\mathbf{u}_{kl}(a)$, $\mathbf{v}_{kl}(a)$, $\mathbf{v}_{k}(a)$, $\mathbf{w}_{kl}(a)$, $\mathbf{w}_{k}(a)$, $\mathbf{f}_{ik}(a)$ and $\mathbf{g}_{ki}(a)$ with $a\in R$, $1\leqslant i\neq j\leqslant m$ and $1\leqslant k\neq l\leqslant n$.
They satisfy $$[\mathfrak{sto}_{m|2n}^0(R,{}^-),\mathfrak{sto}_{m|2n}^1(R,{}^-)]\subseteq
\mathfrak{sto}_{m|2n}^1(R,{}^-).$$
\end{proposition}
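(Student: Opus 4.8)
The plan is to derive both the directness of the sum and the inclusion $[\mathfrak{sto}_{m|2n}^0(R,{}^-),\mathfrak{sto}_{m|2n}^1(R,{}^-)]\subseteq\mathfrak{sto}_{m|2n}^1(R,{}^-)$ from a single structural device: an abelian-group grading of $\mathfrak{sto}_{m|2n}(R,{}^-)$ in which the zero component is $\mathfrak{sto}_{m|2n}^0(R,{}^-)$ and every spanning element of $\mathfrak{sto}_{m|2n}^1(R,{}^-)$ is homogeneous of nonzero degree (I abbreviate these two submodules by $\mathfrak{sto}^0$ and $\mathfrak{sto}^1$). Let $\Gamma$ be the abelian group generated by $\xi_1,\dots,\xi_m,\delta_1,\dots,\delta_n$ subject to the relations $2\xi_i=2\xi_j$ for all $i,j$, and write $s$ for the common value $2\xi_i$. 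I assign the degrees $\deg\mathbf{t}_{ij}=\xi_i-\xi_j$, $\deg\mathbf{u}_{kl}=\delta_k-\delta_l$, $\deg\mathbf{v}_{kl}=\delta_k+\delta_l-s$, $\deg\mathbf{w}_{kl}=s-\delta_k-\delta_l$, $\deg\mathbf{f}_{ik}=\xi_i-\delta_k$ and $\deg\mathbf{g}_{ki}=\delta_k-\xi_i$. The relations $2\xi_i=2\xi_j$ are exactly what forces the two matrix units composing each generator to acquire equal degree; they reflect that the orthogonal block is realized through the identity form and hence carries no torus in these coordinates, so that an orthogonal weight $\xi_i$ is only defined up to $2$-torsion. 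Eliminating $s=2\xi_1$ gives $\Gamma\cong\mathbb{Z}^{1+n}\oplus(\mathbb{Z}/2\mathbb{Z})^{m-1}$.

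First I would verify that every defining relation (\ref{STO00})--(\ref{STO28}) is homogeneous, so that the grading descends from the free Lie superalgebra to $\mathfrak{sto}_{m|2n}(R,{}^-)$ and yields a genuine direct sum $\mathfrak{sto}_{m|2n}(R,{}^-)=\bigoplus_{\alpha\in\Gamma}\mathfrak{sto}[\alpha]$. This is mechanical: the symmetrizing relations (\ref{STO01}) and (\ref{STO02}) hold because $2(\xi_i-\xi_j)=0$, and in each bracket relation (\ref{STO03})--(\ref{STO28}) one checks that the degrees add up, the relations $2\xi_i=2\xi_j$ being invoked precisely in the summands where the involution conjugates an orthogonal index (for instance in (\ref{STO15}) and (\ref{STO19})). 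A direct computation gives $\deg\mathbf{h}_{ik}=(\xi_i-\delta_k)+(\delta_k-\xi_i)=0$, while the defining brackets of $\mathbf{v}_k,\mathbf{w}_k$ yield $\deg\mathbf{v}_k=2\delta_k-s$ and $\deg\mathbf{w}_k=s-2\delta_k$. Hence $\mathfrak{sto}^0\subseteq\mathfrak{sto}[0]$, whereas every spanning element of $\mathfrak{sto}^1$ lies in some $\mathfrak{sto}[\alpha]$ with $\alpha\neq0$: the degrees $\delta_k-\delta_l$, $\delta_k+\delta_l-s$, $\xi_i-\delta_k$ and $2\delta_k-s$ all have nonzero free part, and $\xi_i-\xi_j$ is a nonzero element of the torsion part $(\mathbb{Z}/2\mathbb{Z})^{m-1}$ for $i\neq j$. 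Therefore $\mathfrak{sto}^0\cap\mathfrak{sto}^1\subseteq\mathfrak{sto}[0]\cap\bigoplus_{\alpha\neq0}\mathfrak{sto}[\alpha]=0$, which is the directness, and $[\mathfrak{sto}^0,\mathfrak{sto}^1]\subseteq[\mathfrak{sto}[0],\bigoplus_{\alpha\neq0}\mathfrak{sto}[\alpha]]\subseteq\bigoplus_{\alpha\neq0}\mathfrak{sto}[\alpha]$, which is the bracket inclusion—both holding once the spanning is established.

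It then remains to show that $\mathfrak{sto}^0+\mathfrak{sto}^1$ exhausts $\mathfrak{sto}_{m|2n}(R,{}^-)$, for which I would prove that $V:=\mathfrak{sto}^0+\mathfrak{sto}^1$ is a Lie sub-superalgebra; since it contains all generators of Definition~\ref{def:sto}, this forces $V=\mathfrak{sto}_{m|2n}(R,{}^-)$. Closure splits into three cases. For $[\mathfrak{sto}^1,\mathfrak{sto}^1]$ one reads off from (\ref{STO03})--(\ref{STO28}) and Lemmas~\ref{lem:sto_lmd34} and~\ref{lem:sto_lmd_kp} that brackets of off-diagonal elements are again off-diagonal elements of $\mathfrak{sto}^1$, save for the ``opposite'' brackets $[\mathbf{t}_{ij},\mathbf{t}_{ji}]$, $[\mathbf{u}_{kl},\mathbf{u}_{lk}]$, $[\mathbf{v}_{kl},\mathbf{w}_{lk}]$, $[\mathbf{v}_k,\mathbf{w}_l]$ and $[\mathbf{f}_{ik},\mathbf{g}_{ki}]$, which land in $\mathfrak{sto}^0$. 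For $[\mathfrak{sto}^0,\mathfrak{sto}^1]$ one uses Lemmas~\ref{lem:sto_kp} and~\ref{lem:sto_lmd34} directly, the two remaining brackets $[\mathbf{h}_{ik},\mathbf{v}_l]$ and $[\mathbf{h}_{ik},\mathbf{w}_l]$ following from Lemma~\ref{lem:sto_kp} by the super Jacobi identity applied to the defining brackets of $\mathbf{v}_l,\mathbf{w}_l$. Finally, for $[\mathfrak{sto}^0,\mathfrak{sto}^0]$ I would expand $[\mathbf{h}_{ik}(a,b),\mathbf{h}_{jl}(c,d)]$ by writing $\mathbf{h}_{jl}(c,d)=[\mathbf{f}_{jl}(c),\mathbf{g}_{lj}(d)]$ and applying the Jacobi identity; by Lemma~\ref{lem:sto_kp} each of $[\mathbf{h}_{ik}(a,b),\mathbf{f}_{jl}(c)]$ and $[\mathbf{h}_{ik}(a,b),\mathbf{g}_{lj}(d)]$ is a single term $\mathbf{f}_{jl}(\ast)$ respectively $\mathbf{g}_{lj}(\ast)$ with unchanged indices, so the outcome is a sum of two $\mathbf{h}_{jl}$-elements and lies in $\mathfrak{sto}^0$.

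The main obstacle is pinning down the grading group $\Gamma$, and in particular recognizing that the orthogonal weights must be taken modulo $2$: without the relations $2\xi_i=2\xi_j$ the symmetry (\ref{STO01}) and the constraint imposed by $\mathbf{f}_{ik},\mathbf{g}_{ki}$ are incompatible, so no $\mathbb{Z}^{m+2n}$-grading survives on $\mathfrak{sto}_{m|2n}(R,{}^-)$. One must therefore pass to the quotient $\Gamma$ and then confirm that the degrees of the $\mathfrak{sto}^1$-generators—most delicately those of the $\mathbf{t}_{ij}$, which survive only in the torsion summand $(\mathbb{Z}/2\mathbb{Z})^{m-1}$—remain nonzero after this collapse. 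Once $\Gamma$ is correctly in place, the homogeneity check and the three closure computations are routine consequences of Lemmas~\ref{lem:sto_kp}--\ref{lem:sto_lmd_kp}.
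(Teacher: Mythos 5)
Your proposal is correct, but it reaches the two nontrivial conclusions (directness of the sum and $[\mathfrak{sto}_{m|2n}^0,\mathfrak{sto}_{m|2n}^1]\subseteq\mathfrak{sto}_{m|2n}^1$) by a genuinely different device than the paper. The paper obtains both from the canonical epimorphism $\psi:\mathfrak{sto}_{m|2n}(R,{}^-)\rightarrow\mathfrak{osp}_{m|2n}(R,{}^-)$: it notes that $\psi$ sends $\mathfrak{sto}_{m|2n}^0$ into diagonal matrices and $\mathfrak{sto}_{m|2n}^1$ into matrices with zero diagonal, asserts that $\psi|_{\mathfrak{sto}_{m|2n}^1}$ is injective, and concludes that a vanishing sum $\boldsymbol{x}^0+\boldsymbol{x}^1=0$ forces $\boldsymbol{x}^1=0$; the bracket inclusion is read off from Lemma~\ref{lem:sto_kp}. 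You instead build an internal $\Gamma$-grading with $\Gamma\cong\mathbb{Z}^{1+n}\oplus(\mathbb{Z}/2\mathbb{Z})^{m-1}$ in which $\mathbf{h}_{ik}$ has degree $0$ and every spanning element of $\mathfrak{sto}_{m|2n}^1$ has nonzero degree; I checked the delicate homogeneity verifications — (\ref{STO01}), (\ref{STO15}), (\ref{STO19}), (\ref{STO22}), (\ref{STO23}), (\ref{STO25}), (\ref{STO27}) — and they do all close up using $2\xi_i=2\xi_j$, so the grading descends and your conclusions follow (the spanning argument, i.e.\ closure of $\mathfrak{sto}_{m|2n}^0+\mathfrak{sto}_{m|2n}^1$ under the bracket via Lemmas~\ref{lem:sto_kp}--\ref{lem:sto_lmd_kp}, is essentially identical in both proofs). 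What your route buys is that it is intrinsic to the presentation and does not lean on the injectivity of $\psi|_{\mathfrak{sto}_{m|2n}^1}$, which the paper uses without justification and which itself amounts to knowing that no unexpected relations hold among the off-diagonal generators; it also pins down $\mathfrak{sto}_{m|2n}^0$ exactly as the degree-zero component. What it costs is the mechanical check of all of (\ref{STO00})--(\ref{STO28}) and the correct choice of the torsion in $\Gamma$. One minor inaccuracy that does not affect the argument: your motivating remark that $2\xi_i=2\xi_j$ equalizes the degrees of ``the two matrix units composing each generator'' is true for $\mathbf{t}_{ij}$ but not for $\mathbf{f}_{ik}$ (whose two matrix units would require $2\xi_i=0$); only the homogeneity of the abstract defining relations matters, and that holds.
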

\begin{proof}
From Lemma~\ref{lem:sto_kp}, we deduce that $\mathfrak{sto}_{m|2n}^0(R,{}^-)$ is a Lie sub-superalgebra of $\mathfrak{sto}_{m|2n}(R,{}^-)$ and
$$[\mathfrak{sto}_{m|2n}^0(R,{}^-),\mathfrak{sto}_{m|2n}^1(R,{}^-)]=\mathfrak{sto}_{m|2n}^1(R,{}^-).$$
Note that $\mathfrak{sto}_{m|2n}^1(R,{}^-)$ is not a Lie sub-superalgebra of $\mathfrak{sto}_{m|2n}(R,{}^-)$. Nonetheless, we may also deduce from Definition~\ref{def:sto}, Lemmas~\ref{lem:sto_lmd34} and~\ref{lem:sto_lmd_kp} that
$$[\mathfrak{sto}_{m|2n}^1(R,{}^-),\mathfrak{sto}_{m|2n}^1(R,{}^-)]\subseteq\mathfrak{sto}_{m|2n}^0(R,{}^-)+\mathfrak{sto}_{m|2n}^1(R,{}^-).$$
Since all generators of $\mathfrak{sto}_{m|2n}(R,{}^-)$ are contained in $\mathfrak{sto}_{m|2n}^1(R,{}^-)$, it follows that
\begin{equation}
\mathfrak{sto}_{m|2n}(R,{}^-)=\mathfrak{sto}_{m|2n}^0(R,{}^-)+\mathfrak{sto}_{m|2n}^1(R,{}^-).\label{eq:sto_tridec1}
\end{equation}

We next show that the above summation is a direct sum of $\Bbbk$-modules. Let $0=\boldsymbol{x}^0+\boldsymbol{x}^1$ be a decomposition of $0$ with respect to (\ref{eq:sto_tridec1}), then
$$0=\psi(0)=\psi(\boldsymbol{x}^0)+\psi(\boldsymbol{x}^1)$$
is a decomposition of $0$ in $\mathfrak{osp}_{m|2n}(R,{}^-)$, where $\psi(\boldsymbol{x}^0)$ is a diagonal matrix and $\psi(\boldsymbol{x}^1)$ is a matrix with $0$ diagonal. Hence, $\psi(\boldsymbol{x}^0)=\psi(\boldsymbol{x}^1)=0$. Observing that $\psi|_{\mathfrak{sto}_{m|2n}^1(R,{}^-)}$ is injective, we further deduce that $\boldsymbol{x}^1=0$. This shows the summation in (\ref{eq:sto_tridec1}) is a direct sum.
\end{proof}

To conclude this section, we consider the Lie superalgebra $\mathfrak{sto}_{m|2n}(R,{}^-)$ in the situation where the associative superalgebra $(R,{}^-)$ is specified to $(S\oplus S^{\mathrm{op}},\mathrm{ex})$ for an arbitrary unital associative superalgebra $S$.

\begin{proposition}
\label{prop:sto_SS}
Let $m,n\in\mathbb{N}$ such that $(m,n)\neq(1,1)$ and $S$ an arbitrary unital associative superalgebra. Then
$$\mathfrak{sto}_{m|2n}(S\oplus S^{\mathrm{op}},\mathrm{ex})\cong\mathfrak{st}_{m|2n}(S),$$
where $\mathfrak{st}_{m|2n}(S)$ is the Steinberg Lie superalgebra coordinatized by $S$ \cite{ChangWang2015} and \cite{ChenSun2015}.
\end{proposition}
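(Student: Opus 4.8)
The plan is to prove the isomorphism directly at the level of presentations, by exhibiting mutually inverse homomorphisms and checking defining relations on generators. Both sides are given by generators and relations: $\mathfrak{sto}_{m|2n}(S\oplus S^{\mathrm{op}},\mathrm{ex})$ by Definition~\ref{def:sto}, and $\mathfrak{st}_{m|2n}(S)$ by its standard presentation with Steinberg generators $X_{pq}(a)$ ($1\leqslant p\neq q\leqslant m+2n$, homogeneous $a\in S$, of degree $|p|+|q|+|a|$) subject to $\Bbbk$-linearity and the super-Steinberg relations. The whole construction is dictated by Example~\ref{eg:osp_SS}: composing the intended isomorphism with $\psi$ must reproduce, on $\mathfrak{osp}_{m|2n}(S\oplus S^{\mathrm{op}},\mathrm{ex})\cong\mathfrak{sl}_{m|2n}(S)$, the explicit matrix dictionary recorded there.

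The conceptual key is that $R=S\oplus S^{\mathrm{op}}$ is, as a $\Bbbk$-module, $S\oplus S$, on which $\mathrm{ex}$ interchanges the two summands, so by $\Bbbk$-linearity (\ref{STO00}) every generator $G(a_1\oplus a_2)$ of $\mathfrak{sto}$ splits as $G(a_1\oplus0)+G(0\oplus a_2)$. Crucially, the opposite multiplication (\ref{hml:eq:opalg}) turns a product $ab$ in the second summand into $(-1)^{|a_2||b_2|}b_2a_2$, which matches exactly the sign and index reversal of the second term of the super-Steinberg relation. This makes natural the homomorphism $\Phi\colon\mathfrak{sto}_{m|2n}(S\oplus S^{\mathrm{op}},\mathrm{ex})\to\mathfrak{st}_{m|2n}(S)$ given on generators by $\mathbf{t}_{ij}(a_1\oplus a_2)\mapsto X_{ij}(a_1)-X_{ji}(a_2)$, $\mathbf{u}_{kl}(a_1\oplus a_2)\mapsto X_{m+k,m+l}(a_1)-X_{m+n+l,m+n+k}(a_2)$, $\mathbf{v}_{kl}(a_1\oplus a_2)\mapsto X_{m+k,m+n+l}(a_1)+X_{m+l,m+n+k}(a_2)$, $\mathbf{w}_{kl}(a_1\oplus a_2)\mapsto X_{m+n+k,m+l}(a_1)+X_{m+n+l,m+k}(a_2)$, $\mathbf{f}_{ik}(a_1\oplus a_2)\mapsto X_{i,m+k}(a_1)+X_{m+n+k,i}(\rho(a_2))$ and $\mathbf{g}_{ki}(a_1\oplus a_2)\mapsto X_{m+k,i}(a_1)-X_{i,m+n+k}(\rho(a_2))$, the parity twist $\rho$ being forced by the off-diagonal structure. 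The inverse $\Psi$ sends each $X_{pq}(a)$ to the corresponding $\mathfrak{sto}$ generator evaluated at $a\oplus0$ or at $0\oplus\rho(a)$.

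First I would check that $\Phi$ is consistent with (\ref{STO01}) and (\ref{STO02}) — it is, by the built-in antisymmetry $X_{ij}\leftrightarrow-X_{ji}$ — so that $\Phi$ is well defined, and then verify that the images satisfy every relation (\ref{STO03})--(\ref{STO28}). Each such check reduces to an instance of the super-Steinberg relations in $\mathfrak{st}_{m|2n}(S)$; in the representative case (\ref{STO03}) one finds that the four terms of $[X_{ii'}(a_1)-X_{i'i}(a_2),\,X_{i'j}(b_1)-X_{ji'}(b_2)]$ collapse to $X_{ij}(a_1b_1)-(-1)^{|a_2||b_2|}X_{ji}(b_2a_2)$, which is precisely $\Phi(\mathbf{t}_{ij}(ab))$ once $ab$ is computed in $S\oplus S^{\mathrm{op}}$. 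In the other direction I would verify that $\Psi$ respects the super-Steinberg relations using Definition~\ref{def:sto} and Lemmas~\ref{lem:sto_kp}--\ref{lem:sto_lmd_kp}, and finally that $\Phi\circ\Psi$ and $\Psi\circ\Phi$ are the identity on generators, using $\rho^2=\mathrm{id}$ and that $\rho$ commutes with the superinvolution.

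The main obstacle is the treatment of the ``diagonal'' symplectic positions, for which $\mathfrak{sto}$ has no generator but $\mathfrak{st}$ does. The Steinberg generators $X_{m+k,m+n+k}(a)$ and $X_{m+n+k,m+k}(a)$ must correspond to the derived elements $\mathbf{v}_k(a\oplus0)$ and $\mathbf{w}_k(a\oplus0)$, while the Cartan-type brackets $[X_{pq},X_{qp}]$ correspond to the elements $\mathbf{h}_{ik}(a,b)$; verifying that these assignments are well defined and relation-preserving is exactly where the hypothesis $(m,n)\neq(1,1)$ enters, since it is needed for Lemmas~\ref{lem:sto_lmd34} and~\ref{lem:sto_lmd_kp}. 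In particular the identity $\mathbf{v}_k(a)=\mathbf{v}_k(\bar a)$ becomes, under $\mathrm{ex}$, the symmetry $\mathbf{v}_k(a_1\oplus a_2)=\mathbf{v}_k(a_2\oplus a_1)$ that is forced by $\Phi(\mathbf{v}_k(a_1\oplus a_2))=X_{m+k,m+n+k}(a_1+a_2)$. I expect this auxiliary/diagonal bookkeeping, rather than the generic Steinberg computations, to be the technically delicate part.
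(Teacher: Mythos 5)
Your proposal is correct and follows essentially the same route as the paper: the paper's proof exhibits exactly this pair of mutually inverse homomorphisms (its $\phi$ is your $\Psi$ and its $\phi^{-1}$ is your $\Phi$, with the same formulas including the parity twist $\rho$ on the off-diagonal blocks and the assignment of the diagonal symplectic generators $\boldsymbol{e}_{m+k,m+n+k}(a)$, $\boldsymbol{e}_{m+n+k,m+k}(a)$ to $\mathbf{v}_k(a\oplus0)$, $\mathbf{w}_k(a\oplus0)$), and likewise reduces the verification to checking the defining relations on generators. Your identification of the diagonal bookkeeping as the point where $(m,n)\neq(1,1)$ enters matches the paper's reliance on Lemmas~\ref{lem:sto_lmd34} and~\ref{lem:sto_lmd_kp}.
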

\begin{proof}
The proof is similar to~\cite[Proposition 5.7]{ChangWang2015}. Recall that the Steinberg Lie superalgebra $\mathfrak{st}_{m|2n}(S)$ is the abstract Lie superalgebra generated by homogeneous elements $\boldsymbol{e}_{ij}(a)$ of degree $|i|+|j|+|a|$ for $a\in R$ and $1\leqslant i\neq j\leqslant m+2n$, subjecting to the relations:
\begin{align}
&a\mapsto\boldsymbol{e}_{ij}(a)\text{ is }\Bbbk\text{-linear},\tag{ST0}\label{ST0}\\
&[\boldsymbol{e}_{ij}(a),\boldsymbol{e}_{jk}(b)]=\boldsymbol{e}_{ik}(ab),
&&\text{ for distinct }i,j,k,\tag{ST1}\label{ST1}\\
&[\boldsymbol{e}_{ij}(a),\boldsymbol{e}_{kl}(b)]=0,
&&\text{ for }i\neq j\neq k\neq l\neq i,\tag{ST2}\label{ST2}
\end{align}
where $a,b\in R$ and $1\leqslant i,j,k,l\leqslant m+n$.

We may define a homomorphism $\phi:\mathfrak{st}_{m|2n}(S)\rightarrow\mathfrak{sto}_{m|2n}(S\oplus S^{\mathrm{op}},\mathrm{ex})$
such that:
\begin{align*}
&\phi(\boldsymbol{e}_{ij}(a))=\mathbf{t}_{ij}(a\oplus0), \\
&\phi(\boldsymbol{e}_{i',m+k'}(a))=\mathbf{f}_{i'k'}(a\oplus0),
&&\phi(\boldsymbol{e}_{m+n+k',i'}(a))=\mathbf{f}_{i'k'}(0\oplus \rho(a)),\\
&\phi(\boldsymbol{e}_{i',m+n+k'}(a))=-\mathbf{g}_{k'i'}(0\oplus \rho(a)),
&&\phi(\boldsymbol{e}_{m+k',i'}(a))=\mathbf{g}_{k'i'}(a\oplus0),\\
&\phi(\boldsymbol{e}_{m+k,m+l}(a))=\mathbf{u}_{kl}(a\oplus0),
&&\phi(\boldsymbol{e}_{m+n+k,m+n+l}(a))=-\mathbf{u}_{kl}(0\oplus a),\\
&\phi(\boldsymbol{e}_{m+k,m+n+l}(a))=\mathbf{v}_{kl}(a\oplus0),
&&\phi(\boldsymbol{e}_{m+n+k,m+l}(a))=\mathbf{w}_{kl}(a\oplus0),\\
&\phi(\boldsymbol{e}_{m+k',m+n+k'}(a))=\mathbf{v}_{k'}(a\oplus0),
&&\phi(\boldsymbol{e}_{m+n+k',m+k'}(a))=\mathbf{w}_{k'}(a\oplus0),
\end{align*}
where $a\in S$, $1\leqslant i', i, j\leqslant m$ with $i\neq j$ and $1\leqslant k', k, l\leqslant n$ with $k\neq l$. It is an isomorphism since it has an inverse $\phi^{-1}:\mathfrak{sto}_{m|2n}(S\oplus S^{\mathrm{op}},\mathrm{ex})\rightarrow\mathfrak{st}_{m|2n}(S)$ defined by
\begin{align*}
&\phi(\mathbf{t}_{ij}(a\oplus b))=\boldsymbol{e}_{ij}(a)-\boldsymbol{e}_{ji}(b),\\
&\phi(\mathbf{u}_{kl}(a\oplus b))=\boldsymbol{e}_{m+k,m+l}(a)-\boldsymbol{e}_{m+n+l,m+n+k}(b),\\
&\phi(\mathbf{v}_{kl}(a\oplus b))=\boldsymbol{e}_{m+k,m+n+l}(a)+\boldsymbol{e}_{m+l,m+n+k}(b),\\
&\phi(\mathbf{w}_{kl}(a\oplus b))=\boldsymbol{e}_{m+n+k,m+l}(a)+\boldsymbol{e}_{m+n+l,m+k}(b),\\
&\phi(\mathbf{f}_{i'k'}(a\oplus b))=\boldsymbol{e}_{i',m+k'}(a)+\boldsymbol{e}_{m+n+k',i'}(\rho(b)),\\
&\phi(\mathbf{g}_{k'i'}(a\oplus b))=\boldsymbol{e}_{m+k',i'}(a)-\boldsymbol{e}_{i',m+n+k'}(\rho(b)),
\end{align*}
for $a,b\in S$, $1\leqslant i', i, j\leqslant m$ with $i\neq j$ and $1\leqslant k', k, l\leqslant n$ with $k\neq l$.
\end{proof}

\section{Central extensions of $\mathfrak{osp}_{m|2n}(R,{}^-)$ with $(m,n)\neq(1,1)$}
\label{sec:cext}

Given a unital associative superalgebra with superinvolution $(R,{}^-)$, we have introduced the Steinberg orthosymplectic Lie superalgebra $\mathfrak{sto}_{m|2n}(R,{}^-)$ coordinatized by  $(R,{}^-)$ and obtained the canonical homomorphism $\psi:\mathfrak{sto}_{m|2n}(R,{}^-)\rightarrow\mathfrak{osp}_{m|2n}(R,{}^-)$. This section serves for proving that $\psi:\mathfrak{sto}_{m|2n}(R,{}^-)\rightarrow\mathfrak{osp}_{m|2n}(R,{}^-)$ is a central extension and describing the $\ker\psi$.

\begin{proposition}
\label{prop:osp_ce}
Suppose $(m,n)\neq(1,1)$, then the canonical homomorphism
$$\psi:\mathfrak{sto}_{m|2n}(R,{}^-)\rightarrow\mathfrak{osp}_{m|2n}(R,{}^-)$$
is a central extension and $\ker\psi\subseteq\mathfrak{sto}_{m|2n}^0(R,{}^-)$, where $\mathfrak{sto}_{m|2n}^0(R,{}^-)$ is the Lie sub-superalgebra of $\mathfrak{sto}_{m|2n}(R,{}^-)$ as defined in Proposition~\ref{prop:sto_tridec}.
\end{proposition}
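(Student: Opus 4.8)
The plan is to separate the statement into its two constituents. Since $\psi$ is already known to be an epimorphism of Lie superalgebras, showing that it is a central extension amounts to proving that $\ker\psi$ is central; in addition I must locate $\ker\psi$ inside the diagonal part $\mathfrak{sto}_{m|2n}^0(R,{}^-)$. Both assertions will rest on the decomposition $\mathfrak{sto}_{m|2n}(R,{}^-)=\mathfrak{sto}_{m|2n}^0(R,{}^-)\oplus\mathfrak{sto}_{m|2n}^1(R,{}^-)$ of Proposition~\ref{prop:sto_tridec}, together with the two structural facts recorded there: that $\psi$ carries $\mathfrak{sto}_{m|2n}^0(R,{}^-)$ into diagonal matrices and $\mathfrak{sto}_{m|2n}^1(R,{}^-)$ into matrices with zero diagonal, and that the restriction $\psi|_{\mathfrak{sto}_{m|2n}^1(R,{}^-)}$ is injective. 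I will also use that $\ker\psi$ is a graded submodule, so that it suffices to argue with homogeneous elements.

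For the containment, I would take $z\in\ker\psi$ and write $z=z^0+z^1$ according to the decomposition. Applying $\psi$ gives $0=\psi(z^0)+\psi(z^1)$, where $\psi(z^0)$ is diagonal and $\psi(z^1)$ has zero diagonal; comparing the diagonal and off-diagonal parts forces $\psi(z^0)=0$ and $\psi(z^1)=0$. Since $\psi$ is injective on $\mathfrak{sto}_{m|2n}^1(R,{}^-)$, the latter yields $z^1=0$, whence $z=z^0\in\mathfrak{sto}_{m|2n}^0(R,{}^-)$. This is exactly the inclusion $\ker\psi\subseteq\mathfrak{sto}_{m|2n}^0(R,{}^-)$.

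It then remains to prove centrality, that is $[\ker\psi,\mathfrak{sto}_{m|2n}(R,{}^-)]=0$. Fix a homogeneous $z\in\ker\psi\subseteq\mathfrak{sto}_{m|2n}^0(R,{}^-)$ and let $g$ be any of the generators $\mathbf{t}_{ij}(a),\mathbf{u}_{kl}(a),\mathbf{v}_{kl}(a),\mathbf{w}_{kl}(a),\mathbf{f}_{i'k'}(a),\mathbf{g}_{k'i'}(a)$, all of which lie in $\mathfrak{sto}_{m|2n}^1(R,{}^-)$. By the relation $[\mathfrak{sto}_{m|2n}^0(R,{}^-),\mathfrak{sto}_{m|2n}^1(R,{}^-)]\subseteq\mathfrak{sto}_{m|2n}^1(R,{}^-)$ of Proposition~\ref{prop:sto_tridec} we have $[z,g]\in\mathfrak{sto}_{m|2n}^1(R,{}^-)$, while $\psi([z,g])=[\psi(z),\psi(g)]=0$ since $\psi(z)=0$. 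Thus $[z,g]$ lies in $\ker\psi\cap\mathfrak{sto}_{m|2n}^1(R,{}^-)$, which is zero by injectivity of $\psi|_{\mathfrak{sto}_{m|2n}^1(R,{}^-)}$, so $[z,g]=0$. Finally, the centralizer $\{y:[z,y]=0\}$ of the homogeneous element $z$ is a Lie sub-superalgebra by the super-Jacobi identity, and as it contains all the generators it must equal the whole of $\mathfrak{sto}_{m|2n}(R,{}^-)$; hence $z$ is central. The genuine obstacle has in fact already been dispatched inside Proposition~\ref{prop:sto_tridec}: namely the injectivity of $\psi$ on $\mathfrak{sto}_{m|2n}^1(R,{}^-)$ and the bracket relation $[\mathfrak{sto}_{m|2n}^0(R,{}^-),\mathfrak{sto}_{m|2n}^1(R,{}^-)]\subseteq\mathfrak{sto}_{m|2n}^1(R,{}^-)$, the latter resting on the detailed computations of Lemmas~\ref{lem:sto_kp}--\ref{lem:sto_lmd_kp} and on the hypothesis $(m,n)\neq(1,1)$; granting these inputs, the present argument is purely formal.
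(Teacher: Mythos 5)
Your argument coincides with the paper's own proof: both use the decomposition of Proposition~\ref{prop:sto_tridec}, the diagonal/off-diagonal comparison under $\psi$ together with the injectivity of $\psi|_{\mathfrak{sto}_{m|2n}^1(R,{}^-)}$ to place $\ker\psi$ inside $\mathfrak{sto}_{m|2n}^0(R,{}^-)$, and then the bracket relation $[\mathfrak{sto}_{m|2n}^0,\mathfrak{sto}_{m|2n}^1]\subseteq\mathfrak{sto}_{m|2n}^1$ plus the same injectivity to kill $[\boldsymbol{x},g]$ for each generator $g$. Your explicit remark that the centralizer of a homogeneous element is a sub-superalgebra only makes precise a step the paper leaves implicit; the proof is correct and essentially identical.
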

\begin{proof}
Let $\boldsymbol{x}\in\ker\psi$. We will show that $\boldsymbol{x}\in\mathfrak{sto}_{m|2n}^0(R,{}^-)$ and then prove that $\boldsymbol{x}$ is contained in the center of $\mathfrak{sto}_{m|2n}(R,{}^-)$.

Recall that $\boldsymbol{x}$ is decomposed as $\boldsymbol{x}=\boldsymbol{x}^0+\boldsymbol{x}^1$ with respect to the decomposition (\ref{eq:sto_tridec}). Hence, $\boldsymbol{x}\in\ker\psi$ implies that
$$0=\psi(\boldsymbol{x}^0)+\psi(\boldsymbol{x}^1)$$
in $\mathfrak{osp}_{m|2n}(R,{}^-)$. Since $\psi(\boldsymbol{x}^0)$ is a diagonal matrix and $\psi(\boldsymbol{x}^1)$ is a matrix with 0 diagonal, it follows that $\psi(\boldsymbol{x}^0)=\psi(\boldsymbol{x}^1)=0$. We have known from Proposition~\ref{prop:sto_tridec} that $\psi|_{\mathfrak{sto}_{m|2n}^1(R,{}^-)}$ is injective. It follows that $\boldsymbol{x}^1=0$, and hence $$\boldsymbol{x}=\boldsymbol{x}^0\in\mathfrak{sto}_{m|2n}^0(R,{}^-).$$

Next, we will show that $\boldsymbol{x}$ is contained in the center of $\mathfrak{sto}_{m|2n}(R,{}^-)$. It suffices to show that $\mathrm{ad}\boldsymbol{x}$ annihilates a complete family of generators of $\mathfrak{sto}_{m|2n}(R,{}^-)$, i.e.,
\begin{equation}
[\boldsymbol{x},\mathbf{t}_{ij}(a)],\quad
[\boldsymbol{x},\mathbf{u}_{kl}(a)], \quad
[\boldsymbol{x},\mathbf{v}_{kl}(a)], \quad
[\boldsymbol{x},\mathbf{w}_{kl}(a)], \quad
[\boldsymbol{x},\mathbf{f}_{i'k'}(a)], \quad
[\boldsymbol{x},\mathbf{g}_{k'i'}(a)]\label{eq:sto_cth}
\end{equation}
are zero for all $a\in R$, $1\leqslant i',i,j\leqslant m$ with $i\neq j$ and $1\leqslant k',k,l\leqslant n$ with $k\neq l$. Since
$$[\mathfrak{sto}_{m|2n}^0(R,{}^-),\mathfrak{sto}_{m|2n}^1(R,{}^-)]\subseteq\mathfrak{sto}_{m|2n}^1(R,{}^-)$$
and $\boldsymbol{x}\in\mathfrak{sto}_{m|2n}^0(R,{}^-)$, we deduce that every element in (\ref{eq:sto_cth}) is contained in $\mathfrak{sto}_{m|2n}^1(R,{}^-)$. On the other hand, all elements in (\ref{eq:sto_cth}) are killed by $\psi$ since $\psi(\boldsymbol{x})=0$. Therefore, we conclude that all elements in (\ref{eq:sto_cth}) are zero since $\psi|_{\mathfrak{sto}_{m|2n}^1(R,{}^-)}$ is injective. It follows that $\boldsymbol{x}$ is contained in the center of $\mathfrak{sto}_{m|2n}(R,{}^-)$ and thus $\psi:\mathfrak{sto}_{m|2n}(R,{}^-)\rightarrow\mathfrak{osp}_{m|2n}(R,{}^-)$ is a central extension.
\end{proof}

The remaining part of this section is devoted to explicitly describe $\ker\psi$. We will show that $\ker\psi$ can be identified with the first $\mathbb{Z}/2\mathbb{Z}$-graded skew-dihedral homology $\fourIdx{}{-}{}{1}{\mathrm{HD}}(R,{}^-)$. Recall from~\cite[Proposition 6.3]{ChangWang2015} that
\begin{equation}
\fourIdx{}{-}{}{1}{\mathrm{HD}}(R,{}^-):=\left\{\textstyle{\sum}_i \langle a_i,b_i\rangle_{\mathsf{d}}^-\in\langle R,R\rangle_{\mathsf{d}}^-\middle|\textstyle{\sum}_i[a_i,b_i]=\textstyle{\sum}_i\overline{[a_i,b_i]}\right\},
\end{equation}
where $\langle R,R\rangle_{\mathsf{d}}^-=(R\otimes_{\Bbbk}R)/I_{\mathsf{d}}^-$ and $I_{\mathsf{d}}^-$ is the $\Bbbk$--submodule of $R\otimes_{\Bbbk}R$ spanned by $a\otimes b-\bar{a}\otimes\bar{b}$, $a\otimes b+(-1)^{|a||b|}b\otimes a$ and $(-1)^{|a||c|}ab\otimes c+(-1)^{|a||b|}bc\otimes a+(-1)^{|b||c|}ca\otimes b$ for homogeneous $a,b,c\in R$. Then we have:

\begin{proposition}
\label{prop:sto_ker}
Let $\psi:\mathfrak{sto}_{m|2n}(R,{}^-)\rightarrow\mathfrak{osp}_{m|2n}(R,{}^-)$ be the canonical homomorphism with $(m,n)\neq(1,1)$. Then $\ker\psi\cong\fourIdx{}{-}{}{1}{\mathrm{HD}}(R,{}^-)$ as $\Bbbk$-modules.
\end{proposition}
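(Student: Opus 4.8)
The plan is to produce mutually inverse $\Bbbk$-linear maps between $\ker\psi$ and $\fourIdx{}{-}{}{1}{\mathrm{HD}}(R,{}^-)$, exploiting the containment $\ker\psi\subseteq\mathfrak{sto}_{m|2n}^0(R,{}^-)=\mathrm{span}_{\Bbbk}\{\mathbf{h}_{ik}(a,b)\}$ established in Proposition~\ref{prop:osp_ce}. First I would construct a $\Bbbk$-linear map $\Phi:\mathfrak{sto}_{m|2n}^0(R,{}^-)\to\langle R,R\rangle_{\mathsf{d}}^{-}$ determined, independently of the indices $i,k$, by $\mathbf{h}_{ik}(a,b)\mapsto\langle a,b\rangle_{\mathsf{d}}^{-}$. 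The whole content of this step is \emph{well-definedness}: every $\Bbbk$-linear relation among the generators $\mathbf{h}_{ik}(a,b)$ of $\mathfrak{sto}_{m|2n}^0(R,{}^-)$ must be carried into the submodule $I_{\mathsf{d}}^{-}$. This is exactly what Lemmas~\ref{lem:sto_kp},~\ref{lem:sto_lmd34} and~\ref{lem:sto_lmd_kp} are built to supply. The bar-symmetries $\mathbf{v}_k(a)=\mathbf{v}_k(\bar a)$, $\mathbf{w}_k(a)=\mathbf{w}_k(\bar a)$ together with the quadratic identities $[\mathbf{g}_{ki}(a),\mathbf{g}_{ki}(b)]=-(-1)^{|b|}\mathbf{v}_k(a\bar b)$ and $[\mathbf{f}_{ik}(a),\mathbf{f}_{ik}(b)]=(-1)^{|a|}\mathbf{w}_k(\bar a b)$ are what force the involution relation $\langle a,b\rangle_{\mathsf{d}}^{-}=\langle\bar a,\bar b\rangle_{\mathsf{d}}^{-}$ and the skew relation $\langle a,b\rangle_{\mathsf{d}}^{-}+(-1)^{|a||b|}\langle b,a\rangle_{\mathsf{d}}^{-}=0$, while Lemma~\ref{lem:sto_kp}, which records the action of $\mathbf{h}_{ik}(a,b)$ on all generators, combined with the Jacobi identity yields the cyclic relation $(-1)^{|a||c|}\langle ab,c\rangle_{\mathsf{d}}^{-}+(-1)^{|a||b|}\langle bc,a\rangle_{\mathsf{d}}^{-}+(-1)^{|b||c|}\langle ca,b\rangle_{\mathsf{d}}^{-}=0$. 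Surjectivity of $\Phi$ is immediate since $\langle a,b\rangle_{\mathsf{d}}^{-}=\Phi(\mathbf{h}_{11}(a,b))$ and such symbols span $\langle R,R\rangle_{\mathsf{d}}^{-}$.

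Next I would show $\Phi$ restricts to a bijection $\ker\psi\xrightarrow{\ \sim\ }\fourIdx{}{-}{}{1}{\mathrm{HD}}(R,{}^-)$. A direct computation of the diagonal matrix $\psi(\mathbf{h}_{ik}(a,b))$ shows that for $\boldsymbol{x}=\sum_s\mathbf{h}_{i_sk_s}(a_s,b_s)$ the condition $\psi(\boldsymbol{x})=0$ implies in particular $\sum_s[a_s,b_s]\in R_+$; this is read off most cleanly by composing $\psi$ with the trace map $\varepsilon$ of~(\ref{eq:osp_tr}). Hence $\Phi(\ker\psi)\subseteq\fourIdx{}{-}{}{1}{\mathrm{HD}}(R,{}^-)$. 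For surjectivity onto the homology I would use lifting: given $\xi=\sum_s\langle a_s,b_s\rangle_{\mathsf{d}}^{-}\in\fourIdx{}{-}{}{1}{\mathrm{HD}}(R,{}^-)$, the element $\boldsymbol{x}_0=\sum_s\mathbf{h}_{11}(a_s,b_s)$ (which exists since $m,n\geqslant1$) satisfies $\Phi(\boldsymbol{x}_0)=\xi$, and because $\sum_s[a_s,b_s]\in R_+$ the diagonal matrix $\psi(\boldsymbol{x}_0)$ lies in the span of the $\psi$-images of elements of $\ker\Phi$ (those built from the off-diagonal brackets of Lemma~\ref{lem:sto_lmd_kp}, whose $\Phi$-value is forced to vanish); subtracting such a $\Phi$-trivial lift $\boldsymbol{y}$ produces $\boldsymbol{x}_0-\boldsymbol{y}\in\ker\psi$ with $\Phi(\boldsymbol{x}_0-\boldsymbol{y})=\xi$. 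Producing these $\Phi$-trivial lifts explicitly, together with the reduction of a general $\mathbf{h}_{ik}(a,b)$ to $\mathbf{h}_{11}(a,b)$ modulo $\ker\psi$ via the index-independence relations of Lemma~\ref{lem:sto_lmd_kp}, is the bookkeeping I would carry out here.

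The main obstacle is \emph{injectivity} of $\Phi|_{\ker\psi}$, equivalently the assertion that the only $\Bbbk$-linear relations among the $\mathbf{h}_{ik}(a,b)$ inside $\ker\psi$ are the ones imposed by $I_{\mathsf{d}}^{-}$, so that $\ker\Phi\cap\ker\psi=0$. I would handle this by exploiting that $\ker\psi$ is \emph{central} (Proposition~\ref{prop:osp_ce}): for $\boldsymbol{x}=\sum_s\mathbf{h}_{i_sk_s}(a_s,b_s)\in\ker\psi$ all the brackets $[\boldsymbol{x},\mathbf{t}_{ij}(c)]$, $[\boldsymbol{x},\mathbf{u}_{kl}(c)]$, $[\boldsymbol{x},\mathbf{f}_{i'k'}(c)]$, $\ldots$ vanish, and expanding each of them through Lemma~\ref{lem:sto_kp} converts the vanishing into a family of identities in $R$; since $\psi|_{\mathfrak{sto}_{m|2n}^1(R,{}^-)}$ is injective (Proposition~\ref{prop:sto_tridec}), these abstract brackets may be compared with concrete matrix identities in $\mathfrak{osp}_{m|2n}^1(R,{}^-)$. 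The technical heart of the argument is to verify that the resulting system of identities is \emph{precisely} the defining relations of $I_{\mathsf{d}}^{-}$ supplemented by the condition $\sum_s([a_s,b_s]-\overline{[a_s,b_s]})=0$, so that $\Phi(\boldsymbol{x})=0$ in $\langle R,R\rangle_{\mathsf{d}}^{-}$ already forces $\boldsymbol{x}=0$. This last matching — showing no relation is lost and none is spurious — is where I expect essentially all the difficulty to lie, and it is the step on which the isomorphism $\ker\psi\cong\fourIdx{}{-}{}{1}{\mathrm{HD}}(R,{}^-)$ ultimately rests.
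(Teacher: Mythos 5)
Your overall strategy (reduce to $\mathfrak{sto}^0_{m|2n}$, produce a map to $\langle R,R\rangle_{\mathsf{d}}^{-}$, match it with the homology) points in the right direction, but the two steps you yourself identify as the content of the proof — well-definedness of $\Phi$ and injectivity of $\Phi|_{\ker\psi}$ — are exactly where the proposal breaks down, and the tools you propose cannot close either gap. For well-definedness: $\mathfrak{sto}^0_{m|2n}(R,{}^-)$ is only given as the $\Bbbk$-span of the symbols $\mathbf{h}_{ik}(a,b)$ inside an abstractly presented Lie superalgebra; to define a linear map by prescribing values on this spanning set you must verify that \emph{every} $\Bbbk$-linear relation among the $\mathbf{h}_{ik}(a,b)$ is sent into $I_{\mathsf{d}}^{-}$. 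Lemmas~\ref{lem:sto_kp},~\ref{lem:sto_lmd34} and~\ref{lem:sto_lmd_kp} record particular identities that do hold, but they are not (and are nowhere claimed to be) a complete list of the linear relations in $\mathfrak{sto}^0_{m|2n}(R,{}^-)$; knowing that complete list is essentially equivalent to knowing $\ker\psi$, which is what is being computed. The paper sidesteps this by running the map in the opposite direction: it defines $\eta:\langle R,R\rangle_{\mathsf{d}}^{-}\to\mathfrak{sto}_{m|2n}(R,{}^-)$, $\langle a,b\rangle_{\mathsf{d}}^{-}\mapsto\boldsymbol{\lambda}(a,b)$, where the source \emph{does} have an explicit presentation, so well-definedness reduces to the three finite families of relations checked in Lemma~\ref{lem:sto_h_rln}.

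The injectivity argument is the more serious problem. You propose to expand the brackets $[\boldsymbol{x},\mathbf{t}_{ij}(c)]$, $[\boldsymbol{x},\mathbf{f}_{i'k'}(c)]$, etc.\ via Lemma~\ref{lem:sto_kp} and extract identities in $R$ that force $\boldsymbol{x}=0$. But for \emph{any} $\boldsymbol{x}\in\ker\psi$ all of these brackets vanish — that is precisely the content of Proposition~\ref{prop:osp_ce} — and the identities they produce (e.g.\ $\sum_s(a_sb_sc-\overline{a_sb_s}c)=0$ for all $c$) are automatic consequences of $\psi(\boldsymbol{x})=0$. They hold uniformly on $\ker\psi$ and therefore cannot distinguish a nonzero central element from zero; no amount of bookkeeping with these relations can show $\ker\Phi\cap\ker\psi=0$. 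What is actually needed is a separating functional on $\mathfrak{sto}_{m|2n}(R,{}^-)$ that detects the class $\langle a,b\rangle_{\mathsf{d}}^{-}$, and the paper manufactures one by exhibiting an explicit $2$-cocycle $\alpha$ on $\mathfrak{gl}_{m|2n}(R)$ with values in $\langle R,R\rangle_{\mathsf{d}}^{-}$, restricting it to $\mathfrak{osp}_{m|2n}(R,{}^-)$ to build a central extension $\mathfrak{C}=\mathfrak{osp}_{m|2n}(R,{}^-)\oplus\langle R,R\rangle_{\mathsf{d}}^{-}$, and mapping $\mathfrak{sto}_{m|2n}(R,{}^-)\to\mathfrak{C}$ by the universal property of the presentation; one then computes $\phi(\boldsymbol{\lambda}(a,b))=e_{11}([a,b]-\overline{[a,b]})\oplus 2\langle a,b\rangle_{\mathsf{d}}^{-}$ and invertibility of $2$ finishes the argument. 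This cocycle construction is the missing idea in your proposal, and without it (or an equivalent device) the proof cannot be completed.
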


In order to prove this proposition, we need a fine characterization for elements of the subalgebra $\mathfrak{sto}_{m|2n}^0(R,{}^-)\subseteq\mathfrak{sto}_{m|2n}(R,^-)$. We always assume $(m,n)\neq(1,1)$ in the rest of this section. Let
$$\boldsymbol{\lambda}(a,b):=\mathbf{h}_{11}(a,b)-(-1)^{|a||b|}\mathbf{h}_{11}(1,ba),$$
for homogeneous $a,b\in R$. Then we have:
\begin{lemma}
\label{lem:sto_h_ele}
Every element $\boldsymbol{x}$ of $\mathfrak{sto}_{m|2n}^0(R,{}^-)$ can be written as
\begin{equation}
\boldsymbol{x}=\sum\limits_{i\in I_{\boldsymbol{x}}}\boldsymbol{\lambda}(a_i,b_i)
+\sum\limits_{j=1}^m\mathbf{h}_{j1}(1,c_j)+\sum\limits_{k=2}^n\mathbf{h}_{1k}(1,d_k),
\label{eq:sto_h_ele}
\end{equation}
where $I_{\boldsymbol{x}}$ is a finite index set, $a_i, b_i, c_j, d_k\in R$ for $i\in I_{\boldsymbol{x}}$, $1\leqslant j\leqslant m$ and $2\leqslant k\leqslant n$.
\end{lemma}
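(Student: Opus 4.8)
The plan is to use Proposition~\ref{prop:sto_tridec}, by which $\mathfrak{sto}_{m|2n}^0(R,{}^-)$ is $\Bbbk$-spanned by the elements $\mathbf{h}_{ik}(a,b)=[\mathbf{f}_{ik}(a),\mathbf{g}_{ki}(b)]$; it therefore suffices to rewrite each such generator in the asserted shape and then collect terms. Concretely I would reduce to two tasks: \textbf{(a)} every ``first-argument-one'' element $\mathbf{h}_{ik}(1,c)$ lies in the span of the first-column and first-row family $\{\mathbf{h}_{j1}(1,c)\}_{1\le j\le m}\cup\{\mathbf{h}_{1k}(1,c)\}_{2\le k\le n}$; and \textbf{(b)} every $\mathbf{h}_{ik}(a,b)$ equals a single $\boldsymbol{\lambda}$ plus a $\Bbbk$-combination of elements $\mathbf{h}_{\ast\ast}(1,\ast)$. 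Granting (a) and (b), writing each generator as $\boldsymbol{\lambda}$ plus first-argument-one terms and then invoking (a) puts it in the form (\ref{eq:sto_h_ele}); summing over a finite expansion of $\boldsymbol{x}$ and merging the first-column and first-row pieces by the $\Bbbk$-linearity of $c\mapsto\mathbf{h}_{j1}(1,c)$ and $c\mapsto\mathbf{h}_{1k}(1,c)$ yields the stated expression.

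The engine is a pair of purely-$\mathbf{h}$ identities read off from Lemma~\ref{lem:sto_lmd_kp}. Since the left side of its first relation $[\mathbf{t}_{ij}(a),\mathbf{t}_{ji}(b)]=\mathbf{h}_{ik}(a,b)-(-1)^{|a||b|}\mathbf{h}_{jk}(1,ba)$ is independent of $k$, equating the values at $k$ and $k'$ gives, for $i\ne j$,
$$\mathbf{h}_{ik}(a,b)-\mathbf{h}_{ik'}(a,b)=(-1)^{|a||b|}\bigl(\mathbf{h}_{jk}(1,ba)-\mathbf{h}_{jk'}(1,ba)\bigr);$$
similarly the second relation has left side independent of $i$, so equating at $i$ and $i'$ gives, for $k\ne l$,
$$\mathbf{h}_{il}(a,b)-\mathbf{h}_{i'l}(a,b)=\mathbf{h}_{ik}(1,ab)-\mathbf{h}_{i'k}(1,ab).$$
Both right sides are first-argument-one, so modulo the span of the $\mathbf{h}_{\ast\ast}(1,\ast)$ the first identity lowers the second index of $\mathbf{h}_{ik}(a,b)$ (needing $m\ge2$) and the second lowers the first index (needing $n\ge2$); in the generic range $m,n\ge2$ they combine to give $\mathbf{h}_{ik}(a,b)\equiv\mathbf{h}_{11}(a,b)$, and $\mathbf{h}_{11}(a,b)=\boldsymbol{\lambda}(a,b)+(-1)^{|a||b|}\mathbf{h}_{11}(1,ba)$ settles (b). Setting $a=1$ in the second identity rewrites $\mathbf{h}_{il}(1,c)$ with $i,l\ge2$ as $\mathbf{h}_{1l}(1,c)+\mathbf{h}_{i1}(1,c)-\mathbf{h}_{11}(1,c)$, which settles (a).

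The hard part is the two edge cases: here (a) is trivial, but for (b) the very identity that would move the one non-trivial index is unavailable (the first identity requires $m\ge2$, the second requires $n\ge2$). When $m=1$ (hence $n\ge2$) I would lower the second index by a direct super-Jacobi expansion: using $\mathbf{f}_{1k}(a)=[\mathbf{f}_{1l}(a),\mathbf{u}_{lk}(1)]$ from (\ref{STO17}) and expanding $\mathbf{h}_{1k}(a,b)=[[\mathbf{f}_{1l}(a),\mathbf{u}_{lk}(1)],\mathbf{g}_{k1}(b)]$, the relations (\ref{STO18}), (\ref{STO28}) together with the second relation of Lemma~\ref{lem:sto_lmd_kp} collapse the result to $\mathbf{h}_{1l}(a,b)$ plus first-argument-one terms, whence $\mathbf{h}_{1k}(a,b)\equiv\mathbf{h}_{11}(a,b)$. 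When $n=1$ (hence $m\ge2$) I would instead exploit the involution symmetry (\ref{STO01}): the first relation of Lemma~\ref{lem:sto_lmd_kp} gives $\mathbf{h}_{i1}(a,b)\equiv[\mathbf{t}_{i1}(a),\mathbf{t}_{1i}(b)]$ modulo the first column, and rewriting $\mathbf{t}_{i1}(a)=-\mathbf{t}_{1i}(\bar a)$, $\mathbf{t}_{1i}(b)=-\mathbf{t}_{i1}(\bar b)$ and applying that relation again yields $\mathbf{h}_{i1}(a,b)\equiv\mathbf{h}_{11}(\bar a,\bar b)\equiv\boldsymbol{\lambda}(\bar a,\bar b)$; here the extracted $\boldsymbol{\lambda}$ carries the arguments $(\bar a,\bar b)$, which is still an admissible term of (\ref{eq:sto_h_ele}). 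This disposes of (b) in all cases and completes the reduction.
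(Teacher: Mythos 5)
Your proof is correct, but it organizes the reduction differently from the paper. The paper proves the lemma in two uniform steps, each a direct super-Jacobi computation: it first expands $\mathbf{h}_{1k}(a,b)=[[\mathbf{f}_{11}(a),\mathbf{u}_{1k}(1)],\mathbf{g}_{k1}(b)]$ to obtain $\mathbf{h}_{1k}(a,b)=\boldsymbol{\lambda}(a,b)+(-1)^{|a||b|}\mathbf{h}_{1k}(1,ba)$ for $2\leqslant k\leqslant n$ (your $m=1$ computation is exactly this, with $l=1$), and then expands $\mathbf{h}_{ik}(a,b)=[[\mathbf{t}_{i1}(1),\mathbf{f}_{1k}(a)],\mathbf{g}_{ki}(b)]$ for $i\geqslant2$ to obtain $[\mathbf{t}_{i1}(1),\mathbf{t}_{1i}(ab)]+\mathbf{h}_{1k}(a,b)$. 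Because the first step needs only $n\geqslant2$ and is invoked only when $k\geqslant2$, while the second needs only $m\geqslant2$ and is invoked only when $i\geqslant2$, the paper never has to split into the cases $m=1$ and $n=1$. Your generic-range engine --- comparing each identity of Lemma~\ref{lem:sto_lmd_kp} at two values of the spectator index, whose left-hand side does not see that index --- is a clean observation that avoids any new bracket computations when $m,n\geqslant2$, but it forces the two edge cases, where you either reproduce the paper's first computation ($m=1$) or give a genuinely different argument via the involution relation (\ref{STO01}) ($n=1$); the latter is valid, as you note, because $\boldsymbol{\lambda}(\bar{a},\bar{b})$ is itself an admissible term of (\ref{eq:sto_h_ele}), so no appeal to the later identity $\boldsymbol{\lambda}(\bar{a},\bar{b})=\boldsymbol{\lambda}(a,b)$ is needed. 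Both routes rest entirely on Lemma~\ref{lem:sto_lmd_kp} and the relations of Definition~\ref{def:sto}; the paper's buys uniformity, yours buys less computation in the generic range at the cost of a case split.
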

\begin{proof}
Recall from Proposition~\ref{prop:sto_tridec} that $\mathfrak{osp}_{m|2n}^0(R,{}^-)$ is spanned by $\mathbf{h}_{ik}(a,b)$ for homogeneous $a,b\in R,$ $1\leqslant i\leqslant m$ and $1\leqslant k\leqslant n$. It suffices to show that $\mathbf{h}_{ik}(a,b)$ is of the form (\ref{eq:sto_h_ele}).

We first claim that
\begin{equation}
\boldsymbol{\lambda}(a,b)=\mathbf{h}_{1k}(a,b)-(-1)^{|a||b|}\mathbf{h}_{1k}(1,ba),\label{eq:sto_kp_1}
\end{equation}
for homogeneous $a,b\in R$ and $2\leqslant k\leqslant n$. It follows from the following computation:
\begin{align*}
\mathbf{h}_{1k}(a,b)=&[\mathbf{f}_{1k}(a),\mathbf{g}_{k1}(b)]\\
=&[[\mathbf{f}_{11}(a),\mathbf{u}_{1k}(1)],\mathbf{g}_{k1}(b)]\\
=&[\mathbf{f}_{11}(a),[\mathbf{u}_{1k}(1),\mathbf{g}_{k1}(b)]]
-[\mathbf{u}_{1k}(1),[\mathbf{f}_{11}(a),\mathbf{g}_{k1}(b)]]\\
=&[\mathbf{f}_{11}(a),\mathbf{g}_{11}(b)]
+(-1)^{(|a|+1)(|b|+1)}[\mathbf{u}_{1k}(1),\mathbf{u}_{k1}(ba)]\\
=&\mathbf{h}_{11}(a,b)+(-1)^{|a||b|}(\mathbf{h}_{1k}(1,ba)-\mathbf{h}_{11}(1,ba))\\
=&\boldsymbol{\lambda}(a,b)+(-1)^{|a||b|}\mathbf{h}_{1k}(1,ba),
\end{align*}
where the second last equality follows from Lemma~\ref{lem:sto_lmd_kp}. Now, the claim follows and hence
$$\mathbf{h}_{1k}(a,b)=\boldsymbol{\lambda}(a,b)+(-1)^{|a||b|}\mathbf{h}_{1k}(1,ba)$$
is of the form (\ref{eq:sto_h_ele}) for $1\leqslant k\leqslant n$.

For $i\geqslant2$, we also deduce from Lemma~\ref{lem:sto_lmd_kp} that
\begin{align*}
\mathbf{h}_{ik}(a,b)
=&[\mathbf{f}_{ik}(a),\mathbf{g}_{ki}(b)]\\
=&[[\mathbf{t}_{i1}(1),\mathbf{f}_{1k}(a)],\mathbf{g}_{ki}(b)]\\
=&[\mathbf{t}_{i1}(1),[\mathbf{f}_{1k}(a),\mathbf{g}_{ki}(b)]]
+[\mathbf{f}_{1k}(a),\mathbf{g}_{k1}(b)]\\
=&[\mathbf{t}_{i1}(1),\mathbf{t}_{1i}(ab)]+\mathbf{h}_{1k}(a,b)\\
=&\boldsymbol{\lambda}(a,b)+(-1)^{|a||b|}\mathbf{h}_{1k}(1,ba)+\mathbf{h}_{i1}(1,ab)-\mathbf{h}_{11}(1,ab)
\end{align*}
is of the form (\ref{eq:sto_h_ele}). The lemma follows.
\end{proof}

Using the lemma above, we may obtain a characterization of $\ker\psi$:

\begin{lemma}
\label{lem:sto_ker}
$\ker\psi=\left\{\sum_i\boldsymbol{\lambda}(a_i,b_i)\middle|\sum_i[a_i,b_i]=\sum_i\overline{[a_i,b_i]}\right\}$.
\end{lemma}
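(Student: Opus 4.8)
The plan is to prove the two inclusions separately, the engine for both being the single matrix identity
$$\psi(\boldsymbol{\lambda}(a,b)) = e_{11}\big([a,b]-\overline{[a,b]}\big),\qquad a,b\in R.$$
To get this I would first compute $\psi(\mathbf{h}_{ik}(a,b))=[f_{ik}(a),g_{ki}(b)]$ directly from the definitions of $f_{ik}$ and $g_{ki}$: a short calculation with matrix units shows that $\psi(\mathbf{h}_{ik}(a,b))$ is diagonal, with $(i,i)$-entry $ab-\overline{ab}$, and with its two nonzero symplectic entries (in rows $m+k$ and $m+n+k$) depending on the pair $(a,b)$ only through the product $ba$. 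Consequently, in $\boldsymbol{\lambda}(a,b)=\mathbf{h}_{11}(a,b)-(-1)^{|a||b|}\mathbf{h}_{11}(1,ba)$ the symplectic entries of the two summands coincide and cancel, leaving only the $(1,1)$-entry $(ab-\overline{ab})-(-1)^{|a||b|}(ba-\overline{ba})=[a,b]-\overline{[a,b]}$, which is the displayed formula.

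The inclusion $\supseteq$ is then immediate: if $\sum_i[a_i,b_i]=\sum_i\overline{[a_i,b_i]}$, then $\psi\big(\sum_i\boldsymbol{\lambda}(a_i,b_i)\big)=e_{11}\big(\sum_i([a_i,b_i]-\overline{[a_i,b_i]})\big)=0$, so $\sum_i\boldsymbol{\lambda}(a_i,b_i)\in\ker\psi$.

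For $\subseteq$, take $\boldsymbol{x}\in\ker\psi$. By Proposition~\ref{prop:osp_ce} we have $\boldsymbol{x}\in\mathfrak{sto}_{m|2n}^0(R,{}^-)$, so Lemma~\ref{lem:sto_h_ele} lets me write $\boldsymbol{x}=\sum_i\boldsymbol{\lambda}(a_i,b_i)+\sum_{j=1}^m\mathbf{h}_{j1}(1,c_j)+\sum_{k=2}^n\mathbf{h}_{1k}(1,d_k)$. Extending the computation above, $\psi(\mathbf{h}_{j1}(1,c))$ has $(j,j)$-entry $c-\bar c$ and $(m+1,m+1)$-entry $\rho(c)$, while $\psi(\mathbf{h}_{1k}(1,d))$ has $(1,1)$-entry $d-\bar d$ and $(m+k,m+k)$-entry $\rho(d)$. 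Reading off the vanishing of $\psi(\boldsymbol{x})$ slot by slot forces in turn $d_k=0$ (from the $(m+k,m+k)$-slots, $k\geqslant2$), $c_j\in R_+$ (from the $(j,j)$-slots, $j\geqslant2$), $\sum_j c_j=0$ (from the $(m+1,m+1)$-slot, whence also $c_1\in R_+$), and finally $\sum_i[a_i,b_i]=\sum_i\overline{[a_i,b_i]}$ (from the $(1,1)$-slot). Thus $\boldsymbol{x}=\sum_i\boldsymbol{\lambda}(a_i,b_i)+Q$, where the $\boldsymbol{\lambda}$-part already satisfies the required trace identity and $Q=\sum_{j=1}^m\mathbf{h}_{j1}(1,c_j)$ with $c_j\in R_+$ and $\sum_j c_j=0$.

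The hard part will be the final step: showing that the residual diagonal term $Q$ lies in $V:=\mathrm{span}_{\Bbbk}\{\boldsymbol{\lambda}(a,b)\}$. When $m=1$ this is painless, since there the $(m+1,m+1)$-slot forces $\rho(c_1)=0$, hence $c_1=0$ and $Q=0$. For $m\geqslant2$, however, $Q$ need not vanish — it is a central element of $\ker\psi$ — so the vanishing of $\psi$ cannot by itself eliminate it; using $\sum_j c_j=0$ I would rewrite $Q=\sum_{j=2}^m\big(\mathbf{h}_{j1}(1,c_j)-\mathbf{h}_{11}(1,c_j)\big)=\sum_{j=2}^m[\mathbf{t}_{j1}(1),\mathbf{t}_{1j}(c_j)]$ via Lemma~\ref{lem:sto_lmd_kp}, so that the problem reduces to proving $[\mathbf{t}_{j1}(1),\mathbf{t}_{1j}(c)]\in V$ for every $c\in R_+$. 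I expect this to be the technical heart of the argument, to be settled by manipulating the structure relations of Lemmas~\ref{lem:sto_kp} and~\ref{lem:sto_lmd_kp} (together with the symmetry $\mathbf{h}_{11}(c,1)=\mathbf{h}_{11}(1,c)$ valid for $m\geqslant2$) so as to re-express this commutator purely through $\mathbf{h}_{11}$, i.e.\ as a $\boldsymbol{\lambda}$-combination. Once $Q\in V$ is established, the key formula applied to any $\boldsymbol{\lambda}$-representation of $\boldsymbol{x}$ forces the trace identity, which completes the inclusion and hence the lemma.
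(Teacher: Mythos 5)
Your setup, the computation of $\psi(\boldsymbol{\lambda}(a,b))=e_{11}([a,b]-\overline{[a,b]})$, the inclusion $\supseteq$, and the slot-by-slot analysis forcing $d_k=0$, $c_j\in R_+$ for $j\geqslant2$, $\sum_jc_j=0$ and the trace identity all match the paper's argument and are correct. But the proof is not finished: the entire content of the lemma beyond bookkeeping is concentrated in the step you defer, namely showing that the residual term $Q=\sum_{j\geqslant2}[\mathbf{t}_{j1}(1),\mathbf{t}_{1j}(c_j)]$ with $c_j\in R_+$ lies in the span of the $\boldsymbol{\lambda}(a,b)$. Writing ``I expect this to be settled by manipulating the structure relations'' leaves a genuine gap, because without it you cannot conclude that $\boldsymbol{x}$ is a sum of $\boldsymbol{\lambda}$'s at all, and the tools you name for closing it (the symmetry $\mathbf{h}_{11}(c,1)=\mathbf{h}_{11}(1,c)$, i.e.\ $\boldsymbol{\lambda}(c,1)=0$) are not the ones that do the job.

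The paper closes this gap with a short but essential symmetry trick, and the answer is stronger than what you conjecture: $Q$ is actually zero, not merely a nonzero central element of $V$. For $c=\bar c$, relation (\ref{STO01}) gives $\mathbf{t}_{1i}(c)=-\mathbf{t}_{i1}(c)$ and $\mathbf{t}_{i1}(1)=-\mathbf{t}_{1i}(1)$, so
$$[\mathbf{t}_{i1}(1),\mathbf{t}_{1i}(c)]=[\mathbf{t}_{1i}(1),\mathbf{t}_{i1}(c)].$$
Evaluating both sides by Lemma~\ref{lem:sto_lmd_kp} yields $\mathbf{h}_{i1}(1,c)-\mathbf{h}_{11}(1,c)=\mathbf{h}_{11}(1,c)-\mathbf{h}_{i1}(1,c)$, and since $\tfrac12\in\Bbbk$ this forces $\mathbf{h}_{i1}(1,c)=\mathbf{h}_{11}(1,c)$, hence $Q=\mathbf{h}_{11}\bigl(1,\sum_jc_j\bigr)=0$. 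With that identity supplied, your argument goes through verbatim (and the final appeal to ``any $\boldsymbol{\lambda}$-representation of $\boldsymbol{x}$'' becomes unnecessary, since $\boldsymbol{x}$ is exactly $\sum_i\boldsymbol{\lambda}(a_i,b_i)$ and the trace identity was already read off from the $(1,1)$-slot). You should either prove the displayed identity or an equivalent substitute before the proof can be considered complete.
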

\begin{proof}
It is easy to verify that $\ker\psi\supseteq\left\{\sum_i\boldsymbol{\lambda}(a_i,b_i)\middle|\sum_i[a_i,b_i]=\sum_i\overline{[a_i,b_i]}\right\}$, we show the converse inclusion.
Let $\boldsymbol{x}\in\ker{\psi}$. We have known from Proposition~\ref{prop:osp_ce} that $\ker\psi\subseteq\mathfrak{sto}_{m|2n}^0(R,{}^-)$. It follows from Lemma~\ref{lem:sto_h_ele} that
$$\boldsymbol{x}=\sum\limits_{i\in I_{\boldsymbol{x}}}\boldsymbol{\lambda}(a_i,b_i)+\sum\limits_{j=1}^m\mathbf{h}_{j1}(1,c_j)+\sum\limits_{k=2}^m\mathbf{h}_{1k}(1,d_k),$$
where $I_{\boldsymbol{x}}$ is a finite index set, $a_i, b_i, c_j, d_k\in R$ for $i\in I_x$, $1\leqslant j\leqslant m$ and $2\leqslant k\leqslant n$. We deduce from $\boldsymbol{x}\in\ker\psi$ that
\begin{align*}
0=\psi(\boldsymbol{x})=&\sum_{i\in I_{\boldsymbol{x}}}e_{11}([a_i,b_i]-\overline{[a_i,b_i]})
+\sum\limits_{j=1}^me_{jj}(c_j-\bar{c}_j)+\sum\limits_{j=1}^mu_{11}(\rho(c_j))\\
&+\sum\limits_{k=2}^ne_{11}(d_k-\bar{d}_k)+\sum\limits_{k=2}^n(-1)^{|d_k|}u_{kk}(d_k),
\end{align*}
which yields that $\bar{c}_j=c_j$ for $j=2,\ldots,m$ and $d_k=0$ for $k=2,\ldots,n$,
$$\sum_{i\in I_x}([a_i,b_i]-\overline{[a_i,b_i]})+c_1-\bar{c}_1=0,\text{ and }
\sum_{j=1}^m\rho(c_j)=0.$$

Now, we claim that $\mathbf{h}_{i1}(1,c)=\mathbf{h}_{11}(1,c)$ if $c=\bar{c}\in R$ and $2\leqslant i\leqslant m$. Indeed, we deduce from Lemma~\ref{lem:sto_lmd_kp} that
$$[\mathbf{t}_{i1}(1),\mathbf{t}_{1i}(c)]=\mathbf{h}_{i1}(1,c)-\mathbf{h}_{11}(1,c).$$
Since $c=\bar{c}$, we have $\mathbf{t}_{1i}(c)=-\mathbf{t}_{i1}(\bar{c})=-\mathbf{t}_{i1}(c)$. Hence, Lemma~\ref{lem:sto_lmd_kp} also implies that
$$[\mathbf{t}_{i1}(1),\mathbf{t}_{1i}(c)]=[\mathbf{t}_{1i}(1),\mathbf{t}_{i1}(c)]
=\mathbf{h}_{11}(1,c)-\mathbf{h}_{i1}(1,c),$$
which yields that $\mathbf{h}_{11}(1,c)-\mathbf{h}_{i1}(1,c)=\mathbf{h}_{i1}(1,c)-\mathbf{h}_{11}(1,c)$. The claim follows since $\frac{1}{2}\in\Bbbk$.

Return to the proof of the lemma. Since $c_j=\bar{c}_j$ for $j=2,\ldots,m$, we deduce that
\begin{align*}
\sum\limits_{j=1}^m\mathbf{h}_{j1}(1,c_j)
=\mathbf{h}_{11}(1,c_1)+\sum\limits_{j=2}^m\mathbf{h}_{11}(1,c_j)
=\mathbf{h}_{11}\left(1,\sum\limits_{j=1}^mc_j\right)
=0,
\end{align*}
where $\sum_{j=1}^mc_j=0$ since $\sum_{j=1}^m\rho(c_j)=0$. Combining the fact that $d_k=0$ for $k=2,\ldots,n$, we have
$$\boldsymbol{x}=\sum\limits_{i\in I_{\boldsymbol{x}}}\boldsymbol{\lambda}(a_i,b_i),$$
which also satisfy $\sum\limits_{i\in I_{\boldsymbol{x}}}[a_i,b_i]=\sum\limits_{i\in I_{\boldsymbol{x}}}\overline{[a_i,b_i]}$ since
$\psi(\boldsymbol{x})=0$. This shows the converse inclusion and completes the proof.
\end{proof}

In order to identify $\ker\psi$ with $\fourIdx{}{-}{}{1}{\mathrm{HD}}(R,{}^-)$, we need more properties of $\boldsymbol{\lambda}(a,b)$:

\begin{lemma}
\label{lem:sto_h_rln}
The following equalities hold in $\mathfrak{sto}_{m|2n}(R,{}^-)$ with $(m,n)\neq(1,1)$:
\begin{enumerate}
\item $\boldsymbol{\lambda}(a,b)=\boldsymbol{\lambda}(\bar{a},\bar{b})$,
\item $\boldsymbol{\lambda}(a,b)=-(-1)^{|a||b|}\boldsymbol{\lambda}(b,a)$,
\item $(-1)^{|a||c|}\boldsymbol{\lambda}(ab,c)+(-1)^{|b||a|}\boldsymbol{\lambda}(bc,a)+(-1)^{|c||b|}\boldsymbol{\lambda}(ca,b)=0$,
\end{enumerate}
where $a,b,c\in R$ are homogeneous.
\end{lemma}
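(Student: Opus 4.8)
The three identities are exactly the relations generating the submodule $I_{\mathsf{d}}^{-}$ in the definition of $\langle R,R\rangle_{\mathsf{d}}^{-}$: bar-invariance, super-skew-symmetry, and the cyclic relation. So the plan is to verify them precisely so that $\langle a,b\rangle_{\mathsf{d}}^{-}\mapsto\boldsymbol{\lambda}(a,b)$ descends to a well-defined $\Bbbk$-linear map $\langle R,R\rangle_{\mathsf{d}}^{-}\rightarrow\mathfrak{sto}_{m|2n}^{0}(R,{}^{-})$, which together with Lemma~\ref{lem:sto_ker} will give Proposition~\ref{prop:sto_ker}. Throughout I would use the identity $\mathbf{h}_{1k}(a,b)=\boldsymbol{\lambda}(a,b)+(-1)^{|a||b|}\mathbf{h}_{1k}(1,ba)$, valid for every $1\leqslant k\leqslant n$ (this is \eqref{eq:sto_kp_1} rearranged, together with the definition of $\boldsymbol{\lambda}$), to pass between $\mathbf{h}_{1k}$ and $\boldsymbol{\lambda}$; the recurring mechanism is that the unwanted terms $\mathbf{h}_{1k}(1,\cdot)$ cancel.

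I would establish (ii) first, since (i) and (iii) both depend on it. The idea is to apply super-antisymmetry of the bracket to a ``diagonal-producing'' commutator from Lemma~\ref{lem:sto_lmd_kp}. For $n\geqslant2$ and $k\neq l$, write $[\mathbf{u}_{kl}(a),\mathbf{u}_{lk}(b)]=-(-1)^{|a||b|}[\mathbf{u}_{lk}(b),\mathbf{u}_{kl}(a)]$, expand both sides through the second relation of Lemma~\ref{lem:sto_lmd_kp} (with $i=1$), and convert by the key identity; the $\mathbf{h}_{1k}(1,ab)$ and $\mathbf{h}_{1l}(1,ba)$ contributions appear symmetrically and cancel, leaving exactly $\boldsymbol{\lambda}(a,b)=-(-1)^{|a||b|}\boldsymbol{\lambda}(b,a)$. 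When $m\geqslant2$ the same computation is run on $[\mathbf{t}_{12}(a),\mathbf{t}_{21}(b)]$ using the first relation of Lemma~\ref{lem:sto_lmd_kp}, after replacing the off-diagonal terms via $\mathbf{h}_{2k}(x,y)=\mathbf{h}_{1k}(x,y)+[\mathbf{t}_{21}(1),\mathbf{t}_{12}(xy)]$ (from the proof of Lemma~\ref{lem:sto_h_ele}); the extra commutators again occur symmetrically and drop out.

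For (i) I would exploit the bar-invariance $\mathbf{v}_{k}(a)=\mathbf{v}_{k}(\bar a)$ and $\mathbf{w}_{k}(a)=\mathbf{w}_{k}(\bar a)$ of Lemma~\ref{lem:sto_lmd34}. Evaluating the last relation of Lemma~\ref{lem:sto_lmd_kp} for $[\mathbf{v}_{k}(a),\mathbf{w}_{k}(b)]$ both directly and after replacing $a,b$ by $\bar a,\bar b$, then converting to $\boldsymbol{\lambda}$, yields $\boldsymbol{\lambda}(b,\bar a)-\boldsymbol{\lambda}(\bar b,a)=(-1)^{|a||b|}\mathbf{h}_{1k}(1,a_{-}b_{+})$ with $a_{-}=a-\bar a$, $b_{+}=b+\bar b$. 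Substituting $a\mapsto\bar a$ negates the right-hand side (since $(\bar a)_{-}=-a_{-}$), so adding the two instances kills the junk term and gives $\boldsymbol{\lambda}(b_{-},a_{+})=0$, that is $\boldsymbol{\lambda}(R_{-},R_{+})=0$. Since $\tfrac12\in\Bbbk$ gives $R=R_{+}\oplus R_{-}$, a short bilinear expansion writes $\boldsymbol{\lambda}(a,b)-\boldsymbol{\lambda}(\bar a,\bar b)$ as a $\Bbbk$-combination of $\boldsymbol{\lambda}(a_{+},b_{-})$ and $\boldsymbol{\lambda}(a_{-},b_{+})$, both of which vanish by the above and (ii); this is (i). The argument is uniform in $m,n$, as it uses only the $k=l$ case of the relation, available for all $n\geqslant1$.

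The substantial point is (iii), which I would derive from the graded Jacobi identity applied to a carefully chosen cyclic triple. For $n\geqslant2$ and $k\neq l$, take $X=\mathbf{g}_{k1}(a)$, $Y=\mathbf{f}_{1l}(b)$, $Z=\mathbf{u}_{lk}(c)$; by \eqref{STO28}, \eqref{STO17} and \eqref{STO18} the pairwise brackets are $\mathbf{u}_{kl}(ab)$, $\mathbf{f}_{1k}(bc)$ and $\mathbf{g}_{l1}(ca)$, so the three double brackets reduce---via the second relation of Lemma~\ref{lem:sto_lmd_kp} and the definition of $\mathbf{h}_{1\bullet}$---to combinations of $\boldsymbol{\lambda}(ab,c)$, $\boldsymbol{\lambda}(bc,a)$, $\boldsymbol{\lambda}(b,ca)$ together with terms $\mathbf{h}_{1k}(1,abc)$ and $\mathbf{h}_{1l}(1,cab)$. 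The decisive feature of this triple is that every diagonal $\mathbf{h}$ produced carries the single $m$-index $1$, so these junk terms cancel in the cyclic sum; rewriting $\boldsymbol{\lambda}(b,ca)$ by (ii) then turns the Jacobi identity into $-(-1)^{|a|+|b|}$ times the left-hand side of (iii). The remaining case $n=1$ (hence $m\geqslant2$) is handled by the analogous Jacobi identity on $(\mathbf{f}_{1k}(a),\mathbf{g}_{k2}(b),\mathbf{t}_{21}(c))$, reducing the $\mathbf{h}_{2k}$ contributions as in (ii). I expect the main difficulty to be the Koszul-sign bookkeeping and the verification that the junk terms cancel; choosing the triple so that all diagonal terms sit on index $1$ is what makes this tractable.
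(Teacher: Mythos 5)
Your proposal is correct --- I checked the super-antisymmetry derivation of (ii), the identity $\boldsymbol{\lambda}(b,\bar{a})-\boldsymbol{\lambda}(\bar{b},a)=(-1)^{|a||b|}\mathbf{h}_{1k}(1,(a-\bar{a})(b+\bar{b}))$ underlying your (i), and the cancellation of the $\mathbf{h}_{1k}(1,abc)$ and $\mathbf{h}_{1l}(1,cab)$ terms in your Jacobi triple for (iii) --- but it is organized differently from the paper's proof. The paper packages (ii) and (iii) into the single identity $\boldsymbol{\lambda}(ab,c)=\boldsymbol{\lambda}(a,bc)-(-1)^{(|a|+|b|)|c|}\boldsymbol{\lambda}(ca,b)$, obtained by expanding $\mathbf{h}_{12}(ab,c)$ (resp.\ $[\mathbf{t}_{12}(a),\mathbf{t}_{21}(bc)]$ when $n=1$) with the Jacobi identity; (ii) is then the case $a=1$, and (iii) follows by substituting (ii) back in. You instead get (ii) for free from super-antisymmetry of $[\mathbf{u}_{kl}(a),\mathbf{u}_{lk}(b)]$ (resp.\ $[\mathbf{t}_{12}(a),\mathbf{t}_{21}(b)]$) via Lemma~\ref{lem:sto_lmd_kp}, and your cyclic Jacobi triple $(\mathbf{g}_{k1}(a),\mathbf{f}_{1l}(b),\mathbf{u}_{lk}(c))$ for (iii) is a repackaging of the same computation --- in the $n=1$ case it literally reproduces the paper's master identity. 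The genuinely different part is (i): the paper proves it from $[\mathbf{v}_{12}(b),\mathbf{w}_{21}(a)]=[\mathbf{v}_{21}(\bar{b}),\mathbf{w}_{12}(\bar{a})]$ when $n\geqslant2$ and by a separate $\mathbf{t}$-based computation when $n=1$, whereas your argument via the bar-invariance of the diagonal elements $\mathbf{v}_k,\mathbf{w}_k$ together with $R=R_+\oplus R_-$ is uniform in $(m,n)$, at the price of needing (ii) first. Both routes rest on the same machinery (the identity (\ref{eq:sto_kp_1}) and Lemmas~\ref{lem:sto_lmd34} and~\ref{lem:sto_lmd_kp}), so neither buys extra generality; yours removes one case split in (i), while the paper's single master identity is more economical for (ii)--(iii).
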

\begin{proof}
We first consider the situation where $n\geqslant 2$:

(i) It follows from Lemma~\ref{lem:sto_lmd_kp} that
\begin{align*}
[\mathbf{v}_{12}(b),\mathbf{w}_{21}(a)]
&=(-1)^{(1+|a|)(1+|b|)}(\mathbf{h}_{11}(a,b)+\mathbf{h}_{12}(1,\overline{ab})),\\
[\mathbf{v}_{21}(\bar{b}),\mathbf{w}_{12}(\bar{a})]
&=(-1)^{(1+|a|)(1+|b|)}(\mathbf{h}_{12}(\bar{a},\bar{b})+\mathbf{h}_{11}(1,\overline{\bar{a}\bar{b}})).
\end{align*}
Note that $\mathbf{w}_{12}(\bar{a})=\mathbf{w}_{21}(a)$ and $\mathbf{v}_{12}(b)=\mathbf{v}_{21}(\bar{b})$, we obtained that
$$\boldsymbol{\lambda}(a,b)=\mathbf{h}_{11}(a,b)-(-1)^{|a||b|}\mathbf{h}_{11}(1,ba)
=\mathbf{h}_{12}(\bar{a},\bar{b})-(-1)^{|a||b|}\mathbf{h}_{12}(1,ba)
=\boldsymbol{\lambda}(\bar{a},\bar{b}).$$

For (ii) and (iii), we deduce from (\ref{eq:sto_kp_1}) that
\begin{align*}
\boldsymbol{\lambda}(ab,c)
&=\mathbf{h}_{12}(ab,c)-(-1)^{(|a|+|b|)|c|}\mathbf{h}_{12}(1,cab)\\
&=[\mathbf{f}_{12}(ab),\mathbf{g}_{21}(c)]
-(-1)^{(|a|+|b|)|c|}[\mathbf{f}_{12}(1),\mathbf{g}_{21}(cab)]\\
&=[[\mathbf{f}_{11}(a),\mathbf{u}_{12}(b)],\mathbf{g}_{21}(c)]
-(-1)^{(|a|+|b|)|c|}[\mathbf{f}_{12}(1),[\mathbf{u}_{21}(ca),\mathbf{g}_{11}(b)]]\\
&=[\mathbf{f}_{11}(a),[\mathbf{u}_{12}(b),\mathbf{g}_{21}(c)]]
-(-1)^{|b|(1+|a|)}[\mathbf{u}_{12}(b),[\mathbf{f}_{11}(a),\mathbf{g}_{21}(c)]]\\
&\quad-(-1)^{(|a|+|b|)|c|}[[\mathbf{f}_{12}(1),\mathbf{u}_{21}(ca)],\mathbf{g}_{11}(b)]\\
&\quad+(-1)^{|a|+|c|+|a||b|+|a||c|}[[\mathbf{f}_{12}(1),\mathbf{g}_{11}(b)],\mathbf{u}_{21}(ca)]\\
&=[\mathbf{f}_{11}(a),\mathbf{g}_{11}(bc)]+(-1)^{(1+|b|+|c|)(1+|a|)}[\mathbf{u}_{12}(b),\mathbf{u}_{21}(ca)]\\
&\quad-(-1)^{(|a|+|b|)|c|}[\mathbf{f}_{11}(ca),\mathbf{g}_{11}(b)]
+(-1)^{|a|+|b|+|c|+|a||b|+|a||c|}[\mathbf{u}_{12}(b),\mathbf{u}_{21}(ca)]\\
&=[\mathbf{f}_{11}(a),\mathbf{g}_{11}(bc)]-(-1)^{(|a|+|b|)|c|}[\mathbf{f}_{11}(ca),\mathbf{g}_{11}(b)]\\
&=\mathbf{h}_{11}(a,bc)-(-1)^{(|a|+|b|)|c|}\mathbf{h}_{11}(ca,b)\\
&=\mathbf{h}_{11}(a,bc)-(-1)^{|a|(|b|+|c|)}\mathbf{h}_{11}(1,bca)\\
&\quad+(-1)^{|a|(|b|+|c|)}\mathbf{h}_{11}(1,bca)
-(-1)^{(|a|+|b|)|c|}\mathbf{h}_{11}(ca,b)\\
&=\boldsymbol{\lambda}(a,bc)-(-1)^{(|a|+|b|)|c|}\boldsymbol{\lambda}(ca,b).
\end{align*}
We observe that $\boldsymbol{\lambda}(1,bc)=0$ from the definition of $\boldsymbol{\lambda}(a,b)$. Hence, (ii) follows from the above equality by setting $a=1$. Applying (ii) to the equality above, we obtain (iii).
\medskip

Now, we assume that $m\geqslant2$ and $n=1$ and show that (i), (ii) and (iii) also hold in this situation. We claim that
\begin{equation}
\boldsymbol{\lambda}(ab,c)=[\mathbf{t}_{12}(a),\mathbf{t}_{21}(bc)]
-(-1)^{|c|(|a|+|b|)}[\mathbf{t}_{12}(ca),\mathbf{t}_{21}(b)]
\label{eq:sto_h_rlns1}
\end{equation}
for homogeneous $a,b,c\in R$. Indeed, this can be proved as follows:
\begin{align*}
\boldsymbol{\lambda}(ab,c)
&=[\mathbf{f}_{11}(ab),\mathbf{g}_{11}(c)]
-(-1)^{(|a|+|b|)|c|}[\mathbf{f}_{11}(1),\mathbf{g}_{11}(cab)]\\
&=[[\mathbf{t}_{12}(a),\mathbf{f}_{21}(b)],\mathbf{g}_{11}(c)]
-(-1)^{(|a|+|b|)|c|}[\mathbf{f}_{11}(1),[\mathbf{g}_{12}(ca),\mathbf{t}_{21}(b)]]\\
&=[\mathbf{t}_{12}(a),[\mathbf{f}_{21}(b),\mathbf{g}_{11}(c)]]
-(-1)^{|a|(1+|b|)}[\mathbf{f}_{21}(b),[\mathbf{t}_{12}(a),\mathbf{g}_{11}(c)]]\\
&\quad-(-1)^{(|a|+|b|)|c|}[[\mathbf{f}_{11}(1),\mathbf{g}_{12}(ca)],\mathbf{t}_{21}(b)]\\
&\quad+(-1)^{(|a|+|b|)|c|+|a|+|c|}[\mathbf{g}_{12}(ca),[\mathbf{f}_{11}(1),\mathbf{t}_{21}(b)]]\\
&=[\mathbf{t}_{12}(a),\mathbf{t}_{21}(bc)]
+(-1)^{|a|(|b|+|c|)}[\mathbf{f}_{21}(b),\mathbf{g}_{12}(ca)]\\
&\quad-(-1)^{(|a|+|b|)|c|}[\mathbf{t}_{12}(ca),\mathbf{t}_{21}(b)]
-(-1)^{(|a|+|b|)|c|+|a|+|b|+|c|}[\mathbf{g}_{12}(ca),\mathbf{f}_{21}(b)]\\
&=[\mathbf{t}_{12}(a),\mathbf{t}_{21}(bc)]
-(-1)^{|c|(|a|+|b|)}[\mathbf{t}_{12}(ca),\mathbf{t}_{21}(b)].
\end{align*}

(i) Setting $a=1$ or $b=1$ in (\ref{eq:sto_h_rlns1}), we obtain
\begin{align*}
\boldsymbol{\lambda}(b,c)&=[\mathbf{t}_{12}(1),\mathbf{t}_{21}(bc)]
-(-1)^{|c||b|}[\mathbf{t}_{12}(c),\mathbf{t}_{21}(b)],\\
\boldsymbol{\lambda}(a,c)&=[\mathbf{t}_{12}(a),\mathbf{t}_{21}(c)]
-(-1)^{|c||a|}[\mathbf{t}_{12}(ca),\mathbf{t}_{21}(1)].
\end{align*}
Hence, we deduce that
\begin{align*}
\boldsymbol{\lambda}(\bar{a},\bar{b})
&=[\mathbf{t}_{12}(\bar{a}),\mathbf{t}_{21}(\bar{b})]
-(-1)^{|a||b|}[\mathbf{t}_{12}(\bar{b}\bar{a}),\mathbf{t}_{21}(1)]\\
&=-(-1)^{|a||b|}[\mathbf{t}_{12}(b),\mathbf{t}_{21}(a)]+[\mathbf{t}_{12}(1),\mathbf{t}_{21}(ab)]
=\boldsymbol{\lambda}(a,b)
\end{align*}

For (ii) and (iii), we deduce from Lemma~\ref{lem:sto_lmd_kp} that
\begin{align*}
[\mathbf{t}_{12}(a),\mathbf{t}_{21}(bc)]
&=\mathbf{h}_{11}(a,bc)-(-1)^{|a|(|b|+|c|)}\mathbf{h}_{21}(1,bca),\\
[\mathbf{t}_{12}(ca),\mathbf{t}_{21}(b)]
&=\mathbf{h}_{11}(ca,b)-(-1)^{|b|(|a|+|c|)}\mathbf{h}_{21}(1,bca).
\end{align*}
Now, (\ref{eq:sto_h_rlns1}) implies that
\begin{align*}
\boldsymbol{\lambda}(ab,c)=\mathbf{h}_{11}(a,bc)-(-1)^{|c|(|a|+|b|)}\mathbf{h}_{11}(ca,b)
=\boldsymbol{\lambda}(a,bc)-(-1)^{|c|(|a|+|b|)}\boldsymbol{\lambda}(ca,b).
\end{align*}
Hence, the equality (ii) holds by setting $a=1$, from which we conclude that (iii) also holds.
\end{proof}
\medskip

\begin{remark}
The equalities $\boldsymbol{\lambda}(a,1)=\boldsymbol{\lambda}(1,a)=0$ also hold in the Lie superalgebra $\mathfrak{sto}_{m|2n}(R,{}^-)$. Indeed, we know $\boldsymbol{\lambda}(1,a)=0$ from its definition, and $\boldsymbol{\lambda}(a,1)=0$ follows from Lemma~\ref{lem:sto_h_rln} (ii).
\end{remark}

Now, we may proceed to prove Proposition~\ref{prop:sto_ker}

\begin{proof}[Proof of Proposition~\ref{prop:sto_ker}]
By Lemma~\ref{lem:sto_h_rln}, there is a well-defined $\Bbbk$--linear map
$$\eta:\langle R,R\rangle_{\mathsf{d}}^-\rightarrow\mathfrak{sto}_{m|2n}(R,{}^-),\quad \langle a,b\rangle_{\mathsf{d}}^-\mapsto\boldsymbol{\lambda}(a,b).$$
We will prove that its restriction on $\fourIdx{}{-}{}{1}{\mathrm{HD}}(R,{}^-)$ is an isomorphism of $\Bbbk$-modules.
\medskip

We have already known from Lemma~\ref{lem:sto_ker} that $\eta(\fourIdx{}{-}{}{1}{\mathrm{HD}}(R,{}^-))=\ker\psi$. In order to show that $\eta$ is injective, we need define a $\Bbbk$-bilinear map
$$\alpha:\mathfrak{gl}_{m|2n}(R)\times\mathfrak{gl}_{m|2n}(R)\rightarrow\langle R, R\rangle_{\mathsf{d}}^-$$
by
$$\alpha(e_{ij}(a),e_{kl}(b))=\delta_{jk}\delta_{il}(-1)^{|i|(|i|+|a|+|b|)}\langle a,b\rangle_{\mathsf{d}}^-$$
for $1\leqslant i,j\leqslant m+2n$ and homogeneous $a,b\in R$. It is directly verified that $\alpha$ is a $2$-cocycle on the Lie superalgebra $\mathfrak{gl}_{m|2n}(R)$, i.e., $\alpha$ satisfies
\begin{align}
\alpha(x,y)=-&(-1)^{|x||y|}\alpha(y,x),&\tag{CC1}\label{eq:2cyl1}\\
(-1)^{|x||z|}\alpha([x,y],z)+(-1)^{|y||x|}&\alpha([y,z],x)+(-1)^{|z||y|}\alpha([z,x],y)=0,&\tag{CC2}\label{eq:2cyl2}
\end{align}
for homogeneous $x,y,z\in\mathfrak{gl}_{m|2n}(R)$. Hence, $\alpha$ induces a $2$-cocycle on the Lie superalgebra $\mathfrak{osp}_{m|2n}(R,{}^-)$ through restriction. This yields a new Lie superalgebra
$$\mathfrak{C}:=\mathfrak{osp}_{m|2n}(R,{}^-)\oplus\langle R,R\rangle_{\mathsf{d}}^-$$
with the multiplication
$$[x\oplus c, y\oplus c']=[x,y]\oplus \alpha(x,y),\quad x,y\in\mathfrak{osp}_{m|2n}(R,{}^-)\text{ and }c,c'\in\langle R,R\rangle_{\mathsf{d}}^-,$$
where $[x,y]$ denotes the multiplication in $\mathfrak{osp}_{m|2n}(R,{}^-)$.

In the Lie superalgebra $\mathfrak{C}$, we denote
\begin{align*}
\tilde{t}_{ij}(a):&=t_{ij}(a)\oplus0,
&\tilde{u}_{kl}(a):&=u_{kl}(a)\oplus0,
&\tilde{v}_{kl}(a):&=v_{kl}(a)\oplus0,\\
\tilde{w}_{kl}(a):&=w_{kl}(a)\oplus0,
&\tilde{f}_{i'k'}(a):&=f_{i'k'}(a)\oplus0,
&\tilde{g}_{k'i'}(a):&=g_{k'i'}(a)\oplus0,
\end{align*}
for $a\in R$, $1\leqslant i',i,j\leqslant m$ with $i\neq j$ and $1\leqslant k',k,l\leqslant n$ with $k\neq l$. These elements of $\mathfrak{C}$ satisfy all relations (\ref{STO01})-(\ref{STO28}). Hence, there is a canonical homomorphism of Lie superalgebras
$$\phi:\mathfrak{sto}_{m|2n}(R,{}^-)\rightarrow\mathfrak{C}$$
such that
\begin{align*}
\phi(\mathbf{t}_{ij}(a))&=\tilde{t}_{ij}(a),
&\phi(\mathbf{u}_{kl}(a))&=\tilde{u}_{kl}(a),
&\phi(\mathbf{v}_{kl}(a))&=\tilde{v}_{kl}(a),\\
\phi(\mathbf{w}_{kl}(a))&=\tilde{w}_{kl}(a),
&\phi(\mathbf{f}_{i'k'}(a))&=\tilde{f}_{i'k'}(a),
&\phi(\mathbf{g}_{k'i'}(a))&=\tilde{g}_{k'i'}(a).
\end{align*}
We now compute that
\begin{align*}
\phi(\mathbf{h}_{ik}(a,b))
&=\phi([\mathbf{f}_{ik}(a),\mathbf{g}_{ki}(b)])
=[\tilde{f}_{ik}(a),\tilde{g}_{ki}(b)]\\
&=[{f}_{ik}(a),{g}_{ki}(b)]\oplus \alpha({f}_{ik}(a),{g}_{ki}(b))\\
&=(e_{ii}(ab-\overline{ab})-(-1)^{(|a|+1)(|b|+1)}{u}_{kk}(ba))\oplus(\langle a,b\rangle_{\mathsf{d}}^-+\langle\bar{a},\bar{b}\rangle_{\mathsf{d}}^-)\\
&=(e_{ii}(ab-\overline{ab})-(-1)^{(|a|+1)(|b|+1)}{u}_{kk}(ba))\oplus2\langle a,b\rangle_{\mathsf{d}}^-.
\end{align*}
It yields that
\begin{align*}
\phi(\boldsymbol{\lambda}(a,b))&=\phi(\mathbf{h}_{11}(a,b)-(-1)^{|a||b|}\mathbf{h}_{11}(1,ba))\\
&=e_{11}([a,b]-\overline{[a,b]})\oplus(2\langle a,b\rangle_{\mathsf{d}}^--2\langle1,ba\rangle_{\mathsf{d}}^-).
\end{align*}
Note that $\langle 1,ba\rangle_{\mathsf{d}}^-=0$, we obtain
$$\phi(\boldsymbol{\lambda}(a,b))=e_{11}([a,b]-\overline{[a,b]})\oplus2\langle a,b\rangle_{\mathsf{d}}^-.$$
Hence,
\begin{align*}
\phi(\eta(\langle a,b\rangle))&=\phi(\boldsymbol{\lambda}(a,b))
=e_{11}([a,b]-\overline{[a,b]})\oplus2\langle a,b\rangle_{\mathsf{d}}^-,
\end{align*}
which shows that $\eta$ is injective.
\end{proof}

Summarizing the main results in this section, we have the following:

\begin{theorem}
\label{thm:osp_ce}
Let $m,n\in\mathbb{N}$ such that $(m,n)\neq(1,1)$ and $(R,{}^-)$ be a unital associative superalgebra with superinvolution. Then $\psi:\mathfrak{sto}_{m|2n}(R,{}^-)\rightarrow\mathfrak{osp}_{m|2n}(R,{}^-)$ is a central extension and
$$\ker\psi\cong\fourIdx{}{-}{}{1}{\mathrm{HD}}(R,{}^-)$$
as $\Bbbk$--modules.\qed
\end{theorem}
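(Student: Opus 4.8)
The plan is to assemble the two central results already established in this section, since the theorem is exactly their combination. The assertion that $\psi$ is a central extension is the content of Proposition~\ref{prop:osp_ce}: there one shows that any $\boldsymbol{x}\in\ker\psi$ must lie in $\mathfrak{sto}_{m|2n}^0(R,{}^-)$, using the $\Bbbk$-module decomposition of Proposition~\ref{prop:sto_tridec} together with the injectivity of $\psi$ on the off-diagonal part $\mathfrak{sto}_{m|2n}^1(R,{}^-)$, and then that such $\boldsymbol{x}$ is killed by $\mathrm{ad}$ on every generator because each bracket $[\boldsymbol{x},-]$ lands in $\mathfrak{sto}_{m|2n}^1(R,{}^-)$ yet maps to $0$ under $\psi$. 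Thus I would simply invoke Proposition~\ref{prop:osp_ce} for the first half and add nothing new.

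For the identification $\ker\psi\cong\fourIdx{}{-}{}{1}{\mathrm{HD}}(R,{}^-)$ I would invoke Proposition~\ref{prop:sto_ker}, and it is worth recalling its architecture. Lemma~\ref{lem:sto_h_rln} verifies that $\boldsymbol{\lambda}(a,b)$ satisfies precisely the three relations cutting out the quotient $\langle R,R\rangle_{\mathsf{d}}^-$, so that $\eta\colon\langle a,b\rangle_{\mathsf{d}}^-\mapsto\boldsymbol{\lambda}(a,b)$ is well defined; Lemma~\ref{lem:sto_ker} then pins down $\ker\psi$ as the $\eta$-image of exactly those classes with $\sum_i[a_i,b_i]=\sum_i\overline{[a_i,b_i]}$, which is the definition of $\fourIdx{}{-}{}{1}{\mathrm{HD}}(R,{}^-)$. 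Surjectivity of $\eta$ restricted to this subspace is therefore immediate, and the reduction in Lemma~\ref{lem:sto_ker} (showing the stray terms $\mathbf{h}_{j1}(1,c_j)$ and $\mathbf{h}_{1k}(1,d_k)$ collapse) is the bookkeeping that makes this clean.

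The genuine obstacle, and the step I would spend most effort on, is the \emph{injectivity} of $\eta$: the defining relations alone cannot detect nontriviality of $\boldsymbol{\lambda}(a,b)$, so one needs an independent trace-like invariant. To produce it I would build the abelian extension $\mathfrak{C}=\mathfrak{osp}_{m|2n}(R,{}^-)\oplus\langle R,R\rangle_{\mathsf{d}}^-$ governed by the form $\alpha(e_{ij}(a),e_{kl}(b))=\delta_{jk}\delta_{il}(-1)^{|i|(|i|+|a|+|b|)}\langle a,b\rangle_{\mathsf{d}}^-$, check that $\alpha$ is a $2$-cocycle (the cocycle conditions (CC1) and (CC2)), and verify that the lifted elements $\tilde{t}_{ij}(a),\dots,\tilde{g}_{k'i'}(a)$ satisfy all the relations (STO01)--(STO28), yielding a homomorphism $\phi\colon\mathfrak{sto}_{m|2n}(R,{}^-)\to\mathfrak{C}$. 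The decisive computation $\phi(\boldsymbol{\lambda}(a,b))=e_{11}([a,b]-\overline{[a,b]})\oplus 2\langle a,b\rangle_{\mathsf{d}}^-$ then exhibits a map recovering $2\langle a,b\rangle_{\mathsf{d}}^-$ from $\boldsymbol{\lambda}(a,b)$, and since $2$ is invertible in $\Bbbk$ this forces $\eta$ to be injective. Combining the central-extension property of Proposition~\ref{prop:osp_ce} with the resulting $\Bbbk$-module isomorphism completes the theorem.
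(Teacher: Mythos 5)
Your proposal is correct and follows the paper exactly: Theorem~\ref{thm:osp_ce} is stated there as a summary whose proof is precisely the combination of Proposition~\ref{prop:osp_ce} (the central-extension property via the decomposition of Proposition~\ref{prop:sto_tridec}) and Proposition~\ref{prop:sto_ker} (the identification of $\ker\psi$ via the map $\eta$ and the $2$-cocycle $\alpha$). Your recounting of the supporting lemmas and of the injectivity argument for $\eta$ matches the paper's own development, so nothing further is needed.
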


\section{The universal central extension of $\mathfrak{osp}_{m|2n}(R,{}^-)$}
\label{sec:uce}
In this section, we will show that the central extension $\psi:\mathfrak{sto}_{m|2n}(R,{}^-)\rightarrow\mathfrak{osp}_{m|2n}(R,{}^-)$ is universal for $(m,n)\ne(1,1),(2,1)$.

Let $\varphi:\mathfrak{E}\rightarrow\mathfrak{osp}_{m|2n}(R,{}^-)$ be an arbitrary central extension. For $a\in R$, $1\leqslant i',i,j\leqslant m$ with $i\neq j$ and $1\leqslant k',k,l\leqslant n$ with $k\neq l$, we pick
\begin{align*}
&\hat{t}_{ij}(a)\in\varphi^{-1}({t}_{ij}(a)),
\quad\hat{u}_{kl}(a)\in\varphi^{-1}({u}_{kl}(a)),\quad\hat{v}_{kl}(a)\in\varphi^{-1}({v}_{kl}(a)),\\
&\hat{w}_{kl}(a)\in\varphi^{-1}({w}_{kl}(a)),\quad\hat{f}_{i'k'}(a)\in\varphi^{-1}({f}_{i'k'}(a)),\quad\hat{g}_{k'i'}(a)\in\varphi^{-1}({g}_{k'i'}(a)).
\end{align*}
Then $[\hat{x},\hat{y}]$ is independent of the choice of the representatives $\hat{x}\in\varphi^{-1}(x)$ and $\hat{y}\in\varphi^{-1}(y)$ for $x,y\in\mathfrak{osp}_{m|2n}(R,{}^-)$. Hence, we have the following well-defined elements in $\mathfrak{E}$:
\begin{equation}
\begin{aligned}
\tilde{h}_{k}:&=[\hat{g}_{k1}(1),\hat{f}_{1k}(1)],\\
\tilde{t}_{ij}(a):&=[\hat{f}_{i1}(1),\hat{g}_{1j}(a)],
&\tilde{u}_{kl}(a):&=[\hat{g}_{k1}(1),\hat{f}_{1l}(a)],\\
\tilde{v}_{kl}(a):&=-[\hat{g}_{k1}(a),\hat{g}_{l1}(1)],
&\tilde{w}_{kl}(a):&=[\hat{f}_{1k}(1),\hat{f}_{1l}(a)],\\
\tilde{f}_{i'k'}(a):&=[\hat{f}_{i'k'}(a),\tilde{h}_{k'}],
&\tilde{g}_{k'i'}(a):&=[\tilde{h}_{k'},\hat{g}_{k'i'}(a)],
\end{aligned}\label{eq:osptilde}
\end{equation}
where $a\in R$, $1\leqslant i',i,j\leqslant m$ with $i\neq j$ and $1\leqslant k',k,l\leqslant n$ with $k\neq l$.

\begin{lemma}
\label{lem:tilde_rln}
Suppose that $m,n\in\mathbb{N}$. In the Lie superalgebra $\mathfrak{E}$, the elements $\tilde{t}_{ij}(a)$, $\tilde{u}_{kl}(a)$, $\tilde{v}_{kl}(a)$, $\tilde{w}_{kl}(a)$, $\tilde{f}_{i'k'}(a)$, $\tilde{g}_{k'i'}(a)$, where $a\in R$, $1\leqslant i',i,j\leqslant m$ with $i\neq j$ and $1\leqslant k',k,l\leqslant n$ with $k\neq l$ satisfy relations {\normalfont(\ref{STO00}), (\ref{STO02})-(\ref{STO26}), (\ref{STO28})} and the following relation:
\begin{align}
[\tilde{f}_{ik}(a),\tilde{g}_{lj}(b)]&=0,&&\text{for }i\neq j\text{ and }k\neq l.\tag{STO27a}\label{STO27a}
\end{align}
\end{lemma}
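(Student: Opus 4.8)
The engine of this proof is the \emph{reduction principle} available in any central extension. Since $\ker\varphi$ is central, the bracket $[\hat{x},\hat{y}]$ of two representatives $\hat{x}\in\varphi^{-1}(x)$, $\hat{y}\in\varphi^{-1}(y)$ depends only on $x,y$; moreover, if $[x,y]=0$ in $\mathfrak{osp}_{m|2n}(R,{}^-)$ then $[\hat{x},\hat{y}]\in\ker\varphi$ is itself central and is therefore annihilated by every further bracket, while if $[x,y]$ equals one of the chosen generators $w$ then $[\hat{x},\hat{y}]$ differs from the designated lift of $w$ by a central element, so that $[\hat{z},[\hat{x},\hat{y}]]=[\hat{z},\hat{w}]$ for any $\hat{z}$. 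Combined with the super Jacobi identity, this lets me expand any defining double commutator of (\ref{eq:osptilde}) into single brackets of the representatives $\hat{f},\hat{g},\hat{t},\hat{u},\hat{v},\hat{w}$, read off each inner bracket from the identities of Proposition~\ref{prop:ospalg} and (\ref{eq:osp_gen01})--(\ref{eq:osp_gen10}), and discard the central ambiguities once an outer bracket is applied. I emphasize that the hypothesis is merely $m,n\in\mathbb{N}$: every relation whose indices cannot be chosen distinct (e.g. (\ref{STO03}) when $m<3$ or (\ref{STO06}) when $n<3$) is vacuous and needs no argument.

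First I would record the facts that power everything else. Applying $\varphi$ to the definitions and using (\ref{eq:osp_gen04}) gives $\varphi(\tilde{h}_{k})=u_{kk}(1)$, whence, by (\ref{eq:osp_gen09}) and (\ref{eq:osp_gen10}), each of $\tilde{t}_{ij}(a),\tilde{u}_{kl}(a),\tilde{v}_{kl}(a),\tilde{w}_{kl}(a),\tilde{f}_{i'k'}(a),\tilde{g}_{k'i'}(a)$ is a genuine lift of the corresponding element of $\mathfrak{osp}_{m|2n}(R,{}^-)$, with $\tilde{h}_{k}$ playing the role of the diagonal $u_{kk}(1)$. The $\Bbbk$-linearity (\ref{STO00}) is then immediate: each tilde element is obtained by bracketing a single representative against a \emph{fixed} second entry, and the failure of the chosen representatives to depend linearly on their argument lies in $\ker\varphi$, which that bracket kills.

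With these in hand I would verify the remaining relations in blocks, in each case selecting the most convenient defining expression and pushing the outer bracket through by Jacobi. The commutators (\ref{STO03})--(\ref{STO14}) among $\tilde{t},\tilde{u},\tilde{v},\tilde{w}$ and the mixed relations (\ref{STO15})--(\ref{STO26}), (\ref{STO28}) reduce directly once one rewrites, say, $\tilde{t}_{ij}(a)=[\hat{f}_{i1}(1),\hat{g}_{1j}(a)]$ or $\tilde{f}_{i'k'}(a)=[\hat{f}_{i'k'}(a),\tilde{h}_{k'}]$ and applies the reduction principle to every inner bracket that appears. The symmetry relations (\ref{STO02}) are handled by comparing two lifts of the same element of $\mathfrak{osp}_{m|2n}(R,{}^-)$: expanding both $\tilde{v}_{kl}(a)$ and $\tilde{v}_{lk}(\bar a)$ (respectively $\tilde{w}$) through a common three-fold bracket and reducing shows the central discrepancy vanishes. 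For the restricted relation (\ref{STO27a}) I compute $[\tilde{f}_{ik}(a),\tilde{g}_{lj}(b)]$ with $i\neq j$ and $k\neq l$; its image under $\varphi$ is $\delta_{kl}t_{ij}(ab)=0$, so the bracket is central, and expanding $\tilde{g}_{lj}(b)=[\tilde{h}_{l},\hat{g}_{lj}(b)]$ reduces every inner bracket to one whose $\mathfrak{osp}$-image is trivial (precisely because $k\neq l$), forcing the central value to be $0$.

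The delicate point, and the reason (\ref{STO01}) and the full (\ref{STO27}) are \emph{not} asserted here, is that $\tilde{t}_{ij}(a)$ is defined asymmetrically through the single pair $(\hat{f}_{i1}(1),\hat{g}_{1j}(a))$: there is no a priori control over the competing lift $\tilde{t}_{ji}(\bar a)$, nor over $[\tilde{f}_{ik}(a),\tilde{g}_{kj}(b)]$ in the diagonal case $k=l$, whose $\varphi$-image is the \emph{nonzero} element $t_{ij}(ab)$ and which therefore cannot be forced to vanish by the reduction principle alone. Pinning down those two identifications requires a separate consistency argument and, ultimately, the stronger rank hypothesis $(m,n)\neq(2,1)$ used later in this section; within the present lemma I sidestep them by proving only the weaker (\ref{STO27a}). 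Consequently I expect the bulk of the labor to be sign-bookkeeping in the super Jacobi expansions of the odd relations (\ref{STO19}) and (\ref{STO22})--(\ref{STO25}), where the generators $\hat{f},\hat{g}$ and the superinvolution interact, rather than any conceptual difficulty.
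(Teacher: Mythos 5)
Your proposal is correct and follows essentially the same route as the paper: the paper's proof is exactly this case-by-case verification using the centrality of $\ker\varphi$, the well-definedness of brackets of preimages, and the super Jacobi identity to expand the defining double commutators of (\ref{eq:osptilde}), with only (\ref{STO05}) worked explicitly and the remaining cases declared "straightforward but tedious." Your articulation of the reduction principle, your treatment of (\ref{STO27a}), and your explanation of why (\ref{STO01}) and (\ref{STO27b}) cannot be obtained at this stage all match the paper's reasoning.
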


\begin{remark}
We should notice that (\ref{STO27a}) is deferent from (\ref{STO27}). Indeed, (\ref{STO27}) is equivalent to (\ref{STO27a}) and the following
\begin{equation}
[\mathbf{f}_{ik}(a),\mathbf{g}_{kj}(b)]=\mathbf{t}_{ij}(ab),\quad\text{for }i\neq j.\tag{STO27b}\label{STO27b}
\end{equation}
We will see in the next section that (\ref{STO01}) and (\ref{STO27b}) are not necessarily true in a central extension of $\mathfrak{osp}_{2|2}(R,{}^-)$.
\end{remark}

\begin{proof}[Proof of Lemma~\ref{lem:tilde_rln}]
The proof follows from a case by case verification. The computation is quite straightforward but tedious. We only show (\ref{STO05}) here as an example.

In order to show that $\tilde{t}_{ij}(a)$ and $\tilde{u}_{kl}(b)$ satisfy (\ref{STO05}), we claim that
$$[\tilde{h}_k,\tilde{t}_{ij}(a)]=0,\text{ and }[\tilde{h}_k,\tilde{u}_{kl}(b)]=\tilde{u}_{kl}(b).$$
These can be directly verified as follows:
\begin{align*}
[\tilde{h}_{k}, \tilde{t}_{ij}(a)]
&=[\tilde{h}_{k},[\hat{f}_{i1}(1),\hat{g}_{1j}(a)]]
=-\delta_{k1}[\hat{f}_{i1}(1),\hat{g}_{1j}(a)]+\delta_{k1}[\hat{f}_{i1}(1), \hat{g}_{1j}(a)]
=0,\\
[\tilde{h}_{k}, \tilde{u}_{kl}(a)]
&=[\tilde{h}_{k},[\hat{g}_{k1}(1),\hat{f}_{1l}(a)]]
=[\tilde{g}_{k1}(1),\hat{f}_{1l}(a)]+0
=\tilde{u}_{kl}(a).
\end{align*}

Now, we observe that $[\tilde{t}_{ij}(a),\tilde{u}_{kl}(b)]\in\ker\varphi$ which is contained in the center of $\mathfrak{E}$. Using the claim, we deduce that
$$0=[\tilde{h}_k,[\tilde{t}_{ij}(a),\tilde{u}_{kl}(b)]]
=[[\tilde{h}_k,\tilde{t}_{ij}(a)],\tilde{u}_{kl}(b)]
+[\tilde{t}_{ij}(a),[\tilde{h}_k,\tilde{u}_{kl}(b)]]
=0+[\tilde{t}_{ij}(a),\tilde{u}_{kl}(b)],$$
which shows $\tilde{t}_{ij}(a)$ and $\tilde{u}_{kl}(b)$ for $i\neq j$ and $k\neq l$ satisfy (\ref{STO05}).

To avoid the tedious computational details, we omit the verification for other relations here.
\end{proof}

\begin{theorem}
\label{thm:sto_uce}
Let $(R,^-)$ be a unital associative superalgebra with superinvolution.
Suppose $m,n\in\mathbb{N}$ such that $(m,n)\neq(1,1), (2,1)$. Then the central extension $$\psi:\mathfrak{sto}_{m|2n}(R,^-)\rightarrow\mathfrak{osp}_{m|2n}(R,{}^-)$$
is universal.
\end{theorem}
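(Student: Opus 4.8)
The plan is to verify the universal property of $\psi$ directly. Recall that $\mathfrak{sto}_{m|2n}(R,{}^-)$ is perfect (Proposition~\ref{prop:sto_pft}) and that $\psi$ is a central extension (Theorem~\ref{thm:osp_ce}). Consequently it suffices to produce, for an arbitrary central extension $\varphi\colon\mathfrak{E}\to\mathfrak{osp}_{m|2n}(R,{}^-)$, a Lie superalgebra homomorphism $\theta\colon\mathfrak{sto}_{m|2n}(R,{}^-)\to\mathfrak{E}$ with $\varphi\circ\theta=\psi$: uniqueness of $\theta$ is then automatic, since two such lifts differ on generators only by central elements of $\ker\varphi$ and hence agree on $[\mathfrak{sto}_{m|2n}(R,{}^-),\mathfrak{sto}_{m|2n}(R,{}^-)]=\mathfrak{sto}_{m|2n}(R,{}^-)$.

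To build $\theta$, I would map the generators of $\mathfrak{sto}_{m|2n}(R,{}^-)$ to the elements $\tilde t_{ij}(a),\tilde u_{kl}(a),\tilde v_{kl}(a),\tilde w_{kl}(a),\tilde f_{i'k'}(a),\tilde g_{k'i'}(a)$ of $\mathfrak{E}$ constructed in~(\ref{eq:osptilde}). By Lemma~\ref{lem:tilde_rln} these elements already obey every defining relation of $\mathfrak{sto}_{m|2n}(R,{}^-)$ except possibly (\ref{STO01}) and the component (\ref{STO27b}) of (\ref{STO27}). Thus the entire theorem reduces to establishing, for $(m,n)\neq(1,1),(2,1)$, the two identities $\tilde t_{ij}(a)=-\tilde t_{ji}(\bar a)$ and $[\tilde f_{ik}(a),\tilde g_{kj}(b)]=\tilde t_{ij}(ab)$ for $i\neq j$ in $\mathfrak{E}$. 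Granting these, the presentation of $\mathfrak{sto}_{m|2n}(R,{}^-)$ yields $\theta$, and $\varphi\circ\theta=\psi$ holds on generators because, for instance, $\varphi(\tilde t_{ij}(a))=[f_{i1}(1),g_{1j}(a)]=t_{ij}(a)$.

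The crux is therefore the derivation of (\ref{STO27b}) and (\ref{STO01}), and the decisive point is that the hypothesis $(m,n)\neq(1,1),(2,1)$ is exactly the statement that $m\geqslant3$ or $n\geqslant2$, so that a ``third index'' is always available. For (\ref{STO27b}) with $n\geqslant2$ I would fix $l\neq k$, use (\ref{STO17}) to write both $\tilde f_{ik}(a)=[\tilde f_{il}(a),\tilde u_{lk}(1)]$ and $\tilde f_{ik}(a)=[\tilde f_{il}(1),\tilde u_{lk}(a)]$, expand $[\tilde f_{ik}(a),\tilde g_{kj}(b)]$ by the super-Jacobi identity, and collapse the terms via (\ref{STO18}) and the vanishing (\ref{STO27a}) (valid since $l\neq k$, $i\neq j$); this shows the bracket is independent of the symplectic index and depends only on $ab$, whence it equals $[\tilde f_{i1}(1),\tilde g_{1j}(ab)]=\tilde t_{ij}(ab)$ by the definition in~(\ref{eq:osptilde}). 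For $n=1$ (so $m\geqslant3$) I would instead fix $p\neq i,j$, write $\tilde f_{i1}(a)=[\tilde t_{ip}(a),\tilde f_{p1}(1)]$ by (\ref{STO15}), and reduce $[\tilde f_{i1}(a),\tilde g_{1j}(b)]$ through Jacobi, (\ref{STO16}) (which annihilates the spurious term because $j\neq i,p$) and (\ref{STO03}) to $[\tilde t_{ip}(a),\tilde t_{pj}(b)]=\tilde t_{ij}(ab)$.

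With (\ref{STO27b}) in hand, (\ref{STO01}) follows by a symmetry manipulation. For $n\geqslant2$ and $l\neq k$ one rewrites $\tilde f_{ik}(c)$ through (\ref{STO22}) as $-[\tilde g_{li}(1),\tilde w_{kl}(\bar c)]$, so that $\tilde t_{ij}(c)=[\tilde f_{ik}(c),\tilde g_{kj}(1)]$ expands by Jacobi into a term that dies by (\ref{STO26}) (as $i\neq j$) and a term of the form $[\tilde g_{li}(1),\tilde f_{jl}(\bar c)]$; applying the super-antisymmetry and (\ref{STO27b}) to the latter produces exactly $-\tilde t_{ji}(\bar c)$ once the $\mathbb{Z}/2\mathbb{Z}$-signs are tallied. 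The case $n=1$, $m\geqslant3$ is handled by the parallel computation in which the symplectic index $l$ is replaced by an orthogonal index $p\neq i,j$ and (\ref{STO22}), (\ref{STO26}) give way to (\ref{STO15}), (\ref{STO16}) and (\ref{STO03}). I expect the main obstacle to be precisely this derivation of (\ref{STO01}): it is the one relation that cannot be read off from Lemma~\ref{lem:tilde_rln}, its proof requires genuinely reorganizing a central element as a bracket through an auxiliary index, and the superalgebra signs must be propagated through the Jacobi identity with care. This is also where the exclusion of $(m,n)=(2,1)$ becomes visible, since there no third index exists and the argument breaks down, forcing the separate treatment of $\mathfrak{osp}_{2|2}(R,{}^-)$ in Section~\ref{sec:osp22}.
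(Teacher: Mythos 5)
Your proposal is correct and follows essentially the same route as the paper: lift the generators to $\mathfrak{E}$ via~(\ref{eq:osptilde}), invoke Lemma~\ref{lem:tilde_rln} to reduce everything to (\ref{STO01}) and (\ref{STO27b}), and establish those two relations by routing through an auxiliary index, splitting into the cases $n\geqslant2$ and $n=1$, $m\geqslant3$ exactly as the hypothesis $(m,n)\neq(1,1),(2,1)$ permits. The only cosmetic differences are that your derivation of (\ref{STO01}) for $n\geqslant2$ passes through $\mathbf{w}$ and (\ref{STO22}) where the paper uses $\mathbf{v}$ and (\ref{STO19}), and your uniqueness argument appeals to perfectness of $\mathfrak{sto}_{m|2n}(R,{}^-)$ rather than the paper's direct observation that each bracket $[\varphi^{-1}(x),\varphi^{-1}(y)]$ is a single element; both are valid.
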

\begin{proof}
Let $\varphi:\mathfrak{E}\rightarrow\mathfrak{osp}_{m|2n}(R,{}^-)$ be an arbitrary central extension. We have to show that there is a unique homomorphism $\varphi':\mathfrak{sto}_{m|2n}(R,{}^-)\rightarrow\mathfrak{E}$ such that $\varphi\circ\varphi'=\psi$.

Take the elements $\tilde{t}_{ij}(a),$ $\tilde{u}_{kl}(a),$ $\tilde{v}_{kl}(a),$ $\tilde{w}_{kl}(a),$ $\tilde{f}_{i'k'}(a),$ $\tilde{g}_{k'i'}(a)\in\mathfrak{E}$ for $a\in R$, $1\leqslant i',i, j\leqslant m$ with $i\neq j$ and $1\leqslant k',k,l\leqslant n$ with $k\neq l$ as in (\ref{eq:osptilde}). We have already known from Lemma~\ref{lem:tilde_rln} that they satisfy all relations (\ref{STO00}), (\ref{STO02})--(\ref{STO26}) and (\ref{STO28}). Under the additional assumption that $(m,n)\ne (1,1), (2,1)$, we will show that these elements also satisfy relations (\ref{STO01}) and (\ref{STO27}).

For (\ref{STO27}), we have obtained (\ref{STO27a}) in Lemma~\ref{lem:tilde_rln}. It suffices to show (\ref{STO27b}). Since (\ref{STO27b}) is null if $m=1$, we assume $m\geqslant2$ and $1\leqslant i\neq j\leqslant m$. We need to prove (\ref{STO27b}) in the case of $n\geqslant2$ and the case of $n=1$ separately. If $n\geqslant2$, we choose $1\leqslant l\neq k\leqslant n$ and deduce that
\begin{align*}
[\tilde{f}_{ik}(a),\tilde{g}_{kj}(b)]
&=[[\tilde{f}_{il}(1),\tilde{u}_{lk}(a)],\tilde{g}_{kj}(b)]\\
&=[\tilde{f}_{il}(1),[\tilde{u}_{lk}(a),\tilde{g}_{kj}(b)]]-(-1)^{|a|}[\tilde{u}_{lk}(a),[\tilde{f}_{il}(1),\tilde{g}_{kj}(b)]]\\
&=[\tilde{f}_{il}(1),\tilde{g}_{lj}(ab)]]
\end{align*}
which is equal to $\tilde{t}_{ij}(ab)$ when $l=1$. If $l\neq1$, the same argument shows that
\begin{align*}
[\tilde{f}_{il}(1),\tilde{g}_{lj}(ab)]=[\tilde{f}_{i1}(1),\tilde{g}_{1j}(ab)]=\tilde{t}_{ij}(ab).
\end{align*}

If $n=1$, we may assume $m\geqslant3$ since $(m,n)\neq(1,1),(2,1)$. Choose $i'$ such that $i'\ne i,j$. Then
\begin{align*}
[\tilde{f}_{i1}(a),\tilde{g}_{1j}(b)]
&=[[\tilde{t}_{ii'}(a),\tilde{f}_{i'1}(1)],\tilde{g}_{1j}(b)]\\
&=[\tilde{t}_{ii'}(a),[\tilde{f}_{i'1}(1),\tilde{g}_{1j}(b)]]-(-1)^{|a|}[\tilde{f}_{i'1}(1),[\tilde{t}_{ii'}(a),\tilde{g}_{1j}(b)]]\\
&=[\hat{t}_{ii'}(a),\tilde{t}_{i'j}(b)]]\\
&=\tilde{t}_{ij}(ab),
\end{align*}
where the last equality follows from (\ref{STO03}) that has been proved in Lemma~\ref{lem:tilde_rln}. Hence, (\ref{STO27b}) holds for $(m,n)\neq(1,1), (2,1)$.

The relation (\ref{STO01}) can be proved similarly. We also assume that $m\geqslant2$ and $1\leqslant i\neq j\leqslant m$. We consider the case of $n\geqslant2$ and the case of $n=1$ separately. If $n\geqslant2$, we have $\hat{f}_{j2}(1),\hat{g}_{2i}(a),\hat{v}_{12}(\bar{a})\in\mathfrak{E}$ and deduce that
\begin{align*}
\tilde{t}_{ij}(\bar{a})&=[\hat{f}_{i1}(1),\hat{g}_{1j}(\bar{a})]\\
&=-(-1)^{|a|}[\hat{f}_{i1}(1),[\hat{f}_{j2}(1),\hat{v}_{21}(a)]]\\
&=-(-1)^{|a|}[[\hat{f}_{i1}(1),\hat{f}_{j2}(1)],\hat{v}_{21}(a)]+(-1)^{|a|}[\hat{f}_{j2}(1),[\hat{f}_{i1}(1),\hat{v}_{12}(\bar{a})]]\\
&=-[\hat{f}_{j2}(1),\hat{g}_{2i}(a)]\\
&=-\tilde{t}_{ji}(a),
\end{align*}
where the last equality follows from (\ref{STO27b}) that we just proved.

In the case where $n=1$, we know that $m\geqslant3$ since $(m,n)\neq(1,1),(2,1)$.  Choose $j'$ such that $j'\ne i,j$, then (\ref{STO01}) follows from the computation as follows:
$$\tilde{t}_{ij}(\bar{a})=[\tilde{t}_{ij'}(\bar{a}),\tilde{t}_{j'j}(1)]=[\hat{t}_{j'i}(a),\hat{t}_{jj'}(1)]]=-\tilde{t}_{ji}(a),$$
where the last equality follows from (\ref{STO03}) that has been proved in Lemma~\ref{lem:tilde_rln}.

Summarizing, the elements $\tilde{t}_{ij}(a),\tilde{u}_{kl}(a),\tilde{v}_{kl}(a),\tilde{w}_{kl}(a),\tilde{f}_{i'k'}(a),\tilde{g}_{k'i'}(a)$ for $a\in R$, $1\leqslant i',i,j\leqslant m$ with $i\neq j$ and $1\leqslant k',k,l\leqslant n$ satisfy all relations (\ref{STO00})-(\ref{STO28}). Hence, there is a homomorphism of Lie superalgebras
$$\varphi':\mathfrak{sto}_{m|2n}(R,{}^-)\rightarrow\mathfrak{E}$$
such that
\begin{align*}
\varphi'(\mathbf{t}_{ij}(a))&=\tilde{t}_{ij}(a),
&\varphi'(\mathbf{u}_{kl}(a))&=\tilde{u}_{kl}(a),
&\varphi'(\mathbf{v}_{kl}(a))&=\tilde{v}_{kl}(a),\\
\varphi'(\mathbf{w}_{kl}(a))&=\tilde{w}_{kl}(a),
&\varphi'(\mathbf{f}_{ik}(a))&=\tilde{f}_{ik}(a),
&\varphi'(\mathbf{g}_{ki}(a))&=\tilde{g}_{ki}(a).
\end{align*}
i.e., $\varphi\circ\varphi'=\psi$.

The homomorphism $\varphi'$ above is unique. Suppose that $\tilde{\varphi}':\mathfrak{sto}_{m|2n}(R,{}^-)\rightarrow\mathfrak{osp}_{m|2n}(R,{}^-)$ is another homomorphism satisfying $\varphi\circ\tilde{\varphi}'=\psi$. Since $\mathbf{t}_{ij}(a)=[\mathbf{f}_{i1}(a),\mathbf{g}_{1j}(1)]$ in $\mathfrak{osp}_{m|2n}(R,{}^-)$, we deduce that
$$\tilde{\varphi}'(\mathbf{t}_{ij}(a))=[\tilde{\varphi}'(\mathbf{f}_{i1}(a)),\tilde{\varphi}'(\mathbf{g}_{1j}(1))]
\in[\varphi^{-1}(f_{i1}(a)),\varphi^{-1}(g_{1j}(1)]$$
which contains the unique element $\tilde{t}_{ij}(a)=\varphi'(\mathbf{t}_{ij}(a))$ since $\varphi:\mathfrak{E}\rightarrow\mathfrak{osp}_{m|2n}(R,{}^-)$ is a central extension. Similarly, $\tilde{\varphi}'$ and $\varphi'$ coincide when evaluating at $\mathbf{u}_{kl}(a),\mathbf{v}_{kl}(a),\mathbf{w}_{kl}(a),\mathbf{f}_{i'k'}(a),\mathbf{g}_{k'i'}(a)$ for $a\in R$, $1\leqslant i'\leqslant m$ and $1\leqslant k',k,l\leqslant n$ with $k\neq l$. It follows that $\tilde{\varphi}'=\varphi'$. This shows the uniqueness of $\varphi'$ and the universality of $\psi$ follows.
\end{proof}

Theorems~\ref{thm:osp_ce} and~\ref{thm:sto_uce} imply that
\begin{corollary}
\label{cor:hml}
$\mathrm{H}_2(\mathfrak{osp}_{m|2n}(R,{}^-),\Bbbk)=\fourIdx{}{-}{}{1}{\mathrm{HD}}(R,{}^-)$ if $(m,n)\neq(1,1), (2,1)$.\qed
\end{corollary}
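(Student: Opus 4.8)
The plan is to deduce the corollary formally from the two preceding theorems together with the standard homological interpretation of the universal central extension of a perfect Lie superalgebra. Recall the general principle: for any perfect Lie superalgebra $\mathfrak{g}$ over $\Bbbk$, a universal central extension $\pi:\mathfrak{u}\rightarrow\mathfrak{g}$ exists, is unique up to isomorphism, and its kernel is canonically isomorphic to the second homology group $\mathrm{H}_2(\mathfrak{g},\Bbbk)$ with trivial coefficients. This is the $\mathbb{Z}/2\mathbb{Z}$-graded analogue of the classical theorem for Lie algebras and is valid over an arbitrary commutative base ring with $2$ invertible (see the surveys \cite{Neher2003} and \cite{ScheunertZhang1998}). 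In particular the $\Bbbk$-module $\ker\pi$ is a well-defined invariant of $\mathfrak{g}$, equal to $\mathrm{H}_2(\mathfrak{g},\Bbbk)$.

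First I would verify the hypotheses for $\mathfrak{g}=\mathfrak{osp}_{m|2n}(R,{}^-)$. Proposition~\ref{prop:ospalg}(iii) guarantees that $\mathfrak{osp}_{m|2n}(R,{}^-)$ is perfect, so the general principle applies. For $(m,n)\neq(1,1),(2,1)$, Theorem~\ref{thm:sto_uce} asserts that
$$\psi:\mathfrak{sto}_{m|2n}(R,{}^-)\rightarrow\mathfrak{osp}_{m|2n}(R,{}^-)$$
is a universal central extension; hence by the homological interpretation above,
$$\ker\psi\cong\mathrm{H}_2(\mathfrak{osp}_{m|2n}(R,{}^-),\Bbbk)$$
as $\Bbbk$-modules.

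Finally I would invoke Theorem~\ref{thm:osp_ce}, which for $(m,n)\neq(1,1)$ --- in particular throughout our range --- identifies $\ker\psi\cong\fourIdx{}{-}{}{1}{\mathrm{HD}}(R,{}^-)$. Composing the two isomorphisms yields
$$\mathrm{H}_2(\mathfrak{osp}_{m|2n}(R,{}^-),\Bbbk)\cong\fourIdx{}{-}{}{1}{\mathrm{HD}}(R,{}^-),$$
which is the assertion of the corollary. The argument is essentially formal, and no further computation is required; the only point that genuinely needs care is confirming that the identification of the kernel of a universal central extension with the second homology group transfers verbatim to the super setting over a general $\Bbbk$ with $2$ invertible. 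Since this is standard, I would simply cite \cite{Neher2003}, so this last step does not constitute a real obstacle.
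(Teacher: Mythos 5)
Your argument is correct and is exactly the paper's (the paper simply states that Theorems~\ref{thm:osp_ce} and~\ref{thm:sto_uce} imply the corollary, leaving implicit the standard identification of the kernel of a universal central extension of a perfect Lie superalgebra with $\mathrm{H}_2$). Your extra care in citing perfectness via Proposition~\ref{prop:ospalg}(iii) and the super/general-base-ring version of that identification is appropriate but adds nothing beyond what the paper intends.
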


\begin{remark}
\label{rmk:hml_com}
If $R$ is a unital super-commutative associative superalgebra with the identity superinvolution, then the Lie superalgebra $\mathfrak{osp}_{m|2n}(R,\mathrm{id})$ is isomorphic to $\mathfrak{osp}_{m|2n}(\Bbbk)\otimes_{\Bbbk}R$ (see Example~\ref{eg:osp_com}). Hence,
$$\mathrm{H}_2(\mathfrak{osp}_{m|2n}(\Bbbk)\otimes_{\Bbbk}R,\Bbbk)
=\fourIdx{}{-}{}{1}{\mathrm{HD}}(R,\mathrm{id})
=\mathrm{HC}_1(R),$$
for $(m,n)\neq (1,1),(2,1)$, which coincides with the second homology groups of $\mathfrak{osp}_{m|2n}(\Bbbk)\otimes_{\Bbbk}R$ given in \cite{IoharaKoga2005}.
\end{remark}

In the situation where $(R,{}^-)$ is equal to $(S\oplus S^{\mathrm{op}},\mathrm{ex})$ for an arbitrary unital associative superalgebra $S$, we obtain the following corollary, which is part of the results proved in \cite{ChenSun2015}.
\begin{corollary}
\label{cor:slm2n}
Let $S$ be an arbitrary unital associative superalgebra. Assume that $(m,n)\neq(1,1)$ and $(2,1)$. Then
$\mathfrak{st}_{m|2n}(S)$ is the universal central extension of $\mathfrak{sl}_{m|2n}(S)$ and
$$\mathrm{H}_2(\mathfrak{sl}_{m|2n}(S),\Bbbk)=\mathrm{HC}_1(S).$$
\end{corollary}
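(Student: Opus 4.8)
The plan is to specialize the preceding theorems to the coordinate superalgebra $(R,{}^-)=(S\oplus S^{\mathrm{op}},\mathrm{ex})$ and to transport the conclusions across the two isomorphisms already in hand. First I would invoke Example~\ref{eg:osp_SS} to identify $\mathfrak{osp}_{m|2n}(S\oplus S^{\mathrm{op}},\mathrm{ex})$ with $\mathfrak{sl}_{m|2n}(S)$, and Proposition~\ref{prop:sto_SS} (applicable since $(m,n)\neq(1,1)$) to identify $\mathfrak{sto}_{m|2n}(S\oplus S^{\mathrm{op}},\mathrm{ex})$ with the Steinberg Lie superalgebra $\mathfrak{st}_{m|2n}(S)$. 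The key compatibility to verify is that, under these two isomorphisms, the canonical central extension $\psi$ of Section~\ref{sec:sto} corresponds to the standard projection $\mathfrak{st}_{m|2n}(S)\rightarrow\mathfrak{sl}_{m|2n}(S)$. This is a routine comparison on generators: the isomorphism of Proposition~\ref{prop:sto_SS} sends $\boldsymbol{e}_{ij}(a)\mapsto\mathbf{t}_{ij}(a\oplus0)$ and similarly for the other generators, while $\psi(\mathbf{t}_{ij}(a\oplus0))=t_{ij}(a\oplus0)$, which matches the image of $e_{ij}(a)$ under the isomorphism of Example~\ref{eg:osp_SS}.

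Granting this, the universality statement is immediate. Theorem~\ref{thm:sto_uce}, applied with $(R,{}^-)=(S\oplus S^{\mathrm{op}},\mathrm{ex})$ under the hypothesis $(m,n)\neq(1,1),(2,1)$, asserts that $\psi$ is a universal central extension, and transporting the conclusion across the isomorphisms yields that $\mathfrak{st}_{m|2n}(S)$ is the universal central extension of $\mathfrak{sl}_{m|2n}(S)$. For the second homology, Corollary~\ref{cor:hml} gives $\mathrm{H}_2(\mathfrak{sl}_{m|2n}(S),\Bbbk)\cong\fourIdx{}{-}{}{1}{\mathrm{HD}}(S\oplus S^{\mathrm{op}},\mathrm{ex})$, so the proof reduces to the purely homological identification $\fourIdx{}{-}{}{1}{\mathrm{HD}}(S\oplus S^{\mathrm{op}},\mathrm{ex})\cong\mathrm{HC}_1(S)$.

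I expect this last identification to be the main obstacle, as it is the $\mathbb{Z}/2\mathbb{Z}$-graded analogue of Loday's computation of the dihedral homology of a hyperbolic algebra. I would establish it by analysing the module $\langle R,R\rangle_{\mathsf{d}}^-=(R\otimes_{\Bbbk}R)/I_{\mathsf{d}}^-$ for $R=S\oplus S^{\mathrm{op}}$ explicitly: using the relation $a\otimes b-\bar a\otimes\bar b\in I_{\mathsf{d}}^-$ together with the definition of $\mathrm{ex}$, every generator $\langle x,y\rangle_{\mathsf{d}}^-$ can be reduced to a combination of classes of the form $\langle a\oplus0,\,0\oplus b\rangle_{\mathsf{d}}^-$. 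One then matches the surviving skew-dihedral relations (skew-symmetry and the three-term cyclic relation spanning $I_{\mathsf{d}}^-$) against the defining relations of the first $\mathbb{Z}/2\mathbb{Z}$-graded cyclic homology group $\mathrm{HC}_1(S)$, and checks that the $\mathrm{HD}$-cycle condition $\sum_i[x_i,y_i]=\sum_i\overline{[x_i,y_i]}$ translates into the cyclic cycle condition for $S$. Since the analogous identification already underlies the results of \cite{ChangWang2015} and \cite{ChenSun2015}, I would where possible simply cite it rather than redo the bookkeeping, noting only that the resulting equality $\mathrm{H}_2(\mathfrak{sl}_{m|2n}(S),\Bbbk)=\mathrm{HC}_1(S)$ recovers the special case established in \cite{ChenSun2015}.
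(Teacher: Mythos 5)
Your proposal is correct and follows essentially the same route as the paper: the paper's proof is precisely the commutative diagram built from the isomorphisms of Example~\ref{eg:osp_SS} and Proposition~\ref{prop:sto_SS} together with the identification $\fourIdx{}{-}{}{1}{\mathrm{HD}}(S\oplus S^{\mathrm{op}},\mathrm{ex})\cong\mathrm{HC}_1(S)$, which it cites from \cite{ChangWang2015} exactly as you propose to do. The only difference is that the paper does not spell out the generator-by-generator compatibility check or the homological bookkeeping you sketch, relying instead on the citations.
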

\begin{proof}
Similar to generalized periplectic Lie superalgebras, there is a commutative diagram:
$$\xymatrix{
0\ar[r]&
\fourIdx{}{-}{}{1}{\mathrm{HD}}(S\oplus S^{\mathrm{op}},\mathrm{ex})\ar[r]\ar[d]&
\mathfrak{sto}_{m|2n}(S\oplus S^{\mathrm{op}},\mathrm{ex})\ar[r]\ar[d]&
\mathfrak{osp}_{m|2n}(S\oplus S^{\mathrm{op}},\mathrm{ex})\ar[r]\ar[d]&0\\
0\ar[r]&
\mathrm{HC}_1(S)\ar[r]&
\mathfrak{st}_{m|2n}(S)\ar[r]&
\mathfrak{sl}_{m|2n}(S)\ar[r]&0
}$$
where the three vertical arrows are isomorphisms given in \cite[Proposition~6.6]{ChangWang2015},  Proposition~\ref{prop:sto_SS} and Example~\ref{eg:osp_SS}, respectively. Hence, the lemma follows.
\end{proof}

\section{The universal central extension of $\mathfrak{osp}_{2|2}(R,{}^-)$}
\label{sec:osp22}

We have shown before that the central extension $\psi:\mathfrak{sto}_{m|2n}(R,{}^-)\rightarrow\mathfrak{osp}_{m|2n}(R,{}^-)$ is universal for $(m,n)\neq(1,1), (2,1)$. For the Lie superalgebra $\mathfrak{osp}_{2|2}(R,{}^-)$, it has been proved in \cite{MikhalevPinchuk2002} that $\psi:\mathfrak{sto}_{m|2n}(R,{}^-)\rightarrow\mathfrak{osp}_{m|2n}(R,{}^-)$ is also a universal central extension provided that $R$ is super-commutative and the superinvolution ${}^-$ is the identity map. However, this is not necessarily true if $R$ is not super-commutative. This section will contribute to discuss the universal central extension of $\mathfrak{sto}_{2|2}(R,{}^-)$. We will first create a nontrivial central extension of $\mathfrak{sto}_{2|2}(R,{}^-)$ using a concrete $2$-cocycle and then prove the universality of this central extension under certain assumption.
\medskip

\begin{remark}
In the case where $(R,{}^-)=(\Bbbk,\mathrm{id})$, the Lie superalgebra $\mathfrak{osp}_{2|2}(\Bbbk,\mathrm{id})$ is isomorphic to $\mathfrak{sl}_{1|2}(\Bbbk)$ (c.f. \cite[Section~4.2.2]{Kac1977}). However, this isomorphism is not functorial in $(R,{}^-)$. If we take $(R,{}^-)=(\mathrm{M}_{1|1}(\Bbbk),\mathrm{prp})$, then it follows from Example~\ref{eg:osp_prp} that $\mathfrak{osp}_{2|2}(\mathrm{M}_{1|1}(\Bbbk),\mathrm{prp})$
is isomorphic to $\mathfrak{p}_4(\Bbbk)$, which is obviously not isomorphic to $\mathfrak{sl}_{1|2}(\mathrm{M}_{1|1}(\Bbbk))$.
\end{remark}

It is obvious that the $\Bbbk$-module $R/([R,R]R)$ is a super-commutative associative superalgebra. We use $\boldsymbol{\pi}(a)$ to denote the canonical image of $a\in R$ in $R/([R,R]R)$. Recall that $\mathfrak{sto}_{2|2}(R,{}^-)$ (as a $\Bbbk$-module) is spanned by $$\mathfrak{B}:=\{\mathbf{h}_{i1}(a,b),\mathbf{t}_{12}(a),\mathbf{v}_1(a),\mathbf{w}_1(a),\mathbf{f}_{i1}(a),\mathbf{g}_{1i}(a)|i=1,2,a\in R\}.$$
We define a $\Bbbk$--bilinear map $\beta:\mathfrak{sto}_{2|2}(R,{}^-)\times\mathfrak{sto}_{2|2}(R,{}^-)\rightarrow R/([R,R] R)$ by setting
\begin{align*}
\beta(\mathbf{f}_{11}(a),\mathbf{g}_{12}(b))
&=-(-1)^{(1+|a|)(1+|b|)}\beta(\mathbf{g}_{12}(b),\mathbf{f}_{11}(a))
=\boldsymbol{\pi}(a\bar{b}),\\
\beta(\mathbf{f}_{21}(a),\mathbf{g}_{11}(b))
&=-(-1)^{(1+|a|)(1+|b|)}\beta(\mathbf{g}_{11}(b),\mathbf{f}_{21}(a))
=-\boldsymbol{\pi}(a\bar{b}),
\end{align*}
and $\beta(x,y)=0$ for all other pairs $(x,y)$ of elements in $\mathfrak{B}$. Then we have

\begin{lemma}
\label{lem:osp22_2cy}
Let $(R,{}^-)$ be a unital associative superalgebra with superinvolution. Then $\beta$ is a $2$-cocycle on $\mathfrak{sto}_{2|2}(R,{}^-)$ with values in $R/([R,R]R)$.
\end{lemma}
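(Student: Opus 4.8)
The plan is to verify directly the two defining properties of a $2$-cocycle, namely the super-skew-symmetry \eqref{eq:2cyl1} and the cocycle identity \eqref{eq:2cyl2}, for the bilinear map $\beta$. Property \eqref{eq:2cyl1} is essentially built into the definition: the only pairs of elements of $\mathfrak{B}$ on which $\beta$ is nonzero are $(\mathbf{f}_{11}(a),\mathbf{g}_{12}(b))$ and $(\mathbf{f}_{21}(a),\mathbf{g}_{11}(b))$ together with their transposes, and the prescribed values differ exactly by the sign $-(-1)^{|x||y|}$ demanded by \eqref{eq:2cyl1}, while on every other pair both $\beta(x,y)$ and $\beta(y,x)$ vanish. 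Before turning to \eqref{eq:2cyl2} I would record that $\beta$ is well defined: it vanishes on all pairs involving $\mathbf{h}_{i1}$, $\mathbf{t}_{12}$, $\mathbf{v}_1$, $\mathbf{w}_1$, so the symmetry relations among these spanning elements (such as $\mathbf{v}_1(a)=\mathbf{v}_1(\bar a)$ from Lemma~\ref{lem:sto_lmd34}) impose no constraint, whereas on the span of the $\mathbf{f}_{i1}(a)$ and $\mathbf{g}_{1i}(a)$ the prescription is $\Bbbk$-linear in $a$ and these elements carry no hidden relations because $\psi$ restricts injectively to $\mathfrak{sto}_{2|2}^1(R,{}^-)$.

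The substance of the lemma is \eqref{eq:2cyl2}, which I would cut down drastically using the decomposition $\mathfrak{sto}_{2|2}(R,{}^-)=\mathfrak{sto}_{2|2}^0(R,{}^-)\oplus\mathfrak{sto}_{2|2}^1(R,{}^-)$ of Proposition~\ref{prop:sto_tridec} together with the support of $\beta$. It suffices to check \eqref{eq:2cyl2} for triples $x,y,z$ drawn from $\mathfrak{B}$. If at least two of them lie in $\mathfrak{sto}_{2|2}^0(R,{}^-)$ (that is, are $\mathbf{h}$'s), then each bracket occurring is either again an $\mathbf{h}$ (on which $\beta$ vanishes in both slots) or is paired against an $\mathbf{h}$, so all three terms die. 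When exactly one argument, say $x=\mathbf{h}_{i1}(a,b)$, lies in $\mathfrak{sto}_{2|2}^0(R,{}^-)$, the term $\beta([y,z],x)$ vanishes because its second slot is an $\mathbf{h}$, and the identity collapses to the $\mathbf{h}$-invariance statement $(-1)^{|x||z|}\beta([\mathbf{h}_{i1}(a,b),y],z)+(-1)^{|z||y|}\beta([z,\mathbf{h}_{i1}(a,b)],y)=0$; this I would settle by substituting the explicit adjoint action of $\mathbf{h}_{i1}(a,b)$ on $\mathbf{f}_{j1}(c)$ and $\mathbf{g}_{1j}(c)$ from Lemma~\ref{lem:sto_kp} into the two surviving values of $\beta$ and checking that, modulo $[R,R]R$, the two contributions are negatives of one another.

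The genuinely computational case is that of triples with $x,y,z\in\mathfrak{sto}_{2|2}^1(R,{}^-)$. Here I would organize the verification by the generator types and retain only those configurations in which some bracket $[x,y]$, $[y,z]$ or $[z,x]$ has a nonzero $\mathbf{f}_{11}$-, $\mathbf{f}_{21}$-, $\mathbf{g}_{11}$- or $\mathbf{g}_{12}$-component while the complementary argument is the matching charged generator; the bracket relations needed are the $n=1$ specializations of \eqref{STO15}, \eqref{STO16}, \eqref{STO27}, \eqref{STO28} in Definition~\ref{def:sto} together with Lemmas~\ref{lem:sto_lmd34} and~\ref{lem:sto_lmd_kp}. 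A representative instance is $(x,y,z)=(\mathbf{t}_{12}(a),\mathbf{f}_{21}(b),\mathbf{g}_{12}(c))$: there $[\mathbf{t}_{12}(a),\mathbf{f}_{21}(b)]=\mathbf{f}_{11}(ab)$ contributes $\boldsymbol{\pi}(ab\bar c)$, the bracket $[\mathbf{f}_{21}(b),\mathbf{g}_{12}(c)]=\mathbf{h}_{21}(b,c)$ contributes nothing, and $[\mathbf{g}_{12}(c),\mathbf{t}_{12}(a)]=-\mathbf{g}_{11}(c\bar a)$ contributes a term which, after applying super-skew-symmetry, the identity $\overline{c\bar a}=(-1)^{|a||c|}a\bar c$ and super-commutativity of $\boldsymbol{\pi}$, equals $-\boldsymbol{\pi}(ab\bar c)$ modulo $[R,R]R$ and so cancels the first.

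I expect the main obstacle to be precisely this last step: tracking the Koszul signs through the cyclic sum and then recognizing the cancellations inside the quotient $R/([R,R]R)$. These cancellations are not termwise identities in $R$; they rely on two features of the target, namely that $\boldsymbol{\pi}$ is super-commutative, so $\boldsymbol{\pi}(xy)=(-1)^{|x||y|}\boldsymbol{\pi}(yx)$, and that $\boldsymbol{\pi}$ annihilates $[R,R]R$, so the bookkeeping must be carried out in $R/([R,R]R)$ throughout rather than in $R$. Managing the superinvolution bars introduced by \eqref{STO15}, \eqref{STO16} and by the defining values of $\beta$ (which carry $a\bar b$ rather than $ab$) is where errors are most likely, and I would verify carefully the few essentially distinct charged configurations, letting the remaining ones follow from the symmetry $\mathbf{f}_{11}\leftrightarrow\mathbf{f}_{21}$, $\mathbf{g}_{11}\leftrightarrow\mathbf{g}_{12}$ already present in the definition of $\beta$.
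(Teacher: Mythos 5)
Your proposal is correct and follows essentially the same route as the paper: super-skew-symmetry is immediate from the definition of $\beta$, and the cocycle identity is reduced to a finite list of triples from the spanning set $\mathfrak{B}$ (your one-$\mathbf{h}$ configurations are the paper's cases (i)--(iv), handled via the adjoint action in Lemma~\ref{lem:sto_kp}, and your all-$\mathfrak{sto}_{2|2}^1$ configurations are its cases (v)--(viii)), with the cancellations carried out modulo $[R,R]R$ using the super-commutativity of $\boldsymbol{\pi}$. Your worked example $(\mathbf{t}_{12}(a),\mathbf{f}_{21}(b),\mathbf{g}_{12}(c))$ is precisely the paper's case (vi) and checks out.
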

\begin{proof}
It is obvious from the definition of $\beta$ that $\beta$ satisfies (\ref{eq:2cyl1}). It suffices to show
$$J(x,y,z):=(-1)^{|x||z|}\beta([x,y],z)+(-1)^{|y||x|}\beta([y,z],x)+(-1)^{|z||x|}\beta([z,x],y)=0$$
for $x,\,y,\,z\in \mathfrak{B}$. Noting that $J(x,y,z)$ is symmetric with respect to all permutations on $\{x,y,z\}$ and eliminating the cases where $\beta([x,y],z)=\beta([y,z],x)=\beta([z,x],y)=0$, the problem is reduced to verifying $J(x,y,z)=0$ in the following eight cases:
\begin{enumerate}
\item $x=\mathbf{h}_{11}(a,b),\,y=\mathbf{f}_{11}(c)$ and $z=\mathbf{g}_{12}(d)$,
\item $x=\mathbf{h}_{11}(a,b),\,y=\mathbf{f}_{21}(c)$ and $z=\mathbf{g}_{11}(d)$,
\item $x=\mathbf{h}_{21}(a,b),\,y=\mathbf{f}_{11}(c)$ and $z=\mathbf{g}_{12}(d)$,
\item $x=\mathbf{h}_{21}(a,b),\,y=\mathbf{f}_{21}(c)$ and $z=\mathbf{g}_{11}(d)$,
\item $x=\mathbf{t}_{12}(a),\,y=\mathbf{f}_{11}(b)$ and $z=\mathbf{g}_{11}(c)$,
\item $x=\mathbf{t}_{12}(a),\,y=\mathbf{f}_{21}(b)$ and $z=\mathbf{g}_{12}(c)$,
\item $x=\mathbf{v}_{1}(a),\,y=\mathbf{f}_{11}(b)$ and $z=\mathbf{f}_{21}(c)$,
\item $x=\mathbf{w}_{1}(a),\,y=\mathbf{g}_{12}(b)$ and $z=\mathbf{g}_{11}(c)$,
\end{enumerate}
where $a,b,c,d\in R$ are homogeneous. The verification can be done by direct computation. For instance, in case (i), we compute that
\begin{align*}
J(x,y,z)=
&(-1)^{(|a|+|b|)(1+|d|)}\beta([\mathbf{h}_{11}(a,b),\mathbf{f}_{11}(c)],\mathbf{g}_{12}(d))\\
&+(-1)^{(|1|+|c|)(|a|+|b|)}\beta([\mathbf{f}_{11}(c),\mathbf{g}_{12}(d)],\mathbf{h}_{11}(a,b))\\
&+(-1)^{(|1|+|c|)(|1|+|d|)}\beta([\mathbf{g}_{12}(d),\mathbf{h}_{11}(a,b)],\mathbf{f}_{11}(c))\\
=&(-1)^{(|a|+|b|)(1+|d|)}\beta(\mathbf{f}_{11}(abc-\overline{ab}c-(-1)^{|a||b|+|b||c|+|a||c|}cba),\mathbf{g}_{12}(d))\\
&+(-1)^{(|1|+|c|)(|a|+|b|)}\beta(\mathbf{t}_{12}(cd),\mathbf{h}_{11}(a,b))\\
&+(-1)^{|c|+|d|+|a||b|+|b||d|+|d||a|+|c||d|}\beta(\mathbf{g}_{12}(bad),\mathbf{f}_{11}(c))\\
=&(-1)^{(|a|+|b|)(1+|d|)}\boldsymbol{\pi}(abc\bar{d}-\overline{ab}c\bar{d}-(-1)^{|a||b|+|b||c|+|a||c|}cba\bar{d})\\
&+0+(-1)^{|a||b|+(|a|+|b|)(|c|+|d|+1)}\boldsymbol{\pi}(c\overline{bad})\\
=&-(-1)^{(|a|+|b|)(1+|d|)}\boldsymbol{\pi}(\overline{ab}c\bar{d})+(-1)^{|a||b|+(|a|+|b|)(|c|+|d|+1)}\boldsymbol{\pi}(c\overline{bad})\\
=&-(-1)^{(|a|+|b|)(1+|d|)}\boldsymbol{\pi}(\bar{a}\bar{b}c\bar{d})+(-1)^{(|a|+|b|)(1+|d|)}\boldsymbol{\pi}(\bar{a}\bar{b}c\bar{d})\\
=&0.
\end{align*}
Hence, $J(x,y,z)=0$ in Case (i). Similarly, we may verify that $J(x,y,z)=0$ in Cases (ii)-(viii), we omit the details here.
\end{proof}

The $2$-cocycle $\beta$ gives rise to a new Lie superalgebra
$$\widehat{\mathfrak{sto}}_{2|2}(R,{}^-):=\mathfrak{sto}_{2|2}(R,{}^-)\oplus R/([R,R]R),$$
on which the super-bracket is given by
$$[x\oplus c,y\oplus c']:=[x,y]\oplus \beta(x,y),\quad x,y\in\mathfrak{sto}_{2|2}(R,{}^-),\text{ and }c,c'\in R/([R,R]R).$$
Moreover, the canonical projection $\psi':\widehat{\mathfrak{sto}}_{2|2}(R,{}^-)\rightarrow\mathfrak{sto}_{2|2}(R,{}^-)$ is a central extension.

Alternatively, the Lie superalgebra $\widehat{\mathfrak{sto}}_{2|2}(R,{}^-)$ can be defined as the abstract Lie superalgebra generated by $\tilde{\mathbf{t}}_{12}(a)$, $\tilde{\mathbf{f}}_{11}(a)$, $\tilde{\mathbf{f}}_{21}(a)$, $\tilde{\mathbf{g}}_{11}(a)$, $\tilde{\mathbf{g}}_{12}(a)$ for $a\in R$ together with the super-commutative Lie superalgebra $R/([R,R]R)$, subjecting to the relations:
\begin{align}
&[\tilde{\mathbf{t}}_{12}(a),\tilde{\mathbf{f}}_{11}(b)]=-\tilde{\mathbf{f}}_{21}(\bar{a}b),
&&[\tilde{\mathbf{t}}_{12}(a),\tilde{\mathbf{f}}_{21}(b)]=\tilde{\mathbf{f}}_{11}(ab),\label{eq:hatsto22_02}\\
&[\tilde{\mathbf{g}}_{11}(a),\tilde{\mathbf{t}}_{12}(b)]=\tilde{\mathbf{g}}_{12}(ab),
&&[\tilde{\mathbf{g}}_{12}(a),\tilde{\mathbf{t}}_{12}(b)]=-\tilde{\mathbf{g}}_{11}(a\bar{b}),\label{eq:hatsto22_03}\\
&[\tilde{\mathbf{f}}_{11}(a),\tilde{\mathbf{f}}_{21}(b)]=0,
&&[\tilde{\mathbf{g}}_{11}(a),\tilde{\mathbf{g}}_{12}(b)]=0,\label{eq:hatsto22_04}\\
&[\tilde{\mathbf{f}}_{11}(a),\tilde{\mathbf{g}}_{12}(b)]
=\tilde{\mathbf{t}}_{12}(ab)+\boldsymbol{\pi}(a\bar{b}),
&&[\tilde{\mathbf{f}}_{21}(a),\tilde{\mathbf{g}}_{11}(b)]
=-\tilde{\mathbf{t}}_{12}(\overline{ab})-\boldsymbol{\pi}(a\bar{b}).\label{eq:hatsto22_05}
\end{align}
\medskip

Now, we have obtained a nontrivial central extension $\psi':\widehat{\mathfrak{sto}}_{m|2n}(R,{}^-)\rightarrow\mathfrak{sto}_{m|2n}(R,{}^-)$. In the remaining part of this section, we will show its universality under the following assumption:

\begin{assumption}
\label{asmp_osp22}
Let $(R,{}^-)$ be a unital associative superalgebra with superinvolution. We say that $(R,{}^-)$ satisfies Assumption~\ref{asmp_osp22} if $R$ contains a homogeneous element $e$ such that
\begin{enumerate}
\item $\bar{e}=-e$,
\item $e$ is contained in the center of $R$, and
\item $e$ is a unit element.
\end{enumerate}
\end{assumption}

\begin{remark}
\label{rmk:osp12_asmp}
As indicated in \cite{Gao1996II}, Assumption~\ref{asmp_osp22} is not restrictive. If $R$ has no element $a$ such that $a^2=-1$, then $R\otimes\Bbbk[\sqrt{-1}]=R\oplus R\sqrt{-1}$ is a unital associative superalgebra with the superinvolution given by
$$\overline{a\oplus b\sqrt{-1}}=\bar{a}\oplus(-\bar{b}\sqrt{-1})$$
for $a,b\in R$. In this situation, $R\otimes\Bbbk[\sqrt{-1}]$ satisfies Assumption~\ref{asmp_osp22} since one can choose $e=\sqrt{-1}$. For another example, given a unital associative algebra $S$, the associative superalgebra with superinvolution $(S\oplus S^{\mathrm{op}},\mathrm{ex})$ always satisfies Assumption~\ref{asmp_osp22}, where $e=1\oplus(-1)\in S\oplus S^{\mathrm{op}}$ is a required element.
\end{remark}

Our aim is to prove the universality of the central extension $\psi':\widehat{\mathfrak{sto}}_{2|2}(R,{}^-)\rightarrow\mathfrak{sto}_{2|2}(R,{}^-)$ under the above assumption. Since $\psi:\mathfrak{sto}_{2|2}(R,{}^-)\rightarrow\mathfrak{osp}_{2|2}(R,{}^-)$ is also a central extension, the universality of $\psi'$ is equivalent to the universality of $\psi\circ\psi':\widehat{\mathfrak{sto}}_{2|2}(R,{}^-)\rightarrow\mathfrak{osp}_{2|2}(R,{}^-)$, which will be proved below.

Let $(R,{}^-)$ be a unital associative superalgebra with superinvolution satisfying Assumption~\ref{asmp_osp22} and  $\varphi:\mathfrak{E}\rightarrow\mathfrak{osp}_{2|2}(R,{}^-)$ be an arbitrary central extension of $\mathfrak{osp}_{2|2}(R,{}^-)$.
Let $\hat{x}\in\varphi^{-1}(x)$ for $x\in\mathfrak{osp}_{2|2}(R,{}^-)$ and $e\in R$ an element satisfying Assumption~\ref{asmp_osp22}. Then we define
\begin{align}
\tilde{\pi}(a):&=\frac{1}{2}[\hat{f}_{11}(a),\hat{g}_{12}(1)]-\frac{1}{2}[\hat{f}_{11}(ae),\hat{g}_{12}(e^{-1})],\label{eq:osphat22_1}\\
\tilde{t}_{12}(a):&=\frac{1}{2}[\hat{f}_{11}(a),\hat{g}_{12}(1)]+\frac{1}{2}[\hat{f}_{11}(ae),\hat{g}_{12}(e^{-1})],\label{eq:osphat22_2}\\
\tilde{f}_{11}(a):&=[\hat{t}_{12}(1),\hat{f}_{21}(a)],\label{eq:osphat22_3}\\
\tilde{f}_{21}(a):&=-[\hat{t}_{12}(1),\hat{f}_{11}(a)],\label{eq:osphat22_4}\\
\tilde{g}_{11}(a):&=-[\hat{g}_{12}(a),\hat{t}_{12}(1)],\label{eq:osphat22_5}\\
\tilde{g}_{12}(a):&=[\hat{g}_{11}(a),\hat{t}_{12}(1)],\label{eq:osphat22_6}
\end{align}
which are all independent of the representatives $\hat{x}\in\varphi^{-1}(x)$ for $x\in\mathfrak{osp}_{2|2}(R,{}^-)$. It is directly verified that $\tilde{\pi}(a)\in\ker(\varphi)$ for $a\in R$. Moreover, we have

\begin{lemma}
\label{lem:sto22_ct}
$\tilde{\pi}([R,R]R)=0$.
\end{lemma}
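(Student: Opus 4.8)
The plan is to reduce the statement to a vanishing assertion on a spanning set and then unwind the definition of $\tilde\pi$ through the Jacobi identity. Since $\tilde\pi$ is $\Bbbk$-linear and each $\tilde\pi(a)$ lies in $\ker\varphi$, hence in the centre of $\mathfrak{E}$, and since $[R,R]R$ is the two-sided ideal of $R$ spanned by the homogeneous elements $[a,b]c$, it suffices to prove $\tilde\pi([a,b]c)=0$ for homogeneous $a,b,c\in R$. The main tool is the observation that for $x,y\in\mathfrak{osp}_{2|2}(R,{}^-)$ the element $L(x,y):=[\hat x,\hat y]$ is independent of the chosen preimages (because $\ker\varphi$ is central), is $\Bbbk$-bilinear and super-antisymmetric, and obeys the \emph{exact} identity $L(x,[y,z])=L([x,y],z)+(-1)^{|x||y|}L(y,[x,z])$ inherited from the Jacobi identity of $\mathfrak{E}$. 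In this language $2\tilde\pi(a)=L(f_{11}(a),g_{12}(1))-L(f_{11}(ae),g_{12}(e^{-1}))$.

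First I would peel the ring argument out of the first slot. Writing $f_{11}(x)=[t_{12}(x),f_{21}(1)]$ and $f_{11}(xe)=[t_{12}(x),f_{21}(e)]$ (legitimate because $e$ is central), the super-Jacobi identity together with (\ref{STO15}), (\ref{STO16}) and $[f_{21}(p),g_{12}(q)]=\mathbf{h}_{21}(p,q)$, all specialized to $m=2,\,n=1$, gives
\[L(f_{11}(x),g_{12}(1))=L(t_{12}(x),h_{21}(1,1))-L(f_{21}(1),g_{11}(\bar x)),\]
and the analogous formula with $1,\bar x$ replaced by $e,e^{-1}\bar x$ and $h_{21}(1,1)$ by $h_{21}(e,e^{-1})$. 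Subtracting, the two $L(t_{12}(x),\cdot)$ terms combine, and since $\bar e=-e$ and $ee^{-1}=1$ one computes in $\mathfrak{osp}_{2|2}(R,{}^-)$ that $h_{21}(1,1)-h_{21}(e,e^{-1})=(1-(-1)^{|e|})u_{11}(1)$. Thus the diagonal correction vanishes when $e$ is even and is a fixed multiple of the central element $L(t_{12}(x),u_{11}(1))$ when $e$ is odd, so that $2\tilde\pi(x)$ is reduced to the single difference $L(f_{21}(e),g_{11}(e^{-1}\bar x))-L(f_{21}(1),g_{11}(\bar x))$ of two preimages of $t_{21}(\bar x)$ (plus, in the odd case, the term just described). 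Here the hypotheses on $e$ are used decisively.

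The hard part is the last step: showing that this residual central element vanishes \emph{precisely} when $x\in[R,R]R$. It is not enough to establish a super-trace identity $\tilde\pi(uv)=(-1)^{|u||v|}\tilde\pi(vu)$, since such an identity only kills $[R,R]$, whereas the target $R/([R,R]R)$ is the full super-commutativisation and a super-trace generally survives on the two-sided ideal generated by $[R,R]$ (already the ordinary trace on $\mathrm{M}_2(\Bbbk)$ does not vanish on $[\mathrm{M}_2,\mathrm{M}_2]\,\mathrm{M}_2$). The resolution I would pursue is to keep careful account of the central discrepancies: introduce the canonical preimages $\tilde t_{12},\tilde f_{i1},\tilde g_{1i}$ of (\ref{eq:osphat22_1})--(\ref{eq:osphat22_6}) together with chosen preimages of $u_{11}(1)$ and of the diagonal elements $h_{j1}(a,b)$, and rewrite each surviving bracket $L(f_{21}(p),g_{11}(q))$ and $L(t_{12}(x),u_{11}(1))$ by one further application of super-Jacobi, peeling with $f_{21}(p)=[t_{21}(p),f_{11}(1)]$ and $u_{11}(1)=[g_{11}(1),f_{11}(1)]$ and invoking the relations of Lemma~\ref{lem:sto_kp}. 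This produces a closed system of $\Bbbk$-bilinear, centre-valued functionals whose defining identities I expect to force $\tilde\pi$ to factor through $\boldsymbol\pi\colon R\to R/([R,R]R)$. Substituting $x=[a,b]c$ and collapsing via the cyclicity encoded in (\ref{STO27}) and the symmetry $\bar e=-e$ — using the invertibility of $e$ once more to relocate the extra right factor $c$ — should then yield $\tilde\pi([a,b]c)=0$. I expect this bookkeeping of central terms, rather than any single clever identity, to be the principal difficulty, and the central unit $e$ with $\bar e=-e$ to be exactly what makes it go through.
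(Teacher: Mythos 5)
Your opening reduction is reasonable, and your observation that a mere super-trace identity $\tilde\pi(uv)=(-1)^{|u||v|}\tilde\pi(vu)$ would only kill $[R,R]$ rather than the full ideal $[R,R]R$ is exactly the right worry. But the proposal stops precisely where that worry must be answered: the final paragraph is a plan, not an argument. Phrases such as ``whose defining identities I expect to force $\tilde\pi$ to factor through $\boldsymbol\pi$'' and ``should then yield $\tilde\pi([a,b]c)=0$'' leave the decisive step unproved, and nothing in your preceding reductions indicates that the proposed bookkeeping of central discrepancies actually closes up into a vanishing statement on $[R,R]R$.

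What is missing is a concrete mechanism for moving ring elements between the two slots of the bracket, and the paper supplies exactly this in two steps. First, one proves the shifting identity $[\tilde f_{11}(ab),\tilde g_{12}(c)]=[\tilde f_{11}(a),\tilde g_{12}(bc)]$ for arbitrary $a,c\in R$ but only for $b\in R_+$; the restriction to $b\in R_+$ is essential, since the identity is obtained by writing $\tilde f_{11}(ab_+)$ as a bracket of $\tilde g_{11}(\bar a)$ against $\tilde w(b):=[\hat f_{11}(1),\hat f_{11}(b)]$ and applying the super-Jacobi identity twice. This immediately gives $\tilde\pi_1(abc)=0=\tilde\pi_2(abc)$ for $a,b,c\in R_+$. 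Second, one shows the purely ring-theoretic containment $[R,R]R\subseteq R_+\cdot R_+\cdot R_+$, and this is where Assumption~\ref{asmp_osp22} enters decisively (for instance $[a_-,b_-]=[a_-e^{-1},eb_-]\in R_+\cdot R_+$ and $R_-\cdot R_-=R_+e^{-1}\cdot eR_+=R_+\cdot R_+$). Your proposal uses $e$ only to renormalize preimages of $t_{21}(\bar x)$ and never isolates either of these two ingredients, so as written it does not constitute a proof of the lemma.
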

\begin{proof}
We denote
\begin{align*}
\tilde{\pi}_1(a):&=[\tilde{f}_{11}(a),\tilde{g}_{12}(1)]-[\tilde{f}_{11}(1),\tilde{g}_{12}(a)],\\
\tilde{\pi}_2(a):&=[\tilde{f}_{11}(a),\tilde{g}_{12}(e^{-1})]-[\tilde{f}_{11}(1),\tilde{g}_{12}(ae^{-1}),
\end{align*}
for $a\in R$. Then $\tilde{\pi}(a)=\tilde{\pi}_1(a)-\tilde{\pi}_2(ae)$. We observe that $\tilde{\pi}([R,R]R)=0$ if $\tilde{\pi}_i([R,R]R)=0$ for $i=1,2$. We will show that $\tilde{\pi}_i([R,R]R)=0$ in two steps:
\begin{itemize}
\item Step 1: Prove that $\tilde{\pi}_i(R_+\cdot R_+\cdot R_+)=0$ for $i=1,2$.
\item Step 2: Show that $[R,R]R\subseteq R_+\cdot R_+\cdot R_+$.
\end{itemize}
\medskip

Step 1: In order to show that $\tilde{\pi}_i(R_+\cdot R_+\cdot R_+)=0$ for $i=1,2$, we claim that
\begin{equation}
[\tilde{f}_{11}(ab),\tilde{g}_{12}(c)]=[\tilde{f}_{11}(a),\tilde{g}_{12}(bc)]\label{eq:osp22_tilde}
\end{equation}
for $a,c\in R$ and $b\in R_+$. Without loss of generality, we assume $a,b,c$ are homogenous and set
$$\tilde{v}(a):=-[\hat{g}_{11}(a),\hat{g}_{11}(1)]\text{ and }\tilde{w}(a):=[\hat{f}_{11}(1),\hat{f}_{11}(a)].$$
Then we deduce that
\begin{align*}
[\tilde{f}_{11}(ab),\tilde{g}_{12}(c)]
&=\frac{1}{2}[\tilde{f}_{11}(ab_+),\tilde{g}_{12}(c)]\\
&=-\frac{1}{2}(-1)^{|a|}[[\tilde{g}_{11}(\bar{a}),\tilde{w}(b)],\tilde{g}_{12}(c)]\\
&=-\frac{1}{2}(-1)^{|a|}[\tilde{g}_{11}(\bar{a}),[\tilde{w}(b),\tilde{g}_{12}(c)]]
+\frac{1}{2}(-1)^{|a||b|}[\tilde{w}(b),[\tilde{g}_{11}(\bar{a}),\tilde{g}_{12}(c)]]\\
&=-\frac{1}{2}(-1)^{|a|+|b|+|c|+|b||c|}[\tilde{g}_{11}(\bar{a}),\tilde{f}_{21}(\bar{c}b_+)]+0\\
&=-(-1)^{|a|+|b|+|c|+|b||c|}[\tilde{g}_{11}(\bar{a}),\tilde{f}_{21}(\bar{c}b)]\\
&=-\frac{1}{2}(-1)^{|b|+|c|+|b||c|}[[\tilde{v}(1),\tilde{f}_{11}(a)],\tilde{f}_{21}(\bar{c}b)]\\
&=-\frac{1}{2}(-1)^{|b|+|c|+|b||c|}([\tilde{v}(1),[\tilde{f}_{11}(a),\tilde{f}_{21}(\bar{c}b)]]
-[\tilde{f}_{11}(a),[\tilde{v}(1),\tilde{f}_{21}(\bar{c}b)]])\\
&=0+(-1)^{|b||c|}[\tilde{f}_{11}(a),\tilde{g}_{12}(\overline{\bar{c}b})]\\
&=[\tilde{f}_{11}(a),\tilde{g}_{12}(bc)].
\end{align*}
This proves the claim.

For $a,b,c\in R_+$, it follows from~(\ref{eq:osp22_tilde}) that
\begin{align*}
\tilde{\pi}_1(abc)&=[\tilde{f}_{11}(abc),\tilde{g}_{12}(1)]-[\tilde{f}_{11}(1),\tilde{g}_{12}(abc)]
=[\tilde{f}_{11}(ab),\tilde{g}_{12}(c)]-[\tilde{f}_{11}(a),\tilde{g}_{12}(bc)]=0,
\end{align*}
which shows $\tilde{\pi}_1(R_+\cdot R_+\cdot R_+)=0$. We also obtain $\tilde{\pi}_2(R_+\cdot R_+\cdot R_+)=0$ similarly. Step 1 is completed.
\medskip

Step 2: We first observe that $R_+\subseteq R_+\cdot R_+\subseteq R_+\cdot R_+\cdot R_+$ since $1\in R_+$. Secondly, we claim that $[R,R]\subseteq R_+\cdot R_+$. Indeed, for homogeneous $a,b\in R$, we have
$$[a,b]=\frac{1}{4}[a_+,b_+]+\frac{1}{4}[a_+,b_-]+\frac{1}{4}[a_-,b_+]+\frac{1}{4}[a_-,b_-],$$
in which $[a_+,b_+]\in R_+\cdot R_+$ and $[a_+,b_-],[a_-,b_+]\in R_+\subseteq R_+\cdot R_+$. Note that $(R,{}^-)$ satisfies Assumption~\ref{asmp_osp22}, we know that $a_-e^{-1},eb_-\in R_+$ and $[a_-,b_-]=[a_-e^{-1},eb_-]\in R_+\cdot R_+$.

In order to show that $[R,R]R\subseteq R_+\cdot R_+\cdot R_+$, it suffices to prove $[a_{\pm},b_{\pm}]c_{\pm}\in R_+\cdot R_+$ for homogeneous $a,b,c\in R$. We observe that
\begin{align*}
[a,b]c&=-(-1)^{|a||b|}[b,a]c,\\
[a,b]c-(-1)^{|a|(|b|+|c|)}[b,c]a&=[a,bc]-(-1)^{|a||b|}[ba,c]\in[R,R]\subseteq R_+\cdot R_+.
\end{align*}
It suffices to show that $[a_+,b_+]c_+, [a_+,b_+]c_-, [a_+,b_-]c_-, [a_-,b_-]c_-\in R_+\cdot R_+\cdot R_+$. It obvious that $[a_+,b_+]c_+\in R_+\cdot R_+\cdot R_+$ and $[a_+,b_-]c_-=[a_+,b_-e^{-1}]ec_-\in R_+\cdot R_+\cdot R_+$. Moreover, since $[a_+,b_+], [a_-,b_-]\in R_-$, we obtain that $[a_+,b_+]c_-, [a_-,b_-]c_-\in R_-\cdot R_-=R_+e^{-1}\cdot eR_+=R_+\cdot R_+$. Hence, $[R,R]R\subseteq R_+\cdot R_+\cdot R_+$. This finishes Step 2.
\end{proof}

Now, we may proceed to prove the main theorem in this section:

\begin{theorem}
\label{thm:osp22_uce}
Let $(R,{}^-)$ be a unital associative superalgebra with superinvolution satisfying Assumption~\ref{asmp_osp22}. Then the central extension $\psi':\widehat{\mathfrak{sto}}_{2|2}(R,{}^-)\rightarrow\mathfrak{sto}_{2|2}(R,{}^-)$ is universal.
\end{theorem}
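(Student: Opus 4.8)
The plan is to use the equivalence recorded just above the statement: the universality of $\psi'$ is the same as the universality of the composite $\psi\circ\psi':\widehat{\mathfrak{sto}}_{2|2}(R,{}^-)\rightarrow\mathfrak{osp}_{2|2}(R,{}^-)$, and I will prove the latter by the same scheme as Theorem~\ref{thm:sto_uce}: from an arbitrary central extension $\varphi:\mathfrak{E}\rightarrow\mathfrak{osp}_{2|2}(R,{}^-)$ I manufacture a factoring homomorphism $\theta:\widehat{\mathfrak{sto}}_{2|2}(R,{}^-)\rightarrow\mathfrak{E}$ with $\varphi\circ\theta=\psi\circ\psi'$, and then show it is unique. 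First I would record that $\widehat{\mathfrak{sto}}_{2|2}(R,{}^-)$ is perfect: the subalgebra $\mathfrak{sto}_{2|2}(R,{}^-)$ is perfect by Proposition~\ref{prop:sto_pft} (since $(2,1)\neq(1,1)$), so the derived algebra of $\widehat{\mathfrak{sto}}_{2|2}(R,{}^-)$ surjects onto it under $\psi'$; and relation~(\ref{eq:hatsto22_05}) with $b=1$, together with $\bar1=1$, writes each $\boldsymbol{\pi}(a)=[\tilde{\mathbf{f}}_{11}(a),\tilde{\mathbf{g}}_{12}(1)]-\tilde{\mathbf{t}}_{12}(a)$ inside the derived algebra, so the central summand $R/([R,R]R)$ lies there too. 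Perfectness is what will deliver uniqueness of $\theta$ at the end.

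The substance of the proof is to verify that the lifted elements $\tilde t_{12}(a),\tilde f_{11}(a),\tilde f_{21}(a),\tilde g_{11}(a),\tilde g_{12}(a)$ of (\ref{eq:osphat22_1})--(\ref{eq:osphat22_6}), together with the map $a\mapsto\tilde\pi(a)$, satisfy the defining relations of $\widehat{\mathfrak{sto}}_{2|2}(R,{}^-)$. By Lemma~\ref{lem:sto22_ct} the map $\tilde\pi$ vanishes on $[R,R]R$, so it descends to a $\Bbbk$-linear map $R/([R,R]R)\rightarrow\ker\varphi$ whose image is central in $\mathfrak{E}$; this is exactly the data needed to accommodate the super-commutative central summand. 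The relations~(\ref{eq:hatsto22_02})--(\ref{eq:hatsto22_04}), which carry \emph{no} central correction, I would dispatch exactly as the analogous relations in Lemma~\ref{lem:tilde_rln}: each bracket on the left, after subtracting the asserted right-hand side, is killed by $\varphi$ and hence central, and is then shown to vanish by testing it against a suitable $\mathrm{ad}\,\tilde h$.

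The genuinely new and hardest step is relation~(\ref{eq:hatsto22_05}), namely $[\tilde f_{11}(a),\tilde g_{12}(b)]=\tilde t_{12}(ab)+\tilde\pi(a\bar b)$ and its companion. Here the naive Steinberg relation~(\ref{STO27b}) \emph{fails}, and the discrepancy is precisely the cocycle term; this is the phenomenon that makes $\psi$ non-universal for $(m,n)=(2,1)$. Since $\varphi([\tilde f_{11}(a),\tilde g_{12}(b)])=t_{12}(ab)=\varphi(\tilde t_{12}(ab))$, the element $\omega(a,b):=[\tilde f_{11}(a),\tilde g_{12}(b)]-\tilde t_{12}(ab)$ is central, and the task is to identify $\omega(a,b)$ with $\tilde\pi(a\bar b)$. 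I expect to compute $\omega$ by expanding the definitions (\ref{eq:osphat22_1})--(\ref{eq:osphat22_6}) through repeated use of the Jacobi identity, reducing every bracket to the standard shape $[\hat f_{11}(\,\cdot\,),\hat g_{12}(\,\cdot\,)]$ and then applying the defining split $\tilde t_{12}(c)+\tilde\pi(c)=[\hat f_{11}(c),\hat g_{12}(1)]$ and $\tilde t_{12}(c)-\tilde\pi(c)=[\hat f_{11}(ce),\hat g_{12}(e^{-1})]$. The central, skew, invertible element $e$ of Assumption~\ref{asmp_osp22} is exactly what renders this symmetrization/antisymmetrization independent of the chosen representatives and lets the correction be read off as a value of $\tilde\pi$; this is where the hypothesis is indispensable, and it is the main obstacle. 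Once (\ref{eq:hatsto22_02})--(\ref{eq:hatsto22_05}) are established, the presentation of $\widehat{\mathfrak{sto}}_{2|2}(R,{}^-)$ furnishes the homomorphism $\theta$ with $\varphi\circ\theta=\psi\circ\psi'$. Uniqueness then follows because every generator of $\widehat{\mathfrak{sto}}_{2|2}(R,{}^-)$ is a bracket of elements whose $\theta$-images are pinned down modulo the central $\ker\varphi$, so their brackets are determined outright, precisely as in the uniqueness argument of Theorem~\ref{thm:sto_uce}.
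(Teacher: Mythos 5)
Your proposal follows essentially the same route as the paper's proof: reduce to the universality of $\psi\circ\psi'$, lift the generators as in (\ref{eq:osphat22_1})--(\ref{eq:osphat22_6}), invoke Lemma~\ref{lem:tilde_rln} for the relations carrying no central correction and Lemma~\ref{lem:sto22_ct} to make $\tilde{\pi}$ descend to $R/([R,R]R)$, and obtain (\ref{eq:hatsto22_05}) by splitting $b=\tfrac12 b_++\tfrac12 b_-ee^{-1}$ and reading the correction off the defining split of $\tilde{t}_{12}$ and $\tilde{\pi}$ --- which is exactly the computation the paper carries out. Your preliminary perfectness check is an inessential addition the paper omits (and, as written, it implicitly assumes $\tilde{\mathbf{t}}_{12}(a)$ already lies in the derived algebra, which requires also using $b=e$ rather than only $b=1$), but this does not affect the argument, whose uniqueness step the paper closes by the same reasoning as Theorem~\ref{thm:sto_uce}.
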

\begin{proof}
It suffices to show that the central extension $\psi\circ\psi':\widehat{\mathfrak{sto}}_{2|2}(R,{}^-)\rightarrow\mathfrak{osp}_{2|2}(R,{}^-)$ is universal. Let $\varphi:\mathfrak{E}\rightarrow\mathfrak{osp}_{m|2n}(R,{}^-)$ be an arbitrary central extension. We have to show that there is a unique homomorphism $\varphi':\widehat{\mathfrak{sto}}_{2|2}(R,{}^-)\rightarrow\mathfrak{E}$ such that $\varphi\circ\varphi'=\psi\circ\psi'$.

Let $\tilde{\pi}(a)$, $\tilde{t}_{12}(a)$, $\tilde{f}_{11}(a)$, $\tilde{f}_{21}(a)$, $\tilde{g}_{11}(a)$, $\tilde{g}_{12}(a)$ with $a\in R$ be the elements of $\mathfrak{E}$ as defined in (\ref{eq:osphat22_1})-(\ref{eq:osphat22_6}). We have already shown in Lemma~\ref{lem:sto22_ct} that $\tilde{\pi}([R,R]R)=0$. In order to show the existence of a homomorphism $\varphi':\widehat{\mathfrak{sto}}_{2|2}(R,{}^-)\rightarrow\mathfrak{E}$, it suffices to show that the elements $\tilde{t}_{12}(a)$, $\tilde{f}_{11}(a)$, $\tilde{f}_{21}(a)$, $\tilde{g}_{11}(a)$, $\tilde{g}_{12}(a)$ with $a\in R$ satisfy the relations (\ref{eq:hatsto22_02})-(\ref{eq:hatsto22_05}). It has been shown in Lemma~\ref{lem:tilde_rln} that these elements satisfy the relations (\ref{eq:hatsto22_02})-(\ref{eq:hatsto22_04}). We next show that they also satisfy (\ref{eq:hatsto22_05}).

We deduce from the definitions of $\tilde{\pi}_1$, $\tilde{\pi}_2$
and $\tilde{t}_{12}$ that
$$[\tilde{f}_{11}(a),\tilde{g}_{12}(1)]=\tilde{t}_{12}(a)+\tilde{\pi}(a),\text{ and }
[\tilde{f}_{11}(ae),\tilde{g}_{12}(e^{-1})]=\tilde{t}_{12}(a)-\tilde{\pi}(a).$$
Hence, for $a,b\in R$, we write $b=\frac{1}{2}b_++\frac{1}{2}b_-=\frac{1}{2}b_++\frac{1}{2}b_-ee^{-1}$ with $b_{\pm}\in R_{\pm}$ and deduce by (\ref{eq:osp22_tilde}) that
\begin{align*}
[\tilde{f}_{11}(a),\tilde{g}_{12}(b)]&=\frac{1}{2}[\tilde{f}_{11}(a),\tilde{g}_{12}(b_+)]+\frac{1}{2}[\tilde{f}_{11}(a),\tilde{g}_{12}(b_-ee^{-1})]\\
&=\frac{1}{2}[\tilde{f}_{11}(ab_+),\tilde{g}_{12}(1)]+\frac{1}{2}[\tilde{f}_{11}(ab_-e),\tilde{g}_{12}(e^{-1})]\\
&=\frac{1}{2}(\tilde{t}_{12}(ab_+)+\tilde{\pi}(ab_+))
+\frac{1}{2}(\tilde{t}_{12}(ab_-)-\tilde{\pi}(ab_-))\\
&=\tilde{t}_{12}(ab)+\tilde{\pi}(a\bar{b}).
\end{align*}
Furthermore, we deduce that
\begin{align*}
[\tilde{f}_{21}(a),\tilde{g}_{11}(b)]
&=-\frac{1}{2}(-1)^{|a|}[[\tilde{g}_{12}(\bar{a}),\tilde{w}(1)],\tilde{g}_{11}(b)]\\
&=-\frac{1}{2}(-1)^{|a|}[\tilde{g}_{12}(\bar{a}),[\tilde{w}(1),\tilde{g}_{11}(b)]]
+\frac{1}{2}(-1)^{|a|}[\tilde{w}(1),[\tilde{g}_{12}(\bar{a}),\tilde{g}_{11}(b)]]\\
&=-(-1)^{|a|+|b|}[\tilde{g}_{12}(\bar{a}),\tilde{f}_{11}(\bar{b})]\\
&=-(-1)^{|a||b|}[\tilde{f}_{11}(\bar{b}),\tilde{g}_{12}(\bar{a})]\\
&=-(-1)^{|a||b|}(\tilde{t}_{12}(\bar{b}\bar{a})+\tilde{\pi}(\bar{b}a))\\
&=-\tilde{t}_{12}(\overline{ab})-\tilde{\pi}(a\bar{b}).
\end{align*}

In summary, the elements $\tilde{t}_{12}(a),\tilde{f}_{11}(a),\tilde{f}_{21}(a),\tilde{g}_{11}(a),\tilde{g}_{12}(a)$ with $a\in R$ satisfy relations (\ref{eq:hatsto22_02})-(\ref{eq:hatsto22_05}). Therefore, there is a homomorphism of Lie superalgebras
$\varphi':\widehat{\mathfrak{sto}}_{2|2}(R,{}^-)\rightarrow\mathfrak{E}$ such that $\varphi\circ\varphi'=\psi\circ\psi'$. Following the same arguments as in the proof of Theorem~\ref{thm:sto_uce}, such a homomorphism $\varphi'$ is unique. Hence, the central extension $\psi\circ\psi':\widehat{\mathfrak{sto}}_{2|2}(R,{}^-)\rightarrow\mathfrak{osp}_{2|2}(R,{}^-)$ is universal.
\end{proof}

Summarizing Theorems~\ref{thm:osp_ce} and~\ref{thm:osp22_uce}, we obtian
\begin{corollary}
\label{cor:osp22_hml}
Let $(R,{}^-)$ be a unital associative superalgebra with superinvolution satisfying Assumption~\ref{asmp_osp22}. Then
$$\mathrm{H}_2(\mathfrak{osp}_{2|2}(R,{}^-),\Bbbk)=
\fourIdx{}{-}{}{1}{\mathrm{HD}}(R,{}^-)\oplus R/([R,R]R)$$
as $\Bbbk$-modules.\qed
\end{corollary}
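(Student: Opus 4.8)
The plan is to read off the second homology group directly from the universal central extension supplied by Theorem~\ref{thm:osp22_uce}, invoking the standard fact that for a perfect Lie superalgebra $\mathfrak{g}$ the kernel of its (essentially unique) universal central extension is canonically isomorphic to $\mathrm{H}_2(\mathfrak{g},\Bbbk)$. First I would record that $\mathfrak{osp}_{2|2}(R,{}^-)$ is perfect by Proposition~\ref{prop:ospalg}(iii), so that it admits a universal central extension whose kernel computes $\mathrm{H}_2(\mathfrak{osp}_{2|2}(R,{}^-),\Bbbk)$. The proof of Theorem~\ref{thm:osp22_uce} already establishes that the composite
$$\psi\circ\psi':\widehat{\mathfrak{sto}}_{2|2}(R,{}^-)\rightarrow\mathfrak{osp}_{2|2}(R,{}^-)$$
is a universal central extension; by uniqueness of the universal central extension this forces an isomorphism of $\Bbbk$-modules $\mathrm{H}_2(\mathfrak{osp}_{2|2}(R,{}^-),\Bbbk)\cong\ker(\psi\circ\psi')$.

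The next step is to compute $\ker(\psi\circ\psi')$ as a $\Bbbk$-module. Since $\ker(\psi\circ\psi')=\psi'^{-1}(\ker\psi)$ and $\psi'$ is surjective with $\ker\psi'=R/([R,R]R)$, there is a short exact sequence of $\Bbbk$-modules
$$0\rightarrow R/([R,R]R)\rightarrow\ker(\psi\circ\psi')\rightarrow\ker\psi\rightarrow0.$$
But by construction $\widehat{\mathfrak{sto}}_{2|2}(R,{}^-)=\mathfrak{sto}_{2|2}(R,{}^-)\oplus R/([R,R]R)$ is a direct sum of $\Bbbk$-modules on which $\psi'$ is the projection onto the first summand, so the preimage $\psi'^{-1}(\ker\psi)$ is simply $\ker\psi\oplus R/([R,R]R)$, with $\ker\psi$ regarded inside the summand $\mathfrak{sto}_{2|2}(R,{}^-)$. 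Hence the sequence above splits automatically at the level of $\Bbbk$-modules and $\ker(\psi\circ\psi')\cong\ker\psi\oplus R/([R,R]R)$.

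Finally I would invoke Theorem~\ref{thm:osp_ce}, which applies since $(2,1)\neq(1,1)$ and identifies $\ker\psi\cong\fourIdx{}{-}{}{1}{\mathrm{HD}}(R,{}^-)$ as $\Bbbk$-modules. Substituting this into the previous isomorphism yields
$$\mathrm{H}_2(\mathfrak{osp}_{2|2}(R,{}^-),\Bbbk)\cong\fourIdx{}{-}{}{1}{\mathrm{HD}}(R,{}^-)\oplus R/([R,R]R),$$
which is the claim. With the two cited theorems in hand there is essentially no genuine obstacle remaining; the only point demanding care is that the decomposition of $\ker(\psi\circ\psi')$ is an isomorphism of $\Bbbk$-modules (the corollary asserts only this, not a Lie-superalgebra splitting), and this is immediate from the $\Bbbk$-module direct sum structure built into $\widehat{\mathfrak{sto}}_{2|2}(R,{}^-)$ by the $2$-cocycle construction of Lemma~\ref{lem:osp22_2cy}.
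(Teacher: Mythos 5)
Your proposal is correct and is essentially the argument the paper intends: the corollary is obtained by combining Theorem~\ref{thm:osp_ce} (identifying $\ker\psi$ with $\fourIdx{}{-}{}{1}{\mathrm{HD}}(R,{}^-)$) with the universality of $\psi\circ\psi'$ established in the proof of Theorem~\ref{thm:osp22_uce}, and reading off $\mathrm{H}_2$ as the kernel of the universal central extension of the perfect Lie superalgebra $\mathfrak{osp}_{2|2}(R,{}^-)$. Your explicit computation of $\ker(\psi\circ\psi')=\ker\psi\oplus R/([R,R]R)$ as $\Bbbk$-modules, using the direct-sum structure of the cocycle construction, correctly fills in the routine step the paper leaves implicit.
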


In particular, if $S$ is a unital associative superalgebra, then $(S\oplus S^{\mathrm{op}},\mathrm{ex})$ satisfies Assumption~\ref{asmp_osp22} (see Remark~\ref{rmk:osp12_asmp}). In this situation, the universal central extension of $\mathfrak{osp}_{2|2}(S\oplus S^{\mathrm{op}},\mathrm{ex})$ obtained in Theorem~\ref{thm:osp22_uce} recovers the universal central extension of $\mathfrak{sl}_{2|2}(S)$ given in \cite{ChenSun2015}. More precisely,

\begin{corollary}
\label{cor:osp22_SS}
Let $S$ be a unital associative superalgebra. Then
$$\widehat{\mathfrak{st}}_{2|2}(S):=\mathfrak{st}_{2|2}(S)\oplus S/([S,S]S)\oplus S/([S,S]S)$$
is the universal central extension of $\mathfrak{sl}_{2|2}(S)$ and
$$\mathrm{H}_2(\mathfrak{sl}_{2|2}(S),\Bbbk)=\mathrm{HC}_1(S)\oplus S/([S,S]S)\oplus S/([S,S]S).$$
\end{corollary}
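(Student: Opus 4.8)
The plan is to specialize the general result Theorem~\ref{thm:osp22_uce} to the coordinate pair $(R,{}^-)=(S\oplus S^{\mathrm{op}},\mathrm{ex})$ and then transport everything across the isomorphisms relating the orthosymplectic and special-linear pictures. By Remark~\ref{rmk:osp12_asmp}, the pair $(S\oplus S^{\mathrm{op}},\mathrm{ex})$ always satisfies Assumption~\ref{asmp_osp22} (one may take $e=1\oplus(-1)$), so Theorem~\ref{thm:osp22_uce} applies and yields that the composite
$$\psi\circ\psi':\widehat{\mathfrak{sto}}_{2|2}(S\oplus S^{\mathrm{op}},\mathrm{ex})\rightarrow\mathfrak{osp}_{2|2}(S\oplus S^{\mathrm{op}},\mathrm{ex})$$
is a universal central extension. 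Since a universal central extension of a perfect Lie superalgebra transports along any isomorphism of the base, and $\mathfrak{osp}_{2|2}(S\oplus S^{\mathrm{op}},\mathrm{ex})\cong\mathfrak{sl}_{2|2}(S)$ by Example~\ref{eg:osp_SS}, it follows at once that $\widehat{\mathfrak{sto}}_{2|2}(S\oplus S^{\mathrm{op}},\mathrm{ex})$ is (isomorphic to) the universal central extension of $\mathfrak{sl}_{2|2}(S)$.

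Next I would identify the total space of this extension with $\widehat{\mathfrak{st}}_{2|2}(S)$ summand by summand. Recall $\widehat{\mathfrak{sto}}_{2|2}(S\oplus S^{\mathrm{op}},\mathrm{ex})=\mathfrak{sto}_{2|2}(S\oplus S^{\mathrm{op}},\mathrm{ex})\oplus(S\oplus S^{\mathrm{op}})/([S\oplus S^{\mathrm{op}},S\oplus S^{\mathrm{op}}](S\oplus S^{\mathrm{op}}))$. For the Steinberg part, Proposition~\ref{prop:sto_SS} gives $\mathfrak{sto}_{2|2}(S\oplus S^{\mathrm{op}},\mathrm{ex})\cong\mathfrak{st}_{2|2}(S)$. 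For the central part, the commutator ideal splits as $[S,S]S\oplus[S^{\mathrm{op}},S^{\mathrm{op}}]S^{\mathrm{op}}$; using the graded Jacobi identity one checks $[S,S]S=S[S,S]$, whence the opposite-algebra factor quotient is isomorphic as a $\Bbbk$-module to $S/([S,S]S)$ as well, giving
$$(S\oplus S^{\mathrm{op}})/([S\oplus S^{\mathrm{op}},S\oplus S^{\mathrm{op}}](S\oplus S^{\mathrm{op}}))\cong S/([S,S]S)\oplus S/([S,S]S),$$
which accounts for the two copies in $\widehat{\mathfrak{st}}_{2|2}(S)$. To upgrade these module identifications to an isomorphism of Lie superalgebras, I would assemble a commutative diagram of central extensions exactly as in the proof of Corollary~\ref{cor:slm2n}, whose three vertical maps are the isomorphisms from Example~\ref{eg:osp_SS}, Proposition~\ref{prop:sto_SS}, and the module computation above; commutativity then forces the bracket on $\widehat{\mathfrak{st}}_{2|2}(S)$ and identifies it with $\widehat{\mathfrak{sto}}_{2|2}(S\oplus S^{\mathrm{op}},\mathrm{ex})$, hence as the universal central extension of $\mathfrak{sl}_{2|2}(S)$.

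Finally, for the homology statement I would simply read off the kernel of the universal central extension. Because $\psi\circ\psi'$ is universal, $\mathrm{H}_2(\mathfrak{sl}_{2|2}(S),\Bbbk)$ is its kernel, which by Corollary~\ref{cor:osp22_hml} applied to $(S\oplus S^{\mathrm{op}},\mathrm{ex})$ equals $\fourIdx{}{-}{}{1}{\mathrm{HD}}(S\oplus S^{\mathrm{op}},\mathrm{ex})\oplus(S\oplus S^{\mathrm{op}})/([S\oplus S^{\mathrm{op}},S\oplus S^{\mathrm{op}}](S\oplus S^{\mathrm{op}}))$. Invoking the identification $\fourIdx{}{-}{}{1}{\mathrm{HD}}(S\oplus S^{\mathrm{op}},\mathrm{ex})\cong\mathrm{HC}_1(S)$ from \cite[Proposition~6.6]{ChangWang2015} together with the central-part computation above produces precisely $\mathrm{HC}_1(S)\oplus S/([S,S]S)\oplus S/([S,S]S)$.

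The main obstacle I anticipate is the central-part matching, namely verifying that the isomorphism $\mathfrak{sto}_{2|2}(S\oplus S^{\mathrm{op}},\mathrm{ex})\cong\mathfrak{st}_{2|2}(S)$ of Proposition~\ref{prop:sto_SS} lifts to an isomorphism of the central extensions, i.e. that the concrete $2$-cocycle $\beta$ on $\mathfrak{sto}_{2|2}(S\oplus S^{\mathrm{op}},\mathrm{ex})$ transports to the cocycle defining $\widehat{\mathfrak{st}}_{2|2}(S)$ in \cite{ChenSun2015}. Concretely, this requires tracking how the splitting of $(S\oplus S^{\mathrm{op}})/([S\oplus S^{\mathrm{op}},S\oplus S^{\mathrm{op}}](S\oplus S^{\mathrm{op}}))$ into two copies of $S/([S,S]S)$ pairs with the generators $\tilde{\mathbf{f}}$ and $\tilde{\mathbf{g}}$ under $\phi$; this is the only point demanding genuine bookkeeping beyond citing the already-established isomorphisms, and everything else reduces to the commutative-diagram argument and the application of Corollary~\ref{cor:osp22_hml}.
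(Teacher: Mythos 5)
Your proposal is correct and follows essentially the same route as the paper: specialize Theorem~\ref{thm:osp22_uce} to $(S\oplus S^{\mathrm{op}},\mathrm{ex})$ (which satisfies Assumption~\ref{asmp_osp22} by Remark~\ref{rmk:osp12_asmp}), transport along the isomorphisms of Proposition~\ref{prop:sto_SS} and Example~\ref{eg:osp_SS}, and identify $R/([R,R]R)$ with two copies of $S/([S,S]S)$. The paper's proof is in fact terser than yours — it treats the central-part identification and the cocycle bookkeeping as obvious — so your extra care there (checking $[S,S]S=S[S,S]$ and flagging the matching with the cocycle of \cite{ChenSun2015}) only makes the argument more complete, not different.
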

\begin{proof}
Let $(R,{}^-):=(S\oplus S^{\mathrm{op}},\mathrm{ex})$. Then it is obvious that
$$R/([R,R]R)\cong S/([S,S]S)\oplus S/([S,S]S)$$
as $\Bbbk$-modules. Hence, the corollary follows from Theorem~\ref{thm:osp22_uce} and the isomorphisms
$$\mathfrak{sto}_{2|2}(S\oplus S^{\mathrm{op}},\mathrm{ex})\cong\mathfrak{st}_{2|2}(S),\text{ and }
\mathfrak{osp}_{2|2}(S\oplus S^{\mathrm{op}},\mathrm{ex})\cong\mathfrak{sl}_{2|2}(S),$$
which have been proved in Proposition~\ref{prop:sto_SS} and Example~\ref{eg:osp_SS}, respectively.
\end{proof}

\section{The universal central extension of $\mathfrak{osp}_{1|2}(R,{}^-)$}
\label{sec:osp12}

The universal central extension of $\mathfrak{osp}_{m|2n}(R,{}^-)$ with $m,n\in\mathbb{N}$ and $(m,n)\neq(1,1)$ has been discussed in the previous sections. For $\mathfrak{osp}_{1|2}(R,{}^-)$, one easily reads from Definition~\ref{def:sto} that $\mathfrak{sto}_{1|2}(R,{}^-)$ is the Lie superalgebra generated by homogenous elements $\mathbf{f}_{11}(a),\mathbf{g}_{11}(a)$ of degree $1+|a|$ for homogenous element $a\in R$ subjecting to the only relation that $\mathbf{f}_{11}$ and $\mathbf{g}_{11}$ are $\Bbbk$-linear. It is not necessarily a central extension of $\mathfrak{osp}_{1|2}(R,{}^-)$. Instead, we will explicitly construct the universal central extension of $\mathfrak{osp}_{1|2}(R,{}^-)$ in this section.

\begin{definition}
\label{def:hatosp12}
Let $(R,{}^-)$ be a unital associative superalgebra with superinvolution. We define the Lie superalgebra $\widehat{\mathfrak{osp}}_{1|2}(R,{}^-)$ by generators and relations: The generators of $\widehat{\mathfrak{osp}}_{1|2}(R,{}^-)$ are homogeneous $\mathbf{v}(a)$, $\mathbf{w}(a)$ of degree $|a|$ and homogeneous $\mathbf{f}(a)$, $\mathbf{g}(a)$ of degree $1+|a|$ for homogenous $a\in R$. The defining relations for $\widehat{\mathfrak{osp}}_{1|2}(R,{}^-)$ are given as follows:
\begin{align*}
&\begin{aligned}
&\mathbf{v}(\bar{a})=\mathbf{v}(a),
&&\mathbf{w}(\bar{a})=\mathbf{w}(a),\\
&[\mathbf{v}(a),\mathbf{v}(b)]=0,
&&[\mathbf{w}(a),\mathbf{w}(b)]=0,\\
&[\mathbf{v}(a),\mathbf{g}(b)]=0,
&&[\mathbf{f}(a),\mathbf{w}(b)]=0,\\
&[\mathbf{v}(a),\mathbf{f}(b)]=(-1)^{|b|}\mathbf{g}(a_+\bar{b}),
&&[\mathbf{g}(a),\mathbf{w}(b)]=-(-1)^{|a|}\mathbf{f}(\bar{a}b_+),\\
&[\mathbf{f}(a),\mathbf{f}(b)]=(-1)^{|a|}\mathbf{w}(\bar{a}b),
&&[\mathbf{g}(a),\mathbf{g}(b)]=-(-1)^{|b|}\mathbf{v}(a\bar{b}),
\end{aligned}\\
&[[\mathbf{f}(a),\mathbf{g}(b)],\mathbf{f}(c)]=\mathbf{f}(abc-\overline{ab}c-(-1)^{|a||b|+|b||c|+|c||a|}cba),\\
&[\mathbf{g}(a),[\mathbf{f}(b),\mathbf{g}(c)]]=\mathbf{g}(abc-a\overline{bc}-(-1)^{|a||b|+|b||c|+|c||a|}cba),
\end{align*}
where $a,b,c\in R$ are homogeneous.
\end{definition}

Observing that the elements
\begin{align}
v(a):&=e_{23}(a_+),& w(a):&=e_{32}(a_+), \label{eq:osp12_gen1}\\
f(a):&=e_{12}(a)+e_{31}(\rho(\bar{a})),&g(a):&=e_{21}(a)-e_{13}(\rho(\bar{a})),\label{eq:osp12_gen2}
\end{align}
of the Lie superalgebra $\mathfrak{osp}_{1|2}(R,{}^-)$ satisfy all relations in Definition~\ref{def:hatosp12}. We obtain a canonical homomorphism of Lie superalgebras
$$\psi:\widehat{\mathfrak{osp}}_{1|2}(R,{}^-)\rightarrow \mathfrak{osp}_{1|2}(R,{}^-)$$
such that
$$\psi(\mathbf{v}(a))={v}(a),\quad\psi(\mathbf{w}(a))={w}(a),
\quad\psi(\mathbf{f}(a))={f}(a),\quad\psi(\mathbf{g}(a))={g}(a).$$

In the especial case where $(R,{}^-)=(S\oplus S^{\mathrm{op}},\mathrm{ex})$ with $S$ an arbitrary unital associative superalgebra, the Lie superalgebra $\widehat{\mathfrak{osp}}_{1|2}(S\oplus S^{\mathrm{op}},\mathrm{ex})$ also recovers the Steinberg Lie superalgebra $\mathfrak{st}_{1|2}(S)$:

\begin{proposition}
\label{prop:hatosp_SS}
Let $S$ be an arbitrary unital associative superalgebra. Then
$$\widehat{\mathfrak{osp}}_{1|2}(S\oplus S^{\mathrm{op}},\mathrm{ex})\cong\mathfrak{st}_{1|2}(S)$$
as Lie superalgebras.
\end{proposition}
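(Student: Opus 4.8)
The plan is to follow the same strategy as in the proof of Proposition~\ref{prop:sto_SS}: exhibit two explicit, mutually inverse homomorphisms. Recall that $\mathfrak{st}_{1|2}(S)$ is presented by the generators $\boldsymbol{e}_{ij}(a)$, $1\leqslant i\neq j\leqslant 3$, subject only to (ST0), (ST1) and (ST2); with three indices, (ST2) is still non-trivial, giving the ``same row / same column'' commuting relations such as $[\boldsymbol{e}_{12}(a),\boldsymbol{e}_{13}(b)]=0$ and $[\boldsymbol{e}_{21}(a),\boldsymbol{e}_{31}(b)]=0$. On the other side, $\widehat{\mathfrak{osp}}_{1|2}(S\oplus S^{\mathrm{op}},\mathrm{ex})$ is generated by $\mathbf{v},\mathbf{w},\mathbf{f},\mathbf{g}$ subject to the quadratic relations together with the two cubic relations of Definition~\ref{def:hatosp12}. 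First I would define
$$\phi:\mathfrak{st}_{1|2}(S)\rightarrow\widehat{\mathfrak{osp}}_{1|2}(S\oplus S^{\mathrm{op}},\mathrm{ex})$$
on generators by $\boldsymbol{e}_{12}(a)\mapsto\mathbf{f}(a\oplus0)$, $\boldsymbol{e}_{31}(a)\mapsto\mathbf{f}(0\oplus\rho(a))$, $\boldsymbol{e}_{21}(a)\mapsto\mathbf{g}(a\oplus0)$, $\boldsymbol{e}_{13}(a)\mapsto-\mathbf{g}(0\oplus\rho(a))$, $\boldsymbol{e}_{23}(a)\mapsto\mathbf{v}(a\oplus0)$ and $\boldsymbol{e}_{32}(a)\mapsto\mathbf{w}(a\oplus0)$, together with the candidate inverse
$$\phi^{-1}:\widehat{\mathfrak{osp}}_{1|2}(S\oplus S^{\mathrm{op}},\mathrm{ex})\rightarrow\mathfrak{st}_{1|2}(S),$$
given by $\mathbf{f}(a\oplus b)\mapsto\boldsymbol{e}_{12}(a)+\boldsymbol{e}_{31}(\rho(b))$, $\mathbf{g}(a\oplus b)\mapsto\boldsymbol{e}_{21}(a)-\boldsymbol{e}_{13}(\rho(b))$, $\mathbf{v}(a\oplus b)\mapsto\boldsymbol{e}_{23}(a+b)$ and $\mathbf{w}(a\oplus b)\mapsto\boldsymbol{e}_{32}(a+b)$. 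These formulas are found by matching the canonical maps to $\mathfrak{osp}_{1|2}$ and $\mathfrak{sl}_{1|2}$ through Example~\ref{eg:osp_SS}, and they are immediately seen to be inverse to one another on generators; the symmetry relations $\mathbf{v}(\bar a)=\mathbf{v}(a)$, $\mathbf{w}(\bar a)=\mathbf{w}(a)$ make $\phi^{-1}$ consistent on the two summands.

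Next I would prove well-definedness. That $\phi^{-1}$ respects the relations of $\widehat{\mathfrak{osp}}_{1|2}$ is a direct computation inside $\mathfrak{st}_{1|2}(S)$, where brackets are governed by (ST0)--(ST2). The decisive feature of $S\oplus S^{\mathrm{op}}$ is that a product of an element of one summand with an element of the other vanishes, while $\mathrm{ex}$ interchanges the summands; consequently, after substituting $a,b\in S\oplus S^{\mathrm{op}}$, each quadratic relation (for instance $[\mathbf{f}(a),\mathbf{f}(b)]=(-1)^{|a|}\mathbf{w}(\bar ab)$ and $[\mathbf{v}(a),\mathbf{f}(b)]=(-1)^{|b|}\mathbf{g}(a_+\bar b)$) collapses onto a single instance of (ST1) or (ST2). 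The two cubic relations reduce, under $\phi^{-1}$, to triple-bracket identities among the $\boldsymbol{e}_{ij}$: iterated use of (ST1) turns the three summands $abc$, $\overline{ab}c$ and $cba$ into the corresponding words in $S$, so that the identities hold exactly because multiplication in $S$ is associative. This verification is lengthy but entirely mechanical, and it establishes that $\phi^{-1}$ is a well-defined homomorphism; it is visibly surjective since all $\boldsymbol{e}_{ij}$ lie in its image.

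The hard part will be the well-definedness of $\phi$, equivalently the injectivity of $\phi^{-1}$. The quadratic and the ``composable'' Steinberg relations translate into relations already present in $\widehat{\mathfrak{osp}}_{1|2}$, but the family (ST2) forces the mixed Cartan brackets
$$[\mathbf{f}(a\oplus0),\mathbf{g}(0\oplus\rho(b))]\quad\text{and}\quad[\mathbf{g}(a\oplus0),\mathbf{f}(0\oplus\rho(b))]$$
to vanish in $\widehat{\mathfrak{osp}}_{1|2}(S\oplus S^{\mathrm{op}},\mathrm{ex})$. Since $(a\oplus0)(0\oplus\rho(b))=(0\oplus\rho(b))(a\oplus0)=0$, the two cubic relations show that these elements annihilate every $\mathbf{f}(c)$ and $\mathbf{g}(c)$; as $\mathbf{v}$ and $\mathbf{w}$ are generated by the $\mathbf{f},\mathbf{g}$ (e.g. $\mathbf{v}(a)=-[\mathbf{g}(a),\mathbf{g}(1)]$), these brackets are therefore central, and one checks they lie in $\ker\psi$. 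The delicate point — the real heart of the argument — is to show they are genuinely $0$ rather than merely central. This is the $\mathfrak{osp}_{1|2}$-analogue of the kernel computation carried out in Lemma~\ref{lem:sto_ker} and Proposition~\ref{prop:sto_ker}: one must verify that $\ker\psi$ is spanned only by ``diagonal'' elements $[\mathbf{f}(x),\mathbf{g}(y)]$ arising from a single summand, so that the cross terms collapse. Once this vanishing is established, $\phi$ is a well-defined homomorphism, and since $\phi$ and $\phi^{-1}$ invert each other on generators they are mutually inverse isomorphisms, proving the proposition.
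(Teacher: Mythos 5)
Your maps $\phi$ and $\phi^{-1}$ are exactly the ones the paper writes down, and the overall strategy (two explicit, mutually inverse homomorphisms, modelled on Proposition~\ref{prop:sto_SS}) is the same; the paper's own proof in fact consists of nothing beyond these formulas. You have, moreover, correctly isolated the one verification that is genuinely non-routine: the images under $\phi$ of the (\ref{ST2})-relations $[\boldsymbol{e}_{12}(a),\boldsymbol{e}_{13}(b)]=0$ and $[\boldsymbol{e}_{21}(a),\boldsymbol{e}_{31}(b)]=0$ are the mixed brackets $[\mathbf{f}(a\oplus0),\mathbf{g}(0\oplus\rho(b))]$ and $[\mathbf{g}(a\oplus0),\mathbf{f}(0\oplus\rho(b))]$, and these are not visibly zero from the relations of Definition~\ref{def:hatosp12}. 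But you then stop exactly there. As written this is a genuine gap: proving that these elements are central and lie in $\ker\psi$ does not prove they vanish, and ``verify that $\ker\psi$ is spanned only by diagonal elements'' is not a finishing move but a restatement of what remains to be shown (and, taken literally, it would drag in the full computation of $\ker\psi$, which is far more than is needed).

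The missing step can be closed with tools the paper develops later in the section, none of which depend on this proposition. Since $(a\oplus0)(0\oplus\rho(b))=(0\oplus\rho(b))(a\oplus0)=0$, the offending bracket equals $\boldsymbol{\lambda}(a\oplus0,0\oplus\rho(b))$ in the notation of Lemma~\ref{lem:hatosp12_ele}. Lemma~\ref{lem:hatosp12_h} shows that $(a,b)\mapsto\boldsymbol{\lambda}(a,b)$ kills every generator of $\tilde{I}_{\mathsf{d}}^{-}$ and hence factors through $\tildeHD{R}{R}$; and the computation in the proof of Lemma~\ref{lem:hmltilde}(ii) gives $2\tildeHD{a\oplus0}{0\oplus b}=0$, whence $\tildeHD{a\oplus0}{0\oplus b}=0$ because $\frac{1}{2}\in\Bbbk$. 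Therefore $\boldsymbol{\lambda}(a\oplus0,0\oplus\rho(b))=0$, which is the vanishing you need. To make your argument self-contained you should either run this chain explicitly (noting that Lemma~\ref{lem:hatosp12_h} relies only on Definition~\ref{def:hatosp12}, so there is no circularity) or replace it by an equivalent direct computation; until then the well-definedness of $\phi$, and with it the injectivity of $\phi^{-1}$, is not established.
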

\begin{proof}
An isomorphism $\phi:\widehat{\mathfrak{osp}}_{1|2}(S\oplus S^{\mathrm{op}},\mathrm{ex})\rightarrow\mathfrak{st}_{1|2}(S)$ is given by
\begin{align*}
\phi(\mathbf{v}(a\oplus b))&=\boldsymbol{e}_{23}(a+b),
&\phi(\mathbf{w}(a\oplus b))&=\boldsymbol{e}_{32}(a+b),\\
\phi(\mathbf{f}(a\oplus b))&=\boldsymbol{e}_{12}(a)+\boldsymbol{e}_{31}(\rho(b)),
&\phi(\mathbf{g}(a\oplus b))&=\boldsymbol{e}_{21}(a)-\boldsymbol{e}_{13}(\rho(b)),
\end{align*}
for $a,b\in S$. While the inverse $\phi^{-1}:\mathfrak{st}_{1|2}(S)\rightarrow\widehat{\mathfrak{osp}}_{1|2}(S\oplus S^{\mathrm{op}},\mathrm{ex})$ of $\phi$ is defined by
\begin{align*}
\phi^{-1}(\boldsymbol{e}_{12}(a))&=\mathbf{f}(a\oplus0),
&\phi^{-1}(\boldsymbol{e}_{13}(a))&=-\mathbf{g}(0\oplus\rho(a)),\\
\phi^{-1}(\boldsymbol{e}_{21}(a))&=\mathbf{g}(a\oplus0),
&\phi^{-1}(\boldsymbol{e}_{23}(a))&=\mathbf{v}(a\oplus0),\\
\phi^{-1}(\boldsymbol{e}_{31}(a))&=\mathbf{f}(0\oplus\rho(a)),
&\phi^{-1}(\boldsymbol{e}_{32}(a))&=\mathbf{w}(a\oplus0),
\end{align*}
for $a\in S$.
\end{proof}

In the sequel, we will show that $\psi:\widehat{\mathfrak{osp}}_{1|2}(R,{}^-)\rightarrow\mathfrak{osp}_{1|2}(R,{}^-)$ is a central extension, characterize the kernel of $\psi$, and construct the universal central extension of $\widehat{\mathfrak{osp}}_{1|2}(R,{}^-)$.

\subsection{$\psi:\widehat{\mathfrak{osp}}_{1|2}(R,{}^-)\rightarrow\mathfrak{osp}_{1|2}(R,{}^-)$ is a central extension}
\label{subsubsec:osp12_ce}

In order to show that $\psi:\widehat{\mathfrak{osp}}_{1|2}(R,{}^-)\rightarrow\mathfrak{osp}_{1|2}(R,{}^-)$ is a central extension, we need a few lemmas:

\begin{lemma}
\label{lem:hatosp12_ele}
In the Lie superalgebra $\widehat{\mathfrak{osp}}_{1|2}(R,{}^-)$, we denote
$$\mathbf{h}(a,b):=[\mathbf{f}(a),\mathbf{g}(b)],\text{ and }\quad \boldsymbol{\lambda}(a,b):=\mathbf{h}(a,b)-(-1)^{|a||b|}\mathbf{h}(1,ba),$$
for homogeneous $a,b\in R$. Then every element $x$ of $\widehat{\mathfrak{osp}}_{1|2}(R,{}^-)$ is written as
$$x=\sum_i\boldsymbol{\lambda}(a_i,b_i)+\mathbf{h}(1,c_0)+\mathbf{f}(c_1)+\mathbf{g}(c_2)
+\mathbf{v}(c_3)+\mathbf{w}(c_4),$$
where the summation runs over a finite set, $a_i,b_i,c_0,c_1,c_2\in R$, and $c_3,c_4\in R_+$.
\end{lemma}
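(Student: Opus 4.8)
The plan is to show that the $\Bbbk$-submodule $W\subseteq\widehat{\mathfrak{osp}}_{1|2}(R,{}^-)$ consisting of all elements of the asserted form is the whole Lie superalgebra. Since $\widehat{\mathfrak{osp}}_{1|2}(R,{}^-)$ is generated by the $\mathbf{f}(a)$, $\mathbf{g}(a)$, $\mathbf{v}(a)$, $\mathbf{w}(a)$, and since any Lie superalgebra generated by a set $S$ is spanned by right-normed brackets $[s_1,[s_2,[\cdots,[s_{k-1},s_k]\cdots]]]$ with $s_i\in S$, it suffices to verify that $W$ contains all the generators and that $[s,W]\subseteq W$ for every generator $s$; an induction outward along a right-normed bracket then yields $\widehat{\mathfrak{osp}}_{1|2}(R,{}^-)\subseteq W$, whence equality.

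First I would check that $W$ contains the generators. The relation $\mathbf{v}(\bar a)=\mathbf{v}(a)$ together with the $\Bbbk$-linearity of the generators gives $\mathbf{v}(a)=\mathbf{v}(\tfrac12 a_+)$ with $\tfrac12 a_+\in R_+$, so every $\mathbf{v}(a)$ is of the form $\mathbf{v}(c_3)$ with $c_3\in R_+$, and likewise every $\mathbf{w}(a)=\mathbf{w}(c_4)$ with $c_4\in R_+$; the $\mathbf{f}(a)$ and $\mathbf{g}(a)$ are already of the required shape. I would also record that $\mathbf{h}(a,b)\in W$ for all $a,b$, since by definition $\mathbf{h}(a,b)=\boldsymbol{\lambda}(a,b)+(-1)^{|a||b|}\mathbf{h}(1,ba)$.

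The core of the argument is the closure property $[s,W]\subseteq W$. By bilinearity it is enough to bracket each generator against each of the six term types $\boldsymbol{\lambda}(a,b)$, $\mathbf{h}(1,c_0)$, $\mathbf{f}(c_1)$, $\mathbf{g}(c_2)$, $\mathbf{v}(c_3)$, $\mathbf{w}(c_4)$. The brackets among $\mathbf{f},\mathbf{g},\mathbf{v},\mathbf{w}$ are handled directly by the defining relations of Definition~\ref{def:hatosp12}: for instance $[\mathbf{f}(c),\mathbf{f}(c_1)]=(-1)^{|c|}\mathbf{w}(\bar c c_1)$ and $[\mathbf{f}(c),\mathbf{g}(c_2)]=\mathbf{h}(c,c_2)$ land in $W$ (using the previous paragraph for the $\mathbf{w}$- and $\mathbf{h}$-values), while $[\mathbf{f}(c),\mathbf{w}(c_4)]=0$ and $[\mathbf{f}(c),\mathbf{v}(c_3)]$ reduces to a $\mathbf{g}$-term. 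Brackets of a generator against the diagonal pieces $\boldsymbol{\lambda}(a,b)$ and $\mathbf{h}(1,c_0)$ are treated by writing the diagonal piece through $\mathbf{h}(a,b)=[\mathbf{f}(a),\mathbf{g}(b)]$ and applying the graded Jacobi identity together with the defining relations. For $\mathbf{f}$ and $\mathbf{g}$ this is immediate from the two cubic relations $[[\mathbf{f}(a),\mathbf{g}(b)],\mathbf{f}(c)]=\mathbf{f}(\cdots)$ and $[\mathbf{g}(a),[\mathbf{f}(b),\mathbf{g}(c)]]=\mathbf{g}(\cdots)$; for $\mathbf{v}$ and $\mathbf{w}$, which do not appear in those relations, I would expand, say, $[\mathbf{v}(c),\mathbf{h}(1,c_0)]=[\mathbf{v}(c),[\mathbf{f}(1),\mathbf{g}(c_0)]]$ by Jacobi, use $[\mathbf{v}(c),\mathbf{f}(1)]=\mathbf{g}(c_+)$ and $[\mathbf{v}(c),\mathbf{g}(c_0)]=0$, and collapse to $[\mathbf{g}(c_+),\mathbf{g}(c_0)]=-(-1)^{|c_0|}\mathbf{v}(c_+\overline{c_0})$, a $\mathbf{v}$-term. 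Throughout, whenever a diagonal output $\mathbf{h}(a,b)$ is produced it is split as $\boldsymbol{\lambda}(a,b)+(-1)^{|a||b|}\mathbf{h}(1,ba)$, and the resulting $\mathbf{h}(1,\cdot)$ contributions are merged into a single $\mathbf{h}(1,c_0)$ using the $\Bbbk$-linearity of $c\mapsto\mathbf{h}(1,c)=[\mathbf{f}(1),\mathbf{g}(c)]$.

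The main obstacle is bookkeeping rather than conceptual: one must carry out the Jacobi expansions for the diagonal action on $\mathbf{v}$ and $\mathbf{w}$ consistently with the sign conventions, and verify that every off-diagonal output is genuinely a single $\mathbf{f}$-, $\mathbf{g}$-, $\mathbf{v}$- or $\mathbf{w}$-term (with the $R_+$-constraint respected for $\mathbf{v}$ and $\mathbf{w}$) while every diagonal output collapses to the normal form $\sum_i\boldsymbol{\lambda}(a_i,b_i)+\mathbf{h}(1,c_0)$. Once this finite list of bracket computations is complete, the induction closes and the lemma follows.
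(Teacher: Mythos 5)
Your argument is correct and is essentially the paper's own proof: the paper likewise shows that the $\Bbbk$-span of $\{\mathbf{h}(a,b),\mathbf{f}(a),\mathbf{g}(a),\mathbf{v}(a),\mathbf{w}(a)\}$ is closed under brackets and contains the generators, hence is all of $\widehat{\mathfrak{osp}}_{1|2}(R,{}^-)$, and then rewrites $\mathbf{h}(a,b)=\boldsymbol{\lambda}(a,b)+(-1)^{|a||b|}\mathbf{h}(1,ba)$ to reach the stated normal form. Your refinement of only checking brackets of generators against the span (via right-normed brackets) rather than full subalgebra closure is a harmless economy.
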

\begin{proof}
We first claim that $\widehat{\mathfrak{osp}}_{1|2}(R,{}^-)$ (as a $\Bbbk$-module) is spanned by
$$\mathfrak{B}:=\{\mathbf{h}(a,b),\mathbf{f}(a),\mathbf{g}(a),\mathbf{v}(a),\mathbf{w}(a)| \text{homogeneous }a,b\in R\}.$$
Let $\mathfrak{g}$ be the $\Bbbk$--submodule of $\widehat{\mathfrak{osp}}_{1|2}(R,{}^-)$ spanned by $\mathfrak{B}$. Then it is directly verified that $\mathfrak{g}$ is indeed a Lie sub-superalgebra of $\widehat{\mathfrak{osp}}_{1|2}(R,{}^-)$, containing all the generators $\mathbf{v}(a)$, $\mathbf{w}(a)$, $\mathbf{f}(a)$ and $\mathbf{g}(a)$ for $a\in R$. Hence, $\widehat{\mathfrak{osp}}_{1|2}(R,{}^-)=\mathfrak{g}$, i.e., every element $x\in\widehat{\mathfrak{osp}}_{1|2}(R,{}^-)$ is written as
$$x=\sum_i\mathbf{h}(a_i,b_i)+\mathbf{f}(c_1)+\mathbf{g}(c_2)+\mathbf{v}(c_3)+\mathbf{w}(c_4),$$
where the summation runs over a finite set, $a_i,b_i,c_1,c_2\in R$, and $c_3,c_4\in R_+$.

Now,
$$\mathbf{h}(a_i,b_i)=\boldsymbol{\lambda}(a_i,b_i)+(-1)^{|a_i||b_i|}\mathbf{h}(1,b_ia_i).$$
We conclude that
$$x=\sum_i\boldsymbol{\lambda}(a_i,b_i)+\mathbf{h}(1,c_0)+\mathbf{f}(c_1)+\mathbf{g}(c_2)+\mathbf{v}(c_3)+\mathbf{w}(c_4),$$
where $c_0=\sum_i(-1)^{|a_i||b_i|}b_ia_i$.
\end{proof}

\begin{lemma}
\label{lem:hatosp12_ker1}
$\ker\psi=\{\textstyle{\sum_i}\boldsymbol{\lambda}(a_i,b_i)|\textstyle{\sum_i}[a_i,b_i]=\textstyle{\sum_i}\overline{[a_i,b_i]}\}.$
\end{lemma}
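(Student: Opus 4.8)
The plan is to follow the strategy of the proof of Lemma~\ref{lem:sto_ker} almost verbatim, with the role of Lemma~\ref{lem:sto_h_ele} now played by Lemma~\ref{lem:hatosp12_ele}. The inclusion $\supseteq$ is the easy direction. Using the realization (\ref{eq:osp12_gen2}) and the matrix-unit rule $e_{ij}(x)e_{kl}(y)=\delta_{jk}e_{il}(xy)$, a direct computation of the super-commutator gives that $\psi(\mathbf{h}(a,b))=[f(a),g(b)]$ is a diagonal matrix whose $(1,1)$-entry equals $ab-\overline{ab}$. Forming $\boldsymbol{\lambda}(a,b)=\mathbf{h}(a,b)-(-1)^{|a||b|}\mathbf{h}(1,ba)$, I expect the $(2,2)$- and $(3,3)$-entries to cancel, so that $\psi(\boldsymbol{\lambda}(a,b))=e_{11}([a,b]-\overline{[a,b]})$. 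Summing then yields $\psi\bigl(\sum_i\boldsymbol{\lambda}(a_i,b_i)\bigr)=e_{11}\bigl(\sum_i([a_i,b_i]-\overline{[a_i,b_i]})\bigr)$, which vanishes precisely when $\sum_i[a_i,b_i]=\sum_i\overline{[a_i,b_i]}$; this gives $\supseteq$.

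For the converse, I would take $x\in\ker\psi$ and invoke Lemma~\ref{lem:hatosp12_ele} to write $x=\sum_i\boldsymbol{\lambda}(a_i,b_i)+\mathbf{h}(1,c_0)+\mathbf{f}(c_1)+\mathbf{g}(c_2)+\mathbf{v}(c_3)+\mathbf{w}(c_4)$ with $c_3,c_4\in R_+$. Applying $\psi$ and using (\ref{eq:osp12_gen1})–(\ref{eq:osp12_gen2}) together with the computation $\psi(\mathbf{h}(1,c_0))=e_{11}(c_0-\overline{c_0})+e_{22}(\rho(c_0))-e_{33}(\rho(\overline{c_0}))$, the relation $\psi(x)=0$ becomes the vanishing of a single $3\times3$ matrix. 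The off-diagonal positions $(1,2)$, $(2,1)$, $(2,3)$, $(3,2)$ contribute $c_1$, $c_2$, $(c_3)_+=2c_3$, $(c_4)_+=2c_4$ respectively, so they force $c_1=c_2=c_3=c_4=0$ (recall $2$ is invertible), while the $(2,2)$-entry is $\rho(c_0)$ and, $\rho$ being a bijection, forces $c_0=0$. Hence $x=\sum_i\boldsymbol{\lambda}(a_i,b_i)$, and the $(1,1)$-entry of $\psi(x)=0$ then reads $\sum_i([a_i,b_i]-\overline{[a_i,b_i]})=0$, which is the asserted condition.

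The only genuinely delicate step is the explicit evaluation $\psi(\boldsymbol{\lambda}(a,b))=e_{11}([a,b]-\overline{[a,b]})$; everything else is bookkeeping of matrix positions. Carrying it out requires tracking the parity signs $(-1)^{|a||b|}$ and $(-1)^{(1+|a|)(1+|b|)}$, the action of $\rho$, and the identities $\overline{ab}=(-1)^{|a||b|}\bar b\bar a$ and $[\bar a,\bar b]=-\overline{[a,b]}$ through the cancellation of the $(2,2)$- and $(3,3)$-slots. I expect this sign chase to be the main obstacle, but it is entirely parallel to the computation of $\phi(\boldsymbol{\lambda}(a,b))$ already performed in the proof of Proposition~\ref{prop:sto_ker}, so no new phenomenon should arise.
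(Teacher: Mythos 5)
Your argument is correct and follows the paper's own proof essentially verbatim: the easy inclusion via $\psi(\boldsymbol{\lambda}(a,b))=e_{11}([a,b]-\overline{[a,b]})$ (your sign chase does check out — the $e_{22}$- and $e_{33}$-entries of $\mathbf{h}(a,b)$ and $(-1)^{|a||b|}\mathbf{h}(1,ba)$ cancel exactly), and the converse by decomposing $x$ via Lemma~\ref{lem:hatosp12_ele}, applying $\psi$, and reading off the matrix entries to force $c_0=c_1=c_2=c_3=c_4=0$. No further comment is needed.
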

\begin{proof}
If $\sum_i[a_i,b_i]=\sum_i\overline{[a_i,b_i]}$ for $a_i,b_i\in R$, then
$$\psi\left(\sum_i\boldsymbol{\lambda}(a_i,b_i)\right)=e_{11}\left(\sum_i([a_i,b_i]-\overline{[a_i,b_i]})\right)=0.$$
Conversely, we have known from Lemma~\ref{lem:osp_ele} that every element $x\in\widehat{\mathfrak{osp}}_{1|2}(R,{}^-)$ is written as
\begin{align*}
x&=\sum_i\boldsymbol{\lambda}(a_i,b_i)+\mathbf{h}(1,c_0)+\mathbf{f}(c_1)+\mathbf{g}(c_2)
+\mathbf{v}(c_3)+\mathbf{w}(c_4),
\end{align*}
for homogeneous $a_i,b_i,c_0,c_1,c_2\in R$ and $c_3,c_4\in R_+$. Hence, $x\in\ker\psi$ implies that
\begin{align*}
0=\psi(x)=&\sum_ie_{11}([a_i,b_i]-\overline{[a_i,b_i]})+e_{11}(c_0-\bar{c}_0)+e_{22}(\rho(c_0))-e_{33}(\rho(\bar{c}_0))\\
&+{f}(c_1)+{g}(c_2)+{v}(c_3)+{w}(c_4)
\end{align*}
in $\mathfrak{osp}_{1|2}(R,{}^-)$. It follows that $c_0=c_1=c_2=0$ and $c_3+\bar{c}_3=c_4+\bar{c}_4=0$. Since $c_3,c_4\in R_+$ and $\frac{1}{2}\in\Bbbk$, we obtain that $c_3=c_4=0$. Hence, $x=\sum_i\boldsymbol{\lambda}(a_i,b_i)$ and $\sum_i [a_i,b_i]=\sum_i\overline{[a_i,b_i]}$.
\end{proof}

\begin{proposition}
\label{prop:osp12_ce}
$\psi:\widehat{\mathfrak{osp}}_{1|2}(R,{}^-)\rightarrow\mathfrak{osp}_{1|2}(R,{}^-)$ is a central extension.
\end{proposition}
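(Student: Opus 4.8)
The plan is to adapt the strategy of Proposition~\ref{prop:osp_ce} to the degenerate case $(m,n)=(1,1)$, where the roles of $\mathfrak{sto}_{m|2n}^0$ and $\mathfrak{sto}_{m|2n}^1$ are played by the ``diagonal'' span of the $\mathbf{h}(a,b)$ and the span of the remaining generators. First I would record surjectivity: by Proposition~\ref{prop:ospalg}(ii) the Lie superalgebra $\mathfrak{osp}_{1|2}(R,{}^-)$ is generated by $f(a)=f_{11}(a)$ and $g(a)=g_{11}(a)$, both of which lie in the image of $\psi$ by construction; since $\psi$ is a homomorphism, it is onto.

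The core of the argument is a triangular-type decomposition. Set $\mathfrak{n}^0:=\mathrm{span}_{\Bbbk}\{\mathbf{h}(a,b)\mid a,b\in R\}$ and let $\mathfrak{n}^1$ be the $\Bbbk$-span of all $\mathbf{f}(a),\mathbf{g}(a),\mathbf{v}(a),\mathbf{w}(a)$. Lemma~\ref{lem:hatosp12_ele} already gives $\widehat{\mathfrak{osp}}_{1|2}(R,{}^-)=\mathfrak{n}^0+\mathfrak{n}^1$, and the verification in its proof that $\mathfrak{B}$ spans a Lie sub-superalgebra simultaneously yields $[\mathfrak{n}^0,\mathfrak{n}^1]\subseteq\mathfrak{n}^1$. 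Here the brackets $[\mathbf{h}(a,b),\mathbf{f}(c)]$ and $[\mathbf{g}(c),\mathbf{h}(a,b)]$ are read off directly from the last two defining relations of Definition~\ref{def:hatosp12}, whereas $[\mathbf{h}(a,b),\mathbf{v}(c)]$ and $[\mathbf{h}(a,b),\mathbf{w}(c)]$ have to be obtained by expanding $\mathbf{h}(a,b)=[\mathbf{f}(a),\mathbf{g}(b)]$ via the super-Jacobi identity and applying the mixed $\mathbf{v}$--$\mathbf{f}$, $\mathbf{g}$--$\mathbf{w}$ relations. This last point is where I expect the main obstacle to lie: those two brackets are not among the defining relations and require a careful sign-bookkeeping through $\mathbf{f},\mathbf{g}$ to confirm they land back in $\mathfrak{n}^1$ (as scalar multiples of $\mathbf{v},\mathbf{w}$), with the $R_+$-constraint on $\mathbf{v},\mathbf{w}$ respected.

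Next I would establish injectivity of $\psi|_{\mathfrak{n}^1}$ by a matrix-position count. Under $\psi$, the generator $\mathbf{f}(c_1)$ contributes the $(1,2)$-entry $c_1$, $\mathbf{g}(c_2)$ the $(2,1)$-entry $c_2$, $\mathbf{v}(c_3)$ the $(2,3)$-entry $2c_3$ and $\mathbf{w}(c_4)$ the $(3,2)$-entry $2c_4$ (recalling that $\mathbf{v},\mathbf{w}$ may be normalized so $c_3,c_4\in R_+$). Since these four off-diagonal positions are pairwise distinct, the vanishing of $\psi(\mathbf{f}(c_1)+\mathbf{g}(c_2)+\mathbf{v}(c_3)+\mathbf{w}(c_4))$ forces $c_1=c_2=c_3=c_4=0$ because $2$ is invertible in $\Bbbk$; hence $\psi$ is injective on $\mathfrak{n}^1$.

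Finally I would assemble the pieces exactly as in Proposition~\ref{prop:osp_ce}. By Lemma~\ref{lem:hatosp12_ker1} every $\boldsymbol{x}\in\ker\psi$ is a finite sum of elements $\boldsymbol{\lambda}(a_i,b_i)$, and each $\boldsymbol{\lambda}(a,b)=\mathbf{h}(a,b)-(-1)^{|a||b|}\mathbf{h}(1,ba)$ lies in $\mathfrak{n}^0$, so $\ker\psi\subseteq\mathfrak{n}^0$. For any generator $y\in\mathfrak{n}^1$ we then have $[\boldsymbol{x},y]\in[\mathfrak{n}^0,\mathfrak{n}^1]\subseteq\mathfrak{n}^1$, while $\psi([\boldsymbol{x},y])=[\psi(\boldsymbol{x}),\psi(y)]=[0,\psi(y)]=0$; injectivity of $\psi|_{\mathfrak{n}^1}$ forces $[\boldsymbol{x},y]=0$. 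As $\boldsymbol{x}$ annihilates a generating set, it is central, so $\ker\psi$ is central and $\psi$ is a central extension. Apart from the Jacobi computation flagged above, every step is either cited from the preceding lemmas or is a short position/linearity check.
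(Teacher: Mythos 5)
Your argument is correct, but it is organized differently from the paper's own proof of this proposition. The paper does not use a triangular decomposition here: after identifying $\ker\psi=\{\sum_i\boldsymbol{\lambda}(a_i,b_i)\mid\sum_i[a_i,b_i]=\sum_i\overline{[a_i,b_i]}\}$ (Lemma~\ref{lem:hatosp12_ker1}), it computes the brackets of $\boldsymbol{\lambda}(a,b)$ with the generators \emph{exactly} --- $[\boldsymbol{\lambda}(a,b),\mathbf{v}(c)]=[\boldsymbol{\lambda}(a,b),\mathbf{w}(c)]=0$, $[\boldsymbol{\lambda}(a,b),\mathbf{f}(c)]=\mathbf{f}(([a,b]-\overline{[a,b]})c)$ and $[\mathbf{g}(a),\boldsymbol{\lambda}(b,c)]=\mathbf{g}(a([b,c]-\overline{[b,c]}))$ --- so that for an element of $\ker\psi$ the arguments of $\mathbf{f}$ and $\mathbf{g}$ are literally zero and centrality follows with no appeal to injectivity of $\psi$ on any subspace. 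You instead transplant the scheme of Proposition~\ref{prop:osp_ce}: you only need $[\mathbf{h}(a,b),y]$ to land in $\mathfrak{n}^1=\mathrm{span}\{\mathbf{f},\mathbf{g},\mathbf{v},\mathbf{w}\}$, and you finish with the injectivity of $\psi|_{\mathfrak{n}^1}$, which is exactly the matrix-position count already carried out inside the proof of Lemma~\ref{lem:hatosp12_ker1} (your normalization $c_3,c_4\in R_+$ is the right one, since $\mathbf{v}(a)=\mathbf{v}(\bar a)$ and $\mathbf{w}(a)=\mathbf{w}(\bar a)$ kill $R_-$). The Jacobi step you flag as the main obstacle does go through: $[\mathbf{h}(a,b),\mathbf{v}(c)]=-(-1)^{(1+|a|)(1+|b|)}[\mathbf{g}(b),[\mathbf{f}(a),\mathbf{v}(c)]]$ because $[\mathbf{g}(b),\mathbf{v}(c)]=0$, the inner bracket is a multiple of $\mathbf{g}(c_+\bar a)$, and $[\mathbf{g}(b),\mathbf{g}(c_+\bar a)]$ is a multiple of some $\mathbf{v}(\cdot)$; similarly for $\mathbf{w}$. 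So the two routes rest on essentially the same computations: yours trades the need for exact coefficients (you only track where brackets land) for the extra injectivity input, while the paper's exact formulas make the vanishing immediate. Your surjectivity remark via Proposition~\ref{prop:ospalg}(ii) is also fine, since for $m=n=1$ only $f_{11}$ and $g_{11}$ survive in that generating set.
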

\begin{proof}
It suffices to show that $\ker\psi$ is included in the center of $\widehat{\mathfrak{osp}}_{1|2}(R,{}^-)$. We directly compute that
\begin{align}
[\boldsymbol{\lambda}(a,b),\mathbf{v}(c)]&=0,
&[\boldsymbol{\lambda}(a,b),\mathbf{w}(c)]&=0,\label{eq:osp12_h1}\\
[\boldsymbol{\lambda}(a,b),\mathbf{f}(c)]&=\mathbf{f}(([a,b]-\overline{[a,b]})c),
&[\mathbf{g}(a),\boldsymbol{\lambda}(b,c)]&=\mathbf{g}(a([b,c]-\overline{[b,c]})),\label{eq:osp12_h2}
\end{align}
for homogeneous $a,b,c\in R$. Hence, $x=\sum_i\boldsymbol{\lambda}(a_i,b_i)$ super-commutes with $\mathbf{v}(c)$, $\mathbf{w}(c)$, $\mathbf{f}(c)$ and $\mathbf{g}(c)$ if $\sum_i[a_i,b_i]=\sum_i\overline{[a_i,b_i]}$.

Recall from Lemma~\ref{lem:hatosp12_ker1} that $\ker\psi=\{\sum_i\boldsymbol{\lambda}(a_i,b_i)|\sum_i[a_i,b_i]=\sum_i\overline{[a_i,b_i]}\}$. It follows that $\ker\psi$ is included in the center of $\widehat{\mathfrak{osp}}_{1|2}(R,{}^-)$ and hence $\psi:\widehat{\mathfrak{osp}}_{1|2}(R,{}^-)\rightarrow\mathfrak{osp}_{1|2}(R,{}^-)$ is a central extension.
\end{proof}

\subsection{A modified version of the first $\mathbb{Z}/2\mathbb{Z}$-graded skew-dihedral homology}
\label{subsubsec:hmltilde}

In order to characterize $\ker\psi$, we need a modified version of the first $\mathbb{Z}/2\mathbb{Z}$-graded skew-dihedral homology ~\cite[Section 6]{ChangWang2015}, whose properties will be briefly discussed here.

Let $(R,{}^-)$ be an arbitrary unital associative superalgebra with superinvolution. In the $\Bbbk$-module $R\otimes_{\Bbbk}R$, we denote
$$J(a,b,c):=(-1)^{|a||c|}ab\otimes c+(-1)^{|b||a|}bc\otimes a+(-1)^{|c||b|}ca\otimes b$$
for homogeneous $a,b,c\in R$. Let $\tilde{I}_{\mathsf{d}}^{-}$ be the $\Bbbk$-submodule of $R\otimes_{\Bbbk}R$ spanned by
\begin{enumerate}
\item $a\otimes 1$,
\item $a\otimes b+(-1)^{|a||b|}b\otimes a$,
\item $a\otimes b-\bar{a}\otimes\bar{b}$,
\item $J(b,a,c)-(-1)^{|a||b|+|b||c|+|c||a|}J(a,b,c-\bar{c})$,
\item $J(a,b,[c,d])+(-1)^{|a||c|+|b||d|}J(c,d,[a,b])$, and
\item $J([a_1,a_2],[b_1,b_2],[c_1,c_2])$,
\end{enumerate}
for homogeneous $a,b,c,d,a_1,a_2,b_1,b_2,c_1,c_2\in R$ and $\tildeHD{R}{R}:=R/\tilde{I}_{\mathsf{d}}^{-}$. We use $\tildeHD{a}{b}$ to denote the canonical image of $a\otimes b$ in $\tildeHD{R}{R}$ and define the first modified $\mathbb{Z}/2\mathbb{Z}$-graded skew-dihedral homology group
\begin{align*}
\fourIdx{}{-}{}{1}{\widetilde{\mathrm{HD}}}(R,{}^-)=
\left\{\textstyle{\sum_i}\tildeHD{a}{b}\in\tildeHD{R}{R}\middle|\textstyle{\sum_i}[a_i,b_i]=\textstyle{\sum_i}\overline{[a_i,b_i]}\right\}.
\end{align*}

\begin{remark}
\label{rmk:tildeHD_com}
In the especial case $R$ is a unital super-commutative associative superalgebra and the identity map is the superinvolution on $R$, we observe that $\tilde{I}_d^-$ is the $\Bbbk$-submodule of $R\otimes_{\Bbbk}R$ spanned by $a\otimes1$, $a\otimes b+(-1)^{|a||b|}b\otimes a$ and $J(a,b,c)$ for homogeneous $a,b,c\in R$. Hence,
$$\fourIdx{}{-}{}{1}{\widetilde{\mathrm{HD}}}(R,\mathrm{id})=\mathrm{HC}_1(R).$$
\end{remark}

\begin{lemma}
\label{lem:hmltilde}
The first modified $\mathbb{Z}/2\mathbb{Z}$-graded skew-dihedral homology group $\fourIdx{}{-}{}{1}{\widetilde{\mathrm{HD}}}(R,{}^-)$ satisfies:
\begin{enumerate}
\item If $3$ is invertible in $R$, then
$$\fourIdx{}{-}{}{1}{\widetilde{\mathrm{HD}}}(R,{}^-)\cong \fourIdx{}{-}{}{1}{\mathrm{HD}}(R,{}^-).$$
\item If $(R,{}^-)=(S\oplus S^{\mathrm{op}},\mathrm{ex})$ for an arbitrary unital associative superalgebra $S$, then
$$\fourIdx{}{-}{}{1}{\widetilde{\mathrm{HD}}}(S\oplus S^{\mathrm{op}},\mathrm{ex})\cong\mathrm{HC}_1(S).$$
\end{enumerate}
\end{lemma}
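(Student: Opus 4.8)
The plan is to prove both statements by comparing the two defining submodules $\tilde{I}_{\mathsf{d}}^{-}$ and $I_{\mathsf{d}}^-$ of $R\otimes_{\Bbbk}R$. First I would record the elementary inclusion $\tilde{I}_{\mathsf{d}}^{-}\subseteq I_{\mathsf{d}}^-$: the generators (ii) and (iii) of $\tilde{I}_{\mathsf{d}}^{-}$ are literally two of the spanning elements of $I_{\mathsf{d}}^-$; the generator (i), $a\otimes 1$, lies in $I_{\mathsf{d}}^-$ because $J(1,1,a)$ and $1\otimes a+a\otimes 1$ both lie in $I_{\mathsf{d}}^-$ and their difference is $a\otimes 1$; and each of the generators (iv), (v), (vi) is a $\Bbbk$-combination of elements $J(x,y,z)$, all of which lie in $I_{\mathsf{d}}^-$. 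Hence the identity map of $R\otimes_{\Bbbk}R$ descends to a canonical surjection $\theta\colon\tildeHD{R}{R}\to\langle R,R\rangle_{\mathsf{d}}^-$, which restricts to a surjection $\fourIdx{}{-}{}{1}{\widetilde{\mathrm{HD}}}(R,{}^-)\to\fourIdx{}{-}{}{1}{\mathrm{HD}}(R,{}^-)$ because the homology condition $\sum_i[a_i,b_i]=\sum_i\overline{[a_i,b_i]}$ is imposed on the representatives $a_i,b_i$ and is therefore preserved. Since $I_{\mathsf{d}}^-=\tilde{I}_{\mathsf{d}}^{-}+\mathrm{span}_{\Bbbk}\{J(a,b,c)\}$, the map $\theta$ is an isomorphism precisely when $J(a,b,c)\in\tilde{I}_{\mathsf{d}}^{-}$ for all homogeneous $a,b,c\in R$. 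Thus both (i) and (ii) reduce to this single claim under their respective hypotheses.

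For (i), I would establish $J(a,b,c)\in\tilde{I}_{\mathsf{d}}^{-}$ whenever $3$ is invertible by first proving $3\,J(a,b,c)\in\tilde{I}_{\mathsf{d}}^{-}$ using only the generators (iv), (v), (vi), and then dividing by $3$. The mechanism is that generator (iv) trades the transposed term $J(b,a,c)$ for $J(a,b,c-\bar c)$; generator (iii), together with $\tfrac12\in\Bbbk$, lets me split $c=\tfrac12(c+\bar c)+\tfrac12(c-\bar c)$ and push $\bar{\ }$ through the tensor factors; and generators (v), (vi) force $J$ to vanish as soon as one of its slots is a commutator. Combining these with the cyclic symmetry $J(a,b,c)=J(b,c,a)=J(c,a,b)$, one rewrites $J(a,b,c)$ modulo $\tilde{I}_{\mathsf{d}}^{-}$ so that the three cyclic contributions reinforce into $3\,J(a,b,c)$, while the correction terms all carry a commutator in one slot and die by (v) and (vi). Inverting $3$ then gives $J(a,b,c)\in\tilde{I}_{\mathsf{d}}^{-}$, so $\theta$ is an isomorphism and (i) follows.

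For (ii), I would combine $\theta$ with the isomorphism $\fourIdx{}{-}{}{1}{\mathrm{HD}}(S\oplus S^{\mathrm{op}},\mathrm{ex})\cong\mathrm{HC}_1(S)$ of \cite[Proposition~6.6]{ChangWang2015} to obtain a surjection $\fourIdx{}{-}{}{1}{\widetilde{\mathrm{HD}}}(S\oplus S^{\mathrm{op}},\mathrm{ex})\twoheadrightarrow\mathrm{HC}_1(S)$; it then remains to prove injectivity, that is, again $J(a,b,c)\in\tilde{I}_{\mathsf{d}}^{-}$, now for $R=S\oplus S^{\mathrm{op}}$. Here the role played by $1/3$ in part (i) is taken over by the central unit $e=1\oplus(-1)\in S\oplus S^{\mathrm{op}}$, which satisfies $\bar e=-e$ and $e^2=1$. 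Since $e$ is a central unit with $\bar e=-e$, multiplication by $e$ interchanges $R_+$ and $R_-$, so I can reabsorb the $c-\bar c$ contributions produced by generator (iv) and manufacture the commutator arguments required by (v), (vi) after inserting matched factors $e,e^{-1}$, exactly in the spirit of the Step~2 manipulations of Lemma~\ref{lem:sto22_ct}. This yields an $e$-twisted derivation of $J(a,b,c)\in\tilde{I}_{\mathsf{d}}^{-}$ that removes the residual $3$-torsion without inverting $3$, giving the required injectivity; with Remark~\ref{rmk:tildeHD_com} identifying the target, (ii) follows.

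The main obstacle in both parts is precisely the verification that $J(a,b,c)$ lies in $\tilde{I}_{\mathsf{d}}^{-}$. This is elementary but lengthy $\mathbb{Z}/2\mathbb{Z}$-graded sign bookkeeping with the generators (iv)--(vi); the delicate point is to organize the cyclic and transposition symmetries of $J$ so that the entire $3$-torsion is isolated in a single term, which is then eliminated either by inverting $3$ in part (i) or by the $e$-twist available for $(S\oplus S^{\mathrm{op}},\mathrm{ex})$ in part (ii).
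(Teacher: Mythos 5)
Your overall strategy for (i) matches the paper's: show $\tilde{I}_{\mathsf{d}}^{-}\subseteq I_{\mathsf{d}}^{-}$ (your treatment of the generator $a\otimes 1$ via $J(1,1,a)-(1\otimes a+a\otimes1)=a\otimes 1$ is fine), prove $3J(a,b,c)\in\tilde{I}_{\mathsf{d}}^{-}$, and divide by $3$. But the mechanism you describe for the key step is off: you claim the correction terms ``carry a commutator in one slot and die by (v) and (vi)'', whereas generator (v) only \emph{relates} $J(a,b,[c,d])$ to $J(c,d,[a,b])$ (it kills neither), and (vi) requires commutators in \emph{all three} slots. In fact no appeal to (v) or (vi) is needed: generator (iv) with $c$ replaced by $c+\bar c$ gives $J(a,b,c)\equiv-J(a,b,\bar c)$, hence $J(a,b,c-\bar c)\equiv 2J(a,b,c)$, and two applications of (iv) then give $J(a,b,c)\equiv 4J(a,b,c)$, i.e.\ $3J(a,b,c)\equiv0\pmod{\tilde{I}_{\mathsf{d}}^{-}}$. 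This is exactly the paper's short argument; with it, your part (i) goes through.

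Part (ii) has a genuine gap. You reduce everything to the claim that $J(a,b,c)\in\tilde{I}_{\mathsf{d}}^{-}$ for $R=S\oplus S^{\mathrm{op}}$ and propose to prove it by an ``$e$-twisted'' manipulation with the central unit $e=1\oplus(-1)$, in the spirit of Step 2 of Lemma~\ref{lem:sto22_ct}. The claim itself is true, but the proposed mechanism does not reach it: the only relations available for manipulating $J$ modulo $\tilde{I}_{\mathsf{d}}^{-}$ are (iv)--(vi), and these by themselves yield only $3J\equiv0$ and $J(a,b,c)\equiv-J(a,b,\bar c)$; the element $e$ does not manufacture the commutator entries that (v) and (vi) demand (it is central, so inserting matched factors $e,e^{-1}$ produces no commutators), and the Step~2 manipulations you cite take place in $R$ itself, not modulo $\tilde{I}_{\mathsf{d}}^{-}$. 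The structural feature of $S\oplus S^{\mathrm{op}}$ that actually matters is not the existence of $e$ but the orthogonality of the two ideals, $(S\oplus0)(0\oplus S)=0$. That is what the paper exploits: it first shows $\tildeHD{a\oplus0}{0\oplus b}=0$ (from generator (iv) applied to $J(a\oplus0,1\oplus0,0\oplus b)$) and then constructs explicit mutually inverse maps $\tilde\theta:\langle S,S\rangle_{\mathsf{c}}\rightarrow\tildeHD{S\oplus S^{\mathrm{op}}}{S\oplus S^{\mathrm{op}}}$ and $\tilde\vartheta$ in the opposite direction, bypassing any comparison of $\tilde I_{\mathsf{d}}^{-}$ with $I_{\mathsf{d}}^{-}$. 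You would need either to reproduce such a direct construction or to give an actual derivation of $J(a,b,c)\in\tilde{I}_{\mathsf{d}}^{-}$ from the generators using the ideal decomposition; as written, the central step of (ii) is asserted rather than proved.
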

\begin{proof}
(i) It follows from \cite[Proposition~6.3]{ChangWang2015} that
\begin{align*}
\fourIdx{}{-}{}{1}{\mathrm{HD}}(R,{}^-)=
\left\{\textstyle{\sum_i}\langle a_i,b_i\rangle_{\mathsf{d}}^-\in\langle R,R\rangle_{\mathsf{d}}^-\middle|\textstyle{\sum_i}[a_i,b_i]=\textstyle{\sum_i}\overline{[a_i,b_i]}\right\},
\end{align*}
where $\langle R,R\rangle_{\mathsf{d}}^-=(R\otimes_{\Bbbk}R)/I_{\mathsf{d}}^-$ and $I_{\mathsf{d}}^-$ is the $\Bbbk$-submodule of $R\otimes_{\Bbbk}R$ spanned by
$$a\otimes b+(-1)^{|a||b|}b\otimes a,\quad a\otimes b-\bar{a}\otimes\bar{b}, \text{ and }J(a,b,c)$$
for homogeneous $a,b,c\in R$. It is obvious that $\tilde{I}_{\mathsf{d}}^-\subseteq I_{\mathsf{d}}^-$.

Now, $J(b,a,c)-(-1)^{|a||b|+|b||c|+|c||a|}J(a,b,c-\bar{c})\in\tilde{I}_{\mathsf{d}}^-$ implies that $$J(a,b,c)\equiv-J(a,b,\bar{c})\pmod{\tilde{I}_{\mathsf{d}}^-},$$
and hence
\begin{align*}
J(a,b,c)
\equiv2(-1)^{|a||b|+|b||c|+|c||a|}J(b,a,c)
\equiv4J(a,b,c)\pmod{\tilde{I}_{\mathsf{d}}^-},
\end{align*}
i.e., $3J(a,b,c)\equiv0\pmod{\tilde{I}_{\mathsf{d}}^-}$.

If $3$ is invertible in $R$, then $J(a,b,c)\in\tilde{I}_{\mathsf{d}}^-$ and hence $I_{\mathsf{d}}^-\subseteq\tilde{I}_{\mathsf{d}}^-$. It follows that
$I_{\mathsf{d}}^-=\tilde{I}_{\mathsf{d}}^-$ and $\fourIdx{}{-}{}{1}{\widetilde{\mathrm{HD}}}(R,{}^-)=\fourIdx{}{-}{}{1}{\mathrm{HD}}(R,{}^-)$.
\medskip

(ii) For homogeneous $a,b\in S$, we first claim that
$$\tildeHD{a\oplus0}{0\oplus b}=0$$
in $\tildeHD{S\oplus S^{\mathrm{op}}}{S\oplus S^{\mathrm{op}}}$. Indeed, one easily deduces from
$$J(a\oplus0,1\oplus0,0\oplus b)=-(-1)^{|a||b|}J(1\oplus0,a\oplus0,0\oplus b)$$
that $2\tildeHD{a\oplus0}{0\oplus b}=0$, which yields $\tildeHD{a\oplus0}{0\oplus b}=0$ since $\frac{1}{2}\in\Bbbk$.

Consider the $\Bbbk$-linear map $\theta:S\otimes_{\Bbbk}S\rightarrow \tildeHD{S\oplus S^{\mathrm{op}}}{S\oplus S^{\mathrm{op}}},\quad
a\otimes b\mapsto \tildeHD{a\oplus0}{b\oplus0}$. We compute that
\begin{eqnarray*}
&\theta(a\otimes b+(-1)^{|a||b|}b\otimes a)=0,&\\
&\theta((-1)^{|a||c|}ab\otimes c+(-1)^{|b||a|}bc\otimes a+(-1)^{|c||b|}ca\otimes b)=0.&
\end{eqnarray*}
Hence, $\theta$ induces a $\Bbbk$-linear map $\tilde{\theta}:\langle S,S\rangle_{\mathsf{c}}\rightarrow
\tildeHD{S\oplus S^{\mathrm{op}}}{S\oplus S^{\mathrm{op}}}$ such that
$$\langle a,b\rangle_{\mathsf{c}}\mapsto\tildeHD{a\oplus0}{b\oplus0}.$$

Conversely, we consider the $\Bbbk$-linear map
$$\vartheta:(S\oplus S^{\mathrm{op}})\otimes(S\oplus S^{\mathrm{op}})\rightarrow \langle S,S\rangle_{\mathsf{c}},\quad
(a_1\oplus a_2)\otimes(b_1\oplus b_2)\mapsto \langle a_1,b_1\rangle_{\mathsf{c}}+
\langle a_2,b_2\rangle_{\mathsf{c}}.$$
Then it is directly verified that $\tilde{I}_{\mathsf{d}}^-\subseteq\ker\vartheta$. Hence, $\vartheta$ induces a $\Bbbk$-linear map $$\tilde{\vartheta}:\tildeHD{S\oplus S^{\mathrm{op}}}{S\oplus S^{\mathrm{op}}}\rightarrow\langle S,S\rangle_{\mathsf{c}}$$ such that
$$\tildeHD{a_1\oplus a_2}{b_1\oplus b_2}\mapsto
\langle a_1,b_1\rangle_{\mathsf{c}}+\langle a_2,b_2\rangle_{\mathsf{c}}.$$

Moreover, we observe that $\tilde{\vartheta}\circ \tilde{\theta}=\mathrm{id}$ on $\langle S,S\rangle_{\mathsf{c}}$, while $\tildeHD{a\oplus0}{0\oplus b}=0$ implies that $\tilde{\theta}\circ\tilde{\vartheta}=\mathrm{id}$ on $\tildeHD{S\oplus S^{\mathrm{op}}}{S\oplus S^{\mathrm{op}}}$. Therefore, $\langle S,S\rangle_{\mathsf{c}}$ is isomorphic to $\tildeHD{S\oplus S^{\mathrm{op}}}{S\oplus S^{\mathrm{op}}}$ and we conclude that $\mathrm{HC}_1(S)$ is isomorphic to $\fourIdx{}{-}{}{1}{\widetilde{\mathrm{HD}}}(S\oplus S^{\mathrm{op}},\mathrm{ex})$.
\end{proof}

\subsection{The kernel of $\psi$}
\label{subsubsec:osp12_ker}

We have already known from Lemma~\ref{lem:hatosp12_ker1} that $\ker\psi=\{\textstyle{\sum}_i\boldsymbol{\lambda}(a_i,b_i)|
\textstyle{\sum}_i[a_i,b_i]=\textstyle{\sum}_i\overline{[a_i,b_i]}\}$. We will show in addition that $\ker\psi$ can be identified with $\fourIdx{}{-}{}{1}{\widetilde{\mathrm{HD}}}(R,{}^-)$.

\begin{lemma}
\label{lem:hatosp12_h}
In the Lie superalgebra $\widehat{\mathfrak{osp}}_{1|2}(R,{}^-)$, the following equalities hold:
\begin{enumerate}
\item $\boldsymbol{\lambda}(a,1)=\boldsymbol{\lambda}(1,a)=0$,
\item $\boldsymbol{\lambda}(a,b)=\boldsymbol{\lambda}(\bar{a},\bar{b})$,
\item $\boldsymbol{\lambda}(a,b)=-(-1)^{|a||b|}\boldsymbol{\lambda}(b,a)$,
\end{enumerate}
for homogeneous $a,b\in R$. We further denote
$$\mathbf{J}(a,b,c):=(-1)^{|a||c|}\boldsymbol{\lambda}(ab,c)
+(-1)^{|b||a|}\boldsymbol{\lambda}(bc,a)+(-1)^{|c||b|}\boldsymbol{\lambda}(ca,b)$$
for homogeneous $a,b,c\in R$, then they satisfy the following additional equalities:
\begin{enumerate}
\setcounter{enumi}{3}
\item $\mathbf{J}(b,a,c)=(-1)^{|a||b|+|b||c|+|c||a|}\mathbf{J}(a,b,c-\bar{c})$,
\item $\mathbf{J}(a,b,[c,d])=-(-1)^{|a||c|+|b||d|}\mathbf{J}(c,d,[a,b])$, and
\item $\mathbf{J}([a_1,a_2],[b_1,b_2],[c_1,c_2])=0$.
\end{enumerate}
where $a,b,c,d,a_1,a_2,b_1,b_2,c_1,c_2\in R$ are homogeneous.
\end{lemma}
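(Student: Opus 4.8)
The plan is to verify all six identities by direct computation inside $\widehat{\mathfrak{osp}}_{1|2}(R,{}^-)$, relying on three ingredients that are already available. First, the adjoint-action formulas (\ref{eq:osp12_h1}) and (\ref{eq:osp12_h2}), which say that every $\boldsymbol{\lambda}(a,b)$ annihilates $\mathbf{v}(c),\mathbf{w}(c)$ and acts on $\mathbf{f}(c),\mathbf{g}(c)$ by left/right multiplication by $[a,b]-\overline{[a,b]}$. Second, the mutual expressibility of the generators: $\mathbf{v}(a)=-[\mathbf{g}(a),\mathbf{g}(1)]$ and $\mathbf{w}(a)=[\mathbf{f}(1),\mathbf{f}(a)]$, together with the inverse relations recovering $\mathbf{g}(a)$ from $[\mathbf{v}(1),\mathbf{f}(\bar a)]$ and $\mathbf{f}(a)$ from $[\mathbf{g}(\bar a),\mathbf{w}(1)]$ (legitimate since $2$ is invertible and $1_+=2$). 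Third, the symmetries $\mathbf{v}(\bar a)=\mathbf{v}(a)$ and $\mathbf{w}(\bar a)=\mathbf{w}(a)$. The guiding contrast is with Lemma~\ref{lem:sto_h_rln}: there an auxiliary index allowed one to prove the full cyclic (Jacobi) identity $\mathbf{J}\equiv 0$, whereas $\widehat{\mathfrak{osp}}_{1|2}(R,{}^-)$ has no such index, so $\mathbf{J}$ must survive as a nonzero central element satisfying only the weaker relations (iv)--(vi).

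For (i)--(iii) I would begin with $\boldsymbol{\lambda}(1,a)=0$, which is immediate from the definition. To obtain bar-invariance and skew-symmetry I would expand $\mathbf{h}(a,b)=[\mathbf{f}(a),\mathbf{g}(b)]$ by rewriting $\mathbf{g}(b)$ as a bracket and applying the super-Jacobi identity; using $[\mathbf{f}(a),\mathbf{f}(\bar b)]=(-1)^{|a|}\mathbf{w}(\bar a\bar b)$ to collect terms produces $\mathbf{h}(\bar b,\bar a)$ together with a correction term of the shape $[\mathbf{v}(1),\mathbf{w}(\,\cdot\,)]$. Evaluating that correction, which collapses to a combination of $\mathbf{h}(\,\cdot\,,1)$ and $\mathbf{h}(1,\,\cdot\,)$, yields a clean identity of the form $\boldsymbol{\lambda}(a,b)=-(-1)^{|a||b|}\boldsymbol{\lambda}(\bar b,\bar a)+\boldsymbol{\lambda}(\bar a\bar b,1)$. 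Carrying out the symmetric expansion, in which instead $\mathbf{f}(a)$ is rewritten as a bracket, gives a companion identity; comparing the two forces $\boldsymbol{\lambda}(\,\cdot\,,1)=0$ and simultaneously delivers (ii) and (iii). Part (i) then closes, since $\boldsymbol{\lambda}(a,1)=-\boldsymbol{\lambda}(1,a)=0$ by (iii).

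For (iv)--(vi) I would first note that the associative super-Leibniz identity $[ab,c]=[a,bc]-(-1)^{(|a|+|b|)|c|}[ca,b]$ together with its image under the bar makes the product defect $\mathbf{P}(a,b,c):=\boldsymbol{\lambda}(ab,c)-\boldsymbol{\lambda}(a,bc)+(-1)^{(|a|+|b|)|c|}\boldsymbol{\lambda}(ca,b)$ act trivially on $\mathbf{f},\mathbf{g}$ (it already kills $\mathbf{v},\mathbf{w}$), so $\mathbf{P}(a,b,c)$ is central. Substituting this defect into the definition of $\mathbf{J}$ and using the skew-symmetry (iii) gives $\mathbf{J}(a,b,c)=(-1)^{|a||c|}\mathbf{P}(a,b,c)$, which turns each of (iv)--(vi) into a linear relation among the central elements $\mathbf{P}$. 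These are then verified by the same style of bracket expansion as in the previous step, now pushed to second order in the $\mathbf{v},\mathbf{w}$-corrections.

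The main obstacle is precisely that every quantity being compared lies in $\ker\psi$, hence in the centre of $\widehat{\mathfrak{osp}}_{1|2}(R,{}^-)$: such elements are annihilated both by $\psi$ and by all adjoint actions, so none of these identities can be detected by the convenient \emph{acts as zero on the generators} argument that sufficed for the central-extension statement in Proposition~\ref{prop:osp12_ce}. One is therefore forced to retain the correction terms generated by $\mathbf{v}$ and $\mathbf{w}$ and compute with them explicitly, and it is exactly the non-vanishing of these corrections that is responsible for replacing $\fourIdx{}{-}{}{1}{\mathrm{HD}}(R,{}^-)$ by its modified version $\fourIdx{}{-}{}{1}{\widetilde{\mathrm{HD}}}(R,{}^-)$ in the $\mathfrak{osp}_{1|2}$ case.
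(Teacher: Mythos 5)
Your overall framing is right---every identity here lives in $\ker\psi$, hence in the centre, so nothing can be checked by adjoint action and everything must be extracted from the defining relations directly---and two of your intermediate claims do check out: expanding $\mathbf{h}(a,b)=[\mathbf{f}(a),\mathbf{g}(b)]$ by rewriting $\mathbf{g}(b)=\tfrac12(-1)^{|b|}[\mathbf{v}(1),\mathbf{f}(\bar b)]$ really does yield $\boldsymbol{\lambda}(a,b)=-(-1)^{|a||b|}\boldsymbol{\lambda}(\bar b,\bar a)+\boldsymbol{\lambda}(\bar a\bar b,1)$, and your identity $\mathbf{J}(a,b,c)=(-1)^{|a||c|}\mathbf{P}(a,b,c)$ with $\mathbf{P}$ central is correct and is essentially the paper's observation, used in its proof of (vi), that $\mathbf{J}$ is central. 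Nevertheless there are two genuine gaps.

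First, the step ``comparing the two expansions forces $\boldsymbol{\lambda}(\,\cdot\,,1)=0$'' fails. Carrying out the symmetric expansion via $\mathbf{f}(a)=-\tfrac12(-1)^{|a|}[\mathbf{g}(\bar a),\mathbf{w}(1)]$ gives $\boldsymbol{\lambda}(a,b)=-(-1)^{|a||b|}\boldsymbol{\lambda}(\bar b,\bar a)+(-1)^{|a||b|}\boldsymbol{\lambda}(ba,1)$, and comparing with the first expansion yields only $\boldsymbol{\lambda}(\overline{ba},1)=\boldsymbol{\lambda}(ba,1)$, i.e.\ $\boldsymbol{\lambda}(c,1)=\boldsymbol{\lambda}(\bar c,1)$, not $\boldsymbol{\lambda}(c,1)=0$. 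This is forced: both correction terms are values of $[\mathbf{v}(\,\cdot\,),\mathbf{w}(\,\cdot\,)]$, and the discrepancy between $[\mathbf{v}(1),\mathbf{w}(c)]$ and $[\mathbf{v}(c),\mathbf{w}(1)]$ is exactly $2(-1)^{|c|}(\boldsymbol{\lambda}(c,1)-\boldsymbol{\lambda}(\bar c,1))$. The paper kills $\boldsymbol{\lambda}(a,1)$ by a different mechanism: it first proves $[\mathbf{h}(1,1),\mathbf{h}(a,b)]=0$ from the two $\mathbf{h}$-action defining relations (which give $[\mathbf{h}(1,1),\mathbf{f}(c)]=-\mathbf{f}(c)$ and $[\mathbf{h}(1,1),\mathbf{g}(c)]=\mathbf{g}(c)$), and then expands $[\mathbf{f}(a),\mathbf{g}(1)]=-[[\mathbf{h}(\bar a,1),\mathbf{f}(1)],\mathbf{g}(1)]$, exploiting the $-\overline{ab}c$ twist in $[[\mathbf{f}(a),\mathbf{g}(b)],\mathbf{f}(c)]=\mathbf{f}(abc-\overline{ab}c-\cdots)$. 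Until (i) is in place your ``clean identity'' does not deliver (ii) and (iii), since the correction $\boldsymbol{\lambda}(\bar a\bar b,1)$ has not been shown to vanish.

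Second, for (iv)--(vi) the proposal contains no actual argument: ``the same style of bracket expansion pushed to second order'' is not a derivation. Knowing that $\mathbf{J}$ is central is the starting point, not the conclusion---(iv)--(vi) are specific nontrivial linear relations among these central elements. In the paper they come from expanding $[\mathbf{h}(a,b),\mathbf{h}(c,d)]$ in two ways, as $[[\mathbf{f}(a),\mathbf{g}(b)],\mathbf{h}(c,d)]$ and as $[\mathbf{h}(a,b),[\mathbf{f}(c),\mathbf{g}(d)]]$, producing the key relation (\ref{eq:hatosp12_h1}); (iv) is its specialization $d=1$; (v) follows by feeding the consequences $\mathbf{J}(a,b,c)=-\mathbf{J}(a,b,\bar c)$ and $3\mathbf{J}(a,b,c)=0$ back into (\ref{eq:hatosp12_h1}); and (vi) requires in addition the explicit formula for $[\boldsymbol{\lambda}(a_1,a_2),\boldsymbol{\lambda}(b_1,b_2)]$ together with the super-Jacobi identity for a triple of $\boldsymbol{\lambda}$'s. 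None of these computations is indicated in your plan, and they are where the actual content of (iv)--(vi) lies.
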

\begin{proof}
(i) $\boldsymbol{\lambda}(1,a)=0$ is trivial. In order to show $\boldsymbol{\lambda}(a,1)=0$, we first claim that
$$[\mathbf{h}(1,1),\mathbf{h}(a,b)]=0$$
for homogeneous $a,b\in R$. Indeed,
\begin{align*}
[\mathbf{h}(1,1),\mathbf{h}(a,b)]
&=[\mathbf{h}(1,1),[\mathbf{f}(a),\mathbf{g}(b)]]\\
&=[[\mathbf{h}(1,1),\mathbf{f}(a)],\mathbf{g}(b)]
+[\mathbf{f}(a),[\mathbf{h}(1,1),\mathbf{g}(b)]\\
&=-[\mathbf{f}(a),\mathbf{g}(b)]+[\mathbf{f}(a),\mathbf{g}(b)]\\
&=0.
\end{align*}
Now, we compute that
\begin{align*}
[\mathbf{h}(1,1),\mathbf{h}(a,b)]
=[\mathbf{h}(1,1),[\mathbf{f}(a),\mathbf{g}(b)]]
=0.
\end{align*}
It follows that
\begin{align*}
[\mathbf{f}(a),\mathbf{g}(1)]
=-[[\mathbf{h}(\bar{a},1),\mathbf{f}(1)],\mathbf{g}(1)]
=-[\mathbf{h}(\bar{a},1),\mathbf{h}(1,1)]+[\mathbf{f}(1),\mathbf{g}(a)]
=[\mathbf{f}(1),\mathbf{g}(a)],
\end{align*}
i.e., $\boldsymbol{\lambda}(a,1)=0$.
\medskip

(ii) We calculate that
\begin{align*}
[\mathbf{h}(\bar{a},1),\mathbf{h}(1,b)]
&=[\mathbf{h}(\bar{a},1),[\mathbf{f}(1),\mathbf{g}(b)]]
=-\boldsymbol{\lambda}(a,b)+[\mathbf{f}(1),\mathbf{g}([\bar{a},b]),\\
[\mathbf{h}(\bar{a},1),\mathbf{h}(1,b)]
&=[[\mathbf{f}(\bar{a}),\mathbf{g}(1)],\mathbf{h}(1,b)]
=-\boldsymbol{\lambda}(\bar{a},\bar{b})+[\mathbf{f}([\bar{a},b]),\mathbf{g}(1)].
\end{align*}
It follows that $\boldsymbol{\lambda}(a,b)-\boldsymbol{\lambda}(\bar{a},\bar{b})=
[\mathbf{f}(1),\mathbf{g}([\bar{a},b])-[\mathbf{f}([\bar{a},b]),\mathbf{g}(1)]
=-\boldsymbol{\lambda}([\bar{a},b],1)=0$ by (i). Hence, (ii) holds.
\medskip

(iii) We have obtained from (i) that $\mathbf{h}(a,1)=\mathbf{h}(1,a)$. By (ii), we deduce that
\begin{align*}
\boldsymbol{\lambda}(a,b)
&=-[\mathbf{h}(\bar{a},1),\mathbf{h}(1,b)]+[\mathbf{f}(1),\mathbf{g}([\bar{a},b])]\\
&=-[\mathbf{h}(1,\bar{a}),\mathbf{h}(b,1)]+[\mathbf{f}(1),\mathbf{g}([\bar{a},b])]\\
&=(-1)^{|a||b|}[\mathbf{h}(b,1),\mathbf{h}(1,\bar{a})]-(-1)^{|a||b|}[\mathbf{f}(1),\mathbf{g}([b,\bar{a}])]\\
&=-(-1)^{|a||b|}\boldsymbol{\lambda}(\bar{b},\bar{a})\\
&=-(-1)^{|a||b|}\boldsymbol{\lambda}(b,a).
\end{align*}
This proves (iii).
\medskip

(iv) For homogeneous $a,b,c,d\in R$, we consider the equality
\begin{align*}
[[\mathbf{f}(a),\mathbf{g}(b)],\mathbf{h}(c,d)]
=[\mathbf{h}(a,b),\mathbf{h}(c,d)]
=[\mathbf{h}(a,b),[\mathbf{f}(c),\mathbf{g}(d)]].
\end{align*}
Expanding the left and right hand sides of the above equality using the Jacobi identity and applying (ii) and (iii), we deduce that
\begin{equation}
\begin{aligned}
&\mathbf{J}(ab-\overline{ab},c,d)
+(-1)^{|a||c|+|b||d|}\mathbf{J}(cd-\overline{cd},a,b)\\
&=(-1)^{|a||b|+|a||c|+|a||d|+|b||c|+|b||d|+|c||d|}\mathbf{J}(c,ba,d)+(-1)^{|a||b|+|a||d|+|b||c|+|c||d|}\mathbf{J}(a,dc,b).
\end{aligned}
\label{eq:hatosp12_h1}
\end{equation}
We observe that $\mathbf{J}(a,b,c)=\mathbf{J}(b,c,a)=\mathbf{J}(c,a,b)$ and $\mathbf{J}(a,b,1)=0$. Taking $d=1$ in (\ref{eq:hatosp12_h1}), we obtain $\mathbf{J}(b,a,c)=(-1)^{|a||b|+|b||c|+|c||a|}\mathbf{J}(a,b,c-\overline{c})$, which is (iv).
\medskip

(v) We may deduce from (iv) that:
\begin{eqnarray}
\mathbf{J}(a,b,c)&=&-\mathbf{J}(a,b,\bar{c}),\label{eq:hatosp12_h2}\\
\mathbf{J}(a,b,c)&=&2(-1)^{|a||b|+|b||c|+|c||a|}\mathbf{J}(b,a,c),\label{eq:hatosp12_h3}\\
3\mathbf{J}(a,b,c)&=&0.\label{eq:hatosp12_h4}
\end{eqnarray}
Indeed, (\ref{eq:hatosp12_h2}) follows from (iv) by replacing $c$ with $c+\bar{c}$. Then (iv) and (\ref{eq:hatosp12_h2}) together imply (\ref{eq:hatosp12_h3}), which further yields (\ref{eq:hatosp12_h4}) easily.

Now, applying (\ref{eq:hatosp12_h2})-(\ref{eq:hatosp12_h4}) to (\ref{eq:hatosp12_h1}), we deduce that
$$2\mathbf{J}(ab,c,d)
+2(-1)^{|a||c|+|b||d|}\mathbf{J}(cd,a,b)=2(-1)^{|a||b|}\mathbf{J}(ba,c,d)+2(-1)^{|a||c|+|b||d|+|c||d|}\mathbf{J}(dc,a,b),$$
which yields that $\mathbf{J}(a,b,[c,d])=-(-1)^{|a||c|+|b||d|}\mathbf{J}(c,d,[a,b])$.
\medskip

(vi) We first claim that $\mathbf{J}(a,b,c), a,b,c\in R$ is contained in the center of $\widehat{\mathfrak{osp}}_{1|2}(R,{}^-)$. Indeed,
it follows from (\ref{eq:osp12_h2}) that
\begin{align*}
[\mathbf{J}(a,b,c),\mathbf{f}(d)]=0,\text{ and }[\mathbf{J}(a,b,c),\mathbf{g}(d)]=0.
\end{align*}
Note that $\mathbf{f}(d)$ and $\mathbf{g}(d)$ with $d\in R$ generate the Lie superalgebra $\widehat{\mathfrak{osp}}_{1|2}(R,{}^-)$. Hence, $\mathbf{J}(a,b,c)$ is contained in the center of $\widehat{\mathfrak{osp}}_{1|2}(R,{}^-)$.

For homogeneous $a_1,a_2,b_1,b_2\in R$, a direct computation shows that
\begin{align*}
[\boldsymbol{\lambda}(a_1,a_2),\boldsymbol{\lambda}(b_1,b_2)]
=2(-1)^{(|a_1|+|a_2|)|b_2|}\mathbf{J}([a_1,a_2],b_1,b_2)
+\boldsymbol{\lambda}([a_1,a_2],[b_1,b_2])
-\boldsymbol{\lambda}(\overline{[a_1,a_2]},[b_1,b_2]).
\end{align*}
Since $\mathbf{J}(a,b,c)$ lies in the center of the Lie superalgebra $\widehat{\mathfrak{osp}}_{1|2}(R,{}^-)$, we further deduce that
\begin{align*}
&[[\boldsymbol{\lambda}(a_1,a_2),\boldsymbol{\lambda}(b_1,b_2)],\boldsymbol{\lambda}(c_1,c_2)]
+4(-1)^{(|a_1|+|a_2|)(|c_1|+|c_2|)}\mathbf{J}([a_1,a_2],[b_1,b_2],[c_1,c_2])\\
&=\boldsymbol{\lambda}([[a_1,a_2],[b_1,b_2]],[c_1,c_2])
-\boldsymbol{\lambda}([[a_1,a_2],[b_1,b_2]],\overline{[c_1,c_2]})\\
&\quad-\boldsymbol{\lambda}([[a_1,a_2],\overline{[b_1,b_2]}],[c_1,c_2])
-\boldsymbol{\lambda}([\overline{[a_1,a_2]},[b_1,b_2]],[c_1,c_2]).
\end{align*}
Hence, (vi) follows from (\ref{eq:hatosp12_h2})-(\ref{eq:hatosp12_h4}) and the Jacobi identify
\begin{align*}
0=&(-1)^{(|a_1|+|a_2|)(|c_1|+|c_2|)}[[\boldsymbol{\lambda}(a_1,a_2),\boldsymbol{\lambda}(b_1,b_2)],\boldsymbol{\lambda}(c_1,c_2)]\\
&+(-1)^{(|b_1|+|b_2|)(|a_1|+|a_2|)}[[\boldsymbol{\lambda}(b_1,b_2),\boldsymbol{\lambda}(c_1,c_2)],\boldsymbol{\lambda}(a_1,a_2)]\\
&+(-1)^{(|c_1|+|c_2|)(|b_1|+|b_2|)}[[\boldsymbol{\lambda}(c_1,c_2),\boldsymbol{\lambda}(a_1,a_2)],\boldsymbol{\lambda}(b_1,b_2)].
\end{align*}
This completes the proof.
\end{proof}

Now, we prove the following proposition:

\begin{proposition}
\label{prop:hatosp12_ker}
$\ker\psi\cong\fourIdx{}{-}{}{1}{\widetilde{\mathrm{HD}}}(R,{}^-)$ as $\Bbbk$-modules.
\end{proposition}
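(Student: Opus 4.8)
The plan is to realize the isomorphism through the $\Bbbk$-linear map
$$\eta\colon R\otimes_{\Bbbk}R\to\widehat{\mathfrak{osp}}_{1|2}(R,{}^-),\qquad a\otimes b\mapsto\boldsymbol{\lambda}(a,b),$$
which is bilinear because $\boldsymbol{\lambda}(a,b)=\mathbf{h}(a,b)-(-1)^{|a||b|}\mathbf{h}(1,ba)$ is bilinear in $(a,b)$. First I would check that $\eta$ annihilates the submodule $\tilde{I}_{\mathsf{d}}^{-}$: its six families of generators are killed exactly by the six identities of Lemma~\ref{lem:hatosp12_h}, the tensors $a\otimes1$, $a\otimes b+(-1)^{|a||b|}b\otimes a$ and $a\otimes b-\bar{a}\otimes\bar{b}$ by parts (i), (iii) and (ii), and the three $J$-type generators by parts (iv)--(vi), since $\mathbf{J}(a,b,c)=\eta(J(a,b,c))$. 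Hence $\eta$ descends to $\bar{\eta}\colon\tildeHD{R}{R}\to\widehat{\mathfrak{osp}}_{1|2}(R,{}^-)$, $\tildeHD{a}{b}\mapsto\boldsymbol{\lambda}(a,b)$. Restricting $\bar{\eta}$ to the submodule $\fourIdx{}{-}{}{1}{\widetilde{\mathrm{HD}}}(R,{}^-)$ and invoking Lemma~\ref{lem:hatosp12_ker1}, its image is precisely $\ker\psi$; so $\bar{\eta}$ is a surjection onto $\ker\psi$ and it remains to prove injectivity.

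To prove injectivity I would produce a one-sided inverse by the cocycle method of Proposition~\ref{prop:sto_ker}, now valued in the \emph{modified} homology. Define a $\Bbbk$-bilinear map $\alpha\colon\mathfrak{gl}_{1|2}(R)\times\mathfrak{gl}_{1|2}(R)\to\tildeHD{R}{R}$ on matrix units by
$$\alpha(e_{ij}(a),e_{kl}(b))=\delta_{jk}\delta_{il}(-1)^{|i|(|i|+|a|+|b|)}\tildeHD{a}{b}.$$
This is well defined, and the skew-symmetry $\tildeHD{a}{b}=-(-1)^{|a||b|}\tildeHD{b}{a}$ of $\tildeHD{R}{R}$ gives (CC1) at once. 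Granting that the restriction of $\alpha$ to $\mathfrak{osp}_{1|2}(R,{}^-)$ also satisfies (CC2), one obtains a central extension
$$\mathfrak{C}:=\mathfrak{osp}_{1|2}(R,{}^-)\oplus\tildeHD{R}{R},\qquad [x\oplus c,\,y\oplus c']=[x,y]\oplus\alpha(x,y).$$

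Next I would lift the generators via $v(a)\oplus0$, $w(a)\oplus0$, $f(a)\oplus0$, $g(a)\oplus0$. A short computation with the formula shows $\alpha$ vanishes on every pair of generators whose bracket is prescribed in Definition~\ref{def:hatosp12} (for instance $\alpha(v(a),f(b))=\alpha(f(a),f(b))=\alpha(g(a),g(b))=\alpha(g(a),w(b))=0$), while for the two cubic relations the inner central contributions are central and $\alpha$ vanishes on a diagonal element paired with an off-diagonal one. Thus all defining relations lift and there is a homomorphism $\phi\colon\widehat{\mathfrak{osp}}_{1|2}(R,{}^-)\to\mathfrak{C}$ with $\mathrm{pr}_1\circ\phi=\psi$. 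Evaluating with $f(a)=e_{12}(a)+e_{31}(\rho(\bar{a}))$ and $g(b)=e_{21}(b)-e_{13}(\rho(\bar{b}))$ yields $\alpha(f(a),g(b))=2\tildeHD{a}{b}$, and since $\tildeHD{1}{ba}=0$ one gets
$$\phi(\boldsymbol{\lambda}(a,b))=e_{11}([a,b]-\overline{[a,b]})\oplus2\tildeHD{a}{b}.$$
On $\fourIdx{}{-}{}{1}{\widetilde{\mathrm{HD}}}(R,{}^-)$ the defining constraint forces the diagonal component of $\phi\circ\bar{\eta}$ to vanish, so $\phi\circ\bar{\eta}$ is twice the inclusion into the centre $\tildeHD{R}{R}$ of $\mathfrak{C}$; as $2$ is invertible this composite is injective, whence $\bar{\eta}$ is injective and the identification follows.

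The main obstacle is the verification that $\alpha$ satisfies (CC2) on triples drawn from $\mathfrak{osp}_{1|2}(R,{}^-)$. Unlike Proposition~\ref{prop:sto_ker}, one cannot invoke the full cyclic relation $J(a,b,c)=0$, since $\tildeHD{R}{R}$ only imposes the weaker relations~(iv)--(vi) of $\tilde{I}_{\mathsf{d}}^{-}$; this is precisely why the modified homology, rather than $\fourIdx{}{-}{}{1}{\mathrm{HD}}(R,{}^-)$, is the correct target here. I expect the cocycle computation to collapse, term by term, onto exactly relations~(iv)--(vi), so that the bookkeeping mirrors Lemma~\ref{lem:hatosp12_h}; marshalling the many sign-laden cases produced by $v,w,f,g$ together with the diagonal elements $\mathbf{h}(a,b)$ into these three relation types is the delicate part of the argument.
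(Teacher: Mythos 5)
Your first half is sound and is essentially the paper's argument: the six families of generators of $\tilde{I}_{\mathsf{d}}^{-}$ are annihilated by the six identities of Lemma~\ref{lem:hatosp12_h}, so $\boldsymbol{\lambda}$ descends to a map on $\tildeHD{R}{R}$ whose restriction to $\fourIdx{}{-}{}{1}{\widetilde{\mathrm{HD}}}(R,{}^-)$ surjects onto $\ker\psi$ by Lemma~\ref{lem:hatosp12_ker1}. The gap is in the injectivity step, and it is exactly where you flagged your "main obstacle": the bilinear map $\alpha(e_{ij}(a),e_{kl}(b))=\delta_{jk}\delta_{il}(-1)^{|i|(|i|+|a|+|b|)}\tildeHD{a}{b}$ is \emph{not} a $2$-cocycle with values in $\tildeHD{R}{R}$, and restricting it to $\mathfrak{osp}_{1|2}(R,{}^-)$ does not save it. The identity (CC2) for triangles of matrix units is precisely the full cyclic relation $J(a,b,c)=0$, which holds in $\langle R,R\rangle_{\mathsf{d}}^{-}$ but fails in $\tildeHD{R}{R}$. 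Concretely, take $x=f(a)$, $y=g(b)$, $z=[f(c),g(d)]$ with $a,b,c,d$ even, and write $\mathfrak{j}(a,b,c)$ for the image of $J(a,b,c)$ in $\tildeHD{R}{R}$; a direct computation of the three terms of (CC2) gives $2\bigl(\mathfrak{j}(a,b,cd-\overline{cd})-\mathfrak{j}(b,a,dc)\bigr)=2\,\mathfrak{j}(b,a,[c,d])$, using the fourth relation of $\tilde{I}_{\mathsf{d}}^{-}$ twice. Since $3\mathfrak{j}=0$, requiring this to vanish for all $a,b,c,d$ forces $\mathfrak{j}(a,b,[c,d])=0$ in $\tildeHD{R}{R}$, which is not a formal consequence of the defining relations of $\tilde{I}_{\mathsf{d}}^{-}$ (the available consequences are $\mathfrak{j}(b,a,[c,d])=2\mathfrak{j}(a,b,[c,d])$, $3\mathfrak{j}=0$, $\mathfrak{j}(a,b,[c,d])=-\mathfrak{j}(c,d,[a,b])$, and vanishing on triples of commutators; none of these kill $\mathfrak{j}(a,b,[c,d])$, and if it vanished universally the fifth relation would be redundant). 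So the computation does not collapse onto relations (iv)--(vi), and the cocycle you "grant" does not exist.

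This is precisely the content of Remark~\ref{rmk:osp12_ker_inj}: unlike Proposition~\ref{prop:sto_ker}, the required $2$-cocycle cannot be obtained by restriction from $\mathfrak{gl}_{1|2}(R)$. The paper instead defines $\alpha$ by hand on the spanning set $\{t(a),f(a),g(a),e_{11}(b),e_{23}(c),e_{32}(c)\}$ of $\mathfrak{osp}_{1|2}(R,{}^-)$, the decisive choice being
\begin{equation*}
\alpha(e_{11}(a),e_{11}(b))=\tfrac{1}{2}\tildeHD{a}{b}-\sum_i(-1)^{|a_i||b|}\mathfrak{j}(a_i,a_i',b),
\qquad a=\sum_i\bigl([a_i,a_i']-\overline{[a_i,a_i']}\bigr),
\end{equation*}
together with different normalizations on the other pairs; the $\mathfrak{j}$-correction terms are exactly what repairs (CC2) in the absence of $J=0$, and their well-definedness (independence of the chosen presentation of $a$) requires a separate argument using the fourth and fifth relations. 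You would need to supply such a construction; once a genuine cocycle is in hand, the rest of your injectivity argument (the lift $\phi$, the evaluation $\phi(\boldsymbol{\lambda}(a,b))=e_{11}([a,b]-\overline{[a,b]})\oplus2\tildeHD{a}{b}$, and the conclusion using $\tfrac{1}{2}\in\Bbbk$) goes through as you describe.
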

\begin{proof}
By Lemma~\ref{lem:hatosp12_h}, there is a canonical $\Bbbk$-linear map
$$\eta:\tildeHD{R}{R}\rightarrow\widehat{\mathfrak{osp}}_{1|2}(R,{}^-),\quad \tildeHD{a}{b}\mapsto\mathbf{h}(a,b).$$
It follows from Lemma~\ref{lem:hatosp12_ker1} that $\eta$ takes the $\Bbbk$-submodule $\fourIdx{}{-}{}{1}{\widetilde{\mathrm{HD}}}(R,{}^-)\subseteq\tildeHD{R}{R}$ onto $\ker\psi$. It suffices to show that the restriction of $\eta$ on $\fourIdx{}{-}{}{1}{\widetilde{\mathrm{HD}}}(R,{}^-)$ is injective. This will be done through creating a central extension of $\mathfrak{osp}_{m|2n}(R,{}^-)$ with center $\tildeHD{R}{R}$.

Recall that  $\mathfrak{osp}_{1|2}(R,{}^-)$ (as a $\Bbbk$-module) is spanned by
$$\mathfrak{B}:=\{t(a),f(a),g(a),e_{11}(b),e_{23}(c),e_{32}(c)|
a\in R, b\in [R,R]\cap R_-, c\in R_+\},$$
where $t(a):=e_{11}(a-\bar{a})+e_{22}(\rho(a))-e_{33}(\rho(\bar{a}))$ and $f(a), g(a)$ are given in (\ref{eq:osp12_gen2}).

We define a $\Bbbk$-bilinear map
$$\alpha:\mathfrak{osp}_{1|2}(R)\times\mathfrak{osp}_{1|2}(R)\rightarrow\tildeHD{R}{R}$$
as follows:
\begin{align*}
&\alpha(f(a), g(b))=-(-1)^{(1+|a|)(1+|b|)}\alpha(g(b),f(a))=\tildeHD{a}{b},&&a,b\in R,\\
&\alpha(e_{23}(a),e_{32}(b))=-(-1)^{|a||b|}\alpha(e_{32}(b),e_{23}(a))=-\frac{1}{2}(-1)^{|a|+|b|}\tildeHD{a}{b},&&a,b\in R_+,\\
&\alpha(t(a),t(b))=\tildeHD{a}{\bar{b}},&&a,b\in R,\\
&\alpha(t(a),e_{11}(b))=-(-1)^{|a||b|}\alpha(e_{11}(b),t(a))=\tildeHD{a}{b},&&a\in R, b\in R_-\cap[R,R],\\
&\alpha(e_{11}(a),e_{11}(b))=\frac{1}{2}\tildeHD{a}{b}-\sum_i(-1)^{|a_i||b|}\mathfrak{j}(a_i,a_i',b),
&&a, b\in R_-\cap[R,R],
\end{align*}
and $\alpha(x,y)=0$ for other pairs $(x,y)$ with $x,y\in\mathfrak{B}$. In the last equality above, it is written that $a=\sum_i([a_i,a_i']-\overline{[a_i,a_i']})$ and
$$\mathfrak{j}(a,b,c):=(-1)^{|a||c|}\tildeHD{ab}{c}+(-1)^{|b||a|}\tildeHD{bc}{a}+(-1)^{|c||a|}\tildeHD{ca}{b}.$$

We have to show that $\alpha$ is well-defined, i.e., $\alpha(e_{11}(a),e_{11}(b))$ is independent of the expression $a=\sum_i([a_i,a_i']-\overline{[a_i,a_i']})$ for $a,b\in R_-\cap [R,R]$. This can be verified as follows:

We write $b=\sum_j([b_j,b_j']-\overline{[b_j,b_j']})$ since $b\in R_-\cap [R,R]$ and deduce that
\begin{align*}
\sum_i(-1)^{|a_i||b|}\mathfrak{j}(a_i,a_i',b)
&=\sum_{i,j}(-1)^{|a_i||b_j|+|a_i||b_j'|}\mathfrak{j}(a_i,a_i',[b_j,b_j']-\overline{[b_j,b_j']})\\
&=2\sum_{i,j}(-1)^{|a_i||b_j|+|a_i||b_j'|}\mathfrak{j}(a_i,a_i',[b_j,b_j'])\\
&=-2\sum_{i,j}(-1)^{|a_i'||b_j'|+|a_i||b_j'|}\mathfrak{j}(b_j,b_j',[a_i,a_i'])\\
&=-\sum_{i,j}(-1)^{|a_i'||b_j'|+|a_i||b_j'|}\mathfrak{j}(b_j,b_j',[a_i,a_i']-\overline{[a_i,a_i']})\\
&=-\sum_{i,j}(-1)^{|a||b_j'|}\mathfrak{j}(b_j,b_j',a).
\end{align*}
which is independent of the expression $a=\sum_i([a_i,a_i']-\overline{[a_i,a_i']})$. Hence, $\alpha$ is well-defined. Furthermore, using the definition of $\tildeHD{R}{R}$, we directly verify that $\alpha$ is a $2$-cocycle on the Lie superalgebra $\mathfrak{osp}_{1|2}(R,{}^-)$.

The 2-cocycle $\alpha$ gives rise to a new Lie superalgebra
$$\mathfrak{C}:=\mathfrak{osp}_{1|2}(R,{}^-)\oplus\tildeHD{R}{R},$$
with the multiplication
$$[x\oplus c,y\oplus c']=[x,y]\oplus\alpha(x,y),\quad \forall x,y\in\mathfrak{osp}_{1|2}(R,{}^-), c,c'\in\tildeHD{R}{R}.$$
which is a central extension of $\mathfrak{osp}_{1|2}(R,{}^-)$ with $\tildeHD{R}{R}$ in the kernel.

Now, the elements
$$\tilde{v}(a)=e_{23}(a+\bar{a})\oplus0, \quad\tilde{w}(a):=e_{32}(a+\bar{a})\oplus0,\quad\tilde{f}(a):={f}(a)\oplus0,\quad \tilde{g}(a):={g}(a)\oplus0\in\mathfrak{C}$$
satisfy all the defining relations of $\widehat{\mathfrak{osp}}_{1|2}(R,{}^-)$. Hence, there is a canonical homomorphism
$$\phi:\widehat{\mathfrak{osp}}_{1|2}(R,{}^-)\rightarrow\mathfrak{C}$$
such that
$$\phi(\mathbf{f}(a))=f(a)\oplus0,\text{ and }\phi(\mathbf{g}(a))=g(a)\oplus0.$$
Hence,
\begin{align*}
\phi(\mathbf{h}(a,b))
&=\phi([\mathbf{f}(a),\mathbf{g}(b)])
-(-1)^{|a||b|}\phi([\mathbf{f}(1),\mathbf{g}(ba)])\\
&=[f(a)\oplus0,g(b)\oplus0]-(-1)^{|a||b|}[f(1)\oplus0,g(ba)\oplus0]\\
&=[f(a),g(b)]\oplus \alpha(f(a),g(b))-(-1)^{|a||b|}[f(1),g(ba)]\oplus \alpha(f(1),g(ba))\\
&=e_{11}([a,b]-\overline{[a,b]})\oplus2\tildeHD{a}{b},
\end{align*}
which yields the injectivity of $\eta$ since $\eta(\tildeHD{a}{b})=\mathbf{h}(a,b)$.
\end{proof}

\begin{remark}
\label{rmk:osp12_ker_inj}
In order to prove the injectivity of $\eta$, we construct the central extension $\mathfrak{C}$ of $\mathfrak{osp}_{1|2}(R,{}^-)$ via explicitly creating a $2$-cocycle on $\mathfrak{osp}_{1|2}(R,{}^-)$. Similar techniques have been applied in the proof of Theorem~\ref{thm:osp_ce}. However, the method for creating the $2$-cocycles on $\mathfrak{osp}_{1|2}(R,{}^-)$ is quite different from the methods used in Theorem~\ref{thm:osp_ce}. Concretely, a $2$-cocycle on $\mathfrak{osp}_{m|2n}(R,{}^-)$ with $(m,n)\neq(1,1)$ has been created via restricting a $2$-cocycle on $\mathfrak{gl}_{m|2n}(R,{}^-)$. The case of $\mathfrak{osp}_{1|2}(R,{}^-)$ is much complicated since the $2$-cocycle on $\mathfrak{osp}_{1|2}(R,{}^-)$ that we created here does not come from the restriction of a $2$-cocycle on $\mathfrak{gl}_{1|2}(R)$.
\end{remark}

\subsection{A nontrivial central extension of $\widehat{\mathfrak{osp}}_{1|2}(R,{}^-)$}
\label{subsubsec:hat2osp12}

We have shown that $\psi:\widehat{\mathfrak{osp}}_{1|2}(R,{}^-)\rightarrow\mathfrak{osp}_{1|2}(R,{}^-)$ is a central extension, but it is not necessarily universal. We will explicitly create a nontrivial central extension of $\widehat{\mathfrak{osp}}_{1|2}(R,{}^-)$.

Let $\mathcal{I}_3$ be the $\Bbbk$-submodule of $R$ spanned by
$$3a,\quad a-\bar{a},\quad ([a,b]-\overline{[a,b]})c,\quad a_+b_+c-(-1)^{|a||b|}b_+a_+c,$$
and
$$(-1)^{|a||c|}(a\bar{b}c+\bar{a}b\bar{c})+(-1)^{|b||a|}(b\bar{c}a+\bar{b}c\bar{a})+(-1)^{|c||b|}(c\bar{a}b+\bar{c}a\bar{b})$$
for homogeneous $a,b,c\in R$. Let
$$\mathfrak{z}:=R/\mathcal{I}_3\oplus R/\mathcal{I}_3$$
and $\boldsymbol{\pi}_1(a), \boldsymbol{\pi}_2(b)$ denote the canonical image of $a\in R$ in the first and second copy of $R/\mathcal{I}_3$. Then
\begin{eqnarray}
&3\boldsymbol{\pi}_i(a)=0,&\label{eq:osp12_uce_cyl1}\\
&\boldsymbol{\pi}_i(a)=\boldsymbol{\pi}_i(\bar{a}),&\label{eq:osp12_uce_cyl2}\\
&\boldsymbol{\pi}_i([a,b]c)=\boldsymbol{\pi}_i(\overline{[a,b]}c),&\label{eq:osp12_uce_cyl3}\\
&\boldsymbol{\pi}_i(a_+b_+c)=(-1)^{|a||b|}\boldsymbol{\pi}_i(b_+a_+c),&\label{eq:osp12_uce_cyl4}\\
&(-1)^{|a||c|}\boldsymbol{\pi}_i(a\bar{b}c+\bar{a}b\bar{c})
+(-1)^{|b||a|}\boldsymbol{\pi}_i(b\bar{c}a+\bar{b}c\bar{a})
+(-1)^{|c||b|}\boldsymbol{\pi}_i(c\bar{a}b+\bar{c}a\bar{b})=0,&\label{eq:osp12_uce_cyl5}
\end{eqnarray}
where $a,b,c\in R$ are homogeneous.

Recall that $\widehat{\mathfrak{osp}}_{1|2}(R,{}^-)$ is spanned by
$$\mathfrak{B}:=\{\mathbf{h}(a,b),\mathbf{v}(a),\mathbf{w}(a),\mathbf{f}(a),\mathbf{g}(a)|a,b\in R\text{ are homogeneous.}\}$$
as a $\Bbbk$-module, we define a $\Bbbk$-bilinear map
$$\beta:\widehat{\mathfrak{osp}}_{1|2}(R,{}^-)\times\widehat{\mathfrak{osp}}_{1|2}(R,{}^-)\rightarrow\mathfrak{z}$$
as follows,
\begin{align*}
\beta(\mathbf{v}(a),\mathbf{g}(b))&=-(-1)^{|a|(1+|b|)}\beta(\mathbf{g}(b),\mathbf{v}(a))=\boldsymbol{\pi}_1(a_+b),\\
\beta(\mathbf{f}(a),\mathbf{w}(b))&=-(-1)^{(1+|a|)|b|}\beta(\mathbf{w}(b),\mathbf{f}(a))=\boldsymbol{\pi}_2(ab_+),
\end{align*}
and $\beta(x,y)=0$ for all other pairs $(x,y)\in\mathfrak{B}\times\mathfrak{B}$.

\begin{lemma}
\label{lem:hat2osp12_2cy}
The $\Bbbk$-bilinear map $\beta$ is a $2$-cocycle on $\widehat{\mathfrak{osp}}_{1|2}(R,{}^-)$ with values in $\mathfrak{z}$.
\end{lemma}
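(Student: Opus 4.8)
The plan is to follow the strategy of Lemma~\ref{lem:osp22_2cy} and verify the two defining properties (\ref{eq:2cyl1}) and (\ref{eq:2cyl2}) of a $2$-cocycle. Property (\ref{eq:2cyl1}) is built into the definition of $\beta$, since the only nonzero evaluations $\beta(\mathbf{v}(a),\mathbf{g}(b))$ and $\beta(\mathbf{f}(a),\mathbf{w}(b))$ are prescribed together with their super-transposes and every remaining pair is sent to $0$. It therefore remains to establish the cocycle identity
$$J(x,y,z):=(-1)^{|x||z|}\beta([x,y],z)+(-1)^{|y||x|}\beta([y,z],x)+(-1)^{|z||y|}\beta([z,x],y)=0.$$
By $\Bbbk$-bilinearity it suffices to check this for $x,y,z$ ranging over the spanning set $\mathfrak{B}=\{\mathbf{h}(a,b),\mathbf{v}(a),\mathbf{w}(a),\mathbf{f}(a),\mathbf{g}(a)\}$, and since $J$ is invariant under permutations of its three arguments, I would organize the verification by the unordered multiset of generator types.

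First I would discard all triples on which $J$ vanishes term by term. Because $\beta$ is supported only on the pairs of type $\{\mathbf{v},\mathbf{g}\}$ and $\{\mathbf{f},\mathbf{w}\}$, a summand $\beta([x,y],z)$ can be nonzero only when $[x,y]$ lands in the span of $\mathbf{v},\mathbf{w},\mathbf{f},\mathbf{g}$ and $z$ is its $\beta$-partner. Reading off the brackets in Definition~\ref{def:hatosp12} (together with $[\mathbf{h}(a,b),\mathbf{v}(c)]$, $[\mathbf{g}(d),\mathbf{h}(a,b)]$ and their relatives, obtained from $\mathbf{h}=[\mathbf{f},\mathbf{g}]$ via the Jacobi identity), this leaves exactly six families to treat: $\{\mathbf{g},\mathbf{g},\mathbf{g}\}$, $\{\mathbf{f},\mathbf{f},\mathbf{f}\}$, $\{\mathbf{v},\mathbf{v},\mathbf{f}\}$, $\{\mathbf{w},\mathbf{w},\mathbf{g}\}$, $\{\mathbf{h},\mathbf{v},\mathbf{g}\}$ and $\{\mathbf{h},\mathbf{w},\mathbf{f}\}$; every other multiset forces all three summands to be evaluations of $\beta$ on pairs outside its support and hence to vanish.

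For each surviving family I would expand the three brackets, evaluate $\beta$, and recognize the resulting element of $\mathfrak{z}$ as a consequence of one of the defining relations of $\mathcal{I}_3$. Concretely: the two ``cubic'' cases $\{\mathbf{g},\mathbf{g},\mathbf{g}\}$ and $\{\mathbf{f},\mathbf{f},\mathbf{f}\}$ collapse to the symmetrized relation (\ref{eq:osp12_uce_cyl5}); the cases $\{\mathbf{v},\mathbf{v},\mathbf{f}\}$ and $\{\mathbf{w},\mathbf{w},\mathbf{g}\}$ reduce, after substituting the brackets $[\mathbf{v}(a),\mathbf{f}(b)]$ and $[\mathbf{g}(a),\mathbf{w}(b)]$, to the commutativity relation (\ref{eq:osp12_uce_cyl4}); and the two $\mathbf{h}$-cases, where $\mathbf{h}(a,b)$ contributes a commutator flavour $[a,b]$, reduce to (\ref{eq:osp12_uce_cyl3}). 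The auxiliary relations $3\boldsymbol{\pi}_i(a)=0$ and $\boldsymbol{\pi}_i(a)=\boldsymbol{\pi}_i(\bar a)$ from (\ref{eq:osp12_uce_cyl1})--(\ref{eq:osp12_uce_cyl2}) are then used freely to reconcile signs and involutions.

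The main obstacle I anticipate is twofold. The lighter difficulty is the pervasive Koszul sign bookkeeping: $\mathbf{v},\mathbf{w}$ are even while $\mathbf{f},\mathbf{g}$ are odd and $\mathbf{h}(a,b)$ is homogeneous of degree $|a|+|b|$, so each of the three terms of $J$ carries several parity-dependent signs that must be matched before any $\mathcal{I}_3$-relation can be applied. The genuinely delicate point is the $\mathbf{h}$-cases: there one must first compute $[\mathbf{h}(a,b),\mathbf{v}(c)]$ and $[\mathbf{g}(d),\mathbf{h}(a,b)]$ from scratch by Jacobi, and then check that the combination which appears is \emph{exactly} of the form covered by (\ref{eq:osp12_uce_cyl3}) and the symmetrization (\ref{eq:osp12_uce_cyl5}) rather than some strictly stronger identity. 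This is precisely the step that explains the particular choice of generators of $\mathcal{I}_3$ — in particular the factor $3$ and the symmetrized term — and shows that $\beta$ lands in $\mathfrak{z}$ and not in a larger quotient.
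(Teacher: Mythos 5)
The paper's own proof of this lemma consists of the single sentence ``This is verified through direct computation; we omit the details here,'' and your proposal is a correct and complete plan for exactly that computation: the six multisets you isolate are precisely those on which some term of $J(x,y,z)$ can be nonzero (given that $\beta$ is supported only on the pairs $(\mathbf{v},\mathbf{g})$ and $(\mathbf{f},\mathbf{w})$ and that $[\mathbf{v},\mathbf{w}]$ and $[\mathbf{f},\mathbf{g}]$ land in the $\mathbf{h}$-span, where $\beta$ vanishes), and each case reduces to the generator of $\mathcal{I}_3$ you name. Your matching of cases to relations is corroborated by the proof of Theorem~\ref{thm:osp12_uce}, which derives (\ref{eq:osp12_uce_cyl1})--(\ref{eq:osp12_uce_cyl5}) from the very same bracket computations run in the opposite direction.
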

\begin{proof}
This is verified through direct computation. We omit the details here.
\end{proof}

Using the $2$-cocycle $\beta$, we define a new Lie superalgebra $\mathfrak{uosp}_{1|2}(R,{}^-)$:
$$\mathfrak{uosp}_{1|2}(R,{}^-):=\widehat{\mathfrak{osp}}_{1|2}(R,{}^-)\oplus \mathfrak{z}$$
with the Lie super-bracket $[x\oplus c,y\oplus c']=[x,y]\oplus \beta(x,y)$ for $x,y\in\widehat{\mathfrak{osp}}_{1|2}(R,{}^-)$ and $c,c'\in\mathfrak{z}$. The Lie superalgebra $\mathfrak{uosp}_{1|2}(R,{}^-)$ can also be characterized by generators and relations. A set of generators of $\mathfrak{uosp}_{1|2}(R,{}^-)$ consists of $\mathbf{v}(a)$, $\mathbf{w}(a)$, $\mathbf{f}(a)$, $\mathbf{g}(a)$, $\boldsymbol{\pi}_1(a)$ and $\boldsymbol{\pi}_2(a)$. The defining relations of $\mathfrak{uosp}_{1|2}(R,{}^-)$ are given by
\begin{align*}
&\begin{aligned}
&\mathbf{v}(\bar{a})=\mathbf{v}(a),
&&\mathbf{w}(\bar{a})=\mathbf{w}(a),\\
&[\mathbf{v}(a),\mathbf{v}(b)]=0,
&&[\mathbf{w}(a),\mathbf{w}(b)]=0,\\
&[\mathbf{v}(a),\mathbf{g}(b)]=\boldsymbol{\pi}_1(a_+b),
&&[\mathbf{f}(a),\mathbf{w}(b)]=\boldsymbol{\pi}_2(ab_+),\\
&[\mathbf{v}(a),\mathbf{f}(b)]=(-1)^{|b|}\mathbf{g}(a_+\bar{b}),
&&[\mathbf{g}(a),\mathbf{w}(b)]=-(-1)^{|a|}\mathbf{f}(\bar{a}b_+),\\
&[\mathbf{f}(a),\mathbf{f}(b)]=(-1)^{|a|}\mathbf{w}(\bar{a}b),
&&[\mathbf{g}(a),\mathbf{g}(b)]=-(-1)^{|b|}\mathbf{v}(a\bar{b}),
\end{aligned}\\
&[[\mathbf{f}(a),\mathbf{g}(b)],\mathbf{f}(c)]=\mathbf{f}(abc-\overline{ab}c-(-1)^{|a||b|+|b||c|+|c||a|}cba),\\
&[\mathbf{g}(a),[\mathbf{f}(b),\mathbf{g}(c)]]=\mathbf{g}(abc-a\overline{bc}-(-1)^{|a||b|+|b||c|+|c||a|}cba),
\end{align*}
where $a,b,c\in R$ are homogeneous.

\subsection{The universality of $\mathfrak{uosp}_{1|2}(R,{}^-)$}
\label{subsubsec:osp12_uce}

Let $\psi':\mathfrak{uosp}_{1|2}(R,{}^-)\rightarrow\widehat{\mathfrak{osp}}_{1|2}(R,{}^-)$ be the canonical homomorphism of Lie superalgebras. We have known from Proposition~\ref{prop:osp12_ce} that $\psi:\widehat{\mathfrak{osp}}_{1|2}(R,{}^-)\rightarrow\mathfrak{osp}_{1|2}(R,{}^-)$ is a central extension, so is $\psi\circ\psi':\mathfrak{uosp}_{1|2}(R,{}^-)\rightarrow\mathfrak{osp}_{1|2}(R,{}^-)$. We will show that

\begin{theorem}
\label{thm:osp12_uce}
The central extension
$\psi\circ\psi':\mathfrak{uosp}_{1|2}(R,{}^-)\rightarrow\mathfrak{osp}_{1|2}(R,{}^-)$ is universal.
\end{theorem}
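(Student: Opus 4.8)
The plan is to follow the lifting method of Theorem~\ref{thm:sto_uce} and Theorem~\ref{thm:osp22_uce}. Let $\varphi\colon\mathfrak{E}\rightarrow\mathfrak{osp}_{1|2}(R,{}^-)$ be an arbitrary central extension; I must produce a unique homomorphism $\varphi'\colon\mathfrak{uosp}_{1|2}(R,{}^-)\rightarrow\mathfrak{E}$ with $\varphi\circ\varphi'=\psi\circ\psi'$. Uniqueness is the easy half: the defining relations exhibit every generator $\mathbf{v}(a),\mathbf{w}(a),\mathbf{f}(a),\mathbf{g}(a),\boldsymbol{\pi}_1(a),\boldsymbol{\pi}_2(a)$ of $\mathfrak{uosp}_{1|2}(R,{}^-)$ as a bracket (for instance $\mathbf{g}(a_+\bar b)=(-1)^{|b|}[\mathbf{v}(a),\mathbf{f}(b)]$ with $a=1$, and $\boldsymbol{\pi}_1(b)=\tfrac12[\mathbf{v}(1),\mathbf{g}(b)]$), so $\mathfrak{uosp}_{1|2}(R,{}^-)$ is perfect; exactly as at the end of the proof of Theorem~\ref{thm:sto_uce}, perfectness together with centrality of $\varphi$ forces $\varphi'$ to be unique once it exists.

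For existence I would first lift a grading element. Choose preimages $\hat f(a)\in\varphi^{-1}(f(a))$ and $\hat g(a)\in\varphi^{-1}(g(a))$ and set $\tilde h:=[\hat f(1),\hat g(1)]$, which lies over $\psi(\mathbf{h}(1,1))$ and, by centrality of $\ker\varphi$, is independent of the choices. The relations of Definition~\ref{def:hatosp12} give $[\mathbf{h}(1,1),\mathbf{f}(a)]=-\mathbf{f}(a)$, $[\mathbf{h}(1,1),\mathbf{g}(a)]=\mathbf{g}(a)$, $[\mathbf{h}(1,1),\mathbf{v}(a)]=2\mathbf{v}(a)$ and $[\mathbf{h}(1,1),\mathbf{w}(a)]=-2\mathbf{w}(a)$, so I can define well-defined elements of $\mathfrak{E}$ by the eigen-normalised brackets $\tilde f(a):=[\hat f(a),\tilde h]$, $\tilde g(a):=[\tilde h,\hat g(a)]$, $\tilde v(a):=\tfrac12[\tilde h,\hat v(a)]$ and $\tilde w(a):=-\tfrac12[\tilde h,\hat w(a)]$ for any preimages $\hat v(a),\hat w(a)$; bracketing a well-defined element with an arbitrary preimage is insensitive to the representative, and each of these lands in the correct $\varphi$-fibre. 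Arguing exactly as in Lemma~\ref{lem:tilde_rln}---bracketing with $\tilde h$ and using centrality to annihilate the unwanted terms---I would then verify that $\tilde v,\tilde w,\tilde f,\tilde g$ satisfy all of the $\boldsymbol{\pi}$-free defining relations of $\mathfrak{uosp}_{1|2}(R,{}^-)$, i.e.\ the relations of Definition~\ref{def:hatosp12}.

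Next I would introduce the two central elements through the remaining relations, setting $\tilde\pi_1(a_+b):=[\tilde v(a),\tilde g(b)]$ and $\tilde\pi_2(ab_+):=[\tilde f(a),\tilde w(b)]$. Because $[\mathbf{v}(a),\mathbf{g}(b)]=0$ and $[\mathbf{f}(a),\mathbf{w}(b)]=0$ already hold in $\widehat{\mathfrak{osp}}_{1|2}(R,{}^-)$, applying $\varphi$ shows that $\tilde\pi_1,\tilde\pi_2$ take values in the central subalgebra $\ker\varphi$. The crux of the whole argument is to prove that $\tilde\pi_1$ and $\tilde\pi_2$ descend through $R/\mathcal{I}_3$, that is, that they annihilate each of the five families of generators of $\mathcal{I}_3$; read in $\mathfrak{E}$ these are precisely the identities~(\ref{eq:osp12_uce_cyl1})--(\ref{eq:osp12_uce_cyl5}). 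This is the analogue of Lemma~\ref{lem:sto22_ct}, but substantially harder, since $\mathcal{I}_3$ carries the extra ``modified'' generators $3a$, $a_+b_+c-(-1)^{|a||b|}b_+a_+c$ and the graded cyclic expression, not merely $[R,R]R$; I expect this step to be the main obstacle. It would be carried out by repeated use of the super Jacobi identity and the bracket relations already established among $\tilde f,\tilde g,\tilde v,\tilde w$: for $a-\bar a$ and the $3a$ relation one mimics the involution and ``$3\mathbf{J}=0$'' computations of Lemma~\ref{lem:hatosp12_h}, for $([a,b]-\overline{[a,b]})c$ one invokes (\ref{eq:osp12_h2}), and for the associativity and cyclic generators one expands a suitable triple bracket in two ways---so that $\tilde\pi_1,\tilde\pi_2$ factor to $\Bbbk$-linear maps $R/\mathcal{I}_3\rightarrow\ker\varphi$.

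With $\tilde v,\tilde w,\tilde f,\tilde g,\tilde\pi_1,\tilde\pi_2$ now satisfying every defining relation of $\mathfrak{uosp}_{1|2}(R,{}^-)$, the presentation of $\mathfrak{uosp}_{1|2}(R,{}^-)$ by generators and relations yields a homomorphism $\varphi'\colon\mathfrak{uosp}_{1|2}(R,{}^-)\rightarrow\mathfrak{E}$ agreeing with $\psi\circ\psi'$ after composing with $\varphi$ on generators, hence everywhere. Together with the uniqueness observed above, this shows that the central extension $\psi\circ\psi'\colon\mathfrak{uosp}_{1|2}(R,{}^-)\rightarrow\mathfrak{osp}_{1|2}(R,{}^-)$ is universal.
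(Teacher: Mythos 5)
Your proposal follows essentially the same route as the paper: lift the generators to well-defined brackets in $\mathfrak{E}$, verify the presentation of $\mathfrak{uosp}_{1|2}(R,{}^-)$, and reduce everything to showing that $\tilde{\pi}_1,\tilde{\pi}_2$ annihilate the generators of $\mathcal{I}_3$, i.e.\ satisfy \eqref{eq:osp12_uce_cyl1}--\eqref{eq:osp12_uce_cyl5}; the paper's proof is exactly this, with the lifts given by explicit bracket formulas (e.g.\ $\tilde{f}(a)=-\tfrac12(-1)^{|a|}[\hat{g}(\bar{a}),\hat{w}(1)]$, $\tilde{\pi}_1(a)=\tfrac12[\hat{v}(1),\hat{g}(a)]$) rather than your eigen-normalised brackets against $\tilde{h}$. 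The one caveat is that the step you defer is the bulk of the paper's argument, and for \eqref{eq:osp12_uce_cyl1} the working mechanism is not the $3\mathbf{J}=0$ computation of Lemma~\ref{lem:hatosp12_h} but the observation that $\tilde{\pi}_1,\tilde{\pi}_2$ have $\mathrm{ad}\,\tilde{h}(1,1)$-eigenvalue $\pm3$ while lying in the centre of $\mathfrak{E}$ --- an argument your grading element $\tilde{h}$ already supplies.
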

\begin{proof}
Let $\varphi:\mathfrak{E}\rightarrow\mathfrak{osp}_{1|2}(R,{}^-)$ be an arbitrary central extension of $\mathfrak{osp}_{1|2}(R,{}^-)$. We define
\begin{align*}
\tilde{v}(a):&=-\frac{1}{2}[\hat{g}(a_+),\hat{g}(1)],
&\tilde{w}(a):&=\frac{1}{2}[\hat{f}(1),\hat{f}(a_+)],\\
\tilde{f}(a):&=-\frac{1}{2}(-1)^{|a|}[\hat{g}(\bar{a}),\hat{w}(1)],
&\tilde{g}(a):&=\frac{1}{2}(-1)^{|a|}[\hat{v}(1),\hat{f}(\bar{a})],\\
\tilde{\pi}_1(a):&=\frac{1}{2}[\hat{v}(1),\hat{g}(a)],
&\tilde{\pi}_2(a):&=\frac{1}{2}[\hat{f}(a),\hat{w}(1)],
\end{align*}
for homogeneous $a\in R$, where $\hat{x}$ is an arbitrary element of $\varphi^{-1}(x)$ for $x\in\mathfrak{osp}_{1|2}(R,{}^-)$. Since $\varphi:\mathfrak{E}\rightarrow\mathfrak{osp}_{1|2}(R,{}^-)$ is a central extension, the element $[\hat{x},\hat{y}]$ is independent of the choice of $\hat{x}\in\varphi^{-1}(x)$ and $\hat{y}\in\varphi^{-1}(y)$ for $x,y\in\mathfrak{osp}_{1|2}(R,{}^-)$.

We first show that $\tilde{\pi}_i, i=1,2$ satisfy (\ref{eq:osp12_uce_cyl1})-(\ref{eq:osp12_uce_cyl5}). We set $\tilde{h}(a,b):=[\hat{f}(a),\hat{g}(b)]$ and deduce that
$$[\tilde{h}(1,1),\tilde{\pi}_1(a)]=3\tilde{\pi}_1(a),\text{ and }[\tilde{h}(1,1),\tilde{\pi}_2(a)]=-3\tilde{\pi}_2(a),$$
which yields that $3\tilde{\pi}_1(a)=0=3\tilde{\pi}_2(a)$ since $\tilde{\pi}_i(a)\in\ker\varphi$ is contained in the center of $\mathfrak{E}$.

For (\ref{eq:osp12_uce_cyl2}), we compute that
\begin{align*}
\tilde{\pi}_1(\bar{a})
&=\frac{1}{2}[\hat{v}(1),\hat{g}(\bar{a})]
=-\frac{1}{2}[\hat{v}(1),[\hat{g}(1),\tilde{h}(a,1)]]\\
&=-0-\frac{1}{2}[\hat{g}(1),[\hat{v}(1),\tilde{h}(a,1)]]
=(-1)^{|a|}[\hat{g}(1),\hat{v}(a)].
\end{align*}
Hence, $\tilde{\pi}_1(\bar{a})=\tilde{\pi}_1(a)$ since $\hat{v}(a)-\hat{v}(\bar{a})\in\ker\varphi$. Similarly, we have $\tilde{\pi}_2(\bar{a})=\tilde{\pi}_2(a)$.

We show (\ref{eq:osp12_uce_cyl4}) and (\ref{eq:osp12_uce_cyl5}) before proving (\ref{eq:osp12_uce_cyl3}). In order to show (\ref{eq:osp12_uce_cyl4}), we first deduce that
$$[\tilde{v}(a),\tilde{g}(b)]=\tilde{\pi}_1(a_+b),\text{ and }[\tilde{f}(a),\tilde{w}(b)]=\tilde{\pi}_2(ab_+).$$
Then we compute that
\begin{align*}
\tilde{\pi}_1(a_+b_+c)&=[\tilde{v}(a),\tilde{g}(b_+c)]
=(-1)^{|c|}[\tilde{v}(a),[\tilde{v}(b),\tilde{f}(\bar{c})]]\\
&=(-1)^{|c|+|a||b|}[\tilde{v}(b),[\tilde{v}(a),\tilde{f}(\bar{c})]
=(-1)^{|a||b|}\tilde{\pi}_1(b_+a_+c).
\end{align*}
A similar computation also shows $\tilde{\pi}_2(ab_+c_+)=(-1)^{|b||c|}\tilde{\pi}_2(ac_+b_+)$, which yields $\tilde{\pi}_2(a_+b_+c)=(-1)^{|a||b|}\tilde{\pi}_2(b_+a_+c)$ since $\tilde{\pi}_2(\bar{a})=\tilde{\pi}_2(a)$.

For (\ref{eq:osp12_uce_cyl5}), we consider
$$[[\tilde{g}(a),\tilde{g}(b)],\tilde{g}(c)]
=-(-1)^{|b|}[\tilde{v}(a\bar{b}),\tilde{g}(c)]
=-(-1)^{|b|}\tilde{\pi}_1(a\bar{b}c)-(-1)^{|b|+|a||b|}\tilde{\pi}_1(b\bar{a}c).$$
Using the Jacobi identity
\begin{align*}
0=(-1)^{p(a,c)}[[\tilde{g}(a),\tilde{g}(b)],\tilde{g}(c)]
+(-1)^{p(b,a)}[[\tilde{g}(b),\tilde{g}(c)],\tilde{g}(a)]
+(-1)^{p(c,b)}[[\tilde{g}(c),\tilde{g}(a)],\tilde{g}(b)],
\end{align*}
where $p(a,b)=(1+|a|)(1+|b|)$, we conclude that
\begin{align*}
(-1)^{|a||c|}\tilde{\pi}_1(a\bar{b}c+\bar{a}b\bar{c})
+(-1)^{|b||a|}\tilde{\pi}_1(b\bar{c}a+\bar{b}c\bar{a})
+(-1)^{|c||b|}\tilde{\pi}_1(c\bar{a}b+\bar{c}a\bar{b})=0,
\end{align*}
and similarly for $\tilde{\pi}_2$.

Now, we return to prove that $\tilde{\pi}_i, i=1,2$ satisfy (\ref{eq:osp12_uce_cyl3}). Taking $c=1$ in (\ref{eq:osp12_uce_cyl5}) and then applying (\ref{eq:osp12_uce_cyl1}), we obtain
\begin{align*}
\tilde{\pi}_1((a-\bar{a})(b-\bar{b}))=0.
\end{align*}
It follows that
\begin{align*}
[\tilde{v}(ab),\tilde{g}(c)]
&=-\frac{1}{2}(-1)^{(1+|a|)(1+|b|)}[[\tilde{h}(b,a),\tilde{v}(1)],\tilde{g}(c)]\\
&=\frac{1}{2}(-1)^{(1+|a|)(1+|b|)}[\tilde{v}(1),[\tilde{h}(b,a),\tilde{g}(c)]]\\
&=\frac{1}{2}(-1)^{|a||b|+|b||c|+|c||a|}[\tilde{v}(1),\tilde{g}(cba-c\overline{ba}-(-1)^{|a||b|+|b||c|+|c||a|}abc)]\\
&=\frac{1}{2}\tilde{\pi}_1(abc)-\frac{1}{2}(-1)^{|a||b|+|b||c|+|c||a|}\tilde{\pi}_1(c(ba-\overline{ba}))\\
&=\frac{1}{2}\tilde{\pi}_1(abc)+\frac{1}{2}(-1)^{|a||b|}\tilde{\pi}_1((ba-\overline{ba})c).
\end{align*}
On the other hand, $[\tilde{v}(ab),\tilde{g}(c)]=\tilde{\pi}_1(abc)+\tilde{\pi}_1(\overline{ab}c)$. Hence, $\tilde{\pi}_1([a,b]c)=\tilde{\pi}_1(\overline{[a,b]}c)$. A similar argument also shows that (\ref{eq:osp12_uce_cyl3}) holds for $\tilde{\pi}_2$.

In summary, we have shown that $\tilde{\pi}_1$ and $\tilde{\pi}_2$ satisfy (\ref{eq:osp12_uce_cyl1})-(\ref{eq:osp12_uce_cyl5}). Furthermore, we directly verify that $\tilde{v}(a),\tilde{w}(a),\tilde{f}(a),\tilde{g}(a),\tilde{\pi}_1(a),\tilde{\pi}_2(a)$ for $a\in R$ satisfy all relations in the definition of $\mathfrak{uosp}_{1|2}(R,{}^-)$. Hence, there is a unique homomorphism of Lie superalgebras $\varphi':\mathfrak{uosp}_{1|2}(R,{}^-)\rightarrow\mathfrak{E}$ such that $\varphi\circ\varphi'=\psi\circ\psi'$. Therefore, the central extension $\psi\circ\psi':\mathfrak{uosp}_{1|2}(R,{}^-)\rightarrow\mathfrak{osp}_{1|2}(R,{}^-)$ is universal.
\end{proof}

Following Propositions~\ref{prop:osp12_ce},~\ref{prop:hatosp12_ker} and Theorem~\ref{thm:osp12_uce}, we conclude that
\begin{corollary}
\label{cor:osp12_hml}
$\mathrm{H}_2(\mathfrak{osp}_{1|2}(R,{}^-),\Bbbk)\cong\fourIdx{}{-}{}{1}{\widetilde{\mathrm{HD}}}(R,{}^-)\oplus R/\mathcal{I}_3\oplus R/\mathcal{I}_3$, where $\mathcal{I}_3$ is the $\Bbbk$-submodule of $R$ as defined in Subsection~\ref{subsubsec:hat2osp12}.
\end{corollary}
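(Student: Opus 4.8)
The plan is to deduce the corollary purely from the homological interpretation of the universal central extension together with the three cited results. Recall the standard fact that for a perfect Lie superalgebra $\mathfrak{g}$ over $\Bbbk$, the kernel of its universal central extension is canonically isomorphic to the second homology group $\mathrm{H}_2(\mathfrak{g},\Bbbk)$ with trivial coefficients. First I would verify that $\mathfrak{osp}_{1|2}(R,{}^-)$ is perfect: this is exactly Proposition~\ref{prop:ospalg}(iii) applied with $m=n=1$, since $1\in\mathbb{N}$. By Theorem~\ref{thm:osp12_uce} the composite $\psi\circ\psi':\mathfrak{uosp}_{1|2}(R,{}^-)\rightarrow\mathfrak{osp}_{1|2}(R,{}^-)$ is the universal central extension. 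Hence the homological fact gives $\mathrm{H}_2(\mathfrak{osp}_{1|2}(R,{}^-),\Bbbk)\cong\ker(\psi\circ\psi')$ as $\Bbbk$-modules, so the whole task reduces to computing this kernel.

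Second, I would identify $\ker(\psi\circ\psi')$ using the two-step factorization. By construction $\mathfrak{uosp}_{1|2}(R,{}^-)=\widehat{\mathfrak{osp}}_{1|2}(R,{}^-)\oplus\mathfrak{z}$ as $\Bbbk$-modules, where $\mathfrak{z}=R/\mathcal{I}_3\oplus R/\mathcal{I}_3$ and $\psi'$ is the projection onto the first summand; thus $\ker\psi'=\mathfrak{z}$. An element $x\oplus c$ with $x\in\widehat{\mathfrak{osp}}_{1|2}(R,{}^-)$ and $c\in\mathfrak{z}$ lies in $\ker(\psi\circ\psi')$ precisely when $\psi(x)=0$, i.e. when $x\in\ker\psi$. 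Consequently
\[
\ker(\psi\circ\psi')=\ker\psi\oplus\mathfrak{z}
\]
as $\Bbbk$-modules, where $\ker\psi$ is viewed inside the first summand. Equivalently, this records the split short exact sequence $0\rightarrow\ker\psi'\rightarrow\ker(\psi\circ\psi')\rightarrow\ker\psi\rightarrow0$ arising from the surjectivity of $\psi'$, which splits because $\mathfrak{z}$ is a $\Bbbk$-module direct summand of $\mathfrak{uosp}_{1|2}(R,{}^-)$.

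Third, I would plug in the descriptions of the two pieces. Proposition~\ref{prop:osp12_ce} guarantees that $\psi$ is a central extension, and Proposition~\ref{prop:hatosp12_ker} identifies $\ker\psi\cong\fourIdx{}{-}{}{1}{\widetilde{\mathrm{HD}}}(R,{}^-)$ as $\Bbbk$-modules; meanwhile $\mathfrak{z}=R/\mathcal{I}_3\oplus R/\mathcal{I}_3$ by its definition in Subsection~\ref{subsubsec:hat2osp12}. Combining these with the kernel computation yields
\[
\mathrm{H}_2(\mathfrak{osp}_{1|2}(R,{}^-),\Bbbk)\cong\fourIdx{}{-}{}{1}{\widetilde{\mathrm{HD}}}(R,{}^-)\oplus R/\mathcal{I}_3\oplus R/\mathcal{I}_3,
\]
which is the desired statement. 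The argument is essentially an assembly of already-established results, so there is no genuinely hard computational step; the only point demanding care is the invocation of the homology-equals-kernel identification, which presupposes the perfectness of $\mathfrak{osp}_{1|2}(R,{}^-)$ and the validity of the universal-central-extension formalism in the $\mathbb{Z}/2\mathbb{Z}$-graded setting. I would therefore cite perfectness explicitly and note that all isomorphisms here are of $\Bbbk$-modules, matching the wording of the corollary.
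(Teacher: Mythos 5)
Your proposal is correct and follows essentially the same route as the paper, which deduces the corollary by assembling Proposition~\ref{prop:osp12_ce}, Proposition~\ref{prop:hatosp12_ker} and Theorem~\ref{thm:osp12_uce}: the second homology is the kernel of the universal central extension $\psi\circ\psi'$, which splits as $\ker\psi\oplus\mathfrak{z}\cong\fourIdx{}{-}{}{1}{\widetilde{\mathrm{HD}}}(R,{}^-)\oplus R/\mathcal{I}_3\oplus R/\mathcal{I}_3$ as $\Bbbk$-modules. Your explicit citations of perfectness (Proposition~\ref{prop:ospalg}(iii) with $m=n=1$) and of the kernel computation for the composite are exactly the details the paper leaves implicit.
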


The presentation of the Lie superalgebra $\mathfrak{uosp}_{1|2}(R,{}^-)$ is quite complicated here. The following proposition shows that it is indeed same as the Lie superalgebra $\widehat{\mathfrak{osp}}_{1|2}(R,{}^-)$ in many cases.

\begin{proposition}
\label{prop:osp12_hat2deg}
If $3$ is invertible in $R$ or $R_-+R_-\cdot R_-=R$, then
$$\mathfrak{uosp}_{1|2}(R,{}^-)\cong \widehat{\mathfrak{osp}}_{1|2}(R,{}^-)$$
as Lie superalgebras over $\Bbbk$, and hence $\psi:\widehat{\mathfrak{osp}}_{1|2}(R,{}^-)\rightarrow\mathfrak{osp}_{1|2}(R,{}^-)$ is a universal central extension in this situation. In particular, for every unital associative superalgebra $S$,
$$\mathfrak{uosp}_{1|2}(S\oplus S^{\mathrm{op}},\mathrm{ex})\cong \widehat{\mathfrak{osp}}_{1|2}(S\oplus S^{\mathrm{op}},\mathrm{ex}).$$
\end{proposition}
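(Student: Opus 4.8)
The plan is to reduce the claimed isomorphism to the vanishing of the center $\mathfrak{z}=R/\mathcal{I}_3\oplus R/\mathcal{I}_3$ of $\mathfrak{uosp}_{1|2}(R,{}^-)$. Since $\mathfrak{uosp}_{1|2}(R,{}^-)=\widehat{\mathfrak{osp}}_{1|2}(R,{}^-)\oplus\mathfrak{z}$ with bracket twisted by the $2$-cocycle $\beta$ taking values in $\mathfrak{z}$, once we know that $\mathfrak{z}=0$ we get $\beta=0$, hence $\mathfrak{uosp}_{1|2}(R,{}^-)=\widehat{\mathfrak{osp}}_{1|2}(R,{}^-)$ and $\psi'$ becomes an isomorphism. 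As $\psi\circ\psi'$ is universal by Theorem~\ref{thm:osp12_uce}, this forces $\psi$ itself to be universal. Thus everything comes down to proving $\mathcal{I}_3=R$, i.e. $R/\mathcal{I}_3=0$, under each of the two hypotheses.

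The case where $3$ is invertible in $R$ is immediate: if $3u=1$ for some $u\in R$, then for every $a\in R$ one has $a=3(ua)$, which lies in $\mathcal{I}_3$ by the defining generator $3a$; hence $\mathcal{I}_3=R$.

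The substantial case is $R=R_-+R_-\cdot R_-$, and this is where I expect the only genuine obstacle. First, the generator $a-\bar a$ already gives $R_-\subseteq\mathcal{I}_3$, since for $x\in R_-$ one has $x=\tfrac12(x-\bar x)$ and $\tfrac12\in\Bbbk$. The heart of the argument is to prove $R_-\cdot R_-\subseteq\mathcal{I}_3$. For homogeneous $x,y\in R_-$ I would specialise the degree-three generator
$$(-1)^{|a||c|}(a\bar b c+\bar a b\bar c)+(-1)^{|b||a|}(b\bar c a+\bar b c\bar a)+(-1)^{|c||b|}(c\bar a b+\bar c a\bar b)$$
at $a=x$, $b=y$, $c=1$. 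Using $\bar x=-x$ and $\bar y=-y$, this collapses to $-4xy+2(-1)^{|x||y|}yx\in\mathcal{I}_3$, so dividing by the invertible scalar $-2$ gives $2xy-(-1)^{|x||y|}yx\in\mathcal{I}_3$. On the other hand, a short check shows $\overline{[x,y]}=-[x,y]$, so the supercommutator $[x,y]=xy-(-1)^{|x||y|}yx$ lies in $R_-\subseteq\mathcal{I}_3$; subtracting the two memberships yields $xy\in\mathcal{I}_3$. Hence $R_-\cdot R_-\subseteq\mathcal{I}_3$, and combined with $R_-\subseteq\mathcal{I}_3$ the hypothesis $R=R_-+R_-\cdot R_-$ gives $\mathcal{I}_3=R$.

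Finally, for the special case $(R,{}^-)=(S\oplus S^{\mathrm{op}},\mathrm{ex})$ I would verify the second hypothesis directly. Here $R_-=\{a\oplus(-a)\mid a\in S\}$ is the anti-diagonal, while the product $(a\oplus(-a))(1\oplus(-1))=a\oplus a$ shows that $R_-\cdot R_-$ contains the whole diagonal $\{a\oplus a\mid a\in S\}$. Since $2$ is invertible, the anti-diagonal together with the diagonal spans all of $S\oplus S^{\mathrm{op}}$, so $R=R_-+R_-\cdot R_-$ and the general result applies, giving $\mathfrak{uosp}_{1|2}(S\oplus S^{\mathrm{op}},\mathrm{ex})\cong\widehat{\mathfrak{osp}}_{1|2}(S\oplus S^{\mathrm{op}},\mathrm{ex})$.
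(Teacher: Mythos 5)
Your proof is correct and follows essentially the same route as the paper: reduce everything to showing $\mathfrak{z}=R/\mathcal{I}_3\oplus R/\mathcal{I}_3=0$, handle the invertible-$3$ case via the generator $3a$, and in the second case deduce $R_-\subseteq\mathcal{I}_3$ from the generator $a-\bar a$ and $R_-\cdot R_-\subseteq\mathcal{I}_3$ from the degree-three generator specialized at $c=1$ together with $[x,y]\in R_-$. Your explicit computation of $-4xy+2(-1)^{|x||y|}yx\in\mathcal{I}_3$ just fills in a step the paper leaves implicit, and your verification that $(S\oplus S^{\mathrm{op}},\mathrm{ex})$ satisfies $R=R_-+R_-\cdot R_-$ matches the paper's decomposition of $a\oplus b$ into anti-diagonal plus diagonal parts.
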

\begin{proof}
If $3$ is invertible in $R$, then $3R=R$, which yields that $\mathfrak{z}=0$. On the other hand, we observe that (\ref{eq:osp12_uce_cyl2}) implies that $\boldsymbol{\pi}_i(R_-)=0$ for $i=1,2$, while (\ref{eq:osp12_uce_cyl5}) yields that $\boldsymbol{\pi}_i(R_-\cdot R_-)=0$ for $i=1,2$. Hence, $\mathfrak{z}=0$ if $R_-+R_-\cdot R_-=R$. Hence, the first assertion holds.

In $S\oplus S$, we know that
$$a\oplus b=\frac{1}{2}(a-b)\oplus (b-a)+\frac{1}{2}((a+b)\oplus(-a-b))\cdot(1\oplus-1)\in R_-+R_-\cdot R_-,$$
for all $a,b\in S$. Hence, $(S\oplus S^{\mathrm{op}})_-+(S\oplus S^{\mathrm{op}})_-\cdot(S\oplus S^{\mathrm{op}})_-=S\oplus S^{\mathrm{op}}$, which yields the second assertion.
\end{proof}

\begin{remark}
If $R$ is a unital super-commutative associative superalgebra with the identity superinvolution, the Lie superalgebra $\mathfrak{uosp}_{1|2}(R,\mathrm{id})$ is not necessarily equal to $\widehat{\mathfrak{osp}}_{1|2}(R,\mathrm{id})$. However, we observe that $\mathcal{I}_3=3R$ in this situation. By Remark~\ref{rmk:tildeHD_com}, we conclude that
$$\mathrm{H}_2(\mathfrak{osp}_{1|2}(\Bbbk)\otimes_{\Bbbk}R,\Bbbk)\cong\mathrm{HC}_1(R)\oplus (R/3R)\oplus (R/3R),$$
which coincides with the second homology group of $\mathfrak{osp}_{1|2}(\Bbbk)\otimes_{\Bbbk}R$ obtained in \cite{IoharaKoga2005} if we additionally assume that $\Bbbk$ is a filed of characteristic zero.
\end{remark}

If $(R,{}^-)=(S\oplus S^{\mathrm{op}},\mathrm{ex})$, we recover the result about the universal central extension of $\mathfrak{sl}_{1|2}(S)$ obtained by \cite{ChenSun2015}.

\begin{corollary}
\label{cor:osp12_SS}
Let $S$ be a unital associative superalgebra. Then the canonical homomorphism $\mathfrak{st}_{1|2}(S)\rightarrow\mathfrak{sl}_{1|2}(S)$ is the universal central extension and $\mathrm{H}_2(\mathfrak{sl}_{1|2}(S),\Bbbk)=\mathrm{HC}_1(S)$.
\end{corollary}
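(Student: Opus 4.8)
The plan is to specialize the general machinery of this section to $(R,{}^-)=(S\oplus S^{\mathrm{op}},\mathrm{ex})$ and to assemble the previously established identifications into a single commutative diagram, exactly in the spirit of the proof of Corollary~\ref{cor:slm2n}. First I would observe that, by the computation in the proof of Proposition~\ref{prop:osp12_hat2deg}, one has $(S\oplus S^{\mathrm{op}})_-+(S\oplus S^{\mathrm{op}})_-\cdot(S\oplus S^{\mathrm{op}})_-=S\oplus S^{\mathrm{op}}$. Hence Proposition~\ref{prop:osp12_hat2deg} applies and the auxiliary central term $\mathfrak{z}$ collapses, giving $\mathfrak{uosp}_{1|2}(S\oplus S^{\mathrm{op}},\mathrm{ex})\cong\widehat{\mathfrak{osp}}_{1|2}(S\oplus S^{\mathrm{op}},\mathrm{ex})$. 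Combined with Theorem~\ref{thm:osp12_uce}, this shows that already $\psi:\widehat{\mathfrak{osp}}_{1|2}(S\oplus S^{\mathrm{op}},\mathrm{ex})\rightarrow\mathfrak{osp}_{1|2}(S\oplus S^{\mathrm{op}},\mathrm{ex})$ is the universal central extension.

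Next I would transport this universal central extension along the two isomorphisms already at hand, namely $\widehat{\mathfrak{osp}}_{1|2}(S\oplus S^{\mathrm{op}},\mathrm{ex})\cong\mathfrak{st}_{1|2}(S)$ from Proposition~\ref{prop:hatosp_SS} and $\mathfrak{osp}_{1|2}(S\oplus S^{\mathrm{op}},\mathrm{ex})\cong\mathfrak{sl}_{1|2}(S)$ from Example~\ref{eg:osp_SS}, together with the induced isomorphism on kernels supplied by Proposition~\ref{prop:hatosp12_ker} and Lemma~\ref{lem:hmltilde}(ii). Concretely, I would exhibit the commutative diagram
$$\xymatrix{
0\ar[r]&
\fourIdx{}{-}{}{1}{\widetilde{\mathrm{HD}}}(S\oplus S^{\mathrm{op}},\mathrm{ex})\ar[r]\ar[d]&
\widehat{\mathfrak{osp}}_{1|2}(S\oplus S^{\mathrm{op}},\mathrm{ex})\ar[r]\ar[d]&
\mathfrak{osp}_{1|2}(S\oplus S^{\mathrm{op}},\mathrm{ex})\ar[r]\ar[d]&0\\
0\ar[r]&
\mathrm{HC}_1(S)\ar[r]&
\mathfrak{st}_{1|2}(S)\ar[r]&
\mathfrak{sl}_{1|2}(S)\ar[r]&0
}$$
in which all three vertical arrows are isomorphisms. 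I expect the main obstacle to be the commutativity of the right-hand square, i.e.\ checking that $\psi$ corresponds under these isomorphisms to the canonical projection $\mathfrak{st}_{1|2}(S)\rightarrow\mathfrak{sl}_{1|2}(S)$ rather than to some twist of it. Since $\mathfrak{st}_{1|2}(S)$ is generated by the $\boldsymbol{e}_{ij}(a)$ and all maps in sight are Lie superalgebra homomorphisms, it suffices to compare images of these generators using the explicit formulas of Proposition~\ref{prop:hatosp_SS} and the definitions (\ref{eq:osp12_gen1})--(\ref{eq:osp12_gen2}); this is a direct but essentially bookkeeping verification.

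Once commutativity is verified, universality of $\psi$ transfers along the isomorphisms, so the canonical homomorphism $\mathfrak{st}_{1|2}(S)\rightarrow\mathfrak{sl}_{1|2}(S)$ is a universal central extension. For the homology statement I would then read off the kernel: $\mathfrak{osp}_{1|2}(S\oplus S^{\mathrm{op}},\mathrm{ex})$ is perfect by Proposition~\ref{prop:ospalg}(iii), so its second homology is computed by the kernel of its universal central extension, which the left column identifies with $\mathrm{HC}_1(S)$ via Proposition~\ref{prop:hatosp12_ker} and Lemma~\ref{lem:hmltilde}(ii). Transporting through Example~\ref{eg:osp_SS} gives $\mathrm{H}_2(\mathfrak{sl}_{1|2}(S),\Bbbk)=\mathrm{HC}_1(S)$. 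Apart from the generator comparison in the square, every step is a formal consequence of results already proved in this section, so I anticipate no further difficulty.
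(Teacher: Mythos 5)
Your proposal is correct and follows essentially the same route as the paper: the paper also deduces from Proposition~\ref{prop:osp12_hat2deg} and Theorem~\ref{thm:osp12_uce} that $\widehat{\mathfrak{osp}}_{1|2}(S\oplus S^{\mathrm{op}},\mathrm{ex})\rightarrow\mathfrak{osp}_{1|2}(S\oplus S^{\mathrm{op}},\mathrm{ex})$ is already universal, and then transports this through the identical commutative diagram using Proposition~\ref{prop:hatosp_SS}, Example~\ref{eg:osp_SS} and Lemma~\ref{lem:hmltilde}(ii). The only difference is that you spell out the generator-by-generator check of the right-hand square, which the paper leaves implicit.
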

\begin{proof}
We have known from Lemma~\ref{lem:hmltilde} that $\fourIdx{}{-}{}{1}{\widetilde{\mathrm{HD}}}(S\oplus S^{\mathrm{op}},\mathrm{ex})\cong\mathrm{HC}_1(S)$. Hence, the statements follow from the following commutative diagram
$$\xymatrix{
0\ar[r]&
\fourIdx{}{-}{}{1}{\widetilde{\mathrm{HD}}}(S\oplus S^{\mathrm{op}},\mathrm{ex})\ar[r]\ar[d]&
\widehat{\mathfrak{osp}}_{1|2}(S\oplus S^{\mathrm{op}},\mathrm{ex})\ar[r]\ar[d]&
\mathfrak{osp}_{1|2}(S\oplus S^{\mathrm{op}},\mathrm{ex})\ar[r]\ar[d]&0\\
0\ar[r]&
\mathrm{HC}_1(S)\ar[r]&
\mathfrak{st}_{1|2}(S)\ar[r]&
\mathfrak{sl}_{1|2}(S)\ar[r]&0
}$$
where $\widehat{\mathfrak{osp}}_{1|2}(S\oplus S^{\mathrm{op}},\mathrm{ex})\rightarrow\mathfrak{osp}_{1|2}(S\oplus S^{\mathrm{op}},\mathrm{ex})$ is a universal central extension by Theorem~\ref{thm:osp12_uce} and Proposition~\ref{prop:osp12_hat2deg}.
\end{proof}

\section*{Acknowledgements}
The authors thank Prof. Yun Gao, Prof. Hongjia Chen and Dr. Shikui Shang for useful suggestions. The authors are grateful to the referees for their careful reviews and helpful comments.

\end{document}